\newtheorem{theorem}{Theorem}[section]
\newtheorem{lemma}[theorem]{Lemma}
\newtheorem{corollary}[theorem]{Corollary}
\newtheorem{definition}[theorem]{Definition}
\newtheorem{example}[theorem]{Example}
\newtheorem{proposition}[theorem]{Proposition}
\def\C{\mathbb{C}}
\def\R{\mathbb{R}}
\def\Z{\mathbb{Z}}
\def\trop{\mathbb{T}}
\def\Z{\mathbb{Z}}
\def\tree{\mathcal{T}}
\def\q{/\!/}
\def\ql{\backslash \! \backslash}
\title[Toric geometry from outer space]{Toric geometry of $SL_2(\C)$ free group character varieties from outer space}
\author{Christopher Manon}
\thanks{This work was supported by the NSF fellowship DMS-1500966.}
\begin{document}

\begin{abstract}
Culler and Vogtmann defined a simplicial space $O(g)$ called outer space to study the outer automorphism group
of the free group $F_g$.   Using representation theoretic methods, we give an embedding of $O(g)$ into the analytification of $\mathcal{X}(F_g, SL_2(\mathbb{C})),$ the $SL_2(\mathbb{C})$ character variety of $F_g,$ reproving a result of Morgan and Shalen.  Then we show that every point $v$ contained in a maximal cell of $O(g)$ defines a flat degeneration of $\mathcal{X}(F_g, SL_2(\mathbb{C}))$ to a toric variety $X(P_{\Gamma})$.  We relate $\mathcal{X}(F_g, SL_2(\mathbb{C}))$ and $X(v)$ topologically by showing that there is a surjective, continuous, proper map $\Xi_v: \mathcal{X}(F_g, SL_2(\mathbb{C})) \to X(v)$.  We then show that this map is a symplectomorphism on a dense, open subset of $\mathcal{X}(F_g, SL_2(\mathbb{C}))$ with respect to natural symplectic structures on $\mathcal{X}(F_g, SL_2(\mathbb{C}))$ and $X(v)$.  In this way, we construct an integrable Hamiltonian system in $\mathcal{X}(F_g, SL_2(\mathbb{C}))$ for each point in a maximal cell of $O(g)$, and we show that each $v$ defines a topological decomposition of $\mathcal{X}(F_g, SL_2(\mathbb{C}))$ derived from the decomposition of $X(v)$ by its torus orbits.  Finally, we show that the valuations coming from the closure of a maximal cell in $O(g)$ all arise as divisorial valuations built from an associated projective compactification of $\mathcal{X}(F_g, SL_2(\mathbb{C})).$  
\end{abstract}

\maketitle

\section{Introduction}

In their seminal paper \cite{MS}, Morgan and Shalen introduce a piecewise-linear compactification construction for any complex affine variety
$V$, where the points at infinity correspond to valuations on the coordinate ring of $V$.  By applying this construction to the $SL_2(\C)$ character varieties of surface fundamental groups, Morgan and Shalen are able to produce Thurston's piecewise linear compactification of Teichm\"uller space \cite{T}.  As part of their program, they show that isometric actions of the free group $F_g$ on metric trees are limit points of the character variety $\mathcal{X}(F_g, SL_2(\C))$, and consequently can be understood as equivalence classes of valuations on the coordinate ring $\C[\mathcal{X}(F_g, SL_2(\C))]$ (Allesandrini \cite{A} also has an account of this construction).    We explore an alternative construction of such a complex of valuations which uses a compactification of $SL_2(\C)$ and Vinberg's enveloping monoid as a centerpiece (see \cite{Vi1}, \cite{Vi2}, \cite{HMM2}, and Section \ref{sl2}).  Built into this method is a way to directly relate the symplectic geometry of $\mathcal{X}(F_g, SL_2(\C))$ to that of the flat degeneration $X(P_{\Gamma})$ associated to any valuation we construct.  Generally the flat degenerations $X(P_{\Gamma})$ are toric, so we are able to create a dense open integrable system with globally defined continuous momentum maps in $\mathcal{X}(F_g, SL_2(\C))$ for each maximal cell in the complex of valuations we consider (this construction should be compared to the main result of Harada and Kaveh in \cite{HK}).  In short, from the perspectives of both algebraic and symplectic geometry, $\mathcal{X}(F_g, SL_2(\C))$ is ``almost'' a toric variety in many ways.

Our first main result is a reformulation of Morgan and Shalen's construction of a valuation on $\C[\mathcal{X}(F_g, SL_2(\C))]$ from an isometric
$F_g$ action on a metric tree; in order to state it we recall spaces from geometric group theory and the theory of Berkovich analytification.  In \cite{CV}, Culler and Vogtmann introduce a space $O(g)$ with an action of the outer automorphism group $Out(F_g)$ called \emph{outer space}.   Points of $O(g)$ are graphs $\Gamma$ with no leaves and first Betti number equal to $g$, along with choice of metric $\ell$ and a ``marking'' (see Section \ref{outerspace}), which provides an isomorphism $\phi: \pi(\Gamma) \cong F_g$ (see \cite{CV}, \cite{V},  Section \ref{outerspace}).  The metric is subject to a normalization condition, stipulating that the sum of the lengths of the edges of $\Gamma$ must be $1.$   Culler and Vogtmann show that $O(g)$ has a simplicial structure, with a simplicial action by $Out(F_g)$ and use this to prove that $Out(F_g)$ has a number of group theoretic properties (see \cite{V} for a survey of these results).   Points in $O(g)$ can be viewed as isometric actions of $F_g$ on trees by replacing $(\Gamma, \ell)$ with its universal cover, a metric tree with an isometric action of $F_g$ defined by the isomorphism $\phi$, this places $O(g)$ in the context studied by Morgan and Shalen in \cite{MS}.  We drop the normalization condition on the metric and consider the resulting complex of simplicial cones, $\hat{O}(g)$, referred to as the cone over outer space. 

  Each reduced word $\omega \in F_g$ defines a continuous function $d_{\omega}: \hat{O}(g) \to \R$, where $d_{\omega}(\Gamma, \ell, \phi)$ is the length of the minimal length path in $(\Gamma, \ell)$ which represents $\omega$ in $\pi_1(\Gamma)$. Reduced words $\omega \in F_g$ also define regular functions $tr_{\omega} \in \C[\mathcal{X}(F_g, SL_2(\C))]$ called trace-word functions, see Sections \ref{outerspace} and \ref{spanningsets}.   A point in $\mathcal{X}(F_g, SL_2(\C))$ is an $SL_2(\C)$ representation of $F_g$, this is a choice of $g$ matrices $A_1, \ldots, A_g \in SL_2(\C)$. Two choices of matrices define the same representation if and only if they are related by simultaneous conjugation by an element of $SL_2(\C)$. More precisely, $\mathcal{X}(F_g, SL_2(\C))$ is defined as a Geometric Invariant Theory ($GIT$) quotient. 

\begin{equation}
\mathcal{X}(F_g, SL_2(\C)) = [SL_2(\C) \times \ldots \times SL_2(\C)] \q SL_2(\C)\\
\end{equation}

\noindent
The function $tr_{\omega}$ is computed on $[A_1, \ldots, A_g] \in \mathcal{X}(F_g, SL_2(\C))$ by evaluating $\omega(A_1, \ldots, A_g)$
and taking the trace of the resulting matrix. 

The Berkovich analytification $V^{an}$ of an affine variety $V$ (see \cite{Ber}, \cite{P},  Section \ref{valuations}) is a Hausdorff topological space composed of all the rank $1$ valuations on the coordinate ring of $V$.  Every regular function $f \in \C[V]$ defines a function on the analytification $ev_f: V^{an} \to \R$ called the evaluation function, it is computed by taking a valuation $v$ to $v(f)$. The topology on $V^{an}$ is the weakest one which makes the evaluation functions continuous.

\begin{theorem}\label{mainvaluation}
There is an embedding $\Sigma: \hat{O}(g) \to \mathcal{X}(F_g, SL_2(\C))^{an}$, furthermore, for any reduced word $\omega \in F_g,$ $ev_{tr_{\omega}}\circ \Sigma = d_{\omega}.$
\end{theorem}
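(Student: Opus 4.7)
My plan is to construct $\Sigma$ directly at the level of valuations and then read off the trace--length identity and the embedding property. Given a point $(\Gamma,\ell,\phi) \in \hat{O}(g)$, let $T$ be the universal cover of $(\Gamma,\ell)$; this is an $\R$-tree on which $F_g$ acts by isometries via the marking $\phi$, and for each reduced word $\omega$ the translation length of $\omega$ acting on $T$ coincides with $d_\omega(\Gamma,\ell,\phi)$, namely the length of the shortest loop in $\Gamma$ representing the conjugacy class of $\omega$. The real task is therefore to produce, from the $F_g$-action on $T$, a rank one valuation $v=\Sigma(\Gamma,\ell,\phi)$ on $\C[\mathcal{X}(F_g, SL_2(\C))]$ for which $v(tr_\omega)=d_\omega(\Gamma,\ell,\phi)$ (up to the sign convention fixed in Section \ref{valuations}).

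The main step is the construction of $v$. I would use the Vinberg-monoid spanning set of $\C[\mathcal{X}(F_g, SL_2(\C))]$ developed in Sections \ref{sl2}--\ref{spanningsets}: spanning elements are parametrized by $SL_2$ weight labelings of the edges of a trivalent graph homotopy equivalent to $\Gamma$, and each such labeling carries a natural combinatorial weight obtained by pairing the edge labels against the edge lengths $\ell(e)$. I would define $v$ on a spanning element to be this weight, and extend to $f = \sum c_i b_i$ by the usual min-over-presentations formula. Verifying that this assignment is a genuine valuation comes down to two points: (a) the relations among spanning elements are homogeneous with respect to the weight, so $v$ is well defined; (b) the associated graded of the resulting filtration is a domain, so $v$ is multiplicative. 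Both statements are precisely what is needed to construct the toric degeneration $X(P_\Gamma)$ developed later in the paper, so I would organize things to invoke that analysis.

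With $v$ in hand, the identity $ev_{tr_\omega}\circ \Sigma = d_\omega$ is an explicit computation. The function $tr_\omega$ corresponds to the spanning element given by a single loop labeled by $\omega$, and reducing to a minimal realization of $\omega$ in $\Gamma$ shows that the associated combinatorial weight is exactly $\sum_{e}\ell(e)\cdot n_\omega(e)$, where $n_\omega(e)$ is the number of times the minimal loop crosses $e$; this sum is by definition $d_\omega(\Gamma,\ell,\phi)$.

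Finally, the embedding property splits cleanly. Continuity is automatic from the weakest-topology definition of $\mathcal{X}(F_g, SL_2(\C))^{an}$ once one knows continuity of each $ev_{tr_\omega}\circ \Sigma = d_\omega$, and the $d_\omega$ are piecewise linear in the edge lengths. Injectivity follows from Culler--Morgan rigidity: the family $\{d_\omega\}_{\omega\in F_g}$ of translation lengths of a minimal $F_g$-action on an $\R$-tree already recovers the action up to equivariant isometry and hence recovers the point of $\hat{O}(g)$. The main obstacle is the second paragraph, namely verifying that the combinatorial assignment extends to a well-defined multiplicative valuation, since this is where the $SL_2$ representation theory and Vinberg-monoid machinery of the paper must do real work.
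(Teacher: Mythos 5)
Your plan tracks the paper's architecture in its main lines: the paper also builds $v_{\Gamma,\ell,\phi}$ from the Vinberg/Peter--Weyl filtration on $\C[SL_2(\C)]$ edge-by-edge, verifies it is a valuation by showing the associated graded is a domain (Lemmas \ref{val1}, \ref{val2}, Proposition \ref{agraded}), computes $v(\Phi_a)=\sum_e \ell(e)a(e)$ on the spin-diagram basis and then $v(\tau_\omega)=d_\omega$ via the $\Gamma$-tensor identification (Corollary \ref{gammaval}, Equation \ref{lengthequation}), and uses Culler--Morgan length rigidity \cite{CM} for injectivity.

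The continuity step, however, is a genuine gap. You assert that continuity of $\Sigma$ is ``automatic from the weakest-topology definition once one knows continuity of each $ev_{tr_\omega}\circ\Sigma = d_\omega$.'' This gets the logic backwards. The topology on $\mathcal{X}(F_g,SL_2(\C))^{an}$ is the weakest topology making $ev_f$ continuous for \emph{every} $f\in\C[\mathcal{X}(F_g,SL_2(\C))]$, so to prove $\Sigma$ continuous you must check that $ev_f\circ\Sigma$ is continuous for all $f$, not only for the trace-word functions. Since valuations are not additive --- $v(f+g)$ is not determined by $v(f)$ and $v(g)$, only bounded by their max --- knowing $ev_{\tau_\omega}\circ\Sigma$ is continuous does not bootstrap to general $f$, even though the $\tau_\omega$ generate the coordinate ring. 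The continuity of the $d_\omega$ only gives you the easy direction: that $\Sigma^{-1}$ is continuous from $\Sigma(\hat{O}(g))$ with the subspace topology, since the generating open sets $d_\omega^{-1}(U)$ of $\hat{O}(g)$ are pulled back along $ev_{\tau_\omega}$. The nontrivial direction is handled in Subsection \ref{proofofmainvaluation} of the paper: an arbitrary $f$ is expanded uniquely in spin diagrams $f=\sum C_a\Phi_a$, each $\Phi_a$ is replaced by its unitriangular expansion into trace-word functions (so that $v_{\Gamma,\ell,\phi}(\Phi_a)=v_{\Gamma,\ell,\phi}(\tau_{w_{a,1}})$ in the associated graded), and then Proposition \ref{spinadapt} gives $ev_f\circ\Sigma = \max_{C_a\neq 0}\{d_{w_{a,1}}\}$ on each cone $C_{\Gamma,\phi}$, a finite maximum of piecewise-linear functions, hence continuous; the Pasting Lemma finishes. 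Without this argument the embedding is not established. (A smaller slip: you invoke a ``min-over-presentations'' extension, but the $\Phi_a$ form a basis, so there is a unique presentation, and the paper's convention is MAX, not MIN; see Proposition \ref{spinadapt}.)
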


 Theorem \ref{mainvaluation} is analogous to Theorem $3.4$ in \cite{SpSt}, which identifies the tropical variety of the Grassmannian variety $Gr_2(\C^n)$ with the space of phylogenetic trees with $n$ ordered leaves. In \cite[Proposition 3.1]{M5}, the author uses combinatorial and representation theoretic methods to construct an embedding of the space of metric trees into the analytification $Gr_2(\C^n)^{an}$. We use similar methods to give a new proof of Theorem \ref{mainvaluation}, employing a theory of valuations stemming from compactifications of $SL_2(\C)$, Vinberg's remarkable enveloping monoid construction, and a noncommutative quiver variety construction.  

In \cite{FL}, Florentino and Lawton study varieties $M_{\Gamma}(G)$ for $G$ a reductive group, built as noncommutative
analogues of quiver varieties.  They show that each $M_{\Gamma}(G)$ for $\Gamma$ a directed graph with first Betti number equal to $g$ is isomorphic to $\mathcal{X}(F_g,G)$ (see also \cite{M15}). We extend this construction to a functor $M_{-}(SL_2(\C))$ from the category of graphs with certain finite cellular topological maps to the category of complex schemes.  We show that an isomorphism $\phi: \pi_1(\Gamma) \to F_g$ defines an isomorphism $M_{\Gamma}(SL_2(\C)) \cong \mathcal{X}(F_g, SL_2(\C))$, and that the variety $M_{\Gamma}(SL_2(\C))$ comes with a naturally defined simplicial cone $C_{\Gamma}$ of valuations on its coordinate ring. The pullbacks $C_{\Gamma, \phi}$ of these cones in $\mathcal{X}(F_g, SL_2(\C))^{an}$ paste together to give the embedding $\Sigma.$

The graph $\Gamma$ also plays a fundamental role in our analysis of the coordinate ring of $\mathcal{X}(F_g, SL_2(\C)).$  For a trivalent directed
graph $\Gamma$, a spin diagram of topology $\Gamma$ is an assignment of non-negative integers $a: E(\Gamma) \to \Z_{\geq 0}$
to the edges of $\Gamma$ which satisfy the inequalities of a polyhedral cone $\mathcal{P}_{\Gamma}$ and belong to a certain lattice $L_{\Gamma} \subset \R^{E(\Gamma)}$ (see Figure \ref{spin} and Section \ref{spanningsets}).

\begin{figure}[htbp]
\centering
\includegraphics[scale = 0.4]{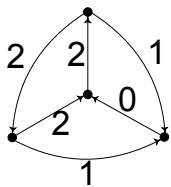}
\caption{A spin diagram.}
\label{spin}
\end{figure}

There is a basis of regular functions $\Phi_a, a \in P_{\Gamma} = \mathcal{P}_{\Gamma} \cap L_{\Gamma}$ 
in $\C[\mathcal{X}(F_g, SL_2(\C))]$ for each marked $\Gamma$ which does not depend on the directed structure on $\Gamma$ (see Section \ref{spanningsets}).   These ``spin diagram'' functions have a rich combinatorial theory, and have been studied before by Lawton and Peterson, \cite{LP}.  Spin diagrams are a beautiful combinatorial tool in topological field theory and knot theory, see \cite{Baez} for a survey of some of these topics. 

The set $P_{\Gamma}$ is an affine semigroup with associated affine semigroup algebra $\C[P_{\Gamma}]$.  For $a, b \in P_{\Gamma}$ the product $[a][b] \in \C[P_{\Gamma}]$ is computed naively, $[a][b] = [a + b]$. We let $X(P_{\Gamma})$  be the  affine toric variety $Spec(\C[P_{\Gamma}])$. 
Our second main theorem says that the associated graded multiplication operation in $\C[\mathcal{X}(F_g, SL_2(\C))]$  with respect to the filtration
defined by the valuation $\Sigma(\Gamma, \ell, \phi) = v_{\Gamma, \ell, \phi}$ is this naive multiplication operation. 

\begin{theorem}\label{mainassociatedgraded}
The associated graded algebra $gr_{v_{\Gamma, \ell, \phi}}(\C[\mathcal{X}(F_g, SL_2(\C))])$ is isomorphic to $\C[P_{\Gamma}]$.
\end{theorem}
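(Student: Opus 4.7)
The plan is to show that the spin diagram basis $\{\Phi_a : a \in P_{\Gamma}\}$ is a Khovanskii (SAGBI) basis for $\C[\mathcal{X}(F_g, SL_2(\C))]$ with respect to the valuation $v = v_{\Gamma, \ell, \phi}$. Concretely, I would establish two facts: (i) the value $v(\Phi_a)$ is a linear functional of $a \in P_{\Gamma}$, so that for any $f = \sum_a c_a \Phi_a$ one has $v(f) = \min\{v(\Phi_a) : c_a \neq 0\}$ whenever this minimum is attained uniquely, and (ii) the expansion of the product $\Phi_a \cdot \Phi_b$ in the spin basis has a unique $v$-leading term equal to $\Phi_{a+b}$. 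Granting these, the images of the $\Phi_a$ form a basis of the associated graded with multiplication $[\Phi_a]\cdot[\Phi_b] = [\Phi_{a+b}]$, which is exactly the affine semigroup algebra $\C[P_{\Gamma}]$.

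For step (i), I would use the explicit construction of $v_{\Gamma, \ell, \phi}$ through the noncommutative quiver variety $M_{\Gamma}(SL_2(\C))$ and its simplicial cone $C_{\Gamma}$ of valuations described earlier. Because the rays of $C_{\Gamma}$ arise from edge-wise valuations tied to the $SL_2(\C)$ compactification of Vinberg, $v(\Phi_a)$ should evaluate to a signed weighted sum of the form $\sum_{e \in E(\Gamma)} \ell(e) a(e)$, linear in the spin labels. A useful consistency check comes from Theorem \ref{mainvaluation}: the trace-word function $tr_{\omega}$ expands in the spin basis as a sum indexed by spin networks whose edge labels are constrained by the loop representing $\omega$, and the resulting minimum of $v$ over this expansion should recover the minimal path length $d_{\omega}$.

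The main obstacle is step (ii), which requires careful analysis of products of spin diagram functions. The natural tool is the $SL_2(\C)$ Clebsch--Gordan decomposition $V_m \otimes V_n \cong V_{m+n} \oplus V_{m+n-2} \oplus \cdots \oplus V_{|m-n|}$, applied edge-by-edge across $\Gamma$ and followed by reassembly through the vertex intertwiners that define the $\Phi_a$. This should yield an expansion
\begin{equation*}
\Phi_a \cdot \Phi_b \;=\; \Phi_{a+b} \;+\; \sum_{c \neq a+b} N_{ab}^c \, \Phi_c,
\end{equation*}
where each $c$ appearing satisfies $c(e) \leq a(e) + b(e)$ for every edge, with strict inequality at some edge; the coefficient of $\Phi_{a+b}$ is $1$ because the top Clebsch--Gordan component is one-dimensional and the natural intertwiners compose unambiguously. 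Since $v$ is monotone in the edge labels (with $\ell(e) > 0$ on a maximal cell), this forces $\Phi_{a+b}$ to be the unique $v$-minimal term in the expansion, giving $v(\Phi_a \Phi_b) = v(\Phi_a) + v(\Phi_b)$ and the required product rule in the associated graded. Pulling back the resulting isomorphism of algebras along the spin basis then delivers $\gr_{v_{\Gamma,\ell,\phi}}(\C[\mathcal{X}(F_g, SL_2(\C))]) \cong \C[P_{\Gamma}]$.
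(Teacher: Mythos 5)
Your plan is a genuinely different route from the paper's. The paper's proof of Theorem \ref{mainassociatedgraded} is structural and does not mention spin diagrams at all: it is Proposition \ref{agraded} (the associated graded of $\C[M_\Gamma(SL_2(\C))]$ is $\C[M_\Gamma(SL_2(\C)^c)]$) followed by Proposition \ref{contractiontoricvariety} (which identifies $\C[M_\Gamma(SL_2(\C)^c)]$ with $\C[P_\Gamma]$). Proposition \ref{agraded} in turn rests on three general facts: the single-edge degeneration $\gr_v(\C[SL_2(\C)]) \cong \C[SL_2(\C)^c]$ from Proposition \ref{sl2val}, Lemma \ref{val2} (linear combinations of tensor-factor valuations have the expected associated graded, which is a domain because each factor is), and Lemma \ref{val1} (taking $G$-invariants commutes with passing to the associated graded for reductive $G$). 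You instead propose a SAGBI/Khovanskii-basis argument in the spin-diagram basis. That is a legitimate alternative and the paper actually records the ingredients you need (Formula \ref{innerproducteval} and the triangular product expansion) later in Section \ref{spanningsets} — but note that the paper \emph{derives} the triangularity and the nonvanishing of the $\Phi_{a+a'}$ coefficient \emph{from} the fact that $v_\ell$ is already known to be a valuation, i.e.\ from the domain property of $\C[M_\Gamma(SL_2(\C)^c)]$, which is exactly what you are trying to prove. So you cannot reuse that proposition; you must supply the key nonvanishing independently.

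And that is precisely where your argument has a gap. You assert that ``the coefficient of $\Phi_{a+b}$ is $1$ because the top Clebsch--Gordan component is one-dimensional and the natural intertwiners compose unambiguously,'' but one-dimensionality of the target space only says the projection is a scalar multiple of $\Phi_{a+b}$ — it does not say that scalar is nonzero (and it certainly need not be $1$, since the $\Phi_a$ are only chosen up to scale). The content you need is that the vertex-by-vertex Cartan product of nonzero intertwiners in $[V(a(e))\otimes V(a(f))\otimes V(a(g))]^{SL_2}$ and $[V(b(e))\otimes V(b(f))\otimes V(b(g))]^{SL_2}$ is a nonzero element of $[V((a+b)(e))\otimes V((a+b)(f))\otimes V((a+b)(g))]^{SL_2}$. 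This is true for $SL_2$, and can be proved directly (e.g.\ by realizing the intertwiners as Pl\"ucker monomials in $\C[(\C^2)^3]^{SL_2}$, whose top-degree components multiply nontrivially because that ring is a polynomial ring), but as written your step (ii) is an assertion, not an argument. The paper avoids this local computation entirely by proving the associated graded is a domain first and deducing the SAGBI property afterward. If you want to keep your route, you need to prove the local Cartan-product nonvanishing and then argue it globalizes across the graph (a nonzero tensor product of nonzero vectors).

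One further slip: the paper uses the MAX convention for valuations (axiom (2) is $v(a+b) \leq \max\{v(a),v(b)\}$, and Proposition \ref{spinadapt} reads $v_\ell(f) = \mathrm{MAX}\{v_\ell(\Phi_a) : C_a \neq 0\}$). Your step (i) states $v(f)=\min\{\cdots\}$ and later calls $\Phi_{a+b}$ the ``$v$-minimal term,'' but $\Phi_{a+b}$ is the term on which $\sum_e \ell(e)c(e)$ is \emph{maximal} among all $c \preceq a+b$. Switching every min to max repairs this, but as written the inequalities in your argument point the wrong way.
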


Let $S_{2, g} \subset \C[\mathcal{X}(F_g, SL_2(\C))]$ be the set of traceword functions $\tau_w$ for reduced words $w$ in which any letter appears at most twice, counting inverses and multiplicity. In Section \ref{spanningsets} we use Theorem \ref{mainassociatedgraded} to show that the functions $S_{2, g}$ generate  $\C[\mathcal{X}(F_g, SL_2(\C))]$ and that the tropical variety $\trop(I_{2, g})$ of the ideal of forms which vanish on $S_{2, g}$
contains the polyhedral complex $\Upsilon_g$ of ``metric spanned graphs'' with first Betti number $g$ (see Subsection \ref{spanneddef} and Theorem \ref{Khovanskii}).

Next we analyze a relationship between $\mathcal{X}(F_g, SL_2(\C))$ and $X(P_{\Gamma})$ in symplectic geometry. We relate $\mathcal{X}(F_g, SL_2(\C))$ to $X(P_{\Gamma})$ topologically by constructing a continuous map $\Xi_{\Gamma}: \mathcal{X}(F_g, SL_2(\C)) \to X(P_{\Gamma})$ in Section \ref{Hamiltonian}. We recall that as a toric variety, $X(P_{\Gamma})$ is stratified by toric subvarieties $X(F)$ for $F \subset P_{\Gamma}$ the affine semigroups of $\mathcal{L}_{\Gamma}$ points in a face  $\mathcal{F} \subset \mathcal{P}_{\Gamma}$."

\begin{theorem}\label{maintoric}
The map $\Xi_{\Gamma}$ is surjective, continuous, and proper.  If a face $\mathcal{F} \subset \mathcal{P}_{\Gamma}$ is not contained
in a coordinate hyperplane of $\R^{E(\Gamma)}$, $\Xi_{\Gamma}: \Xi_{\Gamma}^{-1}(X(F)) \to X(F)$
is a homeomorphism.  If $\mathcal{F}$ is contained in a coordinate hyperplane of $\R^{E(\Gamma)}$, $\Xi_{\Gamma}: \Xi_{\Gamma}^{-1}(X(F)) \to X(F)$
has connected fibers, and if $\mathcal{F}$ is the origin of $\R^{E(\Gamma)}$, the fiber of this bundle
is the compact character variety $\mathcal{X}(F_g, SU(2))$. 
\end{theorem}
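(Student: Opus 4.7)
The plan is to construct $\Xi_\Gamma$ via the flat toric degeneration produced by Theorem \ref{mainassociatedgraded}, in the spirit of the Harada-Kaveh construction of integrable systems from toric degenerations. The Rees-algebra construction applied to the valuation filtration lifts the isomorphism $\gr_{v_{\Gamma,\ell,\phi}} \C[\mathcal{X}(F_g, SL_2(\C))] \cong \C[P_\Gamma]$ to a flat family $\pi: \mathfrak{X} \to \mathbb{A}^1$ whose generic fiber is $\mathcal{X}(F_g, SL_2(\C))$ and whose central fiber is $X(P_\Gamma)$; the family carries a $\mathbb{C}^*$-action scaling each lift of $\Phi_a$ by $t^{v(\Phi_a)}$. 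Restricting to $\R_{>0}$ and applying the gradient-Hamiltonian flow along $\pi$ then produces a continuous map $\Xi_\Gamma$ that is by construction equivariant for the compact torus action on $X(P_\Gamma)$.

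For properness, I would argue that the toric generators $[a] \in \C[P_\Gamma]$ pull back along $\Xi_\Gamma$ to expressions dominated by the spin diagram functions $\Phi_a$, so preimages of compact sets in the affine variety $X(P_\Gamma)$ are bounded in the $\Phi_a$-coordinates on the character variety. Surjectivity onto each torus orbit $X(F)$ follows by exhibiting, for any $x \in X(F)$, a one-parameter family inside $\mathfrak{X}$ whose $t \to 0$ limit is $x$, which is possible because $\pi$ is flat with central fiber $X(P_\Gamma)$.

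For the face stratification, let $\mathcal{F} \subset \mathcal{P}_\Gamma$ be a face with corresponding torus orbit $X(F)$. When $\mathcal{F}$ is not contained in any coordinate hyperplane of $\R^{E(\Gamma)}$, every edge of $\Gamma$ contributes nontrivially and the trace-word functions along the edge-loops separate conjugacy classes of representations in $\Xi_\Gamma^{-1}(X(F))$; this upgrades the set-theoretic bijection coming from the gradient-Hamiltonian flow being a symplectomorphism over the open orbit to a homeomorphism, via continuity and properness. When $\mathcal{F}$ lies in the coordinate hyperplane of a nonempty set $E_0 \subset E(\Gamma)$ of edges, those edge-loops have been trivialized and the residual ambiguity in a conjugacy class is an orbit of a connected compact stabilizer acting on the vertex data at the endpoints of edges in $E_0$, making the fibers connected.

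When $\mathcal{F}$ is the origin, $X(F)$ is the unique torus-fixed closed point of $X(P_\Gamma)$, and the preimage consists of those $\rho$ where the entire cone of spin coordinates collapses maximally. Unwinding this in terms of trace words yields the condition $|\text{tr}_\omega(\rho)| \le 2$ for every reduced word $\omega \in F_g$, which is the classical characterization of conjugacy classes of representations admitting an $SU(2)$-valued representative, so the fiber is $\mathcal{X}(F_g, SU(2))$. The main obstacle I anticipate is verifying connectedness of fibers over non-origin coordinate-hyperplane faces: one must inductively identify the compact stabilizer as edges in $E_0$ are collapsed, which requires careful tracking of how the spin basis and the vertex-gluing structure of $M_\Gamma(SL_2(\C))$ interact with partial trivialization.
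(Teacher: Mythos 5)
Your proposal diverges from the paper at the level of the construction of $\Xi_\Gamma$ and, more importantly, it lacks the combinatorial lemma that makes the face analysis go through.

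On the construction of the map: you build $\Xi_\Gamma$ globally from the Rees degeneration of $\C[\mathcal{X}(F_g, SL_2(\C))]$ via a Ruan/Harada--Kaveh gradient-Hamiltonian flow. That machinery is formulated for smooth projective subvarieties of $\mathbb{P}^N$, while $\mathcal{X}(F_g, SL_2(\C))$ is affine and singular; you do not address the adaptation. The paper sidesteps this: $\Xi$ is defined explicitly on a single $SL_2(\C)$ via the cotangent-bundle/implosion model, $\Xi(k,v) = [kh^{-1}, h\circ v][h, v]$ (Proposition \ref{sl2xi}), the product map $\Xi^{E(\Gamma)}$ is $SU(2)^{V(\Gamma)}$-equivariant and intertwines momentum maps, and $\Xi_\Gamma$ is simply the descent to the symplectic reductions. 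Surjectivity and continuity are inherited edge-by-edge from Proposition \ref{sl2xi}; the gradient-Hamiltonian flow only appears in the paper as a consistency check (via \cite{HMM2}), not as the definition. Your approach requires surjectivity onto boundary orbits via one-parameter degenerations in the Rees family and a properness estimate by bounding $\Phi_a$-coordinates, neither of which you carry out; in the paper both are automatic once $\Xi$ is known to be surjective and continuous with compact fibers.

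The decisive gap is in the face stratification. You must know which faces $\mathcal{F} \subset \mathcal{P}_\Gamma$ lie in coordinate hyperplanes, and you never establish this. The paper's Lemma \ref{hyperplanesubgraph} is the essential combinatorial input: a subset $S \subset E(\Gamma)$ of edges can be the zero set $\{e : v_e = 0\}$ for a point in the zero-momentum level iff the complementary subgraph $\Gamma_S$ has no leaves; the proof is an explicit induction constructing nonvanishing vector assignments satisfying the $SU(2)^{V(\Gamma)}$ momentum-zero condition. This is what parametrizes the boundary strata and identifies the fiber over a point in $M_{\Gamma,S}(SL_2(\C)^c)$ as a quotient of $SU(2)^S$, which is connected because $SU(2)^S$ is (Proposition \ref{connectedfibers}). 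Your ``residual ambiguity is an orbit of a connected compact stabilizer'' gestures at this but does not deliver the argument, and you yourself flag this as the main obstacle --- correctly.

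On the origin fiber: you propose to identify $\Xi_\Gamma^{-1}(\mathrm{origin})$ with $\mathcal{X}(F_g, SU(2))$ via the condition $|\tau_\omega(\rho)| \le 2$ for all reduced $\omega$. This is not the classical characterization in the form you state it --- reality of the traces is also required, and the sufficiency of the pointwise trace bound for a simultaneous unitary conjugation is a nontrivial claim you would need to source or prove. The paper gets this for free: when $S = E(\Gamma)$, the condition $|v_e| = 0$ for all $e$ says each edge coordinate lies on the zero section $SU(2) \subset T^*(SU(2)) \cong SL_2(\C)$, so the fiber is exactly the $SU(2)^{V(\Gamma)}$-reduction of $SU(2)^{E(\Gamma)}$, i.e.\ $M_\Gamma(SU(2)) = \mathcal{X}(F_g, SU(2))$, with no excursion through trace identities.
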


In Section \ref{Hamiltonian} we place symplectic structures on both $\mathcal{X}(F_g, SL_2(\C))$ and $X(P_{\Gamma})$, derived
from the symplectic form on $SL_2(\C)$ obtained from an identification with the cotangent bundle $T^*(SU(2)).$   

\begin{theorem}\label{mainsymplectic}
The map $\Xi_{\Gamma}$ is a symplectomorphism on a dense, open subset of $\mathcal{X}(F_g, SL_2(\C)).$ 
\end{theorem}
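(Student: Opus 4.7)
The plan is to apply the gradient-Hamiltonian flow technique of Harada and Kaveh \cite{HK} to the flat toric degeneration produced by Theorem \ref{mainassociatedgraded}. Concretely, I would first realize this degeneration as a family $\pi: \mathcal{V} \to \C$ whose coordinate ring is the Rees algebra of the filtration $F_\bullet$ on $\C[\mathcal{X}(F_g, SL_2(\C))]$ defined by $v_{\Gamma, \ell, \phi}$; the generic fiber is $\mathcal{X}(F_g, SL_2(\C))$ and the central fiber is $X(P_\Gamma)$. Using the spin diagram basis $\{\Phi_a\}$ as a Khovanskii basis, embed $\mathcal{V}$ into a projective family over $\C$ and pull back a Fubini--Study form to obtain a K\"ahler form $\omega_\mathcal{V}$ on the total space.

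The next step is to verify that $\omega_\mathcal{V}$ restricts (after symplectic reduction, or modification by an exact form) to the intended structures on the two fibers: the form on $\mathcal{X}(F_g, SL_2(\C))$ induced by $SL_2(\C) \cong T^*SU(2)$ on the generic fiber, and the natural toric K\"ahler form on $X(P_\Gamma)$ on the central fiber. Once this compatibility is in place, the Harada--Kaveh gradient-Hamiltonian flow of $|\pi|^2$ yields a real-analytic symplectomorphism $\Psi : \mathcal{X}(F_g, SL_2(\C)) \setminus Z \to X(P_\Gamma) \setminus Z_0$, where $Z_0$ is the complement of the open torus orbit in $X(P_\Gamma)$ (i.e., the union of toric divisors indexed by proper faces $\mathcal{F} \subsetneq \mathcal{P}_\Gamma$) and $Z = \Xi_\Gamma^{-1}(Z_0)$. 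Both $Z_0$ and $Z$ are closed of positive codimension, so $\Psi$ is defined on a dense open subset.

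Finally, I would identify $\Psi$ with $\Xi_\Gamma$ on this open locus. By Theorem \ref{maintoric}, $\Xi_\Gamma$ is already a homeomorphism on the preimage of the open torus orbit, and both maps are equivariant under the natural complex torus action with character lattice $L_\Gamma$ (coming from the $L_\Gamma$-grading on $gr_{v_{\Gamma,\ell,\phi}}$) and carry each $\Phi_a$ to the toric monomial $[a]$; this forces $\Psi = \Xi_\Gamma$ on this dense open set, which is the desired conclusion. The principal technical obstacle is the symplectic compatibility step: bridging the character-variety form (arising from cotangent-bundle reduction from $T^*SU(2)$) and the projective K\"ahler form on $X(P_\Gamma)$ requires an explicit analysis of the Poisson brackets $\{\Phi_a, \Phi_b\}$ and a demonstration that their leading terms in the filtration reproduce the vanishing toric Poisson bracket $\{[a], [b]\} = 0$ on the open torus orbit of $X(P_\Gamma)$. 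This is where most of the substantive computation will live, and it is the piece that justifies calling $\Xi_\Gamma$ genuinely symplectic rather than merely topological.
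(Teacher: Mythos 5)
Your approach differs from the paper's in an essential structural way, and as written it has a gap that the paper's construction is specifically designed to sidestep. You propose to apply the Harada--Kaveh gradient-Hamiltonian flow \emph{downstairs}, to the Rees algebra family of the filtration on $\C[\mathcal{X}(F_g, SL_2(\C))]$ itself. But the HK technique as stated in \cite{HK} is for smooth projective varieties embedded in $\mathbb{P}^N$, with the pulled-back Fubini--Study form; the character variety is affine, singular in general, and its symplectic structure in this paper is not defined by any projective embedding but by identifying $SL_2(\C)$ with $T^*SU(2)$ and reducing by $SU(2)^{V(\Gamma)}$ (Subsection~\ref{sl2symplecticstructures}). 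You correctly flag ``the symplectic compatibility step'' --- bridging a Fubini--Study form on a Rees-algebra projective model with the cotangent-bundle--reduced form --- as the main obstacle, but that step is not a technical wrinkle; it is the whole theorem. Without it you have produced \emph{some} symplectomorphism with respect to \emph{some} form, not the statement claimed. Likewise, the identification of your flow map $\Psi$ with $\Xi_\Gamma$ via torus equivariance presupposes that $\Psi$ is $\mathbb{T}_\Gamma$-equivariant, which is not automatic for a gradient flow of a generic Kähler metric; this again needs the paper's explicit coordinates.

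The paper avoids all of this by running the degeneration \emph{upstairs}, before reduction. The flat family is the Vinberg monoid $\det^{E(\Gamma)}: M_{2\times2}(\C)^{E(\Gamma)} \to \C^{E(\Gamma)}$, where the ambient Kähler form is the standard one on $M_{2\times 2}(\C)^{E(\Gamma)}$ and the results of \cite{HMM2} apply directly, producing the edge-wise contraction $\Xi: SL_2(\C) \to SL_2(\C)^c$, which Proposition~\ref{sl2xi} shows is a symplectomorphism on the dense open set $SL_2(\C)_o$ (this is verified explicitly in imploded cotangent-bundle coordinates, not via a bracket computation). The product map $\Xi^{E(\Gamma)}$ is then an $SU(2)^{V(\Gamma)}$-equivariant map of Hamiltonian spaces, it restricts to the momentum zero level sets, and the symplectomorphism property on $SL_2(\C)_o^{E(\Gamma)}$ descends through symplectic reduction to give that $\Xi_\Gamma$ is a symplectomorphism on $M^o_\Gamma(SL_2(\C))$. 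What this buys is that both sides are reductions of cotangent bundles and imploded cotangent bundles, so the symplectic forms being compared are tautologically the intended ones, and equivariance under $\mathbb{T}_\Gamma$ is built in. If you want to salvage your downstairs argument, you would need to prove (not assume) that the Fubini--Study-type form agrees with the reduced $T^*SU(2)$ form up to exact terms, and that the gradient flow is $\mathbb{T}_\Gamma$-equivariant; at that point you will essentially have rediscovered the upstairs reduction picture.
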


\noindent
The map $\Xi_{\Gamma}$ transfers the integrable system defined by the open torus orbit
of $X(P_{\Gamma})$ to the character variety $\mathcal{X}(F_g, SL_2(\C))$.    This result makes
use of recent work \cite{HMM2} of Hilgert, the author, and Martens on the symplectic geometry of Vinberg's enveloping monoid construction.

Our last theorem shows that the valuations $v_{\Gamma, \ell, \phi}$ come from divisorial valuations on a compactification of $\mathcal{X}(F_g, SL_2(\C)).$   We bring in a compactification $X$ of the group $SL_2(\C)$ and use it to build 
a space $M_{\Gamma}(X)$ by carrying out the the construction $M_{\Gamma}(-)$ for the $SL_2(\C) \times SL_2(\C)$ space $X,$ 
see Section \ref{compactification}. 

\begin{theorem}\label{maincompact}
For each isomorphism $\phi: \pi_1(\Gamma) \to F_g$, there is a compactification of $\mathcal{X}(F_g, SL_2(\C))$ by the projective
scheme $M_{\Gamma}(X)$.  The boundary divisor of this compactification is of combinatorially normal crossings type, and
the divisorial valuations of its irreducible components generate the extremal rays of the cone $C_{\Gamma, \phi}.$ 
\end{theorem}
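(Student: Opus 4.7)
The plan is to make $M_{\Gamma}(X)$ explicit as a GIT quotient and then analyze its boundary stratification using the known structure of the boundary of the compactification $X$ of $SL_2(\C)$. By the construction of Section \ref{compactification}, $M_{\Gamma}(X)$ is the GIT quotient $X^{E(\Gamma)} \q SL_2(\C)^{V(\Gamma)}$, where each vertex copy of $SL_2(\C)$ acts on the incident edge copies of $X$ through the $SL_2(\C) \times SL_2(\C)$-action. Since $X$ is a projective $SL_2(\C)\times SL_2(\C)$-variety with an equivariant linearization (supplied by the Vinberg monoid description in Section \ref{sl2}), the product $X^{E(\Gamma)}$ is projective and its GIT quotient by the reductive group $SL_2(\C)^{V(\Gamma)}$ is a projective scheme. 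The equivariant open immersion $SL_2(\C) \hookrightarrow X$ yields, by functoriality of $M_{-}$ and openness of stability under open equivariant inclusions, an open immersion $M_{\Gamma}(SL_2(\C)) \hookrightarrow M_{\Gamma}(X)$; composing with the established isomorphism $M_{\Gamma}(SL_2(\C)) \cong \mathcal{X}(F_g, SL_2(\C))$ induced by $\phi$ gives the required compactification.

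For the boundary, recall that $X \setminus SL_2(\C)$ is a union of $SL_2(\C)\times SL_2(\C)$-stable divisors whose pairwise intersections are transverse in the sense of the Vinberg monoid/wonderful compactification picture. For each edge $e \in E(\Gamma)$ and each boundary component of $X$, the locus where the $e$-th factor of $X^{E(\Gamma)}$ is restricted to that component is $SL_2(\C)^{V(\Gamma)}$-stable of codimension one, and descends to a boundary divisor of $M_{\Gamma}(X)$. Intersections of several such divisors correspond to restricting several edge factors simultaneously, and transversality edge-by-edge in $X^{E(\Gamma)}$ (different edges are independent) combined with the fact that GIT quotient by a reductive action with nonempty stable locus preserves the local intersection combinatorics of invariant divisors gives the combinatorially normal crossings property.

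To match divisorial valuations with extremal rays of $C_{\Gamma,\phi}$, note that each boundary divisor just constructed defines an order-of-vanishing valuation on $\C[\mathcal{X}(F_g, SL_2(\C))]$ via the open immersion. An extremal ray of $C_{\Gamma,\phi}$ corresponds to a metric $\ell$ concentrated on a single edge $e$ of $\Gamma$. By Theorem \ref{mainassociatedgraded}, the associated graded of $\C[\mathcal{X}(F_g, SL_2(\C))]$ with respect to $v_{\Gamma,\ell,\phi}$ is $\C[P_{\Gamma}]$, and for $\ell$ a ray generator the valuation reads off the $e$-coordinate of the spin diagram weight. On the other hand, the order of vanishing of a spin diagram function $\Phi_a$ along the divisor indexed by $e$ can be computed from the filtration on $\C[X]$ induced by the Vinberg boundary stratum of $X$, and reduces to the same linear function of $a(e)$. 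This identifies, up to positive scaling, the extremal ray generators of $C_{\Gamma,\phi}$ with the divisorial valuations of the boundary components of $M_{\Gamma}(X)$.

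The main obstacle is the boundary analysis after GIT: one must confirm that each boundary stratum of $X^{E(\Gamma)}$ intersects the stable locus in codimension compatible with its image in the quotient (no collapse, no identification of distinct divisors), and that the natural linearization on $X$ coming from the Vinberg monoid yields a stable locus containing $SL_2(\C)^{E(\Gamma)}$ and meeting every boundary component in the expected codimension. The moment map description from Section \ref{Hamiltonian} and the symplectic framework of \cite{HMM2} should provide the local normal form at every boundary stratum needed to verify both the combinatorially normal crossings property and the precise matching of orders of vanishing with the linear functions defining $C_{\Gamma,\phi}$.
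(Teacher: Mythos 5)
Your framework is the same as the paper's (build $M_\Gamma(X)$ as the GIT quotient $SL_2(\C)^{V(\Gamma)} \ql_{\mathcal{L}^{\boxtimes E(\Gamma)}} X^{E(\Gamma)}$, take the $t=0$ boundary, compute divisorial valuations), but the load-bearing step of the theorem is not carried through. The claim that ``GIT quotient by a reductive action with nonempty stable locus preserves the local intersection combinatorics of invariant divisors'' is not true in general; reductive quotients can collapse codimension and can fail to preserve irreducibility of strata, and nothing in the setup here rules this out a priori. You correctly flag this as the main obstacle, but then defer to a hoped-for local normal form from the symplectic picture rather than proving anything. The paper instead proves the normal-crossings statement purely algebraically: it writes the graded pieces $R_\Gamma(m)$ in terms of the spin-diagram basis $\Phi_a$, identifies the ideals $I_e$ and $I_S$ explicitly, proves they are prime directly from the semigroup structure (if $f,g \notin I_S$, then the leading components of $f$ and $g$ force a component of $fg$ to saturate the edges in $S$, using the valuation $v_S$), and then exhibits a chain of primes of the correct length by constructing an explicit spin-diagram weight in $I_{S'} \setminus I_S$ from trinode building blocks (Proposition \ref{codim}). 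None of this is replaced by an argument in your proposal.

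A secondary imprecision: you speak of ``each boundary component of $X$'' and transverse pairwise intersections of boundary components of $X$, as if $X$ had several. In fact $X = \mathrm{Proj}(\C[a,b,c,d,t]/\langle ad-bc-t^2\rangle)$ has a single irreducible boundary divisor $D \cong \mathbb{P}^1\times\mathbb{P}^1$, so the normal-crossings structure on $M_\Gamma(X)$ comes entirely from the different \emph{edges} of $\Gamma$, one divisor per edge, not from several components within $X$. This does not derail the argument, but it suggests you are importing intuition from higher-rank wonderful compactifications that does not apply literally here.

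Finally, for matching divisorial valuations with extremal rays, the paper's proof compares local rings at generic points: $\mathcal{O}_{\eta_e} = \mathcal{O}_{\bar\eta_e}^{SL_2(\C)^{V(\Gamma)}} \subset \mathcal{O}_{\bar\eta_e}$, and observes that the uniformizer $t_e$ upstairs is invariant, hence a uniformizer downstairs, which identifies the order of vanishing with the extremal valuation $v_e \in C_\Gamma$ on the nose. Your route via the spin-diagram filtration would work once you have established the structure of $D_e$, but as written it presupposes precisely the facts about $I_e$ that still need to be proved, so it is somewhat circular without the algebraic input.
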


  In Section \ref{compactification} we extend the toric degenerations of Theorem \ref{mainassociatedgraded} to the compactification $M_{\Gamma}(X)$, obtaining a degeneration $M_{\Gamma}(X) \Rightarrow X(Q_{\Gamma})$ to a toric variety associated to a polytope $\mathcal{Q}_{\Gamma} \subset \mathcal{P}_{\Gamma}$.  The strata of the boundary divisor in Theorem \ref{maincompact} are shown to degenerate to toric varieties corresponding to the faces of $\mathcal{Q}_{\Gamma}.$

A degeneration $\C[\mathcal{X}(F_g, SL_2(\C))] \Rightarrow \C[P_{\Gamma}]$ was studied by the author in \cite{M14} in the
context of the theory of Newton-Okounkov bodies (see \cite{KK}).  In particular $\C[P_{\Gamma}]$ was realized as the associated graded algebra of $\C[\mathcal{X}(F_g, SL_2(\C))]$ with respect to a full rank ($=$ dimension of $ \mathcal{X}(F_g, SL_2(\C))$) valuation.  With Theorem \ref{NOK} (Section \ref{compactification}) we show that this maximal rank valuation can be defined using the boundary divisor of $M_{\Gamma}(X).$ 

Theorems \ref{maintoric} and \ref{mainsymplectic} are inspired by earlier work of the author with Howard and Millson, \cite{HMM}, and the recent work of Harada and Kaveh \cite{HK} and Nishinou, Nohara, and Ueda \cite{NNU}.  In \cite{HK}, the authors use a construction of Ruan \cite{R} to show that a degeneration of a smooth, projective variety $Y \subset \mathbb{P}^N$ to a toric variety $Y(C)\subset \mathbb{P}^N$ guarantees the existence of a surjective, continuous, proper map $\Phi: Y \to Y(C)$ which is a symplectomorphism on a dense, open subset of $Y$ with respect to the standard K\"ahler form on $\mathbb{P}^N$; this defines a dense, open integrable system in $Y$ with momentum image $C.$  Theorems \ref{mainvaluation} and \ref{mainsymplectic} combine to produce an explicitly computable map for the variety $\mathcal{X}(F_g, SL_2(\C))$, which is notably both affine and singular in general. 

  The combinatorial organization of the toric degenerations of $\mathcal{X}(F_g, SL_2(\C))$ in this paper are remniscient of the work of Gross, Hacking, Keel, and Kontsevich, \cite{GHKK} on cluster algebras.  In the spirit of their work and this paper, it would be interesting to see an explicit construction of an integrable system in a cluster variety for each choice of seed in the associated cluster algebra. 

 In \cite{M15} the author shows that the character variety $\mathcal{X}(F_g, SL_2(\C))$ is a dense open subspace in a degeneration $M_{C_{\Gamma}}(SL_2(\C))$ of the moduli space of semistable $SL_2(\C)$ principal bundles on a smooth curve.  In particular the 
projective varietes $M_{\Gamma}(SL_2(\C))$ and $M_{C_{\Gamma}}(SL_2(\C))$ are birational.   We note that Theorem \ref{NOK} and results from
\cite{M15} then imply that the toric degenerations of $M_{C_{\Gamma}}(SL_2(\C))$ studied in \cite{M15} and \cite{M4} emerge from 
the Newton-Okounkov body construction defined by the boundary divisor in $M_{\Gamma}(SL_2(\C))$, see Subsection \ref{NOKsection}.

\subsection{Acknowledgements}

We thank  Joachim Hilgert, Kiumars Kaveh, Sean Lawton, and Johan Martens for useful conversations which contributed to this project.

\tableofcontents

\section{Metric graphs and outer space}\label{outerspace}

In this section we review material on metric graphs and cellular maps, and we recall Culler and Vogtmann's (\cite{V}, \cite{CV})definition of $O(g)$ and the length functions $d_{\omega}: \hat{O}(g) \to \R.$ 

\subsection{Graph notation and $\hat{O}(g)$}\label{graphnotation}

Throughout the paper $\Gamma$ denotes a graph with edge set $E(\Gamma)$, vertex set $V(\Gamma)$, and leaf set $L(\Gamma)$.  We require all non-leaf vertices $v \in V(\Gamma)$ to have valence at least $3$.  Many of our constructions make use of an orientation on a graph, this is an ordering $\delta(e) = (u, v)$ of the endpoints of each edge $e \in E(\Gamma).$ An orientation also defines a partition $\epsilon(v) = i(v) \cup o(v)$ of the edges which contain a given vertex into incoming and outgoing edges. For any vertex $v \in V(\Gamma)$ we let the link $\Gamma_v \subset \Gamma$ be the subgraph induced by those edges in $\epsilon(v)$.     For each genus $g$ we fix a distinguished graph $\Gamma_g$ with a single vertex and no leaves.

The points of both outer space $O(g)$ and the cone over outer space $\hat{O}(g)$ are represented by marked  $\R$-graphs.  Topologically, graphs are considered as finite $CW$ complexes, in particular edges are $1$-cells, vertices are $0$-cells. An $\R$-graph structure on a graph $\Gamma$ is a metric which makes each edge locally isometric to a bounded interval in $\R$, such that the distance between any two vertices is the length of the shortest edge-path which connects them.  A continuous map $\psi: \Gamma_g \to \Gamma$ is called a marking if $\pi_1(\psi)$ is an isomorphism of fundamental groups.  Two markings $\psi_1, \psi_2: \Gamma_g \to \Gamma$ are said to be equivalent if there is an isometry $h: \Gamma \to \Gamma$ so that the diagram in Figure \ref{markingdiagram} commutes up to free homotopy (see \cite[Section 0]{CV}).  

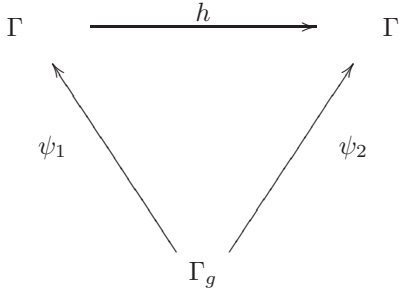
\begin{figure}[htbp]

$$
\begin{xy}
(0, -3)*{\Gamma_g} = "A";
(3.5, 0)*{} = "A1";	
(-3.5, 0)*{} = "A2";
(25, 30)*{\Gamma} = "B";
(20,25)*{} = "B1";
(15, 30)*{} = "B2";
(-25, 30)*{\Gamma} = "C";
(-20,25)*{} = "C1";
(-15, 30)*{} = "C2"; 
(0, 32)*{h};
(-20, 14)*{\psi_1};
(20, 14)*{\psi_2};
{\ar@{>}"A1"; "B1"};
{\ar@{>}"A2"; "C1"};
{\ar@{>}"C2"; "B2"};
\end{xy}
$$\\
\caption{Equivalence of markings}
\label{markingdiagram}
\end{figure}

From now on a map of graphs $\phi: \Gamma \to \Gamma'$ is taken to be a finite cellular map on the underlying topological spaces.  In particular, for any edge $e \in E(\Gamma)$ with $\delta(e) = (u, v)$, there is a finite subset of points $S \subset e$ (considered as a compact interval) such that the sub interval between two consecutive members $s,t \in S$ maps homeomorphically to an edge in $e' \in E(\Gamma')$ through $\pi$, with $s$ and $t$ mapping to the endpoints of $e'$.  Any such $\pi: \Gamma \to \Gamma'$ defines a set map on vertices $\pi_v: V(\Gamma) \to V(\Gamma')$.

Culler and Vogtmann observe that every marking can be represented with a finite cellular map $\psi: \Gamma_g \to \Gamma$ \cite[1.2]{CV} as follows.    For a spanning tree $\tree \subset \Gamma$, collapsing $\tree$ to a single vertex defines a homotopy equivalence $\psi_{\tree}: \Gamma \to \Gamma_g$.  Notice that for each $e_i \in E(\Gamma_g)$ there is a unique edge $\bar{e}_i \in E(\Gamma) \setminus E(\tree)$ which maps to $e_i$ under $\psi_{\tree}$.  An inverse $\phi_{\tree, V}$ to $\psi_{\tree}$ can be constructed from a choice of vertex $V \in V(\Gamma)$ by sending $e_i$ to the path formed by concatinate the unique path in $\tree$ from $V$ to the source endpoint of $\bar{e}_i$ with $\bar{e}_i$ and the unique path in $\tree$ from the sink point in $\bar{e}_i$ back to $V$.  Changing $V$ to a different vertex induces an equivalent inverse map on free homotopy. We say that $\phi_{\tree, V}: \Gamma_g \to \Gamma$ is a distinguished map.  Any outer automorphism $\alpha$ of $F_g$ also defines a map on $\Gamma_g$ by sending an edge $e_i$ to the cellular path in $\Gamma_g$ dictated by the image of the $i$-th generator of $F_g$ under $\alpha$. 

\begin{lemma}[Culler, Vogtmann]\label{distinguishedequivalence}
Any marking $\phi: \Gamma_g \to \Gamma$ is equivalent to a composition of a distinguished marking $\phi_{\tree, V}: \Gamma_g \to \Gamma$ with a map $\alpha: \Gamma_g \to \Gamma_g$ where $\pi_1(\alpha): F_g \to F_g$ is an outer automorphism.
\end{lemma}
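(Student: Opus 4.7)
The plan is to produce the factorization by an explicit composition. Given a marking $\phi: \Gamma_g \to \Gamma$, I would pick any spanning tree $\tree \subset \Gamma$ and any vertex $V \in V(\Gamma)$, obtaining the collapse map $\psi_\tree: \Gamma \to \Gamma_g$ and its distinguished free-homotopy inverse $\phi_{\tree, V}: \Gamma_g \to \Gamma$ just constructed in the preceding discussion. The candidate factor is simply $\alpha := \psi_\tree \circ \phi : \Gamma_g \to \Gamma_g$.

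First I would check that $\alpha$ belongs to the category in use. Since $\phi$ is a finite cellular map (the standing convention on maps of graphs) and $\psi_\tree$ is cellular as well (it collapses the edges of $\tree$ to $V$ and is a homeomorphism on the remaining edges), their composition $\alpha$ is a finite cellular self-map of $\Gamma_g$. The induced map $\pi_1(\alpha) = \pi_1(\psi_\tree) \circ \pi_1(\phi)$ is a composition of two isomorphisms of $F_g$, hence an automorphism, so $\pi_1(\alpha)$ represents an outer automorphism as required.

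Next I would verify the factorization: writing $\phi_{\tree, V} \circ \alpha = (\phi_{\tree, V} \circ \psi_\tree) \circ \phi$, the key observation is that $\phi_{\tree, V}$ was constructed to be a free-homotopy inverse of $\psi_\tree$. Concretely, $\psi_\tree \circ \phi_{\tree, V}$ is the identity on $\Gamma_g$ (each loop $e_i$ is sent by $\phi_{\tree, V}$ to a path through $\bar{e}_i$ concatenated with arcs in $\tree$, which $\psi_\tree$ then collapses back to $e_i$), while $\phi_{\tree, V} \circ \psi_\tree$ is freely homotopic to $\mathrm{id}_\Gamma$ because $\tree$ is contractible. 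Composing with $\phi$ on the right yields a free homotopy $\phi_{\tree, V} \circ \alpha \simeq \phi$, so taking $h = \mathrm{id}_\Gamma$ in the diagram of Figure \ref{markingdiagram} exhibits the equivalence of markings demanded by the lemma.

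The main subtlety to watch is the distinction between pointed and free homotopy: collapsing $\tree$ destroys basepoint data, so the homotopy $\phi_{\tree, V} \circ \psi_\tree \simeq \mathrm{id}_\Gamma$ cannot in general be chosen basepoint-preserving. This is not an obstruction, however, because the equivalence of markings is by definition only up to free homotopy and precomposition/postcomposition with an isometry $h$, which precisely absorbs this ambiguity — indeed this is why the definition is phrased that way, and why the dependence of $\phi_{\tree, V}$ on the auxiliary basepoint $V$ is irrelevant on the level of equivalence classes. Any remaining wish to have $\alpha$ send each edge of $\Gamma_g$ to a reduced edge-path spelling the image of the corresponding generator can be arranged by a routine cellular/tightening homotopy within the same equivalence class.
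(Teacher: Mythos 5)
Your proof is correct and is essentially the same argument the paper gives: both set the outer-automorphism factor (up to equivalence) equal to $\psi_{\tree}\circ\phi$ and invoke the fact that $\phi_{\tree,V}$ and $\psi_{\tree}$ are free-homotopy inverses, so that $\phi_{\tree,V}\circ(\psi_{\tree}\circ\phi)\simeq\phi$. The paper phrases it through the small detour of first naming $w=(\pi_1[\psi_{\tree}\circ\phi])^{-1}$ and using $\alpha_w^{-1}$, but that map is freely homotopic to your $\alpha=\psi_{\tree}\circ\phi$, so the content is identical; your write-up is simply a bit more explicit about cellularity, the homotopy $\phi_{\tree,V}\circ\psi_{\tree}\simeq\mathrm{id}_\Gamma$, and the free-versus-pointed subtlety.
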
																

\begin{proof}
For $\phi: \Gamma_g \to \Gamma$ choose a spanning tree $\tree \subset \Gamma$ and consider $\psi_{\tree} \circ \phi: \Gamma_g \to \Gamma_g$. Let $w: F_g \to F_g$ be the inverse of $\pi_1[\psi_{\tree} \circ \phi]$, and let $\alpha_w: \Gamma_g \to \Gamma_g$ be the corresponding map.  Then for any $V \in V(\Gamma)$, $\phi_{\tree, V}\circ \alpha_w^{-1}: \Gamma_g \to \Gamma$ represents the inverse to $\alpha_w\circ \phi_{\tree}$ in homotopy, and is therefore equivalent to $\phi$.
\end{proof}

  As a set, the cone over outer space $\hat{O}(g)$ is the collection of equivalence classes of markings of a metric graphs of genus $g.$  
The volume of a metric graph is defined in \cite{CV} to be the sum of the lengths of the edges.  Outer space $O(g) \subset \hat{O}(g)$ is then the set of those marked metric graphs with volume $1$.   For a fixed graph $\Gamma$, the collection of supported metrics forms a simplicial cone $C_{\Gamma}$, equal to the positive orthant in $\R^{E(\Gamma)}.$ The points on the boundary of $C_{\Gamma}$ can be viewed as metrics on degenerations of $\Gamma$, where some edges with a $0$ length are collapsed. Each marking $\phi$ defines a cone $C_{\phi, \Gamma} \subset \hat{O}(g)$ isomorphic to $C_{\Gamma}$.  \\

\begin{figure}[htbp]
\centering
\includegraphics[scale = 0.6]{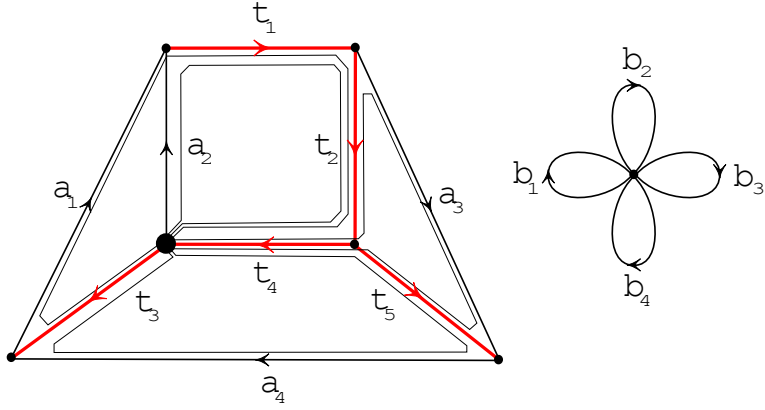}
\caption{A spanning tree $\tree$ in $\Gamma$. The associated maps $\phi, \psi$ are computed below.}
\label{collapsecocollapse}
\end{figure}

\begin{equation}
\psi(a_i) = b_i, \ \ \ \ \psi(t_i) = 1,\\
\end{equation}

$$\phi(b_1) = t_3a_1t_1t_2t_4,  \ \ \ \phi(b_2) = a_2t_1t_2t_4, \ \ \ \phi(b_3) =  t_4^{-1}t_2^{-1}a_3t_5^{-1}t_4, \ \ \ \phi(b_4) = t_4^{-1}t_5a_4t_3^{-1}$$\\

\subsection{Minimal length homotopy classes and length functions}

The cellular length of a path $e_1\cdots e_n = \gamma \subset \Gamma$ is equal to the edge count $n$ (note that this is different than the length assigned to this path by a metric).  A backtrack in a path $\gamma \subset \Gamma$ is a sequence of edges of the form $e_1 \cdots e_k e_k^{-1} \cdots e_1^{-1}$ (inverse is taken with respect to any background orientation).  When a path $\gamma$ has no backtracks, we say that it is reduced.   For any metric graph $\Gamma$ and free homotopy class $[\gamma]$, there is a unique, reduced cellular representative $\gamma \sim \gamma_{min}$ in $\Gamma.$ In particular, $\gamma_{min}$ is the unique representative of $[\gamma_{min}]$ without a backtrack. 

\begin{proposition}\label{backtrack}
Fix a graph $\Gamma$ and a class $[\alpha] \in \pi_1(\Gamma)$ then there is a unique minimal cellular length loop $\gamma$ with $[\gamma] = [\alpha]$ such that any cellular loop $\gamma'$ with $[\gamma'] = [\alpha]$ can be transformed into $\gamma$ be eliminating backtracks.  Furthermore, $\gamma$ also has the smallest metric length in its class according to any metric $\ell$ on $\Gamma$.
\end{proposition}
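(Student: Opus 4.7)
The plan is to lift everything to the universal cover $\pi: \tilde{\Gamma} \to \Gamma$, which is a tree since $\Gamma$ is a one-dimensional $CW$-complex. Fix a basepoint $v \in V(\Gamma)$ and a lift $\tilde{v} \in \tilde{\Gamma}$. Every based cellular loop $\gamma'$ at $v$ representing $[\alpha] \in \pi_1(\Gamma, v)$ lifts uniquely to a cellular edge-path $\tilde{\gamma}'$ in $\tilde{\Gamma}$ starting at $\tilde{v}$ and ending at $[\alpha]\cdot \tilde{v}$, where $\pi_1(\Gamma, v)$ acts by deck transformations. A backtrack subword $e e^{-1}$ in $\gamma'$ lifts to a backtrack $\tilde{e}\tilde{e}^{-1}$ in $\tilde{\gamma}'$ and conversely, so the notions of ``reduced'' upstairs and downstairs agree.

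For existence of a reduced representative I would run a greedy elimination: as long as $\gamma'$ contains some $e e^{-1}$, delete that subword. Each step strictly decreases the cellular length by $2$, so the process terminates in finitely many steps at a reduced cellular loop $\gamma$ with $[\gamma] = [\alpha]$. For uniqueness, I use that in a tree any two vertices are joined by a unique reduced edge-path, since a second such path would close up a nontrivial loop. Hence the reduced lift $\tilde{\gamma}$ starting at $\tilde{v}$ and ending at $[\alpha]\cdot \tilde{v}$ is forced, and therefore $\gamma = \pi\circ \tilde{\gamma}$ is uniquely determined by $[\alpha]$.

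The transformation statement follows at once: given any cellular representative $\gamma'$ of $[\alpha]$, apply the elimination procedure; by the uniqueness just established, whatever sequence of backtrack deletions one chooses, the terminal reduced loop must coincide with $\gamma$. Finally, for the metric claim, each elimination of a backtrack $e e^{-1}$ removes exactly $2\,\ell(e) \geq 0$ from the $\ell$-length of the loop. Hence iterating the reduction gives $\ell(\gamma) \leq \ell(\gamma')$ for every cellular $\gamma'$ in the class, so $\gamma$ realizes the minimal metric length with respect to any metric $\ell$ on $\Gamma$.

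The only genuine content here is \emph{confluence} of the backtrack elimination process, i.e., that different orders of removing backtracks produce the same terminal loop. This is the step that could become fiddly if one tried to argue by induction on cellular length alone, but the universal cover packages it cleanly: uniqueness of reduced edge-paths in the tree $\tilde{\Gamma}$ is exactly what forces all reduction sequences to converge to the same reduced representative.
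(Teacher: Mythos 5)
Your universal-cover argument is a genuinely different (and cleaner) route than the paper's proof, which picks a spanning tree $\tree\subset\Gamma$, pushes $\gamma'$ forward to a cyclically reduced word in $F_g$ via $\phi_\tree$, and reconstructs $\gamma$ from the non-tree edges $\bar e_i$ together with the unique $\tree$-geodesics between their endpoints. Both arguments hinge on uniqueness of reduced paths in a tree; you use the universal cover $\tilde\Gamma$ directly, the paper uses the finite spanning tree $\tree$ and the combinatorics of reduced words.

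However, there is a real gap in scope. You fix a basepoint $v$, work in $\pi_1(\Gamma,v)$, lift based loops to paths from $\tilde v$ to $\alpha\cdot\tilde v$, and perform \emph{linear} backtrack elimination. The proposition, both as the paper uses it (in the definition of the length functions $d_\omega$, which are conjugation-invariant, and in Proposition~\ref{hominv}, which invokes ``freely homotopic'') and as the paper's own proof indicates (``the $\bar e_i$ appear in $\gamma'$ in the same \emph{cyclic} order as the $e_i$ appear in $w$''), is about free homotopy classes, i.e.\ conjugacy classes, and requires \emph{cyclic} backtrack elimination. A loop of the form $u\gamma u^{-1}$ with $\gamma$ already cyclically reduced and $u$ nontrivial is reduced in your linear sense, terminates your greedy procedure as-is, and never reaches the minimal representative $\gamma$ (its basepoint cannot even move under your operation). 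In the universal-cover framework the repair is standard but should be stated: replace the geodesic segment $[\tilde v, \alpha\cdot\tilde v]$ by the translation axis $A_\alpha\subset\tilde\Gamma$, the unique $\alpha$-invariant bi-infinite geodesic in the tree; its quotient in $\Gamma$ is the cyclically reduced loop $\gamma$, any lift of a loop $\gamma'$ in the free class differs from a fundamental domain on $A_\alpha$ by a ``tail and its inverse,'' and eliminating that tail is exactly the cyclic reduction. The metric-length claim then follows as you wrote it, since each deleted backtrack edge contributes $2\ell(e)\geq 0$.
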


\begin{proof}
First we pick a spanning tree $\tree \subset \Gamma$ with the associated map $\phi_{\tree}: \Gamma \to \Gamma_g$.   Since $\pi_1[\phi_{\tree}]$ is an isomorphism, there is a reduced word $w \in F_g$ such that any $\gamma'$ as above must have $\pi_1(\phi_{\tree})[\gamma']$ equal to the homotopy class determined by $w$. In particular if $\bar{e}_i$ is the edge in $E(\Gamma) \setminus E(\tree)$ which maps to $e_i \in E(\Gamma_g)$, the $\bar{e}_i$ appear in $\gamma'$ in the same cyclic order as the $e_i$ appear in $w$, and in between the corresponding $e_i$ in $\phi_{\tree}(\gamma')$ we must have backtracks which can be eliminated.  

Now notice that if $e_i$ and $e_j$ appear consecutively in $\phi_{\tree}(\gamma')$ then the relevant piece of $\gamma'$ must be of the form $\bar{e}_i\rho\bar{e}_j$ for $\rho$ the unique path in $\tree$ which connects the endpoints of $\bar{e}_i$ and $\bar{e}_j$.  

Now by these two observations,  if $e_i$ and $e_j$ appear consecutively in the class $[\phi_{\tree}(\gamma')]$ then in $\gamma'$ we must have $\bar{e}_i\rho'\bar{e}_j$, where $\rho'$ reduces to $\rho$ after eliminating all backtracks.  Carrying out all of these reductions results in a path $\gamma$ in $\Gamma$ constructed by taking the edges $\bar{e}_i$ in $w$ along with the unique paths in $\tree$ between their consecutive endpoints.  By the observations above, this path is the unique path of shortest cellular length in $\Gamma$ which can map to $w$.

By construction, every edge which appears in $\gamma$ also appears in any $\gamma'$, it follows that the length of $\gamma$ is less than or equal to that of $\gamma'$ with respect to any metric $\ell$. 
\end{proof}

The cone over outer space has a natural set of coordinate functions defined by reduced words in the free group $F_g$.  Each element $\omega \in F_g$ defines a homotopy class in $\pi_1(\Gamma_g)$, which can then be pushed forward to $\Gamma$ by the marking $\phi$.  By passing to the unique reduced representative of $\pi_1(\phi)(\omega)$, we obtain a function $d_{\omega}: C_{\phi, \Gamma} \to \R_{\geq 0}$ by measuring the total length of this representative in $(\Gamma, \ell)$, see  \cite[page 2]{CV}, \cite[page 6]{V}.  Since this function is an invariant of the homotopy class, conjugate words define the same length function. Let $<F_g>$ denote the equivalence classes under conjugation.    The collection of length functions $d_{\omega}, \omega \in F_g$ define an embedding $\iota: \hat{O}(g) \to \R^{<F_g>}$, in particular the length function values determine the $\R$-graph structure on $\Gamma$ (see \cite{CM}). Furthermore, the cones $C_{\Gamma, \phi}$ define a decomposition of $\hat{O}(g)$ into simplicial cones in this topology, \cite{V}.  From now on a marked metric graph is a triple $(\Gamma, \phi, \ell)$ of a graph $\Gamma$, a marking $\phi$ and a metric $\ell \in C_{\Gamma, \phi}$.

\subsection{The space $\Upsilon_g$ of metric spanned graphs}\label{spanneddef}

Let $\Gamma$ be a graph with $\beta_1(\Gamma) = g$, and let $\tree \subset \Gamma$ be a spanning tree.  The complement $E(\Gamma) \setminus E(\tree)$ is a set of size $g$; we label these edges $1, \ldots, g$ and give each one an orientation.  Taken together, we say this information makes $\Gamma$ into a spanned graph.  The data of a spanned graph structure on $\Gamma$ is equivalent to giving a map $\psi_{\tree}: \Gamma \to \Gamma_g$ as in Subsection \ref{graphnotation}.  We say two spanned graphs $\Gamma, \Gamma'$ are equivalent if there is a graph isomorphism $h: \Gamma \to \Gamma'$ such that $\psi_{\tree'}\circ h = \psi_{\tree}$.  We let $\Upsilon_g$ be the set of triples $(\Gamma, \psi_{\tree}, \ell)$, where $\psi_{\tree}$ defines a spanned graph structure on $\Gamma$ and $\ell$ is a metric on $\Gamma$.  

Recall the space $\mathfrak{T}_n$ of phylogenetic trees with $n$ leaves introduced by Billera, Holmes, Vogtmann \cite{BHV}.  A point in $\mathfrak{T}_n$ is a tree $\tree$ with $n$ leaves labeled $1, \ldots, n$ and a metric represented as a function $\ell: E(\tree) \to \R_{>0}$.   For a phylogenetic tree $\tree$ we let $f_i \in E(\tree)$ be the edge connected to the $i$-th leaf. 

\begin{proposition}
The set $\Upsilon_g$ can be identified with the subset of $\mathfrak{T}_{2g}$ of phylogenetic trees with $\ell(f_{2i}) = \ell(f_{2i-1})$ for $1 \leq i \leq g$. 
\end{proposition}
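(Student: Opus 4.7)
The plan is to construct an explicit bijection via a cut-and-glue procedure. Given a representative $(\Gamma, \psi_{\tree}, \ell) \in \Upsilon_g$, the datum $\psi_{\tree}$ records a spanning tree $\tree \subset \Gamma$, an enumeration $\bar{e}_1, \ldots, \bar{e}_g$ of the non-tree edges, and their orientations $\delta(\bar{e}_i) = (u_i, v_i)$. I define $\Phi(\Gamma, \psi_{\tree}, \ell)$ by cutting each $\bar{e}_i$ at its midpoint, replacing it with two edges $f_{2i-1}$ joining $u_i$ to a new leaf labeled $2i-1$ and $f_{2i}$ joining $v_i$ to a new leaf labeled $2i$, each assigned length $\ell(\bar{e}_i)/2$. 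The result is a metric tree with $2g$ labeled leaves; interior edges retain their original lengths, and the paired equal-length condition $\ell'(f_{2i-1}) = \ell'(f_{2i})$ is built in by construction. Since cutting an edge at its midpoint does not alter the edge-incidences at its original endpoints, all non-leaf vertices of $\Phi(\Gamma, \psi_{\tree}, \ell)$ retain valence $\geq 3$, so the output is a bona fide element of $\mathfrak{T}_{2g}$.

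Next I build the inverse $\Psi$. Given $\tree' \in \mathfrak{T}_{2g}$ with $\ell(f_{2i-1}) = \ell(f_{2i})$ for every $i$, identify the leaf labeled $2i-1$ with the leaf labeled $2i$ and fuse the half-edges $f_{2i-1}, f_{2i}$ into a single edge $\bar{e}_i$ of length $\ell(f_{2i-1}) + \ell(f_{2i})$, oriented from the old endpoint of $f_{2i-1}$ to that of $f_{2i}$. The resulting graph $\Gamma$ has first Betti number $g$ (each of the $g$ gluings adds one independent cycle to what was a tree), and comes equipped with a distinguished spanning tree (the residue of $\tree'$) together with an ordering and orientation of its $g$ non-tree edges, i.e.\ precisely the data of a spanned graph structure $\psi_{\tree}$ in the sense of Subsection \ref{graphnotation}. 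It is immediate from the constructions that $\Phi \circ \Psi$ and $\Psi \circ \Phi$ recover the original representative, including its metric, labels, and orientations.

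It remains to verify that $\Phi$ descends to the stated equivalence relations. An equivalence of spanned graphs is a graph isomorphism $h: \Gamma \to \Gamma'$ with $\psi_{\tree'} \circ h = \psi_{\tree}$; any such $h$ must send $\tree$ to $\tree'$ and preserve the labeling and orientation of the non-tree edges, so under $\Phi$ it induces a tree isomorphism that fixes each leaf label in $\{1, \ldots, 2g\}$ and preserves lengths, which is exactly the equivalence used in the construction of $\mathfrak{T}_{2g}$. Conversely, a leaf-label-preserving isometry of trees in the relevant subset of $\mathfrak{T}_{2g}$ regluies to an isomorphism of spanned graphs commuting with the $\psi_{\tree}$ maps.

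The one substantive point to confirm is the compatibility of the valence conditions on internal vertices (required to be $\geq 3$ on both sides). This is automatic since cutting at interior midpoints and gluing at leaf endpoints both preserve the edge-incidences at the vertices inherited from $\Gamma$ or $\tree'$, and no new internal vertices of valence $\leq 2$ are created. Beyond this, the argument is purely bookkeeping.
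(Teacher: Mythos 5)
Your proof is essentially the same cut-and-glue bijection as the paper's, and it is correct. The one cosmetic difference is the length convention: you cut at the midpoint and assign each half-edge length $\ell(\bar{e}_i)/2$, whereas the paper assigns both $f_{2i}$ and $f_{2i-1}$ the full length $\ell(e_i)$; either convention lands in the stated equal-length subset of $\mathfrak{T}_{2g}$ and yields a bijection, so this does not affect the argument (your convention has the side benefit of being a literal isometry on the underlying metric spaces). You also spell out the descent to equivalence classes and the preservation of the valence-$\geq 3$ condition, which the paper leaves implicit; both are straightforward and your verification is sound.
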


\begin{proof}
Let $(\Gamma, \psi_{\tree}, \ell) \in \Upsilon_g$, and let $\{e_1, \ldots, e_g\} = E(\Gamma) \setminus E(\tree)$. Each $e_i$ can be split into two edges $f_{2i}, f_{2i-1}$, where $f_{2i}$ is connected to the vertex at the head of the orientation on $e_i$, this defines a labeled tree $\tree'$ with $2g$ leaves.  By stipulating that $\ell'(f_{2i}) = \ell'(f_{2i-1}) = \ell(e_i)$ and $\ell'(h) = \ell(h)$ for all edges in $\tree$, we define the structure of a phylogenetic tree on $\tree'$. This procedure can be reversed by replacing $f_{2i}$ and $f_{2i-1}$ with an oriented edge $e_i$, and the resulting graph is clearly isomorphic to $\Gamma$ as a spanned graph.  As this latter procedure can be performed with any point in $\mathfrak{T}_{2g}$, we have $\Upsilon_g \cong \mathfrak{T}_{2g}$. 
\end{proof}

The polyhedral complex structure on $\mathfrak{T}_{2g}$ defines the structure of a polyhedral complex on $\Upsilon_g$.  For each spanned graph $(\Gamma, \psi_{\tree})$ we choose a basepoint $V \in V(\Gamma)$; this defines a homotopy inverse $\phi_{\tree, V}$ to $\psi_{\tree}$, and a marked graph structure on $\Gamma$.  Any two choices of basepoint define the same marking by definition.  In this way, $\Upsilon_g$ can be identified with a subset of $\hat{O}_g$.

\begin{proposition}
Any point $(\Gamma, \phi, \ell) \in \hat{O}_g$ is in the image of $\Upsilon_g$ under the action of $Out(F_g)$.
\end{proposition}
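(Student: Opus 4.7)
The plan is to reduce the statement to Lemma \ref{distinguishedequivalence}. Recall that the action of $Out(F_g)$ on $\hat{O}(g)$ is given on a marked metric graph $(\Gamma,\phi,\ell)$ by pre-composing the marking with a representative $\alpha_w : \Gamma_g \to \Gamma_g$ of an outer automorphism $w \in Out(F_g)$, so $w \cdot (\Gamma,\phi,\ell) = (\Gamma,\phi\circ\alpha_w^{-1},\ell)$, and this is well-defined up to the equivalence of markings of Figure \ref{markingdiagram}.

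First I would fix an arbitrary point $(\Gamma,\phi,\ell)\in\hat{O}(g)$ and choose a spanning tree $\tree\subset\Gamma$ together with a vertex $V\in V(\Gamma)$. By Lemma \ref{distinguishedequivalence}, the marking $\phi$ is equivalent to a composition $\phi_{\tree,V}\circ\alpha_w$, where $\phi_{\tree,V}:\Gamma_g\to\Gamma$ is the distinguished marking associated to $(\tree,V)$ and $\alpha_w:\Gamma_g\to\Gamma_g$ realizes an outer automorphism $w\in Out(F_g)$. Applying $w$ to $(\Gamma,\phi,\ell)$ produces the marked metric graph $(\Gamma,\phi\circ\alpha_w^{-1},\ell)$, which by Lemma \ref{distinguishedequivalence} is equivalent to $(\Gamma,\phi_{\tree,V},\ell)$.

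It then remains to observe that $(\Gamma,\phi_{\tree,V},\ell)$ lies in $\Upsilon_g$: the choice $(\tree,V)$ equips $\Gamma$ with the spanned graph structure $\psi_{\tree}:\Gamma\to\Gamma_g$ (with any orientation on the edges of $E(\Gamma)\setminus E(\tree)$), and by the construction preceding the statement the marking on the resulting element of $\Upsilon_g$ is precisely $\phi_{\tree,V}$, independent of the auxiliary choice of basepoint. Therefore $(\Gamma,\phi_{\tree,V},\ell)$ is the image of a point of $\Upsilon_g$ under the embedding $\Upsilon_g\hookrightarrow\hat{O}(g)$, and the original point $(\Gamma,\phi,\ell)$ is in the $Out(F_g)$-orbit of this image.

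The main content of the argument is thus entirely contained in Lemma \ref{distinguishedequivalence}; the only point requiring care is checking that the $Out(F_g)$-action on markings is compatible with the convention under which $\Upsilon_g$ sits inside $\hat{O}(g)$, namely that distinguished markings built from spanning-tree data are exactly the representatives of orbits one would like to identify with $\Upsilon_g$. Once this is confirmed, no further calculation is needed.
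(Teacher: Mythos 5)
Your proof is correct and follows exactly the route the paper intends: the paper's own proof consists of the single sentence ``This follows from Lemma~\ref{distinguishedequivalence},'' and your write-up is precisely the expected unpacking of that reference, using the lemma to reduce an arbitrary marking to a distinguished one $\phi_{\tree,V}$ and then identifying $(\Gamma,\phi_{\tree,V},\ell)$ as a point in the image of $\Upsilon_g$.
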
  

\begin{proof}
This follows from Lemma \ref{distinguishedequivalence}. 
\end{proof}

\section{Valuations, filtrations, and analytification}\label{valuations}

In this section we review technical details of valuations which we use in Section \ref{charactervaluations}. We also recall some of the structure of the analytification $X^{an}$ of a complex affine variety $X = Spec(A).$ 

\subsection{Valuations on tensor products}

Throughout the paper we us the MAX convention in the definition of a valuation. A function $v: A \to \R \cup \{-\infty\}$ on a commutative $\C$ domain is known as a rank $1$ valuation which lifts the trivial valuation on $\C$ if it satisfies:\\

\begin{enumerate}
\item $v(ab) = v(a) + v(b)$,\\
\item $v(a + b) \leq max\{v(a), v(b)\}$,\\
\item $v(C) = 0$ for all $C \in \C^*$, and\\
\item $v(0) = -\infty$.\\
\end{enumerate}

\noindent
A valuation $v$ defines a filtration on $A$ by the spaces $v_{\leq k} = \{f \in A | v(f) \leq k\} \subset A;$ we let $gr_v(A)$ be the associated graded algebra of this filtration. An increasing algebraic $\R-$filtration $F$ by complex vector spaces likewise defines a real-valued function $v_{F}: A \to \R \cup \{-\infty\}$ by sending $f$ to $inf\{r | f \in F_{\leq r}(A)\}.$ This function is a valuation as above if and only if $gr_F(A)$ is a domain. More generally, $v_F$ is said to be a quasivaluation.

Let $v$ and $w$ be valuations on commutative $\C-$domains $A$ and $B$, these can be used to define filtrations on the tensor product $A \otimes B$ (taken over $\C$):

\begin{equation}
v_{\leq k}(A \otimes B) = v_{\leq k}(A) \otimes B \ \ \ \ w_{\leq k}(A \otimes B) = A \otimes w_{\leq k}(B).\\
\end{equation}

The sum $v \oplus w$ of two valuations $v, w$ is the quasivaluation corresponding to the filtration defined by the following subspaces: 
 
\begin{equation}
(v \oplus w)_{\leq m} = \sum_{i + j \leq m} v_{\leq i} \cap w_{\leq j}.\\
\end{equation}

\noindent
Similarly, for $R \in \R_{\geq 0},$ the multiple $R\circ v$ of a valuation is defined by setting

\begin{equation}
(R\circ v)_{\leq m} = \sum_{k \leq \frac{m}{R}} v_k(A)\\
\end{equation}
 
\begin{lemma}\label{val2}
Let $v, w: A \otimes B \to \R \cup \{-\infty\}$ be as above. Any linear combination $S\circ v \oplus T\circ w$, $S, T \in \R_{\geq 0}$ 
defines a valuation on $A \otimes B.$
\end{lemma}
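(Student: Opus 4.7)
My plan is to prove $V := S\circ v \oplus T\circ w$ is a valuation, not merely a quasivaluation, by showing its associated graded algebra is an integral domain (per the criterion just recalled in the excerpt). The central identification I aim to establish is an isomorphism of graded $\C$-algebras
\[
gr_V(A \otimes B) \;\cong\; gr_v(A) \otimes_{\C} gr_w(B);
\]
once this is in hand the result follows from the standard fact that the tensor product of two integral $\C$-domains over the algebraically closed field $\C$ is itself an integral domain.

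First I would dispose of the degenerate cases $S = 0$ or $T = 0$: when $S = 0$, the defining condition $Si + Tj \leq m$ is vacuous in $i$, so using exhaustivity $\bigcup_i v_{\leq i}(A) = A$ the filtration collapses to $V_{\leq m}(A \otimes B) = A \otimes w_{\leq m/T}(B) = (T\circ w)_{\leq m}$, which is a valuation because $w$ is. Now assume $S, T > 0$ and choose bases $\{e_\alpha\}$ of $A$ and $\{f_\beta\}$ of $B$ adapted to the valuations, in the sense that $\{e_\alpha : v(e_\alpha) \leq k\}$ is a basis of $v_{\leq k}(A)$ for every real $k$ (and similarly for $B$); such bases exist by lifting bases of each nonzero graded piece. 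A direct computation then yields
\[
V_{\leq m}(A \otimes B) \;=\; \mathrm{span}_{\C}\{e_\alpha \otimes f_\beta : S\,v(e_\alpha) + T\,w(f_\beta) \leq m\},
\]
so that $gr_V(A \otimes B)_m$ has basis $\{[e_\alpha \otimes f_\beta] : S\,v(e_\alpha) + T\,w(f_\beta) = m\}$. The assignment $[a] \otimes [b] \mapsto [a \otimes b]$ extends to a graded algebra homomorphism $\phi : gr_v(A) \otimes_\C gr_w(B) \to gr_V(A \otimes B)$ which is a bijection on the natural bases, hence an isomorphism.

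The main obstacle is justifying the basis description of $V_{\leq m}$. Concretely, one must check that any element of $V_{\leq m}$ written as a finite sum $\sum x_{ij}$ with $x_{ij} \in v_{\leq i}(A) \otimes w_{\leq j}(B)$ and $Si + Tj \leq m$ expands, in the adapted basis, into monomials $e_\alpha \otimes f_\beta$ all satisfying $S v(e_\alpha) + T w(f_\beta) \leq m$. The adapted basis is engineered precisely to preclude cancellations that could artificially lower the filtration degree, and it bypasses any awkwardness arising from the fact that the bifiltration $\{v_{\leq i}(A) \otimes w_{\leq j}(B)\}_{i,j}$ need not decompose as an internal direct sum inside $A \otimes B$. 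Once $\phi$ is established as an isomorphism, integrality of $gr_v(A) \otimes_{\C} gr_w(B)$, using that each factor is a domain ($v$ and $w$ being valuations) and that $\C$ is algebraically closed, completes the argument.
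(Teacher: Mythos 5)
Your argument is correct and follows the same overall strategy as the paper's proof: identify $gr_{V}(A\otimes B)$ with $gr_v(A)\otimes_{\C} gr_w(B)$ and then invoke the fact that a tensor product of domains over the algebraically closed field $\C$ is a domain. The route you take to the isomorphism differs in flavor: the paper computes directly with the subspaces $v_{\leq i}(A)\otimes w_{\leq j}(B)$, observing that pairwise intersections of spaces with $i+j=m$ drop into $(v\oplus w)_{<m}$ and reading off a direct-sum decomposition of the quotient, whereas you construct adapted bases $\{e_\alpha\}$, $\{f_\beta\}$ and verify the filtration equality $V_{\leq m} = \mathrm{span}\{e_\alpha\otimes f_\beta : S\,v(e_\alpha)+T\,w(f_\beta)\leq m\}$ monomial by monomial. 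Your version makes the "no accidental cancellation lowering the degree" point completely explicit, which is exactly the step the paper treats more tersely, and you also dispose of the degenerate cases $S=0$ or $T=0$ separately, which the paper's reduction to $S=T=1$ silently assumes away. So this is the same proof in substance, delivered with a somewhat more hands-on bookkeeping device.
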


\begin{proof}
The associated graded algebra of $v$ as a valuation on $A \otimes B$ is $gr_v(A) \otimes B$, which is manifestly a domain.  This implies that $v$ and $w$ define valuations on $A \otimes B.$  

Now we consider the sum valuation $S \circ v \oplus T \circ w$.  By the first part, without loss of generality, we may take $S, T = 1$.  We must prove that the associated graded algebra is a domain, we claim that it is in fact isomorphic to $gr_v(A) \otimes gr_w(B)$. We compute the quotient $(v \oplus w)_{\leq m}/ (v \oplus w)_{< m}$.

\begin{equation}
(v \oplus w)_{\leq m}/ (v \oplus w)_{< m} = \sum_{i + j \leq m} v_{\leq i} \cap w_{\leq j} \  / \sum_{i + j < m} v_{\leq i} \cap w_{\leq j}\\\\
\end{equation}

\noindent
The space $v_{\leq i}\cap w_{\leq j}$ is equal to $v_{\leq i} \otimes w_{\leq j},$ and $v_{\leq i} \otimes w_{\leq j} \subset v_{\leq i'} \otimes w_{\leq j'}$ if and only if $i \leq i'$ and  $j \leq j'$.  The space $(v \oplus w)_{\leq m}/ (v \oplus w)_{< m}$ is spanned by the images of the spaces $ v_{\leq i} \otimes w_{\leq j}$ with $i + j = m$, and the intersection of two of these spaces $v_{\leq i} \otimes w_{\leq j} \cap v_{\leq i'} \otimes w_{\leq j'}$ is $v_{\leq min\{i, i'\}} \otimes w_{\leq min\{j, j'\}}$, which is contained in $ (v \oplus w)_{< m}$. It follows that the above quotient simplifies to the direct sum of the spaces $v_{\leq i} \otimes w_{\leq j} / v_{< i} \otimes w_{\leq j} + v_{\leq i} \otimes w_{<j} = v_{\leq i}/v_{< i} \otimes w_{\leq j}/ w_{< j},$ this proves the claim. 
\end{proof}

\noindent
Lemma \ref{val2} can be applied to two valuations $v, w: C \to \R \cup \{-\infty\}$ induced from an inclusion of algebras $C \subset A \otimes B$, this is how we use it in Section \ref{charactervaluations}.

We will also need the following lemma, which relates the associated graded algebra of a filtration by representations of a reductive group $G$ to the associated graded algebra of the invariant ring. 

\begin{lemma}\label{val1}
Let $R$ be a $\C$ algebra with a rational action by a reductive group $G$, and let $v$ be a $G-$stable valuation, then the following holds. 

\begin{equation}
gr_v(R^G) = (gr_v(R))^G\\
\end{equation}

\end{lemma}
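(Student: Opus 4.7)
The plan is to exploit the exactness of the invariants functor for rational representations of a reductive group. Since $v$ is $G$-stable, each filtration piece $v_{\leq k}(R)$ is a $G$-subrepresentation of $R$, and hence so is each successive quotient $\gr_v(R)_k = v_{\leq k}(R)/v_{<k}(R)$. The induced filtration on $R^G$ is simply $v_{\leq k}(R^G) = v_{\leq k}(R) \cap R^G = v_{\leq k}(R)^G$, where the last equality uses that $v$ takes invariant elements to invariant filtered pieces because $v_{\leq k}(R)$ is $G$-stable.

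Next I would pass to associated graded pieces. In degree $k$ we have
\begin{equation}
\gr_v(R^G)_k \;=\; v_{\leq k}(R)^G \big/ v_{<k}(R)^G.
\end{equation}
Because $G$ is reductive and the action on $R$ (and hence on every $G$-stable subspace) is rational, the functor $(-)^G$ is exact on the category of rational $G$-modules. Applied to the short exact sequence
\begin{equation}
0 \to v_{<k}(R) \to v_{\leq k}(R) \to \gr_v(R)_k \to 0,
\end{equation}
this yields a short exact sequence of invariants, so $v_{\leq k}(R)^G / v_{<k}(R)^G \cong (\gr_v(R)_k)^G$.

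Finally, I would assemble these degree-wise isomorphisms into an isomorphism of graded algebras. The inclusions $v_{\leq k}(R^G) \hookrightarrow v_{\leq k}(R)$ are $\C$-algebra homomorphisms compatible with the filtration, so they induce a graded algebra homomorphism $\gr_v(R^G) \to \gr_v(R)$ whose image lies in $(\gr_v(R))^G$. By the previous paragraph this map is a degree-wise isomorphism onto $(\gr_v(R))^G$, and multiplicativity is automatic since it is inherited from the filtered inclusion $R^G \subset R$.

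The only real content is the exactness of $(-)^G$ for reductive $G$ acting rationally, which is standard (it is equivalent to complete reducibility and is the defining property of linearly reductive groups). I do not anticipate a genuine obstacle; the argument is essentially bookkeeping once one observes that $G$-stability of $v$ forces the filtration pieces to be rational $G$-submodules.
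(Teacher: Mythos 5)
Your argument is correct and is essentially the paper's own proof: the paper also reduces the claim to exactness of the invariants functor for a reductive group applied to the short exact sequence $0 \to v_{<m} \to v_{\leq m} \to v_{\leq m}/v_{<m} \to 0$. Your write-up merely spells out the degree-wise bookkeeping and the compatibility with the ring structure, which the paper leaves implicit.
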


\begin{proof}
This is a consequence of the fact that passing to invariants by a reductive group is an exact functor.  This fact is applied to the short exact sequence $0 \to v_{< m} \to v_{\leq m} \to v_{\leq m}/v_{< m} \to 0.$
\end{proof}

\subsection{Analytification}

The set of all rank $1$ valuations on a domain $A$ which lift the trivial valuation on $\C$ can be given the structure of a topological space $X^{an},$ called the analytification of the associated affine variety $X = Spec(A),$ see \cite{Ber}, \cite{P}. The analytification is given coarsest topology which makes the evaluation maps $ev_f: X^{an} \to \R$, $ev_f(v) = v(f) \in \R$, $f \in A$ continuous with respect to the usual topology on $\R$. In particular, the topology on $X^{an}$ is generated by the pullbacks of open subsets of $\R$ under the maps $ev_f$; this makes $X^{an}$ into a path connected, locally compact Hausdorff space.  It follows that for a topological space $Y$, a set map $\Phi: Y \to X^{an}$ is continuous if and only if for each $f  \in A,$ the map $[f] \circ \Phi$  defines a continuous map from $Y$ to $\R.$

\section{Degeneration, Compactification and Hamiltonian systems in $SL_2(\C)$}\label{sl2}

In this section we use a valuation to construct a flat degeneration of $SL_2(\C)$ to the singular matrices $SL_2(\C)^c \subset M_2(\C).$  We place symplectic structures on both $SL_2(\C)$ and $SL_2(\C)^c$ and recall a surjective, continuous, map $\Xi: SL_2(\C) \to SL_2(\C)^c$ of Hamiltonian $SU(2) \times SU(2)$ spaces which is a symplectomorphism on a dense open subset of $SL_2(\C)$.     

\subsection{The coordinate ring of $SL_2(\C)$}

The algebraic aspects of $\C[\mathcal{X}(F_g, SL_2(\C))]$ covered in this paper are derived from the interplay between two different ways to view the coordinate ring $\C[SL_2(\C)]$.  As an algebraic variety, the group $SL_2(\C)$ is most familiar as the locus of the equation $AD- BC = 1$ in the space of $2 \times 2$ matrices $M_{2\times 2}(\C)$: 

\begin{equation} SL_2(\C) = \{A, B, C, D | \ det\left[ \begin{array}{cc}
A & B\\
C & D\\
\end{array} \right] = 1 \} \subset M_{2\times 2}(\C).\\
\end{equation}

The Peter-Weyl theorem grants another description of this ring as the direct sum of the endomorphism spaces of the irreducible representations of $SL_2(\C)$.   Recall that there is one such irreducible $V(i)$ for each non-negative integer $i \in \Z_{\geq 0}$, with  $V(i) \cong Sym^i(\C^2)$.  For any $f \in End(V(i))$, there is a regular function on $SL_2(\C)$ defined by $g \to Tr(g^{-1}\circ f)$.  This produces an inclusion of $SL_2(\C) \times SL_2(\C)$ representations: $End(V(i)) \subset \C[SL_2(\C)]$, and defines the $SL_2(\C)\times SL_2(\C)$ isotypical decomposition of $\C[SL_2(\C)]$:

\begin{equation}
\C[SL_2(\C)] = \bigoplus_{i \in \Z_{\geq 0}} End(V(i)).\\ 
\end{equation}

\noindent
An element $(g, h) \in SL_2(\C) \times SL_2(\C)$ acts on $f: V(a) \to V(a)$ by pre and post composition, respectively:  

\begin{equation}
(g, h) \circ [V \to f(V)] = [V \to g\circ f(h^{-1}V)].\\
\end{equation}

\noindent
These descriptions are connected by identifying the generators $A, B, C, D$ above with matrix elements in $End(V(1)) \cong M_{2\times 2}(\C).$  Let $X_{ij} \in End(\C^2)$ be the matrix which has $ij$ entry equal to $1$ and all other entries $0$. The generators of $\C[SL_2(\C)]$ are computed on an element $M \in SL_2(\C)$ as follows:

\begin{equation}
A(M) = Tr(M^{-1}X_{11}) \ \ \ \ B(M) = Tr(M^{-1}X_{01})\\ 
\end{equation}

$$C(M) = Tr(M^{-1}X_{10}) \ \ \ \ D(M) = Tr(M^{-1}X_{00}).$$\\

Let $U_-, U_+ \subset SL_2(\C)$ be the groups of upper, respectively lower triangular matrices in $SL_2(\C).$ It will be necessary to use the coordinate ring of the $GIT$ quotients $SL_2(\C)\q U_-, U_+ \ql SL_2(\C)$, both of which are identified with $\C^2.$  The algebras $\C[SL_2(\C)]^{U_-}, \C[SL_2(\C)]^{U_+}$ are the subspaces of right highest, respectively left lowest weight vectors in the representation $\bigoplus_{i \in \Z_{\geq 0}} End(V(i))$: 

\begin{equation}
\C[SL_2(\C)]^{U_-} = \C[SL_2(\C)]^{U_+} = \bigoplus_{i \in \Z_{\geq 0}} V(i).\\
\end{equation}
\noindent

With actions taken on the right hand side, the algebra $\C[SL_2(\C)]^{U_-} \subset \C[SL_2(\C)]$ is a polynomial ring, generated by $A, C$. Under the automorphism $(-)^{-1}:SL_2(\C) \to SL_2(\C)$, $\C[U_-\ql SL_2(\C)]$ is likewise identified with a polynomial ring in two variables.

The isotypical decomposition of $\C[SL_2(\C)]$ allows an $SL_2(\C) \times SL_2(\C)$-stable filtration defined by the spaces $E_{\leq k} = \bigoplus_{k - i \in 2\Z_{\geq 0}} End(V(i)) \subset \C[SL_2(\C)]$.   The even numbers $2\Z$ in this expression should be interpretted as the root lattice of $SL_2(\C)$.  Following \cite[Chapter 7]{Gr}, the $E_{\leq k}$ define a filtration of algebras, in particular the image of a product $End(V(i))End(V(j))$ lies in the space $\bigoplus_{i + j - k \in 2\Z_{\geq 0}} End(V(k)),$ and the associated graded algebra of this filtration is the algebra $[\C[SL_2(\C)]^{U_-} \otimes \C[SL_2(\C)]^{U_+}]^{\C^*}$.  Taking invariants by the  $\C^*$ action in this expression picks out the invariant subalgebra $\bigoplus_{i \in \Z_{\geq 0}} V(i) \otimes V(i) \subset \C[SL_2(\C)]^{U_-} \otimes \C[SL_2(\C)]^{U_+}$.  Hidden in this statement is the important fact that for any non-zero $f \in End(V(i)), g \in End(V(j))$, the component $(fg)_{i + j} \in End(V(i+j))$ is always non-zero, this is a consequence of the fact that $[\C[SL_2(\C)]^{U_-} \otimes \C[SL_2(\C)]^{U_+}]^{\C^*}$ is a domain.   

We let $SL_2(\C)^c$ be the GIT quotient $[SL_2(\C)\q U_- \times U_+ \ql SL_2(\C)]\q \C^*,$ following the notation in \cite{HMM2}, \cite{M3}; this space is known as the horospherical contraction of $SL_2(\C)$.  Multiplication in $\C[SL_2(\C)^c]$ is graded by the components $V(i) \otimes V(i)$, and is $SL_2(\C) \times SL_2(\C)$-equivariant. The isotypical components are irreducible representations of $SL_2(\C)\times SL_2(\C)$, it follows that each product map $[V(i) \otimes V(i)] \otimes [V(j) \otimes V(j)] \to [V(i + j) \otimes V(i + j)]$ is surjective, and this algebra is generated by the subspace $V(1) \otimes V(1)$.  The associated graded algebra can be presented by $\C[A, B, C, D]$ modulo the initial ideal of $<AD - BC - 1>$ with respect to the filtration $F.$  As $A, B, C, D$ all have the same filtration level, and $1$ is a scalar, we have: 

\begin{equation}
\C[SL_2(\C)^c] = \C[A, B, C, D]/<AD-BC>.\\
\end{equation}

Notice that the equation $AD - BC = 0$ cuts out the set of singular matrices in $M_{2\times 2}(\C).$  We let $X, Y$ be generators of the polynomial coordinate ring $\C[\C^2].$  The algebra $\C[SL_2(\C)^c]$ is isomorphic to $\C[(\C^2\times \C^2)\q \C^*]$ by the map $A \to X \otimes X, B \to X\otimes Y, C \to Y \otimes X, D \to Y \otimes Y.$ 

Let $\mathcal{P}$ be the pointed, polyhedral cone in $\R^3$ defined by the origin $(0, 0, 0)$ and the rays through the points $\{(0, 0, 1), (1, 0, 1), (0, 1, 1), (1, 1, 1)\}$, and  $P$ be the affine semigroup  $\mathcal{P} \cap \Z^3.$ The map $A \to (0, 0, 1),$ $B \to (0, 1, 1),$ $C \to (1, 0, 1),$ $D \to (1, 1, 1)$ defines an isomorphism of algebras, 

\begin{equation}
\C[SL_2(\C)^c] \cong \C[P].\\
\end{equation}

\noindent
We must also consider the Rees algebra of the filtration $E$: 

\begin{equation}
R = \bigoplus_{k \in \Z_{\geq 0}} E_{\leq k}.\\
\end{equation} 

\noindent
This algebra can be presented as $\C[A, B, C, D, t]/<AD - BC - t>$, 
where $t\circ F_{\leq k} \subset F_{\leq k +2}$ identifies $F_{\leq k}$ with itself as a subspace of $F_{\leq k+2}.$  The following are standard properties of Rees algebras, see \cite[3.1]{HMM2}:

\begin{proposition}\label{sl2val}

\begin{enumerate}
\item $R$ is flat over $\C[t]$ (The scheme $Spec(R)$ is a flat family over the affine line.)\\
\item $\frac{1}{t}R = \C[SL_2(\C)]\otimes \C[t, \frac{1}{t}]$ (A general fiber of $Spec(R)$ is isomorphic to $SL_2(\C)$.)\\
\item $R/t = \C[SL_2(\C)^c]$ (The scheme $Spec(R)$ defines a degeneration of $SL_2(\C)$ to $SL_2(\C)^c$)\\
\end{enumerate}

\end{proposition}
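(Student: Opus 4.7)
The plan is to work directly from the explicit presentation $R = \C[A,B,C,D,t]/\langle AD - BC - t\rangle$ derived in the discussion preceding the statement. All three claims then reduce to elementary manipulations of this single presentation, and I would handle them in order of increasing substance rather than in the stated order.

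I would dispose of claim (3) first as a one-line computation: reducing the presentation modulo $t$ simply removes $t$ from the relation, leaving $\C[A,B,C,D]/\langle AD-BC\rangle$, which was already identified with $\C[SL_2(\C)^c]$ earlier in this subsection. For claim (1), I would eliminate $t$ from the presentation rather than $AD-BC$: the relation identifies $R$ with the polynomial ring $\C[A,B,C,D]$, and under this identification the structure map $\C[t] \to R$ becomes the inclusion $\C[AD-BC] \hookrightarrow \C[A,B,C,D]$ of a polynomial subring. Since $\C[t]$ is a PID and $R$ is a domain, hence torsion-free over this subring, $R$ is flat over $\C[t]$.

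The real content lies in claim (2), and the obstacle is that the fiber $R/(t-c)$ for $c \neq 0$ is the affine variety $\{AD-BC=c\}$, which is isomorphic to $SL_2(\C)$ only after a rescaling. After inverting $t$, I would perform the substitution $A = tA'$, $B = tB'$, $C = C'$, $D = D'$, whose inverse $A' = A/t$, $B' = B/t$, $C' = C$, $D' = D$ makes sense precisely because $t$ has been made invertible. This converts the relation $AD - BC = t$ into $t(A'D' - B'C') = t$, i.e.\ $A'D' - B'C' = 1$, giving an isomorphism of $\C[t,t^{-1}]$-algebras $R_t \cong \C[A',B',C',D',t,t^{-1}]/(A'D' - B'C' - 1) = \C[SL_2(\C)] \otimes \C[t, t^{-1}]$. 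The non-integrality of this rescaling is exactly the reason one must invert $t$ before the generic fiber becomes $SL_2(\C)$ rather than the singular matrix variety $SL_2(\C)^c$.
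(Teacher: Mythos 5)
The paper gives no proof of this proposition: it states that these are ``standard properties of Rees algebras'' and simply cites a reference. Your proposal supplies a correct, concrete argument where the paper relies on general theory, and all three steps check out. The standard abstract route would be: for (1), the Rees algebra embeds in $\C[SL_2(\C)][t]$ and is therefore a torsion-free module over the PID $\C[t]$, hence flat; for (2), inverting $t$ in $R = \bigoplus_k E_{\leq k}\,t^k$ collapses each $\Z$-degree onto $\bigcup_k E_{\leq k} = \C[SL_2(\C)]$ because the filtration is exhaustive, giving $R_t \cong \C[SL_2(\C)]\otimes\C[t,t^{-1}]$ immediately; for (3), $R/tR \cong \gr_E(\C[SL_2(\C)])$ is the general identification of the special fiber of a Rees algebra with the associated graded ring. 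Your proof replaces all of this with direct manipulations of the explicit presentation $\C[A,B,C,D,t]/\langle AD-BC-t\rangle$. Parts (1) and (3) are essentially the same argument in different clothing, but in (2) your route genuinely diverges: the abstract proof obtains the trivialization automatically from the graded structure, whereas your asymmetric rescaling $A\mapsto tA'$, $B\mapsto tB'$, $C\mapsto C'$, $D\mapsto D'$ is an ad hoc change of coordinates tuned to this one relation. Both give the same conclusion; yours is more elementary and self-contained, at the cost of generality (it would not transfer to an arbitrary Rees algebra). The only caveat is that you take the presentation of $R$ as given, which the paper also only asserts; a fully self-contained treatment would need to establish that presentation, but this is a reasonable inheritance from the surrounding text rather than a gap in your argument.
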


\noindent
The total family $t: Spec(R) \to \C$ is the determinant family of $2\times 2$ matrices: $det: M_{2\times 2}(\C) \to \C$.   The space $M_{2\times 2}(\C)$ is the Vinberg enveloping monoid of $SL_2(\C)$, and $SL_2(\C)^c$ is known as the asymptotic semigroup of this monoid, see \cite{Vi1}, \cite{Vi2}, and \cite{HMM2}.

\subsection{Symplectic structures }\label{sl2symplecticstructures}

The discussion of integrable systems in Section \ref{Hamiltonian} requires us to fix symplectic forms on $SL_2(\C)$ and $SL_2(\C)^c$.   All of the forms we use stem from the canonical $SU(2) \times SU(2)-$invariant symplectic form $\omega$ on the cotangent bundle $T^*(SU(2))$.  Recall that $T^*(SU(2))$ is an $SU(2)\times SU(2)$ Hamiltonian space, with right and left momentum maps computed as follows, here $h \circ v$ denotes the coadjoint action of $SU(2)$ on $su(2)^*$.

\begin{equation}
h \circ_L(k, v) = (hk, v) \ \ \ \ \ \ \ h\circ_R(k, v) = (kh^{-1}, h\circ v)\\
\end{equation}

\begin{equation}
\mu_L(k, v) = k \circ v \ \ \ \ \ \ \mu_R(k, v) = -v\\
\end{equation}

Following \cite[Subsection 2.3]{HMM2} and \cite{MaTh}, we induce symplectic form on $SL_2(\C)$ with a Hamiltonian $SU(2)\times SU(2)$ action from the embedding $SL_2(\C) \to M_{2 \times 2}(\C)$, which carries the K\"ahler form $\omega(X, Y) = -Im(Tr(XY^*))$.  Appendix $A$ of \cite{MaTh} and \cite{Sja} then imply that there is an associated isomorphism of Hamiltonian $SU(2)\times SU(2)$ manifolds $SL_2(\C) \cong T^*(SU(2))$.  Letting $\pi: SL_2(\C) \to SU(2)$ be the projection derived from the Cartan polar decomposition, the isomorphism above is computed as below:

\begin{equation}
SL_2(\C) \cong T^*(SU(2)), \ \ \ \  g \to (\pi(g), \mu(g)),\\
\end{equation}

\noindent
where $\mu: M_{2\times 2}(\C) \to su(2)^*$ is the momentum map for the right action of $SU(2)$.

A symplectic form is placed on $SL_2(\C) \q U_-, U_+ \ql SL_2(\C)$ by viewing these spaces as the right, respectively left $symplectic$ $implosions$ of the cotangent bundle with respect to the standard Weyl chamber of $SU(2)$ and its negative, respectively. We refer the reader to \cite{GJS} and \cite{Kir} for the fundamentals of the symplectic implosion operation with respect to a Hamiltonian action by a compact group, a more detailed description appears below. 

We let $\varrho \in su(2)^*$ be the root vector in $su(2)^*$, with $\R_{\geq 0}\varrho, \R_{\leq 0}\varrho \subset su(2)^*$ the positive
and negative Weyl chambers.   The spaces $SL_2(\C) \q U_+$ and $U_- \ql SL_2(\C)$ are identified with quotients of the subspaces $\mu_R^{-1}(\R_{\leq 0}\varrho), \mu_L^{-1}(\R_{\geq 0}\varrho) \subset T^*(SU(2))$ by the equivalence relation which collapses all $(k, v)$ with $v = 0$ to a single point.   The subspaces $\mu_R^{-1}(\R_{\leq 0}\varrho)$ and $\mu_L^{-1}(\R_{\geq 0}\varrho)$ are both topologically half-closed cylinders over a $3-$sphere (homemorphic to $SU(2)$). This equivalence relation collapses the closed sphere at the boundary, creating a topological copy of $\R^4.$  Implosions of Hamiltonian spaces all come with induced, often singular symplectic structures, however the forms on $SL_2(\C) \q U_-$ and $U_+ \ql SL_2(\C)$ both agree with the standard form on $\C^2$, see \cite{GJS}.  These isomorphisms are $S^1$ equivariant with respect to the standard action of $S^1$ on $\C^2$ and the residual $S^1$ action on $SL_2(\C)\q U$ through $\C^*$. In imploded cotangent coordinates, this action is $t\circ_R [k, v] = [kt^{-1}, v]$ and $t\circ_L[k, v] = [tk, v].$ The momentum maps for these actions are given simply by taking the length of the $v$ coordinate. 

 We place a symplectic form on $SL_2(\C)^c$ by taking the symplectic reduction of $SL_2(\C) \q U_- \times U_+ \ql SL_2(\C)$ at level $0$ by the diagonal $S^1$ action.  The momentum $0$ subspace $\mu^{-1}(0) \subset SL_2(\C) \q U_- \times U_+ \ql SL_2(\C)$ is given by those elements $[k, w][h, v]$ with $w = h \circ v.$  By results of Kirwan and Kempf, Ness, \cite{Kir}, \cite{KN}, this symplectic reduction can be identified with $SL_2(\C)^c.$ Notice that this space carries a residual Hamiltonian $S^1$ action, and that the momentum map for this action takes $[k, w][h, v]$ to the length $|v|$.  By \cite[Subsection 5.3.3]{HMM2} this form coincides with the form on $SL_2(\C)^c$ induced from the K\"ahler structure on $M_{2 \times 2}(\C)$.

\subsection{The map $\Xi$}\label{themapxi}

Using the identification $SL_2(\C) \cong T^*(SU(2))$, we may define a continuous function
on $SL_2(\C)$ by setting $\xi(g)$ equal to $|\mu_R(g)|$, the length with respect to the Killing form on $su(2)^*$. This can be computed from the entries of $g$ itself by as follows:

\begin{equation}
\xi(g) = \sqrt{det( g^*g - \frac{1}{2}Tr(g^*g)I)}.\\
\end{equation}

The map $\xi$ is smooth on $SL_2(\C)_o \subset SL_2(\C)$, the space of matrices $g$ with $\mu_R(g) \neq 0$, where it generates a Hamiltonian $S^1$ action.  However $\xi$ is singular along the subgroup $SU(2) \subset SL_2(\C),$ where the $S^1$ flow is consequently not well-defined.  Next we introduce a map which ``repairs'' the undefined flow by relating $SL_2(\C)$ to $SL_2(\C)^c.$  We recall the contraction map $\Xi: SL_2(\C) \to SL_2(\C)^c$ from \cite{M3} and \cite[Section 4]{HMM2}.   For $g \to (k, v)$, let $h \in SU(2)$ be an element which diagonalizes $v$, that is $h \circ v \in \R_{\geq 0}\varrho \subset su(2)^*$, we define $\Xi: SL_2(\C) \to SL_2(\C)^c$ as follows: 

\begin{equation}
\Xi(k, v) = [kh^{-1}, h\circ v][h, v].\\
\end{equation} 

\noindent
It is straightforward to check that $\Xi(k, v)$ defines a point in $SL_2(\C)^c.$
The following proposition recalls all the properties of $\Xi$ we will need.

\begin{proposition}\label{sl2xi}
The map $\Xi: SL_2(\C) \to SL_2(\C)^c$ is well-defined, surjective, continuous, and defines a symplectomorphism on $SL_2(\C)_o.$  The subgroup $SU(2) = SL_2(\C)\setminus SL_2(\C)_o$ is collapsed to a single point in the image.  Furthermore, the composition of $\Xi$ with the residual $S^1$ momentum map $\mu: SL_2(\C)^c \to \R$ is $\xi$.
\end{proposition}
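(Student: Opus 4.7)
The plan is to verify each assertion of the proposition in turn, using the explicit formula for $\Xi$ and the implosion machinery from Subsection \ref{sl2symplecticstructures}, relying on \cite{HMM2} for the symplectic content. For well-definedness: if $v = 0$ then $h$ is unconstrained, but both $[kh^{-1}, 0]$ and $[h, 0]$ are the single collapsed point of the respective implosions, so $\Xi(k, 0)$ is independent of $h$; this simultaneously shows that $SU(2) \subset SL_2(\C)$ maps to one point. For $v \neq 0$, the diagonalizing element $h$ is unique up to left multiplication by the stabilizer $T \cong S^1$ of $h \circ v \in \R_{> 0} \varrho$. Replacing $h$ by $th$ transforms the pair $(kh^{-1}, h \circ v), (h, v)$ into $(kh^{-1} t^{-1}, h \circ v), (th, v)$; under the residual $S^1$-actions recorded in Subsection \ref{sl2symplecticstructures}, this is exactly the diagonal $S^1 \subset \C^*$ that is quotiented out in forming $SL_2(\C)^c$, so the image is independent of $h$.

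Surjectivity is by explicit preimage: given a representative $[a, w][b, u] \in \mu^{-1}(0)$ with $w = b \circ u$, the point $(k, v) = (ab, b^{-1} \circ w) \in T^*(SU(2))$ has $\Xi(k, v) = [a, w][b, u]$ by construction. Continuity on $\{v \neq 0\}$ is immediate since $h$ can be chosen as a local smooth section of $SU(2) \to SU(2)/T$. At a point of $SU(2) \subset SL_2(\C)$, for any sequence $(k_n, v_n)$ with $v_n \to 0$ both imploded factors of $\Xi(k_n, v_n)$ converge to their respective collapsed points (using $|h_n \circ v_n| = |v_n|$), which agrees with $\Xi$ of the limit; hence $\Xi$ is continuous everywhere. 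The momentum identity $\mu \circ \Xi = \xi$ is immediate, since the residual $S^1$-momentum on $SL_2(\C)^c$ evaluates $[k, w][h, v]$ to $|v|$, which equals $|\mu_R(g)| = \xi(g)$.

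The symplectomorphism statement on $SL_2(\C)_o$ is the substantive step, and where I expect the main obstacle. The strategy is to exhibit $\Xi|_{SL_2(\C)_o}$ as a composition of two known symplectomorphisms. First, the implosion maps from $\mu_R^{-1}(\R_{\leq 0}\varrho)$ and $\mu_L^{-1}(\R_{\geq 0}\varrho)$, restricted to their principal strata $\{v \neq 0\}$, are symplectomorphisms onto the complements of the collapsed points in $SL_2(\C) \q U_-$ and $U_+ \ql SL_2(\C)$; see \cite{GJS}. Second, the image of $\Xi$ lies in $\mu^{-1}(0)$ by construction, and Kirwan together with Kempf--Ness identify the symplectic quotient of the principal stratum of $\mu^{-1}(0)$ by the diagonal $S^1$ with the principal stratum of $SL_2(\C)^c$, equipped with the form induced from the K\"ahler form on $M_{2 \times 2}(\C)$ by \cite[Subsection 5.3.3]{HMM2}. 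Composing these identifications realizes $\Xi|_{SL_2(\C)_o}$ as a symplectomorphism onto the principal stratum of $SL_2(\C)^c$, completing the proof.
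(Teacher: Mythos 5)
Note first that the paper supplies no proof of this proposition: it is introduced with the sentence ``The following proposition recalls all the properties of $\Xi$ we will need,'' and the map $\Xi$ itself is attributed to \cite{M3} and \cite[Section 4]{HMM2}. So you are supplying an argument the paper omits. Your treatments of well-definedness (and of the collapse of $SU(2)$ to a point), surjectivity via the explicit preimage $(ab,u)$, continuity via a local section of $SU(2)\to SU(2)/T$ combined with the length estimate at the tip, and the momentum identity $\mu\circ\Xi = \xi$ are all correct and follow the natural route.

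The gap is in the symplectomorphism step, and it is genuine: there is no composition of the two maps you cite whose domain is $SL_2(\C)_o \cong T^*(SU(2))^o$. The implosion maps on their principal strata are symplectomorphisms between $4$-dimensional spaces, $\mu_R^{-1}(\R_{\leq 0}\varrho)\cap\{v\neq 0\}\to (SL_2(\C)\q U_-)\setminus\{*\}$ and similarly on the other side, while the reduction $\mu^{-1}(0)^o\to (SL_2(\C)^c)^o$ is a $7$-to-$6$-dimensional quotient; stringing these together does not produce a map out of the $6$-dimensional space $T^*(SU(2))^o$, so it cannot ``realize $\Xi|_{SL_2(\C)_o}$'' as a composition of symplectomorphisms. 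What the argument actually requires is the symplectic cross-section fact: the assignment $(k,v)\mapsto ([kh^{-1},h\circ v],[h,v])$, for any local smooth choice of the diagonalizing element $h$, is a local section of the diagonal $S^1$-quotient $\mu^{-1}(0)^o\to(SL_2(\C)^c)^o$, and the pullback along it of the reduced form is the canonical cotangent form on $T^*(SU(2))^o$. That is a nontrivial tangent-space verification, and it is precisely what is carried out in \cite[Section 4]{HMM2}; you should cite that directly, rather than only \cite[Subsection 5.3.3]{HMM2}, which addresses the separate assertion that the reduced form on $SL_2(\C)^c$ agrees with the one pulled back from the K\"ahler structure on $M_{2\times 2}(\C)$. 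Without that cross-section step, the two facts you invoke are true but do not connect $SL_2(\C)_o$ to $(SL_2(\C)^c)^o$.
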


\subsection{The gradient Hamiltonian flow}

The determinant map $det: M_{2 \times 2}(\C) \to \C$ defines an algebraic family joining $SL_2(\C) = det^{-1}(1)$ to $SL_2(\C)^c = det^{-1}(0)$. As explained in \cite{HK} and \cite[Section 5]{HMM2}, the K\"ahler structure on this family can be used to define a gradient Hamiltonian vector field:

\begin{equation}
V_{det} = \frac{\nabla(Re(det))}{|\nabla(Re(det))|}.\\
\end{equation}

\noindent
The work of Harada and Kaveh (making use of a technique of Ruan \cite{R}) implies the existence of a surjective, continuous map $\Xi_{det}: SL_2(\C) \to SL_2(\C)^c$, which is a symplectomorphism on $SL_2(\C)_o$.  Corollary 5.12 of \cite{HMM2} implies that $\Xi_{det}$ coincides with the map $\Xi$ above.

\subsection{The compactification $SL_2(\C) \subset X$}\label{wonderful}

There is another, perhaps more natural filtration on $\C[SL_2(\C)]$ defined by the spaces $F_{\leq k} = \bigoplus_{i \leq k} End(V(i))$.  This is also a filtration of algebras, and the spectrum of the Rees algebra $\bar{R} = \bigoplus_{k \in \Z_{\geq 0}} F_{\leq k}$ is a double cover of $M_{2 \times 2}(\C) = Spec(R)$. The associated graded algebra of this filtration is also isomorphic to $\C[P]$.

The filtration $F$ naturally defines a valuation $v: \C[SL_2(\C)] \to \Z\cup \{-\infty\}$; for $f  = \sum f_i \in \bigoplus_{i \in \Z_{ \geq 0}} End(V(i)),$ one has $v(f) = max\{i | f_i \neq 0\}$.  This function is the divisorial valuation associated to a compactification $X$ of $SL_2(\C)$:\\

\begin{enumerate}
\item $X = Proj(\C[a, b, c, d, t]/<ad - bc - t^2>) \subset \mathbb{P}^4$\\
\item $D = Proj(\C[a, b, c, d]/<ad- bc>) \subset X \subset \mathbb{P}^4$\\
\end{enumerate}

\noindent
The divisor $D$ is cut out by the principal ideal $<t> =  \bigoplus_{a < k} End(V(a))t^k$, and is isomorphic to the projective toric variety $Proj(\C[P]) = \mathbb{P}^1\times \mathbb{P}^1$.  The pullback of $O(1)$ (on $\mathbb{P}^4$) to $D \cong \mathbb{P}^1 \times \mathbb{P}^1$ is $\mathcal{O}(1) \boxtimes \mathcal{O}(1).$ 

It follows from the equation $\frac{1}{t}\C[a, b, c, d, t]/<ad-cb-t^2> = \C[SL_2(\C)]\otimes \C[t, \frac{1}{t}]$ that the complement of $D$ is the scheme $Proj(\frac{1}{t}\C[a, b, c, d, t]/<ad-bc-t>) = Spec(\C[a, b, c, d]/<ad-bc -1> = SL_2(\C).$  We induce a valuation $\bar{v}$ on $\bar{R}$ from the valuation $v \oplus 0$ on $\C[SL_2(\C)] \otimes \C[t].$  The associated graded algebra $T = gr_{\bar{v}}(\bar{R})$ is presented as $\C[a, b, c, d, t]/<ad - bc>$. This defines a flat $SL_2(\C) \times SL_2(\C)-$stable degeneration of $X$ to a projective toric variety $X_0 = Proj(T) =  Proj(\C[P\times \Z_{\geq 0}]).$

\section{Valuations on $\C[\mathcal{X}(F_g, SL_2(\C))]$}\label{charactervaluations}

In this section we construct a cone of valuations $C_{\Gamma, \phi} \subset \mathcal{X}(F_g, SL_2(\C))^{an}$ for each marking $\phi: \Gamma_g \to \Gamma$ in three steps.  First, we define a space $M_{\Gamma}(SL_2(\C))$ for each oriented graph $\Gamma$, such that $M_{\Gamma_g}(SL_2(\C))$ is naturally isomorphic to $\mathcal{X}(F_g, SL_2(\C)).$  Then we construct a simplicial cone of valuations $C_{\Gamma}\subset M_{\Gamma}(SL_2(\C))^{an}$ using the valuation $v: \C[SL_2(\C)] \to \Z \cup \{-\infty\}$ constructed in Section \ref{sl2}. We conclude by showing that each marking $\phi: \Gamma_g \to \Gamma$ gives an isomorphism $\phi^*: M_{\Gamma}(SL_2(\C)) \to M_{\Gamma_g}(SL_2(\C))$, this induces a cone of valuations $C_{\Gamma, \phi} = \phi^*(C_{\Gamma}) \subset \mathcal{X}(F_g, SL_2(\C))^{an}$.

\subsection{The space $M_{\Gamma}(X)$ and the cone $C_{\Gamma}$}

Let $X$ be an affine $SL_2(\C) \times SL_2(\C)$ space, and let $\Gamma$ be any finite oriented graph with all vertices of valence at least three.   We define an action of $SL_2(\C)^{V(\Gamma)}$ on $X^{E(\Gamma)}$, by having an element $(\ldots g_w \ldots)$ $\in SL_2(\C)^{V(\Gamma)}$ act on $(\ldots x_e \ldots) \in X^{E(\Gamma)}$ by sending it to $(\ldots g_vx_eg_u^{-1}\ldots )$, where $\delta(e) = (v, u).$ We let $M_{\Gamma}(X)$ be the corresponding $GIT$ quotient: 

\begin{equation}
M_{\Gamma}(X) = SL_2(\C)^{V(\Gamma)} \ql X^{E(\Gamma)}.\\
\end{equation}

\noindent
Three cases are of this construction are of special interest: $X = SL_2(\C), SL_2(\C)^c,$ and $M_{2 \times 2}(\C)$.  When the graph is $\Gamma_g$ (with any orientation), and $X  = SL_2(\C)$, each $(h_1, \ldots, h_g) \in SL_2(\C)^{E(\Gamma_g)}$ is conjugated by $g_v \in SL_2(\C)^{V(\Gamma_g)} = SL_2(\C)$, so this construction reduces to the $GIT$ definition of the character variety of $F_g$:
 
\begin{equation}
M_{\Gamma_g}(SL_2(\C)) = \mathcal{X}(F_g, SL_2(\C)).\\
\end{equation}

The coordinate ring of each $SL_2(\C)$ component of the product space $SL_2(\C)^{E(\Gamma)}$ comes with a copy of the valuation $v: \C[SL_2(\C)] \to \Z \cup \{-\infty\}$ constructed in Section \ref{sl2}.  We let $v_e$ be the copy of $v$ associated to the the $SL_2(\C)$ component assigned to the edge $e \in E(\Gamma)$. 

By Lemma \ref{val1}, the $\R_{\geq 0}$ combinations of the $v_e$ form a cone of valuations $C_{\Gamma}$ on the coordinate ring $\C[SL_2(\C)^{E(\Gamma)}],$ and therefore $\C[M_{\Gamma}(SL_2(\C))]$.  Let $v_{\ell} = \sum_{e \in E(\Gamma)} \ell(e)v_e$ be the valuation corresponding to an assignment $\ell: E(\Gamma) \to \R_{\geq 0}.$  The following proposition allows us to compute the associated graded algebra of an interior valuation from this cone. 

\begin{proposition}\label{agraded}
For an interior point $\ell \in C_{\Gamma}$,  the associated graded algebra of $\C[M_{\Gamma}(SL_2(\C))]$ with respect to $v_{\ell}$ is isomorphic to $\C[M_{\Gamma}(SL_2(\C)^c)]$. 
\end{proposition}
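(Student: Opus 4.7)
The plan is to reduce the statement to an application of Lemma \ref{val1} (invariants commute with associated graded for reductive group actions) after first computing the associated graded of $v_{\ell}$ upstairs on the affine coordinate ring $\C[SL_2(\C)^{E(\Gamma)}]$, before taking invariants. For an interior point $\ell$ we have $\ell(e) > 0$ for every $e \in E(\Gamma)$, which is the hypothesis we need so that each tensor factor contributes to the filtration in a nondegenerate way.

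First I would note that the valuation $v: \C[SL_2(\C)] \to \Z \cup \{-\infty\}$ is $SL_2(\C) \times SL_2(\C)$-stable because its filtration levels $E_{\leq k} = \bigoplus_{k - i \in 2\Z_{\geq 0}} \mathrm{End}(V(i))$ are isotypical and therefore bi-invariant. Hence each $v_e$ on $\C[SL_2(\C)^{E(\Gamma)}] = \bigotimes_{e \in E(\Gamma)} \C[SL_2(\C)]$ is $SL_2(\C)^{V(\Gamma)}$-stable, and so is any positive combination $v_\ell = \sum_e \ell(e) v_e$.

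Next I would iterate Lemma \ref{val2}: that lemma treats the case of two tensor factors, but the proof of Lemma \ref{val2} only uses that each factor is a domain and that the sum filtration on a tensor product decomposes cleanly; by induction on $|E(\Gamma)|$ (using the interiority assumption $\ell(e) > 0$ so that none of the coefficients $R = \ell(e)$ are zero when we apply the $R \circ v$ rescaling), one concludes that $v_\ell$ is a valuation on $\C[SL_2(\C)^{E(\Gamma)}]$ whose associated graded algebra is
\begin{equation}
gr_{v_\ell}\bigl(\C[SL_2(\C)^{E(\Gamma)}]\bigr) \cong \bigotimes_{e \in E(\Gamma)} gr_{v_e}(\C[SL_2(\C)]) \cong \bigotimes_{e \in E(\Gamma)} \C[SL_2(\C)^c] = \C[(SL_2(\C)^c)^{E(\Gamma)}],
\end{equation}
where the middle isomorphism uses Proposition \ref{sl2val}(3). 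This identification is $SL_2(\C)^{V(\Gamma)}$-equivariant because the isomorphism $gr_{v}(\C[SL_2(\C)]) \cong \C[SL_2(\C)^c]$ is built from the $(SL_2(\C) \times SL_2(\C))$-stable isotypical filtration $E$.

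Now I would invoke Lemma \ref{val1} with the reductive group $G = SL_2(\C)^{V(\Gamma)}$ acting on $R = \C[SL_2(\C)^{E(\Gamma)}]$ and the $G$-stable valuation $v_\ell$. Since $\C[M_\Gamma(SL_2(\C))] = R^G$ by definition, Lemma \ref{val1} gives
\begin{equation}
gr_{v_\ell}\bigl(\C[M_\Gamma(SL_2(\C))]\bigr) = gr_{v_\ell}(R^G) = gr_{v_\ell}(R)^G = \C[(SL_2(\C)^c)^{E(\Gamma)}]^{SL_2(\C)^{V(\Gamma)}} = \C[M_\Gamma(SL_2(\C)^c)],
\end{equation}
where the last equality is the definition of $M_\Gamma(SL_2(\C)^c)$ as a $GIT$ quotient with the same $SL_2(\C)^{V(\Gamma)}$ action (conjugation along edges, which is literally the same action on the degenerate factors).

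The main obstacle I anticipate is the iteration of Lemma \ref{val2} to more than two tensor factors, and in particular the bookkeeping that the associated graded of the sum valuation on $\bigotimes_e \C[SL_2(\C)]$ really is the tensor product of the individual associated gradeds. The interior assumption $\ell(e) > 0$ for all $e$ is essential here: on a boundary ray some $\ell(e)$ vanishes, in which case $v_e$ contributes trivially and the associated graded over the corresponding edge remains $\C[SL_2(\C)]$ rather than degenerating to $\C[SL_2(\C)^c]$. Beyond this, the proof is a clean concatenation of Lemmas \ref{val2} and \ref{val1}.
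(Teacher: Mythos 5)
Your proposal is correct and is essentially the paper's argument written out in full: the paper's own proof of Proposition \ref{agraded} is the single sentence ``This follows from Lemmas \ref{val1} and \ref{val2}, and Proposition \ref{sl2val},'' and you have supplied precisely the expected expansion --- iterate Lemma \ref{val2} across the tensor factors of $\C[SL_2(\C)^{E(\Gamma)}]$, use Proposition \ref{sl2val} to identify each factor's associated graded with $\C[SL_2(\C)^c]$, then apply Lemma \ref{val1} with $G = SL_2(\C)^{V(\Gamma)}$ to pass to invariants. Your observation that interiority of $\ell$ (i.e.\ $\ell(e) > 0$ for all $e$) is exactly what prevents any tensor factor from receiving the trivial filtration is the right thing to flag, and it is consistent with the scaling convention $(R \circ v)_{\leq m} = \sum_{k \leq m/R} v_k(A)$ in the paper.
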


\begin{proof}
This follows from Lemmas  \ref{val1} and \ref{val2}, and Proposition \ref{sl2val}. 
\end{proof}

\subsection{The graph functor}

We show that the construction $M_{\Gamma}(SL_2(\C))$ is functorial with respect to finite cellular graph maps, in particular any marking $\phi: \Gamma_g \to \Gamma,$ induces an isomorphism $\phi^*: M_{\Gamma}(SL_2(\C)) \cong \mathcal{X}(F_g, SL_2(\C)) = M_{\Gamma_g}(SL_2(\C)).$  We show that if two finite cellular homotopy isomorphisms $\phi_1, \phi_2: \Gamma_1 \to \Gamma_2$ induce the same map on homotopy, $\pi_1(\phi_1) = \pi_1(\phi_2)$, then the maps on varieties also agree, $\phi_1^* = \phi_2^*.$  This means that each equivalence class of markings $[\phi]: \Gamma_g \to \Gamma$ defines a unique associated isomorphism $\phi^*: M_{\Gamma}(SL_2(\C)) \cong \mathcal{X}(F_g, SL_2(\C))$.

A finite cellular map $\phi: \Gamma_1 \to \Gamma_2$ takes an edge $e \in E(\Gamma_1)$ to a finite path of edges $\phi(e) = f(1)\cdots f(k)$ in $\Gamma_2$. These paths determine a regular map of affine varieties $\phi^s: SL_2(\C)^{E(\Gamma_2)} \to SL_2(\C)^{E(\Gamma_1)}$ by sending $(\ldots h_f \ldots) \in  SL_2(\C)^{E(\Gamma_2)}$ to $(\ldots, \prod h_{f(i)}, \ldots) \in  SL_2(\C)^{E(\Gamma_1)}$.  In particular, if $\phi$ collapses the edge $e$, then the associated component is assigned the identity $Id$.  Notice that the value of the $e$-th component of $\phi^s(\ldots, h_f, \ldots)$ only depends on the reduction of the path $\phi(e)$, as the $SL_2(\C)$ elements in any backtracks cancel each other.  In particular, if $e$ is a closed loop in $\Gamma_1$, then the $e$-th component of $\phi^s$ depends only on the minimal length loop in the free homotopy class of $\phi(e)$ (see Proposition \ref{backtrack}).

Let $\phi^a: SL_2(\C)^{V(\Gamma_2)} \to SL_2(\C)^{V(\Gamma_1)}$ send $(\ldots, g_w, \ldots)$ to the tuple defined by the property that $g_u = g_w$ for all $u \in \phi^{-1}(w)$.  
It is clear that $\phi^a$ is a map of reductive groups.

\begin{proposition}\label{graphfunctor}
The map $\phi^s$ descends to define a map of affine varieties on the GIT quotients $\phi^*: M_{\Gamma_2}(SL_2(\C)) \to M_{\Gamma_1}(SL_2(\C))$.  Furthermore, the construction $M_{\Gamma}(SL_2(\C))$ and the maps $\phi^*$ define a functor from the category of finite graphs with finite cellular maps to the category of complex affine varieties. 
\end{proposition}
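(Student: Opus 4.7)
The plan reduces both claims to a telescoping identity in $SL_2(\C)^{E(-)}$. First I would verify that the map $\phi^s$ is $\phi^a$-equivariant, which is the only nontrivial content in the descent to the GIT quotient; everything else then follows formally.

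Fix $e \in E(\Gamma_1)$ with $\delta(e) = (v_e, u_e)$ and $\phi(e) = f(1)\cdots f(k)$. Because $\phi$ is cellular, the path in $\Gamma_2$ satisfies $v_{f(1)} = \phi(v_e)$, $u_{f(k)} = \phi(u_e)$, and $u_{f(i)} = v_{f(i+1)}$ for consecutive indices (using the endpoint appropriate to the direction in which $f(i)$ is traversed). For $(g_w) \in SL_2(\C)^{V(\Gamma_2)}$ acting on $(h_f)$, the $e$-th coordinate of $\phi^s((g_w)\cdot (h_f))$ is
\begin{equation}
\prod_{i=1}^{k} g_{s(i)}\, h_{f(i)}^{\pm 1}\, g_{t(i)}^{-1},
\end{equation}
where $s(i), t(i)$ are the source and target vertices of $f(i)$ along the path and the $\pm 1$ records whether $f(i)$ is traversed along or against $\delta(f(i))$. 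The interior $g$-factors telescope to leave $g_{\phi(v_e)} \left( \prod_i h_{f(i)}^{\pm 1} \right) g_{\phi(u_e)}^{-1}$, which is exactly the $e$-th coordinate of $\phi^a((g_w)) \cdot \phi^s((h_f))$. Since $\phi^a$ is a homomorphism of reductive groups, pullback along $\phi^s$ carries $SL_2(\C)^{V(\Gamma_1)}$-invariant regular functions to $SL_2(\C)^{V(\Gamma_2)}$-invariant ones, and descent to the GIT quotients produces the morphism $\phi^*$.

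Functoriality then amounts to two short checks. The identity map $\mathrm{Id}_\Gamma$ sends each edge to itself as a length-one path, so $(\mathrm{Id}_\Gamma)^s$ is the identity on $SL_2(\C)^{E(\Gamma)}$, and $(\mathrm{Id}_\Gamma)^* = \mathrm{Id}_{M_{\Gamma}(SL_2(\C))}$. For a composable pair $\phi\colon \Gamma_1 \to \Gamma_2$ and $\psi\colon \Gamma_2 \to \Gamma_3$, the path $(\psi \circ \phi)(e)$ is by definition the concatenation of the paths $\psi(f(i))$ as $f(i)$ ranges over the edges of $\phi(e)$; the product of $SL_2(\C)$-elements along this concatenation equals the corresponding iterated product, giving $(\psi \circ \phi)^s = \phi^s \circ \psi^s$. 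Passing to GIT quotients yields $(\psi \circ \phi)^* = \phi^* \circ \psi^*$.

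I do not expect a substantive obstacle here; the proof is largely careful bookkeeping. The two points that require attention are (i) consistently using $h_{f(i)}^{-1}$ when $f(i)$ is traversed against its orientation, so that the telescoping actually fires, and (ii) the observation, already noted in the excerpt, that backtracks in $\phi(e)$ contribute $h_f h_f^{-1} = \mathrm{Id}$ and hence make $\phi^s$ insensitive to non-reduced representatives of the path. Neither point causes real trouble, since cellularity guarantees the adjacent-vertex matching that drives telescoping in both the equivariance and composition calculations.
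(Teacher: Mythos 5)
Your proof is correct and follows the same strategy as the paper's: show $\phi^a$-equivariance of $\phi^s$ (the paper states the telescoped identity $g_v(\prod h_{f(i)})g_u^{-1}$ directly, you spell out the cancellation), then note $(\phi_2 \circ \phi_1)^s = \phi_1^s \circ \phi_2^s$ for functoriality. The only additions you make are the explicit identity-map check and a careful note about inverting $h_{f(i)}$ along reversed edges, both of which the paper leaves implicit.
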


\begin{proof}
 If we act on $(\ldots h_f \ldots) \in SL_2(\C)^{E(\Gamma_2)}$ with $(\ldots g_w \ldots) \in SL_2(\C)^{V(\Gamma_2)}$ and pass the result through $\phi^s,$ by the definition of the action the resulting element is $g_v(\prod h_{f(i)})g_u^{-1}$, where $\delta(e) = (v, u).$  This calculation implies that $\phi^s$ intertwines the actions of $SL_2(\C)^{V(\Gamma_2)}$ and $SL_2(\C)^{V(\Gamma_1)}$ as-related through $\phi^a$. This implies that $\phi^s$ descends to a map $\phi^*:  M_{\Gamma_2}(SL_2(\C)) \to M_{\Gamma_1}(SL_2(\C))$.  For the second part of the proposition, note that by definition, $(\phi_2 \circ \phi_1)^s = \phi_1^s \circ \phi_2^s$ and $(\phi_2 \circ \phi_1)^a = \phi_1^a \circ \phi_2^a$, as a consequence $(\phi_2 \circ \phi_1)^* = \phi_1^* \circ \phi_2^*.$   
\end{proof}

\begin{example}
Let $\phi: \Gamma \to \Gamma$ be an isomorphism of graphs, which reverses the orientation of one edge $e\in E(\Gamma)$ with $\delta(e) = (u, v)$. This induces an automorphism of $M_{\Gamma}(SL_2(\C))$ by $(-)^{-1}: SL_2(\C)^{e} \to SL_2(\C)^{e} \subset SL_2(\C)^{E(\Gamma)}.$  This takes the edge element $g_uh_eg_v^{-1}$ to $g_vh_e^{-1}g_u^{-1},$ and therefore intertwines the $SL_2(\C)^{V(\Gamma)}$ action, giving an isomorphism of varieties.  
\end{example}

\begin{example}\label{freegroupaction}
Let $\theta: \Gamma_g \to \Gamma_g$ be induced from an outer automorphism of $F_g$, namely an assignment $e_i \to W_i$ of $g$
generating words to the edges of $\Gamma_g$.  Then $\theta^s: SL_2(\C)^{E(\Gamma_g)} \to SL_2(\C)^{E(\Gamma_g)}$
is the regular map defined by sending $\vec{h} = (h_1, \ldots, h_g) \to (W_1(\vec{h}), \ldots, W_g(\vec{h})).$ The inverse of this map is likewise defined by the inverse of $\theta$.  These automorphisms define the action of $Out(F_g)$ on $\mathcal{X}(F_g, SL_2(\C)) = M_{\Gamma_g}(SL_2(\C)).$
\end{example}

\begin{example}\label{collapsingmap}
Let $\phi_{\tree}: \Gamma \to \Gamma_g$ and $\psi_{V, \tree}: \Gamma_g \to \Gamma$ be the pair of maps induced by a choice of spanning tree $\tree \subset \Gamma.$   

The map $\phi_{\tree}^s$ sends $(h_1, \ldots, h_g) \in SL_2(\C)^{E(\Gamma_g)}$ to the element in $SL_2(\C)^{E(\Gamma)}$ obtained by assigning $Id$ to all edges in $\tree$, and $h_i$ to the edge in $E(\Gamma) \setminus E(\tree)$ corresponding to the $i-$th edge of $\Gamma_g$.   As $\phi_{\tree}$ maps each vertex in $\Gamma$ down to the unique vertex of $\Gamma_g$, $\phi_{\tree}^a: SL_2(\C) \to SL_2(\C)^{V(\Gamma)}$ is the diagonal map of reductive groups.   

The map $\psi_{V, \tree}^s$ sends an element $(\ldots h_e \ldots) \in SL_2(\C)^{E(\Gamma)}$ to an assignment of words $w_i(\ldots h_e \ldots)$ on the edges of $\Gamma_g$.  Let $e_i \in E(\Gamma)$ be the preimage of the $i-$th edge of $\Gamma_g$ under $\phi_{\tree}$, with $\delta(e_i) = (u_i, v_i).$  The word $w_i(\ldots h_e \ldots)$ is computed by concatenating the unique path $V \to u_i$ in $\tree$ with  $e_i$ and the unique path $v_i \to V$ in $\tree$, and recording $h_e$ or $h_e^{-1}$ for each traversed edge, where the sign depends on the orientation of $e$ with respect to the direction of the path. 
\end{example}

\begin{lemma}\label{distiso}
For any distinguished map $\psi_{V, \tree}: \Gamma_g \to \Gamma$, the induced map $\psi_{V, \tree}^*$ is an isomorphism of affine schemes.  
\end{lemma}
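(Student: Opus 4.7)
The plan is to construct an explicit inverse to $\psi_{V,\tree}^*$ using the companion collapsing map $\phi_{\tree}: \Gamma \to \Gamma_g$ of Example \ref{collapsingmap}. By Proposition \ref{graphfunctor}, $\phi_{\tree}$ induces a morphism $\phi_{\tree}^*: M_{\Gamma_g}(SL_2(\C)) \to M_{\Gamma}(SL_2(\C))$, and one then verifies the two compositions separately: the first is immediate because $\phi_{\tree} \circ \psi_{V,\tree}$ is literally the identity cellular map of $\Gamma_g$; the second requires a gauge transformation to absorb the extra tree-path decoration that $\psi_{V,\tree} \circ \phi_{\tree}$ adds to edges of $\Gamma$.

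For the first composition, observe from Example \ref{collapsingmap} that $\psi_{V,\tree}(e_i)$ is a concatenation of two $\tree$-paths with the unique lift $\bar{e}_i \in E(\Gamma) \setminus E(\tree)$, and $\phi_{\tree}$ collapses the tree portion while sending $\bar{e}_i$ back to $e_i$. Hence $\phi_{\tree} \circ \psi_{V,\tree} = \mathrm{id}_{\Gamma_g}$ as cellular maps, and the contravariant functoriality of Proposition \ref{graphfunctor} gives $\psi_{V,\tree}^* \circ \phi_{\tree}^* = (\phi_{\tree} \circ \psi_{V,\tree})^* = \mathrm{id}$ on $M_{\Gamma_g}(SL_2(\C))$.

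For the reverse composition, the plan is to show directly that $\phi_{\tree}^s \circ \psi_{V,\tree}^s$ lands in the $SL_2(\C)^{V(\Gamma)}$-orbit of its input at every point. Given $(\ldots h_e \ldots) \in SL_2(\C)^{E(\Gamma)}$, define a gauge element $(\ldots g_w \ldots) \in SL_2(\C)^{V(\Gamma)}$ by setting $g_V = Id$ and, for any other vertex $w$, letting $g_w$ be the ordered product of the $h_e^{\pm 1}$ along the unique $\tree$-path from $V$ to $w$, with signs and order chosen so that the standard action of $(g_w)$ sends every tree edge to $Id$. A direct bookkeeping check, using the explicit form of $w_j$ from Example \ref{collapsingmap}, then shows that the same gauge transformation sends each non-tree edge $e_j$ to exactly $w_j(\ldots h_e \ldots)$, so that $(g_w) \cdot (h_e) = \phi_{\tree}^s \circ \psi_{V,\tree}^s(h_e)$. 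Consequently every $SL_2(\C)^{V(\Gamma)}$-invariant function on $SL_2(\C)^{E(\Gamma)}$ is pulled back to itself by $\phi_{\tree}^s \circ \psi_{V,\tree}^s$, and the induced morphism $\phi_{\tree}^* \circ \psi_{V,\tree}^*$ is the identity on $M_{\Gamma}(SL_2(\C))$. The main obstacle is the sign-and-order bookkeeping: one must hold the convention for $\delta$, the twist direction $h_e \mapsto g_v h_e g_u^{-1}$ of the action, and the explicit form of $w_j$ all mutually consistent, but once these are synchronized both the cancellation on tree edges and the reappearance of $w_j$ on non-tree edges are a routine unwinding of the recursive definition of $g_w$ along the $\tree$-path from $V$.
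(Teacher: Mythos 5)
Your proof is correct and follows essentially the same route as the paper: you verify that $\psi_{V,\tree}^*\circ\phi_{\tree}^*=\mathrm{id}$ on $M_{\Gamma_g}(SL_2(\C))$ directly from the definitions of $\phi_{\tree}^s,\psi_{V,\tree}^s$, and that $\phi_{\tree}^*\circ\psi_{V,\tree}^*=\mathrm{id}$ on $M_{\Gamma}(SL_2(\C))$ by a gauge transformation that trivializes the tree edges without changing the holonomy words $W_i$. The only cosmetic difference is that you write the gauge element as a single explicit product over the $\tree$-path from $V$ to each vertex, while the paper normalizes tree edges one at a time by iterating the action of $SL_2(\C)^{\{v\}}$; these are the same argument. (One small caveat: $\phi_{\tree}\circ\psi_{V,\tree}$ is the identity on $\Gamma_g$ only up to reparametrization and collapsed subintervals, but since $\phi^s$ assigns $Id$ to collapsed segments the induced map $\psi_{V,\tree}^s\circ\phi_{\tree}^s$ is genuinely the identity, which is all you need.)
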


\begin{proof}
We must check that $\psi_{V, \tree}^*$ and $\phi_{\tree}^*$ discussed in Example \ref{collapsingmap} are inverse to each other.  The discussion in Example \ref{collapsingmap} immediately implies that $\psi_{V, \tree}^* \circ \phi_{\tree}^*$ is the identity on $M_{\Gamma_g}(SL_2(\C)).$  

Consider any edge $e \in \tree \subset \Gamma$ with $\delta(e) = (u, v)$, $v \neq V.$  Using the action of $SL_2(\C)^{\{v\}} \subset SL_2(\C)^{V(\Gamma)}$ we can move $(\ldots h_e \ldots)$ to $(\ldots, 1, \ldots)$, by introducing $h_e$ or $h_e^{-1}$ on the other incident edges of $v$ according to their orientations.  The $W_i$ do not change under this operation.  The same construction can be applied to $u \neq V$ as above.  It follows that we may move any point in $SL_2(\C)^{E(\Gamma)}$ into the image of $\phi^* \circ \psi^*$ using the action of $SL_2(\C)^{V(\Gamma)}$. 
\end{proof}

Notice that Lemma \ref{distiso} implies that $\psi^*_{V, \tree}$ does not depend on the choice $V \in V(\Gamma)$.  Indeed, the image $\psi_{V, \tree}^*(\ldots h_e \ldots) = (\ldots W_i(\ldots h_e \ldots) \ldots)$ differs under a change of basepoint by including or taking away backtracks in the words $W_i$, a modification which does not change the value of $W_i(\ldots, h_e, \ldots ) \in SL_2(\C)$.  Next we prove a generalization of this fact.

\begin{proposition}\label{hominv}
Let $\phi_1, \phi_2: \Gamma_g \to \Gamma$ be markings which satisfy  $\pi_1(\phi_1) = \pi_1(\phi_2)$, then $\phi_1^* = \phi_2^*.$ 
\end{proposition}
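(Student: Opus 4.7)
The plan is to show that the morphisms $\phi_1^s$ and $\phi_2^s$ determine the same map after taking the $SL_2(\C)^{V(\Gamma_g)}$ GIT quotient on the target. The essential tool is the observation, already used implicitly in the remarks after Lemma \ref{distiso}, that for any cellular path $\gamma = f(1)\cdots f(k)$ in $\Gamma$ and any tuple $(h_e)_{e \in E(\Gamma)} \in SL_2(\C)^{E(\Gamma)}$, the product $\prod h_{f(i)}^{\pm 1} \in SL_2(\C)$ is invariant under insertion or elimination of a backtrack $e \cdot e^{-1}$, because such a pair contributes a factor $h_e h_e^{-1} = \t{Id}$. By Proposition \ref{backtrack}, this product therefore depends only on the unique reduced cellular representative of the free homotopy class of $\gamma$.

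First I would handle the case where $\phi_1$ and $\phi_2$ agree on the unique vertex $v_0 \in V(\Gamma_g)$, sending it to a common vertex $V \in V(\Gamma)$. Under this assumption the cellular loops $\phi_1(e_j)$ and $\phi_2(e_j)$ are both based at $V$ and represent the same element of $\pi_1(\Gamma, V)$, so their reduced forms agree. By the observation above, $W_j^{(1)}(\ldots h_e \ldots)$ and $W_j^{(2)}(\ldots h_e \ldots)$ are equal as elements of $SL_2(\C)$ for every choice of $(h_e)_e \in SL_2(\C)^{E(\Gamma)}$, and hence $\phi_1^s = \phi_2^s$ on the nose, giving $\phi_1^* = \phi_2^*$.

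The general case is reduced to this one by a basepoint change. Write $V_i = \phi_i(v_0)$ and choose a cellular path $\rho$ in $\Gamma$ from $V_1$ to $V_2$; the hypothesis $\pi_1(\phi_1) = \pi_1(\phi_2)$ is to be read after conjugating by $\rho$, so that each cellular loop $\phi_1(e_j)$ at $V_1$ represents the same class in $\pi_1(\Gamma, V_1)$ as $\rho \cdot \phi_2(e_j) \cdot \rho^{-1}$. Let $\tilde\phi_2: \Gamma_g \to \Gamma$ be the marking with $\tilde\phi_2(v_0) = V_1$ and $\tilde\phi_2(e_j) = \rho \cdot \phi_2(e_j) \cdot \rho^{-1}$ as a cellular loop. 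On coordinates one computes
\begin{equation}
\tilde\phi_2^s(\ldots h_e \ldots) = \bigl(U \cdot W_j^{(2)}(\ldots h_e \ldots) \cdot U^{-1}\bigr)_{j=1}^g,
\end{equation}
where $U \in SL_2(\C)$ is the value of the word assigned to $\rho$. Because $\Gamma_g$ has only one vertex, simultaneous conjugation of every coordinate by $U$ is precisely the action of $U \in SL_2(\C) = SL_2(\C)^{V(\Gamma_g)}$, so $\tilde\phi_2^*$ and $\phi_2^*$ agree as morphisms into $M_{\Gamma_g}(SL_2(\C))$. Combined with the first case applied to $\phi_1$ and $\tilde\phi_2$, this gives $\phi_1^* = \tilde\phi_2^* = \phi_2^*$.

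The only real subtlety I anticipate is pinning down the interpretation of $\pi_1(\phi_1) = \pi_1(\phi_2)$ when the basepoints $V_1, V_2$ differ; once a connecting path $\rho$ is fixed, the rest is an immediate consequence of backtrack invariance and the fact that the single-vertex structure of $\Gamma_g$ promotes any global conjugation to a gauge transformation by $SL_2(\C)^{V(\Gamma_g)}$.
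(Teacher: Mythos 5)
Your proof is correct and rests on the same essential ingredients as the paper's: backtrack-insensitivity of the $SL_2(\C)$-valued words evaluated along cellular paths (Proposition~\ref{backtrack}), and the fact that $\Gamma_g$ has a single vertex, so that simultaneous conjugation of every coordinate by a common $U \in SL_2(\C)$ is exactly the $SL_2(\C)^{V(\Gamma_g)}$-gauge action absorbed in the GIT quotient. The difference is one of care. The paper's proof asserts that $\phi_1(e)$ and $\phi_2(e)$ both differ from a common reduced cellular loop by backtracks and concludes $W^1_i = W^2_i$ literally; this step is airtight only when $\phi_1(v_0) = \phi_2(v_0)$. When the basepoints differ, the reduced loops are instead conjugate by a fixed connecting path $\rho$, so the tuples of words differ by a simultaneous conjugation by the value $U$ of the word along $\rho$, not merely by backtracks. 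You explicitly separate the common-basepoint case (where the paper's argument holds verbatim) from the general case, and observe that the residual conjugation by $U$ is precisely a gauge transformation on $M_{\Gamma_g}(SL_2(\C))$. Your version is the rigorously complete form of the paper's terse argument; it also repairs the same imprecision in the remark following Lemma~\ref{distiso}, which attributes the $V$-independence of $\psi^*_{V,\tree}$ to ``backtracks'' alone when a conjugation absorbed in the quotient is really what is happening.
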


\begin{proof}
For any edge $e \in E(\Gamma_g)$, the loops $\phi_1(e), \phi_2(e) \subset \Gamma$ are freely homotopic.  It follows that both differ from a common minimal length cellular class $\gamma$ by backtracks.  Therefore, we must have $W^1_i(\ldots h_e \ldots) = W^2_i(\ldots h_e \ldots)$ for any $(\ldots h_e \ldots) \in M_{\Gamma}(SL_2(\C)).$
\end{proof}

\begin{corollary}\label{hominv2}
Let $\phi_1, \phi_2: \Gamma_1 \to \Gamma_2$ satisfy $\pi_1(\phi_1) = \pi_1(\phi_2)$, then $\phi_1^* = \phi_2^*$. 
\end{corollary}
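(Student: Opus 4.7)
The strategy is to reduce to Proposition \ref{hominv} by precomposing with a distinguished marking of $\Gamma_1$, thereby moving from the source graph $\Gamma_1$ (which may have many vertices) to $\Gamma_g$ (which has only one).

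First I would choose a spanning tree $\tree \subset \Gamma_1$ and a basepoint $V \in V(\Gamma_1)$; this yields the distinguished marking $\psi_{V, \tree}: \Gamma_g \to \Gamma_1$, and Lemma \ref{distiso} tells us that the induced map $\psi_{V, \tree}^*: M_{\Gamma_1}(SL_2(\C)) \to M_{\Gamma_g}(SL_2(\C))$ is an isomorphism of affine schemes. Next I would consider the two compositions $\phi_1 \circ \psi_{V, \tree}, \phi_2 \circ \psi_{V, \tree}: \Gamma_g \to \Gamma_2$. On fundamental groups these give $\pi_1(\phi_i) \circ \pi_1(\psi_{V, \tree})$, which agree because $\pi_1(\phi_1) = \pi_1(\phi_2)$ by hypothesis.

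I would then invoke Proposition \ref{hominv} applied to $\phi_1 \circ \psi_{V, \tree}$ and $\phi_2 \circ \psi_{V, \tree}$ to conclude $(\phi_1 \circ \psi_{V, \tree})^* = (\phi_2 \circ \psi_{V, \tree})^*$. By the functoriality established in Proposition \ref{graphfunctor}, this equality reads $\psi_{V, \tree}^* \circ \phi_1^* = \psi_{V, \tree}^* \circ \phi_2^*$, and cancelling the isomorphism $\psi_{V, \tree}^*$ yields $\phi_1^* = \phi_2^*$, completing the argument.

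The one point that requires care, and which I expect to be the main obstacle, is verifying that Proposition \ref{hominv} genuinely applies in the previous step: that proposition is stated for a pair of \emph{markings} of a graph $\Gamma$, whereas $\phi_i \circ \psi_{V, \tree}$ need not induce isomorphisms on $\pi_1$ (since the $\phi_i$ were not assumed to be markings). However, inspecting the proof of Proposition \ref{hominv} shows that the marking hypothesis is never actually used: the argument requires only that for each edge $e \in E(\Gamma_g)$, the loops $(\phi_1 \circ \psi_{V,\tree})(e)$ and $(\phi_2 \circ \psi_{V,\tree})(e)$ in $\Gamma_2$ are freely homotopic, which follows from $\pi_1(\phi_1 \circ \psi_{V, \tree}) = \pi_1(\phi_2 \circ \psi_{V, \tree})$ together with the fact that every edge of $\Gamma_g$ is a loop. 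Hence the same cancellation-of-backtracks argument used in Proposition \ref{hominv} carries over verbatim, and the reduction closes.
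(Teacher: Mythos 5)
Your proof is correct and follows essentially the same route as the paper: choose a distinguished map $\theta = \psi_{V,\tree}: \Gamma_g \to \Gamma_1$, apply the argument of Proposition~\ref{hominv} to the compositions $\phi_i \circ \theta$, and cancel the isomorphism $\theta^*$. The paper's own proof also explicitly invokes ``the proof of Proposition~\ref{hominv}'' rather than the statement, for exactly the reason you flag — that $\phi_i \circ \theta$ need not be markings — so your closing paragraph correctly identifies and resolves the one genuine subtlety; your write-up simply spells out the functoriality and cancellation steps that the paper leaves implicit.
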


\begin{proof}
We can choose a spanning tree in $\Gamma_1$, and a corresponding distinguished map $\theta: \Gamma_g \to \Gamma_1$. This defines two cellular maps $\theta_i = \phi_i \circ \theta: \Gamma_g \to \Gamma_2$, which are the same map under $\pi_1.$  It follows from the proof of Proposition \ref{hominv} that $\theta_1^*, \theta_2^*$ and therefore  $\phi_1^*, \phi_2^*$ define the same maps on varieties, see Figure \ref{homequiv}.
\end{proof}

From Proposition \ref{hominv} and Lemma \ref{distiso}, it follows that any equivalence class of markings $[\phi]: \Gamma_g \to \Gamma$ corresponds to a well-defined isomorphism of varieties $\phi^*: M_{\Gamma}(SL_2(\C)) \cong \mathcal{X}(F_g, SL_2(\C)),$ and an isomorphism of coordinate rings: 

\begin{equation}
\hat{\phi}: \C[\mathcal{X}(F_g, SL_2(\C))] \to \C[M_{\Gamma}(SL_2(\C))].\\
\end{equation}

\noindent
As a consequence, there is a cone of valuations $C_{\Gamma, \phi} \subset \mathcal{X}(F_g, SL_2(\C))^{an}$ obtained by pulling back $C_{\Gamma}$ along $\hat{\phi}.$

\begin{figure}[htbp]

$$
\begin{xy}
(0, -3)*{\mathcal{X}(F_g, SL_2(\C))} = "A";
(-8.5, 0)*{} = "A0";	
(3.5, 0)*{} = "A1";	
(-3.5, 0)*{} = "A2";
(28, 30)*{M_{\Gamma_1}(SL_2(\C))} = "B";
(20,25)*{} = "B1";
(15, 28)*{} = "B2";
(15, 33)*{} = "B3";
(-28, 30)*{M_{\Gamma_1}(SL_2(\C))} = "C";
(-25,25)*{} = "C0";
(-20,25)*{} = "C1";
(-15, 28)*{} = "C2"; 
(-15, 33)*{} ="C3";
(0, 30)*{\phi_1^*};
(0, 35)*{\phi_2^*};
(18, 14)*{\theta^*};
(-15, 14)*{\theta_1^*};
(-20, 14)*{\theta_2^*};
(20, 14)*{};
{\ar@{>}"B1"; "A1"};
{\ar@{>}"C0"; "A0"};
{\ar@{>}"C1"; "A2"};
{\ar@{>}"C2"; "B2"};
{\ar@{>}"C3"; "B3"};
\end{xy}
$$\\
\caption{}
\label{homequiv}
\end{figure}

\section{The sets $\mathcal{R}(\Gamma, \phi), \mathcal{S}(\Gamma, \phi)$ and length functions}\label{spanningsets}

We discuss two spanning sets $\mathcal{R}(\Gamma, \phi), \mathcal{S}(\Gamma, \phi) \subset \C[\mathcal{X}(F_g, SL_2(\C))]$ associated to a marking $\phi: \Gamma_g \to \Gamma$.  Elements of $S(\Gamma, \phi)$ correspond to arrangements of closed loops in $\Gamma$, and the elements of $R(\Gamma, \phi)$ are in bijection with the so-called spin diagrams with topology $\Gamma$.   We relate these sets to each other, showing that any element of $\mathcal{S}(\Gamma, \phi)$ has a lower-triangular expansion into spin diagrams in $\mathcal{R}(\Gamma, \phi)$ with respect to a natural partial ordering.  We then show how to evaluate elements from both of these sets in the valuations $v_{\Gamma, \ell, \phi} \in C_{\Gamma, \phi}.$  We finish the section by showing that the set $\mathcal{S}(\Gamma, \phi)$ does not depend on the marking $\phi$ or the graph $\Gamma$, and we use this to give an embedding of $\hat{O}(g)$ into $\mathcal{X}(F_g, SL_2(\C))^{an}.$

\subsection{Spin diagrams}\label{subspindiagrams}

We fix a marking of a trivalent graph $\phi: \Gamma_g \to \Gamma$, and use the induced isomorphism $\phi^*: M_{\Gamma}(SL_2(\C)) \to \mathcal{X}(F_g, SL_2(\C))$ to define a basis in $\C[\mathcal{X}(F_g, SL_2(\C))]$ with invariant theory.   Observe that the coordinate ring of the space $SL_2(\C)^{E(\Gamma)}$ has the following isotypical decomposition under the action of $SL_2(\C)^{E(\Gamma)} \times SL_2(\C)^{E(\Gamma)}$. 

\begin{equation}
\C[SL_2(\C)^{E(\Gamma)}] = \bigoplus_{a: E(\Gamma) \to \Z_{\geq 0}} \bigotimes_{e \in E(\Gamma)} V(a(e))\otimes V(a(e))\\
\end{equation}

\noindent
Passing to $SL_2(\C)^{V(\Gamma)}$ invariants gives a direct sum decomposition of the coordinate ring of $M_{\Gamma}(SL_2(\C))$. 

\begin{equation}
\C[M_{\Gamma}(SL_2(\C))] =  \bigoplus_{a: E(\Gamma) \to \Z_{\geq 0}} [\bigotimes_{e \in E(\Gamma)} V(a(e))\otimes V(a(e))]^{SL_2(\C)^{V(\Gamma)}}\\
\end{equation}

Recall that there is a copy of $SL_2(\C)$ for each vertex $v \in V(\Gamma)$ acting on the tensor product $V(a(e)) \otimes V(a(f)) \otimes V(a(g))$, where $v \in \delta(e), \delta(f), \delta(g).$  The Clebsch-Gordon rule implies that the invariant subspace of such a tensor product is at most one dimensional, and this space is nontrivial if and only if the numbers $a(e), a(f), a(g)$ satisfy two conditions.

\begin{enumerate}
\item $a(e) + a(f) + a(g) \in 2\Z$\\
\item $a(e), a(f), a(g)$ are the sides of a triangle: $|a(e) - a(g)| \leq a(f) \leq a(e) + a(g)$.\\
\end{enumerate}

\begin{definition}\label{spincone}
We let $\mathcal{P}_{\Gamma}$ be the polyhedral cone of $a: E(\Gamma) \to \R_{\geq 0}$ which satisfy condition $2$ above, and we let $L_{\Gamma} \subset \Z^{E(\Gamma)}$ be the lattice defined by condition $1$ above.  Finally,  $P_{\Gamma} = \mathcal{P}_{\Gamma} \cap L_{\Gamma}$ is defined to be the associated affine semigroup.  
\end{definition}

For $a: E(\Gamma) \to \Z_{\geq 0}$ the invariant spaces $[V(a(e)) \otimes V(a(f)) \otimes V(a(g))]^{SL_2(\C)}$ are multiplicity-free, it follows that each space $ [\bigotimes_{e \in E(\Gamma)} V(a(e))\otimes V(a(e))]^{SL_2(\C)^{V(\Gamma)}}$ is multiplicity-free as well, with dimension $1$ occurring precisely when $a \in P_{\Gamma}$.  We fix a non-zero element $\Phi_a \in  [\bigotimes_{e \in E(\Gamma)} V(a(e))\otimes V(a(e))]^{SL_2(\C)^{V(\Gamma)}}$.  This choice defines a direct sum decomposition: 

\begin{equation}
\C[M_{\Gamma}(SL_2(\C))] = \bigoplus_{a \in P_{\Gamma}} \C\Phi_a\\
\end{equation}

\noindent
Let $R(\Gamma) \subset \C[M_{\Gamma}(SL_2(\C))]$ be the set composed of the $\Phi_a$; these functions are called spin diagrams.  For any valuation $v_{\ell} \in C_{\Gamma}$ the following formula holds by definition (see Figure \ref{spinval}): 

\begin{equation}\label{innerproducteval}
v_{\ell}(\Phi_a) = \sum_{e \in E(\Gamma)} \ell(e)a(e).\\
\end{equation}

\begin{figure}[htbp]
\centering
\includegraphics[scale = 0.4]{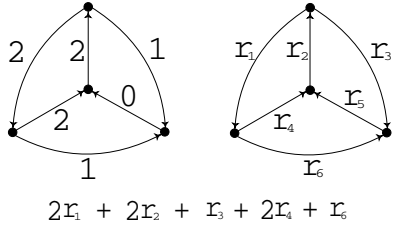}
\caption{Evaluating a spin diagram element on the left in a valuation on the right.}
\label{spinval}
\end{figure}

\begin{proposition}\label{spinadapt}
For any $\ell \in C_{\Gamma}$, the space $v_{\ell}^{\leq r}$ is equal to the direct sum $\bigoplus_{a | v_{\ell}(\Phi_a) \leq r} \C\Phi_a$.  Furthermore, for any $f \in \C[M_{\Gamma}(SL_2(\C))]$ with $f = \sum C_a\Phi_a$ we have:

\begin{equation}
v_{\ell}(f) = MAX\{v_{\ell}(\Phi_a) | C_a \neq 0\}.\\
\end{equation}

\end{proposition}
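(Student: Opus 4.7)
The plan is to show that the $v_\ell$-filtration on $\C[M_\Gamma(SL_2(\C))]$ is direct-sum-adapted to the spin diagram basis $\{\Phi_a : a \in P_\Gamma\}$; once this is established, the MAX formula follows tautologically since a sum of basis vectors lying in distinct graded pieces has valuation equal to the maximum degree of its nonzero components. The argument proceeds in two main steps: analyze the filtration on the ambient ring $\C[SL_2(\C)^{E(\Gamma)}]$, then descend to the $GIT$ quotient via reductivity.

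First I would trace how $v_\ell$ sits on $\C[SL_2(\C)^{E(\Gamma)}] = \bigotimes_e \C[SL_2(\C)]$. The valuation $v$ from Subsection \ref{wonderful} has graded pieces $\mathrm{End}(V(i))$, so applying Lemma \ref{val2} iteratively to the decomposition $v_\ell = \sum_e \ell(e)\circ v_e$ shows that the associated graded on the ambient ring is the tensor product of the edge-wise associated gradeds. A direct unwinding of the tensor-product filtration from Section \ref{valuations} then gives
$$v_\ell^{\leq r} = \bigoplus_{a : \sum_e \ell(e) a(e) \leq r} \bigotimes_{e \in E(\Gamma)} \mathrm{End}(V(a(e))).$$
In particular the ambient filtration is adapted to the multi-isotypical decomposition under $SL_2(\C)^{E(\Gamma)} \times SL_2(\C)^{E(\Gamma)}$, and $v_\ell$ takes the value $\sum_e \ell(e) a(e)$ on any nonzero element lying in a single multi-isotypical component.

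Next I would pass to $SL_2(\C)^{V(\Gamma)}$-invariants. Each $SL_2(\C)$ factor at a vertex acts on incident edges by left or right multiplication and hence preserves the $F$-filtration on each tensor factor (since $F$ is by $SL_2(\C) \times SL_2(\C)$-isotypical components); consequently $v_\ell$ is $SL_2(\C)^{V(\Gamma)}$-stable. Lemma \ref{val1} then yields
$$v_\ell^{\leq r} \cap \C[M_\Gamma(SL_2(\C))] = \bigoplus_{a : \sum_e \ell(e) a(e) \leq r} \left[ \bigotimes_{e} \mathrm{End}(V(a(e))) \right]^{SL_2(\C)^{V(\Gamma)}}.$$
By the Clebsch-Gordan analysis in Subsection \ref{subspindiagrams}, each invariant summand is zero when $a \notin P_\Gamma$ and otherwise equals $\C\Phi_a$, while equation \eqref{innerproducteval} identifies $\sum_e \ell(e) a(e)$ with $v_\ell(\Phi_a)$. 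This proves the first claim, from which the MAX formula follows immediately. I do not anticipate a serious obstacle: the two main inputs—stability of the $F$-filtration under $SL_2(\C) \times SL_2(\C)$ and the factorization of associated gradeds for $\oplus$-sums of valuations on tensor products—are both already in hand.
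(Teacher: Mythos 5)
Your proposal is correct and follows exactly the route that the paper's one-line proof (``a consequence of the definition of $\C\Phi_a$ and Lemmas \ref{val1} and \ref{val2}'') intends: use Lemma \ref{val2} to see that $v_\ell^{\leq r}$ on the ambient ring $\C[SL_2(\C)^{E(\Gamma)}]$ is adapted to the multi-isotypical decomposition, then restrict to invariants via Lemma \ref{val1} and the Clebsch--Gordan multiplicity-freeness to identify the pieces with the lines $\C\Phi_a$, with Equation \eqref{innerproducteval} supplying the value $v_\ell(\Phi_a) = \sum_e \ell(e)a(e)$. The only cosmetic point is that the displayed direct-sum identity for the invariant subring follows from the $SL_2(\C)^{V(\Gamma)}$-stability of the ambient direct-sum decomposition (taking invariants commutes with direct sums), while Lemma \ref{val1} is what guarantees that the restricted filtration is still a genuine valuation, i.e.\ has integral associated graded.
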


\begin{proof}
This is a consequence of the definition of the space $\C\Phi_a$ and Lemmas \ref{val1} and \ref{val2}.
\end{proof}

We place a partial ordering on the lattice points in $P_{\Gamma},$ where $a \preceq a'$ if $a' - a \in C_{\Gamma}.$

\begin{proposition}
A product $\Phi_a \Phi_{a'}$ is a linear combination of $\Phi_{t}$ with $t \preceq a + a'$, furthermore, the term $\Phi_{a + a'}$ always appears with a non-zero coefficient.
\end{proposition}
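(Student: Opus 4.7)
The plan is to compute the product $\Phi_a \Phi_{a'}$ edge-by-edge before taking invariants, and then use the associated graded / domain machinery set up in Section \ref{charactervaluations} to isolate the top term.

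First I would pull the computation up to $\C[SL_2(\C)^{E(\Gamma)}]$. By definition $\Phi_a$ lies in $\bigotimes_{e \in E(\Gamma)} [V(a(e)) \otimes V(a(e))] \subset \bigotimes_e End(V(a(e))) \subset \C[SL_2(\C)^{E(\Gamma)}]$, and similarly for $\Phi_{a'}$. On each edge factor separately, the multiplication in $\C[SL_2(\C)]$ respects the filtration $F_{\leq k} = \bigoplus_{i \leq k} End(V(i))$ (discussed in Section \ref{sl2}): the product $End(V(a(e))) \cdot End(V(a'(e)))$ lies in $\bigoplus_{k} End(V(k))$ where $k$ runs over those non-negative integers with $k \leq a(e) + a'(e)$ and $k \equiv a(e) + a'(e) \pmod 2$. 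Therefore the product $\Phi_a \Phi_{a'}$ lies in $\bigoplus_t [\bigotimes_e V(t(e)) \otimes V(t(e))]$ where $t(e) \leq a(e) + a'(e)$ for every $e$. After taking $SL_2(\C)^{V(\Gamma)}$-invariants, only the indices $t \in P_\Gamma$ survive, and the inequality $t(e) \leq a(e) + a'(e)$ on every edge is precisely the condition that $a + a' - t$ lies in the positive orthant $C_\Gamma$, i.e.\ $t \preceq a + a'$. This gives the first assertion.

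For the second assertion I would argue that $\Phi_{a + a'}$ must appear. Choose a strictly positive $\ell \in C_\Gamma$ (an interior point). By Proposition \ref{agraded}, $gr_{v_\ell}(\C[M_\Gamma(SL_2(\C))]) \cong \C[M_\Gamma(SL_2(\C)^c)]$ is a domain. By formula \eqref{innerproducteval}, $v_\ell(\Phi_a) = \sum_e \ell(e) a(e)$ and similarly for $\Phi_{a'}$ and $\Phi_{a+a'}$, so
\begin{equation*}
v_\ell(\Phi_a) + v_\ell(\Phi_{a'}) = \sum_e \ell(e)(a(e) + a'(e)) = v_\ell(\Phi_{a+a'}).
\end{equation*}
Since the associated graded is a domain, the images of $\Phi_a$ and $\Phi_{a'}$ multiply to a non-zero element of degree $v_\ell(\Phi_a) + v_\ell(\Phi_{a'})$, so $v_\ell(\Phi_a \Phi_{a'}) = v_\ell(\Phi_{a+a'})$. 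Now write $\Phi_a \Phi_{a'} = \sum_{t \preceq a+a'} C_t \Phi_t$. By Proposition \ref{spinadapt}, $v_\ell(\Phi_a \Phi_{a'}) = \mathrm{MAX}\{v_\ell(\Phi_t) \mid C_t \neq 0\}$. But for $\ell$ strictly positive and any $t \preceq a + a'$ with $t \neq a + a'$, at least one coordinate satisfies $t(e) < a(e) + a'(e)$, so $v_\ell(\Phi_t) < v_\ell(\Phi_{a+a'})$. The maximum can therefore only be realized by $t = a + a'$, forcing $C_{a + a'} \neq 0$.

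The potentially delicate point, and the one I would double-check, is the edge-wise filtration argument in the first paragraph: namely, that one may pass from the filtration behavior of each tensor factor $\C[SL_2(\C)]$ to the filtration of the tensor product and then through the $SL_2(\C)^{V(\Gamma)}$-invariant functor without losing information. This is exactly the content of Lemmas \ref{val1} and \ref{val2} invoked already in Proposition \ref{agraded}, so the obstacle is really just bookkeeping rather than a new input; the rest of the argument is an immediate application of the domain property of $gr_{v_\ell}$ and the adaptedness statement of Proposition \ref{spinadapt}.
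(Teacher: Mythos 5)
Your proof is correct and follows essentially the same route as the paper: the first assertion is the edgewise filtration property of $\C[SL_2(\C)]$ pushed through the tensor product and invariants, and the second assertion is the interior-valuation argument (choose $\ell$ interior to $C_\Gamma$, use $v_\ell(\Phi_a\Phi_{a'}) = v_\ell(\Phi_a) + v_\ell(\Phi_{a'})$, and observe that this value can only be realized by the $t = a + a'$ term). The paper phrases the second step as a contradiction ($\Phi_{a+a'}$ absent would force $v_\ell(\Phi_a\Phi_{a'}) < v_\ell(\Phi_a)+v_\ell(\Phi_{a'})$), while you argue directly via Proposition \ref{spinadapt}, but the content is identical.
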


\begin{proof}
The first statement is a consequence of the corresponding fact for $\C[SL_2(\C)]$.  For the second statement, we choose a valuation $v_{\ell}$ with $\ell$ in the interior of $C_{\Gamma}$, and consider $v_{\ell}(\Phi_a\Phi_{a'}) = v_{\ell}(\Phi_a) + v_{\ell}(\Phi_{a'})$.  If $\Phi_{a + a'}$ did not appear in $\Phi_a \Phi_{a'}$ then this element would be a linear combination of $\Phi_t$ with $t(e) < a(e) + a'(e)$ for
some $e \in E(\Gamma)$.  For each of these terms we must have $v_{\ell}(\Phi_t) < v_{\ell}(\Phi_a) + v_{\ell}(\Phi_{a'})$ by Formula \ref{innerproducteval}, which contradicts that $v_{\ell}$ is  a  valuation. 
\end{proof}

For any $f \in \C[M_{\Gamma}(SL_2(\C))]$ we may write $f = \sum_{a \in P_{\Gamma}} C_a\Phi_a$.  If there is a $a \in P_{\Gamma}$ with $C_a \neq 0$ and $a' \prec a$ for all $C_{a'} \neq 0$, we say that $\Phi_a$ is the initial term of $f$, $in(f) = C_a\Phi_a$.  This notion can also be defined in the coordinate ring $\C[SL_2(\C)^{E(\Gamma)}]$, where $in(f)$ is taken to be the contribution from the isotypical space $\otimes_{e \in E(\Gamma)} V(a(e)) \otimes V(a(e))$ with $a$ maximal under $\prec$.  We note that the $SL_2(\C)^{V(\Gamma)}$-stability of this decomposition implies that the initial term of an invariant $f \in \C[M_{\Gamma}(SL_2(\C))] \subset \C[SL_2(\C)^{E(\Gamma)}]$ can be computed in either ring, with the same result. 

The coordinate ring $\C[M_{\Gamma}(SL_2(\C)^c)]$ has a decomposition into invariant spaces identical to the decomposition of $\C[M_{\Gamma}(SL_2(\C))]$ discussed above.  Each of these spaces is of course $1-$dimensional, and the multiplication operation on $\C[SL_2(\C)^c]$ is graded by the highest weights in the isotypical decomposition. The following is an immediate consequence of these observations. 

\begin{proposition}\label{contractiontoricvariety}
The space $M_{\Gamma}(SL_2(\C)^c)$ is the affine toric variety $Spec( \C[P_{\Gamma}])$. 
\end{proposition}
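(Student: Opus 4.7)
My plan is to read off the structure of $\C[M_{\Gamma}(SL_2(\C)^c)]$ directly from its $SL_2(\C)^{V(\Gamma)}$-isotypical decomposition and show that the multiplication table coincides with that of the affine semigroup algebra $\C[P_{\Gamma}]$. There are three things to verify: that the graded pieces are indexed by $P_{\Gamma}$ with each piece one-dimensional, that the multiplication is strictly graded (no lower terms appear), and that products of nonzero elements are nonzero.

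The first point essentially repeats the Clebsch–Gordon analysis already performed in Subsection~\ref{subspindiagrams} for $\C[M_{\Gamma}(SL_2(\C))]$. The coordinate ring of $SL_2(\C)^c$ is $\bigoplus_{i\geq 0} V(i)\otimes V(i)$ with the same $SL_2(\C)\times SL_2(\C)$-module structure as $\C[SL_2(\C)]$, so the decomposition of $\C[SL_2(\C)^{c,E(\Gamma)}]$ under $SL_2(\C)^{V(\Gamma)}$-invariants is exactly $\bigoplus_{a:E(\Gamma)\to \Z_{\geq 0}} \bigl[\bigotimes_e V(a(e))\otimes V(a(e))\bigr]^{SL_2(\C)^{V(\Gamma)}}$. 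Clebsch–Gordon then tells us this space is one-dimensional when $a\in P_{\Gamma}$ and zero otherwise, giving a vector space isomorphism $\C[M_{\Gamma}(SL_2(\C)^c)] \cong \bigoplus_{a\in P_{\Gamma}} \C\Phi_a$.

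For the multiplication, recall from the $SL_2(\C)$ discussion that the product $[V(i)\otimes V(i)]\cdot [V(j)\otimes V(j)] \to [V(i+j)\otimes V(i+j)]$ in $\C[SL_2(\C)^c]$ is strictly graded and surjective. Multiplying edge by edge, the product $\Phi_a \Phi_b$ lands in $\bigotimes_e V(a(e)+b(e))\otimes V(a(e)+b(e))$; since this isotypical component is one-dimensional in the invariant ring, we get $\Phi_a\Phi_b = c_{a,b}\,\Phi_{a+b}$ for some scalar $c_{a,b}\in \C$.

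The main thing to check is that $c_{a,b}\neq 0$, and this is where Proposition~\ref{agraded} does the work. Choosing $\ell$ in the interior of $C_{\Gamma}$, the proposition identifies $\C[M_{\Gamma}(SL_2(\C)^c)]$ with the associated graded of $\C[M_{\Gamma}(SL_2(\C))]$ with respect to the valuation $v_{\ell}$; since $v_{\ell}$ is a genuine valuation (Lemmas~\ref{val1} and \ref{val2}), its associated graded is a domain. Hence products of nonzero elements are nonzero, so each $c_{a,b}$ is a unit. The resulting 2-cocycle $c:P_{\Gamma}\times P_{\Gamma}\to \C^*$ on the torsion-free abelian semigroup $P_{\Gamma}$ is a coboundary, so after rescaling each $\Phi_a$ by a suitable scalar $\lambda_a$ we may take $c_{a,b}=1$, producing a $\C$-algebra isomorphism $\C[P_{\Gamma}] \xrightarrow{\sim} \C[M_{\Gamma}(SL_2(\C)^c)]$ sending $[a]\mapsto \Phi_a$. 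The only subtle point is the non-vanishing of $c_{a,b}$; everything else is bookkeeping on representation theoretic data already assembled in the preceding sections.
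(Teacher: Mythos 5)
Your proof is correct and follows essentially the same line of reasoning as the paper's: isotypical decomposition indexed by $P_{\Gamma}$, one-dimensional graded pieces, strict gradedness of the product, and the domain property forcing nonvanishing structure constants. The paper's own proof is a terse three-sentence remark that treats the nonvanishing of $c_{a,b}$ as implicit; you have correctly identified it as the only nontrivial point and supplied the right tool (Proposition~\ref{agraded} via Lemmas~\ref{val1} and~\ref{val2}). The cocycle-trivialization step at the end is slightly more formal than necessary — the underlying fact is that a $P$-graded domain over $\C$ with all homogeneous pieces one-dimensional is automatically isomorphic to $\C[P]$ when $P$ is a torsion-free affine semigroup, since one can localize at the nonzero homogeneous elements to reduce to $P^{\mathrm{gp}}\cong\Z^n$, where $\mathrm{Ext}^1(\Z^n,\C^*)=0$ trivializes the symmetric cocycle — but your invocation of it is accurate, and this is exactly what the paper leaves unsaid.
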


\subsection{An alternative construction of $M_{\Gamma}(SL_2(\C))$.} 

We give a different construction of the space $M_{\Gamma}(SL_2(\C))$ which is useful for describing the second spanning set $S(\Gamma) \subset \C[M_{\Gamma}(SL_2(\C))]$.  Consider the following left diagonal $GIT$ quotient: 

\begin{equation}
M_{k}(SL_2(\C)) = SL_2(\C) \ql SL_2(\C)^k.\\
\end{equation}

\noindent
This space retains a right action by $SL_2(\C)^k$.

For an oriented graph $\Gamma$, we form the product variety $\prod_{v \in V(\Gamma)} M_{\epsilon(v)}(SL_2(\C))$.  This product is taken over vertices so each edge $e \in E(\Gamma)$ is represented twice, and each corresponding copy of $SL_2(\C)$ has a right action by $SL_2(\C)$ as above.   Consequently, this product space comes with a residual action by $SL_2(\C)^{E(\Gamma)} \times SL_2(\C)^{E(\Gamma)}$. A right hand side quotient by a diagonally embedded $SL_2(\C)^{E(\Gamma)} \subset SL_2(\C)^{2E(\Gamma)}$ gives the following identity: 

\begin{equation}
[\prod_{v \in V(\Gamma)} M_{\epsilon(v)}(SL_2(\C))] \q SL_2(\C)^{E(\Gamma)} \cong\\ \end{equation}

$$SL_2(\C)^{V(\Gamma)} \ql [\prod_{e \in E(\Gamma)} ([SL_2(\C)^2]\q SL_2(\C))^{E(\Gamma)}] =   M_{\Gamma}(SL_2(\C)).$$

\begin{figure}[htbp]
\centering
\includegraphics[scale = 0.6]{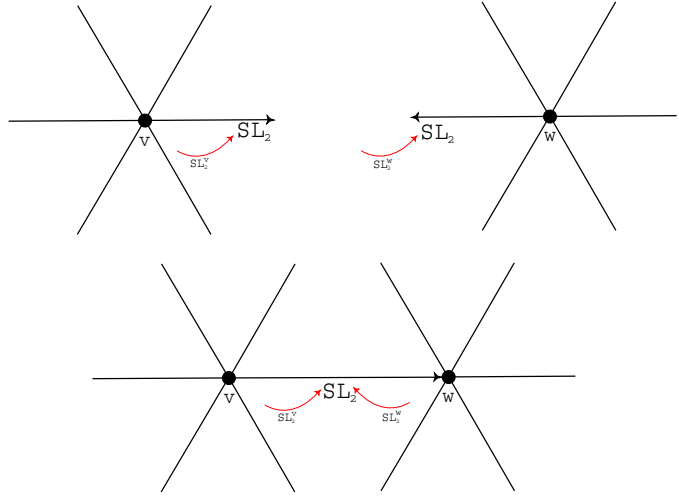}
\caption{The isomorphism takes $SL_2(\C) \ql [SL_2(\C) \times SL_2(\C)]$ with its two right hand actions to $SL_2(\C)$ with its left and right actions.}
\label{Gluing}
\end{figure}

\noindent 
To see this, we view the group $SL_2(\C)$ as an affine $GIT$ quotient $[SL_2(\C) \times SL_2(\C)]\q SL_2(\C),$ where $SL_2(\C)$ acts on the right hand side of both components. The equivariant $SL_2(\C) \times SL_2(\C)$ isomorphism between these spaces is provided by the maps $(g, h) \to h^{-1}g$, $g \to (g, 1)$.

For an oriented tree $\tree$ we define a space $M_{\tree}(SL_2(\C))$ using the same quotient recipe:

\begin{equation}
M_{\tree}(SL_2(\C)) = [\prod_{v \in V(\tree)} M_{\epsilon(v)}(SL_2(\C))]\q SL_2(\C)^{E(\tree)}.\\
\end{equation}

\begin{lemma}\label{distisotree}
For a tree $\tree$ with leaf set $\mathcal{L}(\tree)$ there is an isomorphism:

\begin{equation}
\Phi_{\tree}: M_{|\mathcal{L}(\tree)|}(SL_2(\C)) \to M_{\tree}(SL_2(\C)).\\
\end{equation}

\end{lemma}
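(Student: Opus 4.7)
The plan is to prove Lemma \ref{distisotree} by induction on the number of internal edges of $\tree$, meaning those $e \in E(\tree)$ both of whose endpoints lie in $V(\tree)$. The base case is a tree with a single internal vertex $v$ and $|\epsilon(v)|$ leaf edges; then $M_{\tree}(SL_2(\C)) = M_{\epsilon(v)}(SL_2(\C)) = M_{|\mathcal{L}(\tree)|}(SL_2(\C))$ directly from the definitions, and $\Phi_{\tree}$ is the identity.

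For the inductive step, I would choose an internal edge $e$ with $\delta(e) = (v, u)$ and cut $\tree$ along $e$ to obtain two subtrees $\tree_v, \tree_u$, each acquiring one new leaf at the cut. Since GIT quotients by commuting group actions may be taken in stages, the defining expression for $M_{\tree}(SL_2(\C))$ factors as
\begin{equation}
M_{\tree}(SL_2(\C)) \cong [M_{\tree_v}(SL_2(\C)) \times M_{\tree_u}(SL_2(\C))] \q SL_2(\C),
\end{equation}
where the remaining $SL_2(\C)$ is the diagonal right action on the two new leaf coordinates coming from the two halves of $e$. Applying the inductive hypothesis to each factor reduces the problem to the single ``gluing identity''
\begin{equation}
[M_{a+1}(SL_2(\C)) \times M_{b+1}(SL_2(\C))] \q SL_2(\C) \cong M_{a+b}(SL_2(\C)),
\end{equation}
where the $SL_2(\C)$ on the left acts diagonally on the right of the $(a+1)$-st and $(b+1)$-st coordinates, and the isomorphism must intertwine the residual right $SL_2(\C)^{a+b}$ actions on the leaf coordinates.

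This gluing identity is the heart of the argument and is an iterated version of the single-edge case pictured in Figure \ref{Gluing}. Writing classes as $[(h_1, \ldots, h_a, h)]$ and $[(h_1', \ldots, h_b', h')]$, an explicit equivariant isomorphism is given by
\begin{equation}
[(h_1, \ldots, h_a, h)] \times [(h_1', \ldots, h_b', h')] \longmapsto [(h' h^{-1} h_1, \ldots, h' h^{-1} h_a, h_1', \ldots, h_b')],
\end{equation}
which one verifies descends through all four layers of GIT quotient and respects right multiplication on each leaf coordinate. The main obstacle I anticipate is purely organizational: keeping the left-diagonal quotients arising vertex-by-vertex distinct from the right-diagonal quotients arising edge-by-edge, and verifying that the equivariant formula produced at each inductive step is compatible with the next gluing. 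Once this is in place, the induction assembles the global isomorphism $\Phi_{\tree}$, which is independent of the chosen edge $e$ at each step since both sides of the claimed isomorphism are manifestly canonical.
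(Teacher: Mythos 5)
Your proposal is correct and reaches the same destination, but it is organized differently from the paper's argument. The paper avoids induction entirely: it applies the basic isomorphism $[SL_2(\C) \times SL_2(\C)]\q SL_2(\C) \cong SL_2(\C)$ once per internal edge to rewrite $M_{\tree}(SL_2(\C))$ all at once as $SL_2(\C)^{V(\tree)} \ql SL_2(\C)^{E(\tree)\cup\mathcal{L}(\tree)}$, and then exhibits explicit maps in both directions: $\Phi_{\tree}$ assigns $\mathrm{Id}$ to every internal edge and the input to the leaves, while $\Psi_{V,\tree}$ evaluates word products along the unique paths in $\tree$ from a chosen basepoint $V$ to each leaf, with the inverse verification deferred to the gauge-fixing argument of Lemma \ref{distiso}. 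By contrast you run an induction on internal edges, cutting $\tree$ at one edge and reducing to a single ``gluing identity'' $[M_{a+1}(SL_2(\C)) \times M_{b+1}(SL_2(\C))]\q SL_2(\C) \cong M_{a+b}(SL_2(\C))$ stated at the level of the $M_n$'s rather than at the level of raw products $SL_2(\C)^k$. Your explicit formula $[(h_1,\ldots,h_a,h)]\times[(h_1',\ldots,h_b',h')]\mapsto[(h'h^{-1}h_1,\ldots,h'h^{-1}h_a,h_1',\ldots,h_b')]$ is indeed equivariant for all four group actions in play, and its inverse is precisely the map that appends identity elements in the last coordinate of each factor, which is the $M_n$-level analogue of the paper's $\Phi_{\tree}$. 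The tradeoff is this: the paper's formulation directly produces the maps $\Phi_{\tree}$, $\Psi_{V,\tree}$ that reappear in Proposition \ref{spanningtreespace} and elsewhere, and it never needs to worry about compatibility of choices made across inductive steps; your version is more modular and makes each step a self-contained verification, at the cost of leaving the global map $\Phi_{\tree}$ somewhat implicit in the assembled composite. One minor point to tighten: you assert that the map ``descends through all four layers'' without recording the four checks, and you should also explicitly note the inverse (insertion of identities), since surjectivity of the GIT map alone does not give you an isomorphism.
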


\begin{proof}
 By repeatedly applying the isomorphism $[SL_2(\C) \times SL_2(\C)]\q SL_2(\C) \cong SL_2(\C)$, we find that $M_{\tree}(SL_2(\C))$ is isomorphic to the quotient space:

\begin{equation}
SL_2(\C)^{V(\tree)} \ql [SL_2(\C)^{E(\tree) \cup \mathcal{L}(\tree)}].\\
\end{equation} 

For ease of notation we set $n = |\mathcal{L}(\tree)|$, and we let $\Phi_{\tree}: M_{\mathcal{L}(\tree)}(SL_2(\C)) \to M_{\tree}(SL_2(\C))$ be the map that assigns $(g_1, \ldots, g_n)$ to the leaves of $\tree$ and $Id \in SL_2(\C)$ to all edges, similar to the map $\phi_{\Gamma}$ from Section \ref{charactervaluations}. 

We choose a base point $V \in V(\tree)$ and define the map $\Psi_{V, \tree}: M_{\tree}(SL_2(\C)) \to M_{\mathcal{L}(\tree)}(SL_2(\C))$.  This map sends an element $(\ldots, g_e, \ldots)$ is sent to the element $(W_1(\ldots g_e \ldots), \ldots, W_n(\ldots g_e \ldots))$ where $W_i$ is the oriented word obtained by following the unique path from $V$ to the $i-$th leaf of $\tree.$  Now the proof of Lemma \ref{distiso} shows that $\Phi_{\tree}$ and $\Psi_{V, \tree}$ are inverses. 
\end{proof}

For a graph $\Gamma$, and a spanning tree $\tree$, we let $T(\tree, \Gamma)$ be the tree obtained by splitting each edge in $E(\Gamma) \setminus E(\tree),$ see Figure \ref{Spantree}.  By once again making use of the isomorphism $[SL_2(\C) \times SL_2(\C)] \q SL_2(\C) \cong SL_2(\C)$, we observe that there is a natural quotient map associated to gluing the two ends of a split edge back together:

\begin{equation}
\pi_{\tree, \Gamma}: M_{T(\tree, \Gamma)}(SL_2(\C)) \to M_{\Gamma}(SL_2(\C)) = M_{T(\tree, \Gamma)}(SL_2(\C))\q SL_2(\C)^{E(\Gamma) \setminus E(\tree)}.\\
\end{equation}

\noindent
We let $\pi_0$ be this quotient map for  the special case $\Gamma = \Gamma_g$. 

\begin{equation}
\pi_0: M_{2g}(SL_2(\C)) \to M_{\Gamma_g}(SL_2(\C))\\ 
\end{equation}

\begin{figure}[htbp]
\centering
\includegraphics[scale = 0.6]{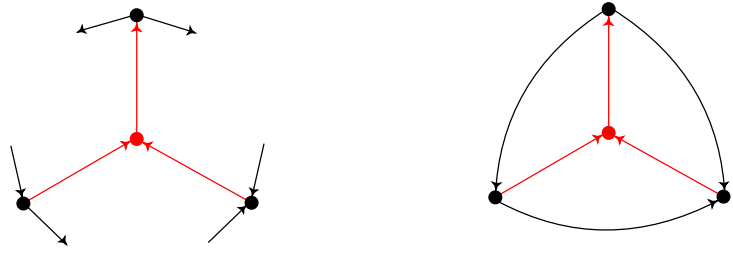}
\caption{A graph $\Gamma$ on the right, with spanning tree $\tree$ in red, and the tree $T(\tree, \Gamma)$ on the left.}
\label{Spantree}
\end{figure}

The following proposition allows us to relate our constructions on the graph $\Gamma$ to those on the tree $T(\tree, \Gamma),$ it is a consequence of the argument used in Lemmas \ref{distiso} and \ref{distisotree}.

\begin{proposition}\label{spanningtreespace}
The maps $\Phi_{T(\tree, \Gamma)}$ and $\Psi_{T(\tree, \Gamma)}$ are isomorphisms.  Furthermore, the following diagram commutes: 

\begin{equation}
\xymatrix{
  M_{T(\tree, \Gamma)}(SL_2(\C)) \ar[r]^{\pi_{\tree, \Gamma}} \ar@<-2pt>[d]_{\Psi_{V, \tree}} & M_{\Gamma}(SL_2(\C)) \ar@<-2pt>[d]_{\psi_{V, \Gamma}} \\
   M_{2g}(SL_2(\C)) \ar@<-2pt>[u]_{\Phi_{\tree}} \ar[r]^{\pi_0} & M_{\Gamma_g}(SL_2(\C)) \ar@<-2pt>[u]_{\phi_{\Gamma}}.
}
\end{equation}

\end{proposition}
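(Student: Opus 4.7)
The plan is to handle the two parts of Proposition~\ref{spanningtreespace} separately. First, observe that $T(\tree,\Gamma)$ is genuinely a tree, obtained from $\Gamma$ by cutting each of the $g$ non-tree edges into two half-edges. Its leaf set has cardinality $2g$, one leaf for each end of a non-tree edge. Therefore Lemma~\ref{distisotree} applies verbatim to $T(\tree,\Gamma)$ and gives that $\Phi_{T(\tree,\Gamma)}\colon M_{2g}(SL_2(\C))\to M_{T(\tree,\Gamma)}(SL_2(\C))$ is an isomorphism with inverse $\Psi_{V,T(\tree,\Gamma)}$; no new argument is required here.

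For commutativity of the square, I would chase an element $(\ldots,h_e,\ldots)\in M_{T(\tree,\Gamma)}(SL_2(\C))$ around both ways. Going down then right: $\Psi_{V,T(\tree,\Gamma)}$ produces the $2g$-tuple of words $W_j(\ldots,h_e,\ldots)$ indexed by the leaves of $T(\tree,\Gamma)$, where $W_j$ is the unique reduced word along the oriented path from $V$ to the $j$-th leaf. The leaves come in pairs $(2i{-}1,2i)$ corresponding to the two ends of the split non-tree edge $e_i$, and $\pi_0$ combines such a pair into a single element of $SL_2(\C)$ assigned to the $i$-th generator loop of $\Gamma_g$, using the identification $[SL_2(\C)\times SL_2(\C)]\q SL_2(\C)\cong SL_2(\C)$ from Figure~\ref{Gluing}. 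Going right then down: $\pi_{\tree,\Gamma}$ reattaches the half-edges into the closed loop $e_i$ in $\Gamma$, and $\psi_{V,\Gamma}$ (as described in Example~\ref{collapsingmap}) produces the word obtained by concatenating the $\tree$-path from $V$ to the source of $e_i$, the edge $e_i$ itself, and the $\tree$-path from the target of $e_i$ back to $V$.

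The key observation is that these two constructions yield the same word in the letters $h_e$. Concatenating the two half-edge paths in $T(\tree,\Gamma)$ after identifying their free endpoints is, by construction, the same as traversing the loop at $e_i$ in $\Gamma$ and recording the edge letters along the way, with the orientation conventions of $\pi_{\tree,\Gamma}$ exactly matching the sign conventions of the gluing isomorphism $[SL_2(\C)\times SL_2(\C)]\q SL_2(\C)\cong SL_2(\C)$ used in defining $\pi_0$. Thus $\pi_0\circ\Psi_{V,T(\tree,\Gamma)}=\psi_{V,\Gamma}\circ\pi_{\tree,\Gamma}$, which is exactly the bottom-to-top, right-side equality stated in the diagram; the opposite direction follows by inverting $\Phi_{\tree}$ and $\phi_\Gamma$, which are isomorphisms by the first part of the proposition and Lemma~\ref{distiso}.

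I expect the only subtle point to be bookkeeping of orientations: the half-edges produced when splitting $e_i$ inherit an orientation from $\delta(e_i)$, and one must check that the "outward" orientation used by $\Psi_{V,T(\tree,\Gamma)}$ on the leaf-edges agrees with what $\psi_{V,\Gamma}$ records when traversing $e_i$ after re-gluing. Once this sign convention is pinned down, the commutativity is essentially formal, an instance of the same cancellation of backtracks that underlies Proposition~\ref{hominv} and Lemma~\ref{distiso}.
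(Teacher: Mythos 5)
Your proof is correct and follows the paper's intended approach; the paper itself offers no separate argument for this proposition, merely stating that it "is a consequence of the argument used in Lemmas \ref{distiso} and \ref{distisotree}," and your write-up is precisely the element chase that remark is pointing to. You correctly apply Lemma~\ref{distisotree} to the $2g$-leaf tree $T(\tree,\Gamma)$ for the isomorphism statement, and for commutativity you compare the word read off by $\Psi_{V,T(\tree,\Gamma)}$ followed by $\pi_0$ with the word read off by $\psi_{V,\Gamma}$ after $\pi_{\tree,\Gamma}$, with the orientation bookkeeping at the split/reglued half-edges as the only nontrivial check, exactly as in Example~\ref{collapsingmap}.
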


Now we give a presentation of the coordinate ring
$\C[M_{n}(SL_2(\C))]$ (this also appears in \cite[Section 8]{M15}).  The space $SL_2(\C)^n$  is an affine subspace of the space of $2 \times 2n$ matrices, cut out by $n$ determinant equations.  Below $C_{i, j}$ is a column of an element of $M_{2, 2n}(\C)$:

\begin{equation}
SL_2(\C)^n = \{[C_{1, 1}, C_{1, 2}, \ldots, C_{n, 1}, C_{n, 2}] \in M_{2, 2n}(\C) | det(C_{i,1}, C_{i,2}) = 1\}. \\
\end{equation}

We can realize $M_{n}(SL_2(\C))$ as a subvariety of $SL_2(\C) \ql M_{2, 2n}(SL_2(\C))$.
Following Weyl's First Fundamental Theorem of Invariant Theory, the latter is cut out of $\C^{\binom{2n}{2}}$ by Pl\"ucker equations.
The invariant ring $\C[M_{2, 2n}(SL_2(\C))]^{SL_2(\C)}$ is generated by the forms $p_{(i,a), (j, b)} = det(C_{i,a}, C_{j,b})$.  The space $M_{n}(SL_2(\C))$ is then the intersection of the hypersurfaces in this quotient determined by the equations $p_{(i, 1), (j, 2)} = 1.$

\begin{proposition}
Order the indices $(i, a)$ lexicographically.  The coordinate ring of $M_{n}(SL_2(\C))$ is generated by the $\binom{2n}{2}$ generators $p_{(i, a)},$ subject to the following equations,  where $(i, a) < (j, b) < (k, c) < (l, d)$.  

\begin{equation}
 p_{(i, a), (j, b)} p_{(k, c), (l, d)} +  p_{(i, a), (l, d)} p_{(j, b), (k, c)} =  p_{(i, a), (k, c)} p_{(j, b), (l, d)}, \ \ \ \ \ p_{(i, 1),(i,2)} = 1\\
\end{equation}

\end{proposition}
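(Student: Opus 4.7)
The plan is to derive this presentation directly from Weyl's First and Second Fundamental Theorems of Invariant Theory for the standard action of $SL_2(\C)$ on $(\C^2)^{\oplus 2n}$, together with the observation that the $n$ determinant equations cutting $SL_2(\C)^n$ out of $M_{2,2n}(\C)$ are themselves $SL_2(\C)$-invariant so that forming the GIT quotient commutes with imposing them.

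First, I would recall the ambient picture. The inclusion $SL_2(\C)^n \subset M_{2,2n}(\C)$ realizes $SL_2(\C)^n$ as the closed subscheme cut out by the ideal $J = \langle \det(C_{i,1},C_{i,2}) - 1 : 1 \leq i \leq n\rangle$. Since each generator of $J$ is $SL_2(\C)$-invariant (being a bracket function), $J$ is $SL_2(\C)$-stable, and the diagonal left action of $SL_2(\C)$ on $M_{2,2n}(\C)$ restricts to the left action defining $M_n(SL_2(\C))$. By reductivity of $SL_2(\C)$, invariants are exact, so
\[
\C[M_n(SL_2(\C))] \;=\; \C[M_{2,2n}(\C)]^{SL_2(\C)} \big/ \bigl(J \cap \C[M_{2,2n}(\C)]^{SL_2(\C)}\bigr).
\]
Because $J$ is already generated by invariants, the intersection equals the ideal generated inside the invariant ring by the same elements $p_{(i,1),(i,2)} - 1$.

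Next, I would invoke the classical invariant theory of $SL_2$. Weyl's First Fundamental Theorem states that $\C[M_{2,2n}(\C)]^{SL_2(\C)}$ is generated by the $\binom{2n}{2}$ bracket functions $p_{(i,a),(j,b)} = \det(C_{i,a},C_{j,b})$, and his Second Fundamental Theorem states that the kernel of the surjection from the polynomial ring in these generators onto the invariant ring is the Plücker ideal, i.e.\ the ideal generated by the quadratic relations
\[
p_{(i,a),(j,b)}\,p_{(k,c),(l,d)} \;-\; p_{(i,a),(k,c)}\,p_{(j,b),(l,d)} \;+\; p_{(i,a),(l,d)}\,p_{(j,b),(k,c)} \;=\; 0
\]
for every quadruple of indices $(i,a) < (j,b) < (k,c) < (l,d)$ in lexicographic order. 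Assembling this with the previous paragraph gives exactly the presentation in the proposition.

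The main bookkeeping obstacle is to check that the sign convention in the stated Plücker identity matches the one produced by expanding $2\times 2$ minors of the $2\times 4$ submatrix on columns $(i,a),(j,b),(k,c),(l,d)$; this is a direct computation. One should also note that after imposing the $n$ invertible-normalization equations $p_{(i,1),(i,2)} = 1$ no new syzygies appear beyond the Plücker ones, because the imposed elements are regular in the Plücker coordinate ring (cutting the cone over $\mathrm{Gr}_2(\C^{2n})$ down to an affine chart). This is why the presentation closes after adding only these $n$ scalar equations, completing the proof.
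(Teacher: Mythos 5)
Your approach is essentially the one the paper sketches in the paragraph preceding the proposition: realize $M_n(SL_2(\C))$ inside $SL_2(\C)\ql M_{2,2n}(\C)$, invoke Weyl's First (and implicitly Second) Fundamental Theorem to obtain the Pl\"ucker presentation of the invariant ring, and observe that the determinant normalizations $p_{(i,1),(i,2)}=1$ are themselves invariants so that reductivity (via the Reynolds operator) lets you impose them after passing to invariants. Your Reynolds-operator justification helpfully spells out what the paper leaves implicit and the argument is correct; the only superfluous step is the closing remark about the elements $p_{(i,1),(i,2)}-1$ being a regular sequence, since a quotient of a quotient is automatically the quotient by the sum of the two defining ideals with no regularity hypothesis needed.
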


The algebra $\C[M_{n}(SL_2(\C))]$ controls the structure of $M_{\Gamma}(SL_2(\C))$ ``at a vertex of valence $n$.'' Each index $i$ is associated to an incident edge, and the ordering of the indices $i$ above is tantamount to a choice of a cyclic ordering of the edges incident on such a vertex.  For a graph $\Gamma$ such a choice is known as a ribbon structure on $\Gamma$.

  We will also make use of the "opposite" Pl\"ucker generators $-p_{(i, a)(j, b)} = p_{(j, b)(i, a)}$ when direction is important.    Of course, $M_{n}(SL_2(\C)) = SL_2(\C)^{n-1}$, so this is perhaps an overly complicated presentation, however the combinatorics of the Pl\"ucker generators and equations play an important role in describing the set $S(\Gamma) \subset \C[M_{\Gamma}(SL_2(\C))]$.  Throughout we will use ``direction" to mean the ordering of indices, as in $p_{(i, a), (j, b)}$ goes from $i$ to $j$, and ``sign" to mean the data $a, b$.

\subsection{The space $M_{3}(SL_2(\C))$}

Let $v_1, v_2, v_3$ be three copies of $v: \C[SL_2(\C)] \to \Z \cup \{-\infty\}$ defined on $\C[M_{3}(SL_2(\C))] \subset \C[SL_2(\C)^3].$
We apply Lemmas \ref{val1} and \ref{val2} to show that the associated graded algebra of an interior element of $C_3 = \R_{\geq 0}\{v_1, v_2,  v_3\}$ is $\C[SL_2(\C)^c \times SL_2(\C)^c \times SL_2(\C)^c]^{SL_2(\C)} = \C[M_{3}(SL_2(\C)^c)],$ and we
show that this is an affine semigroup algebra. 

\begin{definition}
The cone $\mathcal{P}_3$ is defined to be the set of labelings of the following diagram by non-negative real numbers
which satisfy $x_1 + y_1 = a, x_2 + y_2 = b, x_3 + y_3 = c$, such that $a, b, c$ form the sides of a triangle, $|a-c| \leq b \leq a+c.$ 
The lattice $L_3$ is the set of integer labelings of this diagram with  $a + b +c \in 2\Z$.  We let $P_3$ be the affine semigroup $\mathcal{P}_3 \cap L_3$, see Figure \ref{P3el}. 

\begin{figure}[htbp]
\centering
\includegraphics[scale = 0.4]{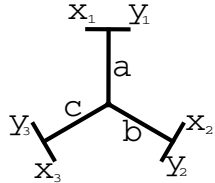}
\caption{Defining diagram of $P_3$.}
\label{P3el}
\end{figure}

\end{definition}

\begin{proposition}
The algebra $\C[M_{3}(SL_2(\C)^c)]$ is isomorphic to the affine semigroup algebra $\C[P_3]$.  
\end{proposition}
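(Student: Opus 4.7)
My plan is to decompose both sides as graded vector spaces via Peter-Weyl, and then match the multiplicative structure. Starting from the isotypical decomposition $\C[SL_2(\C)^c] = \bigoplus_{i \geq 0} V(i) \otimes V(i)$ (with left $SL_2(\C)$ on the first factor and right $SL_2(\C)$ on the second), the coordinate ring of the triple product decomposes accordingly, and taking invariants under the diagonal left $SL_2(\C)$-action (applying Lemma \ref{val1}) yields
$$\C[M_{3}(SL_2(\C)^c)] = \bigoplus_{a,b,c \geq 0} [V(a)\otimes V(b)\otimes V(c)]^{SL_2(\C)} \otimes V(a) \otimes V(b) \otimes V(c).$$

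By the Clebsch-Gordan rule, the invariant factor $[V(a)\otimes V(b)\otimes V(c)]^{SL_2(\C)}$ is at most one-dimensional, and is nonzero exactly when $(a,b,c)$ satisfies the triangle inequality $|a-c| \leq b \leq a+c$ and the parity condition $a+b+c \in 2\Z$ --- precisely the constraints on $(a,b,c)$ in the definition of $P_3$. Choose a generator $\Phi_{abc}$ of each one-dimensional invariant space that arises. On the right-hand tensor factors, identify $V(a_i) \cong \mathrm{Sym}^{a_i}(\C^2)$ and pick the monomial weight basis $X^{x_i} Y^{y_i}$ indexed by $x_i + y_i = a_i$, $x_i, y_i \geq 0$. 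A basis of $\C[M_{3}(SL_2(\C)^c)]$ is then indexed by exactly the lattice points of $P_3$.

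For the multiplicative structure, recall $\C[SL_2(\C)^c] \cong \C[P]$ with $A, B, C, D$ identified with $X\otimes X,\ X\otimes Y,\ Y\otimes X,\ Y\otimes Y$, so multiplication on the right-hand factors of $V(i)\otimes V(i)$ is multiplication of symmetric monomials, and weight vectors multiply by adding their weight coordinates. On the left, the fact that $\C[SL_2(\C)^c] = \bigoplus_i V(i)\otimes V(i)$ is a grading (not merely a filtration) forces $\Phi_{abc} \cdot \Phi_{a'b'c'}$ to lie entirely inside $V(a+a') \otimes V(b+b') \otimes V(c+c')$, and by $SL_2(\C)$-invariance it must be a scalar multiple of $\Phi_{a+a',b+b',c+c'}$. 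Combining this with the weight-addition rule on the right shows that basis multiplication matches semigroup addition in $P_3$, up to a scalar in each graded piece.

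The one substantive point is showing that the scalar $\lambda$ in $\Phi_{abc} \cdot \Phi_{a'b'c'} = \lambda \cdot \Phi_{a+a',b+b',c+c'}$ is always nonzero --- the three-edge analogue of the ``leading term nonvanishing'' fact for $\C[SL_2(\C)]$ recalled in Section \ref{sl2}. I would conclude this by observing that $(SL_2(\C)^c)^3$ is an affine toric variety (a product of copies of $\mathrm{Spec}\,\C[P]$), hence irreducible, so $\C[(SL_2(\C)^c)^3]$ is a domain, and the product of two nonzero $SL_2(\C)$-invariant elements cannot vanish. Any $P_3$-graded commutative $\C$-algebra with one-dimensional graded pieces all of whose pairwise products are nonzero is isomorphic to $\C[P_3]$, yielding the desired identification $\C[M_{3}(SL_2(\C)^c)] \cong \C[P_3]$.
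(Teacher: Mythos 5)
Your argument is correct, and it reaches the same graded decomposition as the paper but by a somewhat different route. The paper starts from the GIT presentation $SL_2(\C)^c = [SL_2(\C)\q U_- \times U_+\ql SL_2(\C)]\q \C^*$, reorganizes the triple quotient so that the diagonal $SL_2(\C)$ is taken first, and then invokes Weyl's First Fundamental Theorem to identify $SL_2(\C)\ql (\C^2)^3$ with $\bigwedge^2(\C^3)$ before further grading each $V(a)$ by its monomial basis. You instead tensor the Peter--Weyl decomposition $\C[SL_2(\C)^c]=\bigoplus_i V(i)\otimes V(i)$ three times, take diagonal invariants via the exactness of invariants (Lemma \ref{val1}), and identify the nontrivial pieces with Clebsch--Gordan rather than the FFT. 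The end result is the same basis of one-dimensional pieces indexed by $P_3$; what your approach buys is a shorter path to the grading that bypasses the $\C^*$-quotient bookkeeping and the $\bigwedge^2(\C^3)$ identification, at the cost of not exhibiting the intermediate toric space $\C[Q_3]$ that the paper finds useful.

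You also supply an explicit nonvanishing argument that the paper leaves implicit: since $\C[SL_2(\C)^c]^{\otimes 3}$ is $P_3$-graded (not merely filtered), each product $\Phi_s\Phi_{s'}$ lands in the one-dimensional $(s+s')$-piece, and since $(SL_2(\C)^c)^3$ is an (irreducible) toric variety its coordinate ring is a domain, so the product is nonzero. This is a clean closure of the argument; the analogous fact for $\C[SL_2(\C)]$ is recorded in Section \ref{sl2} and your observation shows the graded version is automatic. One small point worth noting: a graded domain with one-dimensional pieces indexed by a set $S\subset \Z^n$ is a subalgebra of $\C[\Z^n]$, so $S$ is automatically a semigroup and the algebra is $\C[S]$; you cite this correctly, and the identification of $S$ with $P_3$ is exactly the Clebsch--Gordan plus weight-basis accounting you carried out.
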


\begin{proof}
Following Section \ref{sl2}, $M_{3}(SL_2(\C)^c)$ can be rewritten as the $GIT$ quotient:

\begin{equation}
SL_2(\C) \ql [(SL_2(\C)\q U_+)^3] \times [(U_- \ql SL_2(\C))^3]\q (\C^*)^3.\\
\end{equation}

\noindent
We compute this as a reduction in steps.  The quotient $SL_2(\C) \ql [(SL_2(\C)\q U_+)^3] = SL_2(\C) \ql [(\C^2)^3]$
is isomorphic to the affine space $\bigwedge^2(\C^3)$ by the First Fundamental Theorem of Invariant Theory. It can be viewed
as the variety associated to the affine semigroup algebra $\C[Q_3]$, where $Q_3$ is the set
of triples $a, b, c$ as in the definition of $P_3$ above. Here $a, b, c$ are the highest weights of the components
$(V(a)\otimes V(b) \otimes V(c))^{SL_2(\C)} \subset SL_2(\C) \ql [(\C^2)^3]$.  Recall that the character spaces
of the diagonal action of $\C^*$ on $SL_2(\C)\q U = \C^2$ are precisely the $V(a) \subset \C[\C^2].$  It follows
that $\C[M_{3}(SL_2(\C))]$ is graded by the subspaces $((V(a)\otimes V(b) \otimes V(c))^{SL_2(\C)} \otimes V(a) \otimes V(b) \otimes V(c)$. 
Since $\C[U_- \ql SL_2(\C)] = \C[\C^2]$ is itself toric, we can further grade this algebra by viewing each $V(a)$ as a direct
sum of the monomial spaces $\C X^{x_1}Y^{y_1}$, where $x_1 + y_1 = a.$ A similar decomposition holds for $V(b), V(c).$ 
\end{proof}

The affine semigroup $P_3$ is generated by the $3 \times 4 = 12$ weightings with one of $a, b, c$ equal to $0$
and the other two entries equal to $1.$ We label these generators $X_{(i, s), (j, t)}$, indicating a path from $i$ to $j$
with sign markings on the $s, t$ ends of these paths, respectively. Now we recall the forms $p_{(i, s) (j, t)} \in \C[M_{3}(SL_2(\C))]$.  If we view $M_{3}(SL_2(\C))$ as the space of $2 \times 6$ matrices $[C_{1, 1}C_{1, 2}C_{2,1}, C_{2,2}, C_{3,1}C_{3,2}]$
which satisfy $det(C_{i,1}C_{i,2}) = 1,$ $p_{(i,s),(j,t)}$ is the function $det(C_{i,s}C_{j,t})$.   We let the initial form $in(f) \in \C[M_{3}(SL_2(\C)^c)]$ (when it exists) for $f \in \C[M_{3}(SL_2(\C))]$ be as in Subsection \ref{subspindiagrams}.

\begin{proposition}
The initial forms $in(p_{(i,s),(j,t)})$ give a generating set of $P_3.$
\end{proposition}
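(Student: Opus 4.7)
The plan is to identify each $p_{(i,s),(j,t)}$ with its position in the Peter--Weyl isotypical decomposition of $\C[M_3(SL_2(\C))]$, and then trace through the degeneration to $\C[P_3]$ to match it with a specific semigroup generator $X_{(i,s),(j,t)}$. Fix $\{i,j,k\} = \{1,2,3\}$. By definition $p_{(i,s),(j,t)} = \det(C_{i,s},C_{j,t})$ is bilinear in the entries of the $i$-th and $j$-th matrices and constant in the $k$-th. Since the space of linear polynomials on $SL_2(\C)$ is precisely $\mathrm{End}(V(1)) \subset \C[SL_2(\C)]$ and the constants are $\mathrm{End}(V(0)) = \C$, the function $p_{(i,s),(j,t)}$ lies in the isotypical summand labelled by $(a(i),a(j),a(k))=(1,1,0)$. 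The Clebsch--Gordan rule makes $[V(1)\otimes V(1)\otimes V(0)]^{SL_2(\C)}$ one-dimensional, spanned by the standard skew pairing, so the entire $(1,1,0)$-component of $\C[M_3(SL_2(\C))]$ is a $4$-dimensional space naturally indexed by the sign choices $(s,t) \in \{1,2\}^2$.

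Next, I would apply Proposition \ref{agraded} to the interior element $v_1 + v_2 + v_3 \in C_3$: the associated graded of $\C[M_3(SL_2(\C))]$ is $\C[M_3(SL_2(\C)^c)] \cong \C[P_3]$, and the filtration $F$ on each $\C[SL_2(\C)]$-factor respects the Peter--Weyl grading, so the $(a(i),a(j),a(k))=(1,1,0)$-component maps isomorphically (as a vector space) onto the corresponding $4$-dimensional graded piece of $\C[P_3]$. Under the identification $\C[SL_2(\C)^c] = \C[(\C^2 \times \C^2)\q \C^*]$ from Section \ref{sl2}, the $s$ and $t$ labels are precisely the $\C^*$-weight indices recording which column of each $2\times 2$ matrix is being picked out. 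This matches the bookkeeping of $P_3$, whose generators at layer $(a,b,c)=(1,1,0)$ are exactly the four labelings $X_{(i,s),(j,t)}$ of the diagram in Figure \ref{P3el}.

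Finally, since $p_{(i,s),(j,t)}$ is a nonzero regular function already supported in the top $F$-level of its isotypical type, its initial form is nonzero, and it must be (up to a nonzero scalar) the unique generator $X_{(i,s),(j,t)}$ of $P_3$ with the matching indices and signs. Running over the three unordered pairs $\{i,j\} \subset \{1,2,3\}$ and the four sign choices gives all $12$ generators of $P_3$, establishing the proposition.

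The main obstacle is the sign/weight identification in the second paragraph: one has to verify that the $\C^*$-weight labels $s,t$ produced by the factorization $SL_2(\C)^c = [SL_2(\C)\q U_+ \times U_-\ql SL_2(\C)]\q\C^*$ agree with the column indices of $\det(C_{i,s},C_{j,t})$. This is essentially a coordinate computation in the generators $A,B,C,D$ of $\C[SL_2(\C)]$ of Section \ref{sl2}, tracing how left and right highest-weight vectors in $\mathrm{End}(V(1))$ correspond to the four Plücker forms $p_{(i,s),(j,t)}$, but it is the only place where one risks getting a relabeling wrong.
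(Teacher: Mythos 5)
Your argument takes a genuinely different route from the paper's. The paper proves the proposition by a single direct coordinate computation: it picks $p_{(1,1),(2,1)}$, writes it as $A_1C_2 - A_2C_1$, pushes it through the explicit presentation $A\to X\otimes X,\,B\to X\otimes Y,\,C\to Y\otimes X,\,D\to Y\otimes Y$ of $\C[SL_2(\C)^c]$, and observes that the result is the Pl\"ucker invariant $X_1Y_2 - X_2Y_1$ tensored with the monomial leaf data $X_1X_2$, i.e.\ exactly the lattice point $X_{(1,1),(2,1)}\in P_3$; it then notes the same calculation works in all $12$ cases. Your proof instead locates each $p_{(i,s),(j,t)}$ in the $(1,1,0)$-isotypical piece of the Peter--Weyl decomposition, uses Clebsch--Gordan to see that piece is $4$-dimensional, and argues that the degeneration carries it isomorphically onto the corresponding $4$-dimensional graded piece of $\C[P_3]$.

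There is a real gap in your third paragraph, though, and it sits precisely at the claim ``its initial form\dots must be (up to a nonzero scalar) the unique generator.'' The fact that the degeneration restricts to a \emph{linear isomorphism} of $4$-dimensional vector spaces does not, by itself, force any particular basis element on one side to land on a monomial on the other --- a generic linear isomorphism scrambles bases. What actually forces each $in(p_{(i,s),(j,t)})$ to be a \emph{single} monomial is torus equivariance: the residual right $T^{3}$-action on $M_3(SL_2(\C))$ makes each Pl\"ucker form $\det(C_{i,s},C_{j,t})$ a weight vector, the filtration is $SL_2(\C)^3\times SL_2(\C)^3$-stable so the degeneration is $T^3$-equivariant, and the weight spaces of $\C[P_3]$ in type $(1,1,0)$ are one-dimensional and spanned by the $X_{(i,s),(j,t)}$. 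You gesture at this when you call $s,t$ ``$\C^*$-weight indices,'' but you never close the loop: you need to say that equivariance forces the isomorphism of $(1,1,0)$-pieces to respect the weight decomposition, hence to send weight vectors to weight vectors and so Pl\"ucker forms to monomials. Once that is in place the proof is complete. Incidentally, the ``main obstacle'' you flag --- verifying that the label $s$ on $p_{(i,s),(j,t)}$ matches the convention in $X_{(i,s),(j,t)}$ --- is not actually needed for the proposition as stated: it is a collective claim that the $12$ initial forms form a generating \emph{set}, and the weight argument shows they are $12$ distinct monomials filling out the degree-$(1,1,0),(1,0,1),(0,1,1)$ pieces, which is all you need; the matching of indices only matters if you want the stronger labeled statement (which the paper's computation does verify).
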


\begin{proof}
Recall that the algebra $\C[SL_2(\C)^c]$ is isomorphic to $\C[\C^2 \times \C^2]^{\C^*}$, and the isomorphism does the following to generators:
$A \to X \otimes X$, $B \to X \otimes Y$, $C \to Y \otimes X$, $D \to Y \otimes Y$.  We consider the image of the tensor $p_{(1, 1), (2, 1)} \in \C[M_{3}(SL_2(\C)^c],$  given by the determinant $A_1C_2 - A_2C_1 =$ $(X_1 \otimes X_1)(Y_2 \otimes X_2) -$ $(X_2 \otimes X_2)(Y_1 \otimes X_1)=$ $(X_1Y_2 - X_2Y_1) \otimes X_1X_2.$  We obtain the Pl\"ucker invariant $X_1Y_2 - X_2Y_1 \in \C[\C^2 \times \C^2 \times \C^2]^{SL_2(\C)}$ $=\C[M_{3}(SL_2(\C)\q U_-^3]$ tensored with the leaf data $X_1X_2$, this is precisely the element $X_{(1, 1), (2, 1)} \in P_3$.  This computation works for all generators $X_{(i, a)(j, b)}.$  

\end{proof}

Note that if generators $p_{(i_1, a_1), (j_1, b_1)}, \ldots, p_{(i_n, a_n), (j_n, b_n)}$ are multiplied, 
the $x_i, y_i$ components of the initial term $in(\prod p_{(i_s, a_s),(j_s, b_s)})$ records the number 
of each type of sign which appears, see Figure \ref{P3mult}.  

\begin{figure}[htbp]
\centering
\includegraphics[scale = 0.4]{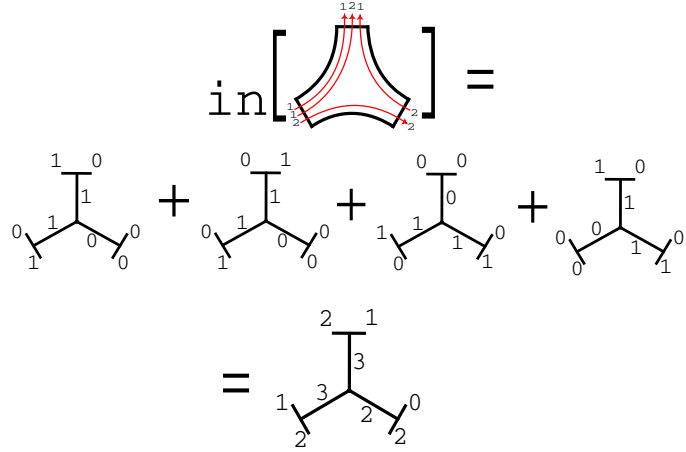}
\caption{Multiplying initial terms in $P_3$.}
\label{P3mult}
\end{figure}

\subsection{$\Gamma-$tensors}\label{gammatensors}

 For a graph $\Gamma,$ we introduce a combinatorial object called an abstract $\Gamma$-tensor $V(S, \phi)$ (see Figure \ref{pathglue}), this is the following information (see also \cite[Section 8]{M15}): 

\begin{enumerate}
\item A set of reduced directed paths $S_{v}$ in each link $\Gamma_v \subset \Gamma$.\\
\item For each edge $e \in E(\Gamma)$ with $\delta(e)= (v, u)$, a direction preserving bijection $\phi_e: S_{v, e} \to S_{u, e}$ between the paths through $e$ in $S_v$ and $S_u$.\\ 
\end{enumerate}

A marking $A$ of an abstract $\Gamma-$tensor is an assignment $A: S_{v} \to \{1, 2\}^2$ to the end points of each path such that the label on meeting endpoints of two paths connected by one of the maps $\phi_e$ are different. When a path $p_{ij} \in S_v$ is marked $A(i) = a, A(j) = b$, it can be viewed as a Pl\"ucker generator $p_{(i, a),(j,b)} \in \C[M_{\epsilon(v)}(SL_2(\C))].$ In this way an abstract $\Gamma$ tensor $V(S, \phi)$ with  $A$ defines a monomial $\mathcal{V}(S, \phi, A) \in \C[\prod_{v \in V(\Gamma)}M_{\epsilon(v)}(SL_2(\C))]$.

\begin{figure}[htbp]
\centering
\includegraphics[scale = 0.4]{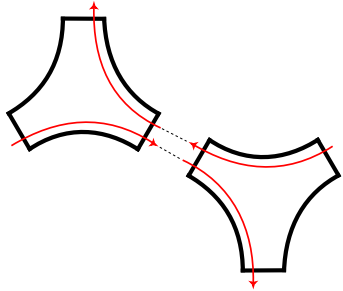}
\caption{An abstract $\Gamma$-tensor}
\label{pathglue}
\end{figure}

By \cite{M15}, Proposition $8.5$ (also see Proposition \ref{tracetensor} below), we obtain an invariant form $\mathcal{V}(S, \phi) \in \C[M_{\Gamma}(SL_2(\C))]$ $\subset \C[\prod_{v \in V(\Gamma)}M_{\epsilon(v)}(SL_2(\C))]$ by taking a signed sum over all $2^{E(\Gamma)}$ possible markings. 

\begin{equation}
\mathcal{V}(S, \phi) = \sum_{A} (-1)^{\rho(A)} \mathcal{V}(S, \phi, A)\\
\end{equation}

\noindent
Here $(-1)^{\rho(A)}$ is a product of signs defined by $\phi_e$ and the marking $A$ at an edge. For each identification
of endpoints $\phi_e(i) = j$, there is a contribution $(-1)^{\rho(\phi_e, i, j)}$, which is positive if $a(i) = 1, a(j) = 2$ and $i$ is outgoing, and negative if $a(i) =1, a(j) = 2$ and $i$ is incoming.  This assignment is illustrated on an example in Figure \ref{orientassign}.

\begin{figure}[htbp]
\centering
\includegraphics[scale = 0.4]{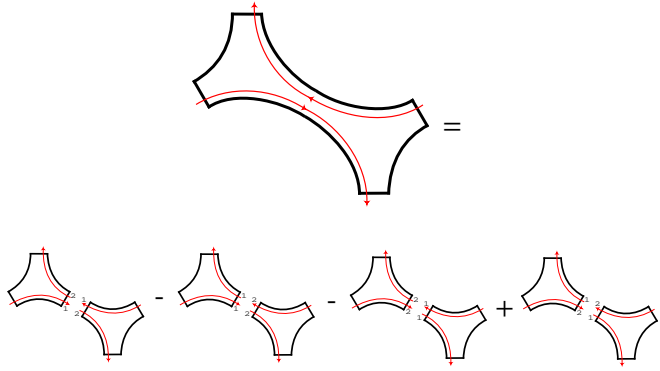}
\caption{A $\Gamma$ tensor obtained as a signed sum over sign assignments to an abstract $\Gamma$-tensor}
\label{orientassign}
\end{figure}

We let $\mathcal{S}(\Gamma) \subset \C[M_{\Gamma}(SL_2(\C))]$ be the set of all the forms $\mathcal{V}(S, \phi)$. 
Each abstract $\Gamma$-tensor $V(S, \phi)$ corresponds naturally to a weighting $a(S, \phi): E(\Gamma) \to \Z_{\geq 0}$, where $a(S, \phi)(e)$ is the number of paths passing through $e$.  The next proposition relates $\mathcal{V}(S, \phi)$ to the spin element $\Phi_{a(S, \phi)} \in \C[M_{\Gamma}(SL_2(\C))]$  when $\Gamma$ is trivalent.

\begin{proposition}\label{gammainitial}
Let $\Gamma$ be trivalent, then the initial form $in(\mathcal{V}(S, \phi))$ of a $\Gamma-$tensor is a non-zero multiple of $\Phi_{a(S, \phi)}.$ 
\end{proposition}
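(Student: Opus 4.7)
The plan is to expand $\mathcal{V}(S, \phi)$ in the spin basis $\{\Phi_a : a \in P_\Gamma\}$ and identify its leading term under $\prec$. The argument has two parts: an upper bound, showing every $\Phi_a$ appearing in the expansion satisfies $a \preceq a(S, \phi)$, and a nonvanishing statement for the coefficient of $\Phi_{a(S, \phi)}$.

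For the upper bound, I would use the alternative construction of Proposition~\ref{spanningtreespace}: each $\mathcal{V}(S, \phi, A)$ is a tensor product over vertices of monomials in Pl\"ucker generators $p_{(i,a)(j,b)}$ of $\C[M_{\epsilon(v)}(SL_2(\C))]$. At a trivalent vertex $v$ with $a_v, b_v, c_v$ paths through its three incident edges, the Pl\"ucker monomial is an element of $V(1)^{\otimes a_v}_e \otimes V(1)^{\otimes b_v}_f \otimes V(1)^{\otimes c_v}_g$ under the right action by the three edge $SL_2$'s. The Clebsch--Gordan decomposition $V(1)^{\otimes n} = V(n) \oplus V(n-2) \oplus \cdots$ places each such monomial in a sum of isotypical components indexed by $(a', b', c') \preceq (a_v, b_v, c_v)$. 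Taking the global tensor product over vertices and then $SL_2^{V(\Gamma)}$-invariants through the signed marking sum places $\mathcal{V}(S, \phi)$ in $\bigoplus_{a' \preceq a(S, \phi)} [\bigotimes_e V(a'(e)) \otimes V(a'(e))]^{SL_2^{V(\Gamma)}}$. By Subsection~\ref{subspindiagrams}, each such invariant space is one-dimensional for trivalent $\Gamma$, spanned by $\Phi_{a'}$.

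For the nonvanishing, I would pass to the associated graded $\C[M_\Gamma(SL_2(\C)^c)] = \C[P_\Gamma]$ of Proposition~\ref{contractiontoricvariety} using any interior valuation $v_\ell \in C_\Gamma$. In this ring the only surviving term of $\mathcal{V}(S, \phi)$ is a scalar multiple of $\Phi_{a(S, \phi)}$, so it suffices to verify that the image does not vanish. Each Pl\"ucker form $p_{(i,a)(j,b)}$ degenerates to the semigroup generator $X_{(i,a)(j,b)} \in P_3 = P_{\epsilon(v)}$ (by the proposition on $P_3$-generators in Section~\ref{sl2}), so the image of $\mathcal{V}(S, \phi)$ in $\C[P_\Gamma]$ is a signed sum of products of semigroup generators, projected onto the line $\C\Phi_{a(S, \phi)}$.

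The main obstacle is showing this projected signed sum is nonzero, which reduces to a local statement at each trivalent vertex: the signed sum of Pl\"ucker monomials, when projected onto the one-dimensional subspace $\C\Phi_{(a_v, b_v, c_v)} \subset \C[P_3]$, realizes a nonzero multiple of the unique Clebsch--Gordan invariant in $[V(a_v) \otimes V(b_v) \otimes V(c_v)]^{SL_2(\C)}$. This is essentially the classical spin-network contraction at a trivalent vertex. I would verify it either by invoking \cite[Proposition~8.5]{M15}, where the analogous signed sum is computed in the curves setting, or by a direct combinatorial matching with the standard Young-symmetrized Clebsch--Gordan element, whose nonvanishing follows from the nondegeneracy of the $SL_2(\C)$-invariant pairing on $V(n) \otimes V(n)$. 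Since the construction factorizes over vertices, the product of nonzero local coefficients gives the desired nonzero global coefficient of $\Phi_{a(S, \phi)}$.
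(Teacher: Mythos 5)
Your upper-bound step (Clebsch--Gordan placing each $\mathcal{V}(S,\phi,A)$ in components indexed by $a' \preceq a(S,\phi)$, hence only $\Phi_{a'}$ with $a' \preceq a(S,\phi)$ appearing in the spin expansion) is sound and in fact makes explicit a point the paper leaves implicit. The gap is in the nonvanishing step. You reduce it to a ``local statement at each trivalent vertex'' and then assert that ``the construction factorizes over vertices, [so] the product of nonzero local coefficients gives the desired nonzero global coefficient.'' This factorization does not hold. The signed sum in $\mathcal{V}(S,\phi) = \sum_A (-1)^{\rho(A)}\mathcal{V}(S,\phi,A)$ ranges over markings $A$ whose bits live at the edges, and each edge constrains the markings at \emph{both} of its endpoints. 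Concretely, for a theta graph the marking space is $\{1,2\}^{3}$ indexed by edges, and the resulting expression is a tensor-network contraction $\sum_A \prod_v c_v(A|_{\epsilon(v)})$, not a product $\prod_v \bigl(\sum_{A_v} c_v(A_v)\bigr)$ of independent local signed sums. So ``nonzero local coefficient at each vertex'' does not immediately yield a nonzero global coefficient, and the proposed appeal to the local spin-network contraction is not a complete argument.

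The paper sidesteps this entirely. It works in $\C[M_3(SL_2(\C))^{V(\Gamma)}]$ and notes three things: (i) each $in(\mathcal{V}(S,\phi,A))$ lies in the \emph{same} isotypical component, labeled by $a(S,\phi)$, regardless of $A$; (ii) for distinct markings $A$ the initial forms $in(\mathcal{V}(S,\phi,A))$ are \emph{distinct monomials} in $\C[P_3^{V(\Gamma)}]$ (the markings record into the $x_i,y_i$ grading of $P_3$), hence linearly independent; (iii) at least one term appears with coefficient $\pm 1$ (the all-outgoing-$=1$ marking). Linear independence then forces the signed sum to be nonzero, and by one-dimensionality of the invariant subspace it is a nonzero multiple of $\Phi_{a(S,\phi)}$. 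If you want to salvage your route, replace the factorization claim with this linear-independence-of-monomials observation; you do not need to compute the scalar, only to show it is nonzero.
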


\begin{proof}
We observe that the initial form $in(\mathcal{V}(S, \phi)) \in \C[M_{\Gamma}(SL_2(\C)^c)]$ can be computed in the algebra $\C[M_{3}(SL_2(\C))^{V(\Gamma)}]$; the $SL_2(\C)^{E(\Gamma)}$ stability of the filtrations involved guarantees that the initial form of an invariant in $\C[M_{3}(SL_2(\C))^{V(\Gamma)}]$ is also invariant. The initial form of $in(\mathcal{V}(S, \phi, A))$ is a tensor
product (over $v \in V(\Gamma)$) of initial forms of the monomials defined by $S$ in $\C[M_{3}(SL_2(\C))]$.  Each of the $in(\mathcal{V}(S, \phi, A))$ lies in the $a(S, \phi)$ isotypical component of $\C[M_{3}(SL_2(\C))^{V(\Gamma)}]$, so it follows that $in(\mathcal{V}(S, \phi))$ equals the sum of these terms.   Initial forms with different markings on a path at an edge are linearly independent, as this is the case for the associated elements of $\C[P_3^{V(\Gamma)}]$. Since  $\mathcal{V}(S, \phi)$ always has exactly one term with all outgoing assignments $1$, this sum cannot vanish.  
\end{proof}

\begin{corollary}\label{gammaval}
A valuation $v_{\ell} \in C_{\Gamma},$ is computed on a $\Gamma-$tensor $\mathcal{V}(S, \phi) \in \C[\mathcal{X}(F_g, SL_2(\C))]$ as follows: 

\begin{equation}
v_{\ell}(\mathcal{V}(S, \phi)) = \sum_{e \in E(\Gamma)} \ell(e)a(S, \phi)(e).\\
\end{equation}

\end{corollary}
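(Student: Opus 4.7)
The plan is to derive the corollary directly from Proposition \ref{gammainitial} together with the evaluation formula \eqref{innerproducteval} and the adapted-basis statement in Proposition \ref{spinadapt}. Since the spin elements $\{\Phi_a \mid a \in P_\Gamma\}$ form a basis of $\C[M_\Gamma(SL_2(\C))]$, we may expand
\[
\mathcal{V}(S,\phi) \;=\; \sum_{b \in P_\Gamma} C_b\, \Phi_b,
\]
and all that needs to be shown is that the maximum of $v_\ell(\Phi_b)=\sum_{e}\ell(e)b(e)$ over the terms with $C_b\neq 0$ is attained at $b=a(S,\phi)$.

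First I would identify the leading term of this expansion. By Proposition \ref{gammainitial}, the initial form $in(\mathcal{V}(S,\phi))$ is a non-zero scalar multiple of $\Phi_{a(S,\phi)}$, so $C_{a(S,\phi)} \neq 0$ and, by the definition of the initial form in Subsection \ref{subspindiagrams}, every other $b$ with $C_b \neq 0$ satisfies $b \prec a(S,\phi)$. Unwinding the definition of $\preceq$ from Subsection \ref{subspindiagrams}, this precisely means $a(S,\phi)-b \in C_\Gamma$, i.e.\ $a(S,\phi)(e) \geq b(e)$ for every edge $e \in E(\Gamma)$.

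Now I would apply Proposition \ref{spinadapt}, which asserts
\[
v_\ell(\mathcal{V}(S,\phi)) \;=\; \max\bigl\{v_\ell(\Phi_b)\,\big|\, C_b\neq 0\bigr\},
\]
combined with the closed-form evaluation \eqref{innerproducteval} giving $v_\ell(\Phi_b) = \sum_e \ell(e) b(e)$. Since $\ell \in C_\Gamma$ has $\ell(e)\geq 0$, the pointwise inequality $b(e)\leq a(S,\phi)(e)$ forces $\sum_e \ell(e)b(e) \leq \sum_e \ell(e)a(S,\phi)(e)$ for every $b$ appearing, and equality is realized by the non-vanishing leading term $\Phi_{a(S,\phi)}$.

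There is no real obstacle here; the content is entirely pushed into Proposition \ref{gammainitial}, which is where one must control the possibly enormous cancellations in the signed sum $\mathcal{V}(S,\phi)=\sum_A (-1)^{\rho(A)}\mathcal{V}(S,\phi,A)$. Once the leading spin term is known to survive, the corollary is just the monotonicity of the linear functional $b \mapsto \sum_e \ell(e)b(e)$ with respect to the partial order $\preceq$ on $P_\Gamma$ together with the formula $v_\ell(\Phi_a) = \langle \ell, a\rangle$.
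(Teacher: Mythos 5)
Your proposal is correct and is exactly the (omitted) argument the paper intends: Proposition~\ref{gammainitial} supplies the leading term in the spin-diagram expansion, Proposition~\ref{spinadapt} reduces the evaluation of $v_{\ell}$ to a maximum over nonzero terms, and the closed form~\eqref{innerproducteval} together with the nonnegativity of $\ell$ and the definition of $\preceq$ (as the order generated by the positive orthant $C_\Gamma$) shows the maximum is attained at $b=a(S,\phi)$. The only thing you do not address, and which the paper mentions in the remark following the corollary, is that Proposition~\ref{gammainitial} is stated only for trivalent $\Gamma$; the general case is handled by lifting the weighting to a trivalent cover with zero entries on the added edges, but this is a side remark rather than a gap in the argument for the trivalent case you treat.
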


We have stated this corollary for trivalent $\Gamma$, but the general case can be recovered by considering
a weighting on $\Gamma$ as a weighting with $0$ entries on a trivalent cover $\tilde{\Gamma} \to \Gamma.$

\subsection{$\Gamma-$tensors and $F_g$}

For a marking $\phi: \Gamma_g \to \Gamma$, let $S(\Gamma, \phi) \subset \C[\mathcal{X}(F_g, SL_2(\C)]$ be the pullback of $S(\Gamma) \subset \C[M_{\Gamma}(SL_2(\C))]$.  We show that the sets $S(\Gamma, \phi) \subset \C[\mathcal{X}(F_g, SL_2(\C))]$ all coincide, and how to compute them as regular functions. From each connected abstract $\Gamma$ tensor $V(S, \phi)$ we obtain a reduced word $\omega(S, \phi) \in F_g$, this defines a bijection between abstract $\Gamma$-tensors and elements of $<F_g>$.  The $\Gamma-$tensor $\mathcal{V}(S, \phi)$ is then shown to coincide with the trace-word function $\tau_{\omega(S, \phi)}$ (see Figure \ref{pathword}).  We conclude by showing that for a valuation $v_{\Gamma, \ell, \phi} \in C_{\Gamma, \phi}$, $v_{\Gamma, \ell, \phi}(\tau_{\omega(S, P)})$ is the evaluation of the length function $d_{\omega(S, P)}$ on the metric graph $(\Gamma, \ell, \phi) \in \hat{O}(g)$.

In order to identify the $\mathcal{V}(S, \phi)$ as regular functions, we consider similar forms defined on trees. 
 A $\tree-$tensor $\mathcal{V}(S, \phi, B) \in \C[M_{\tree}(SL_2(\C))]$ is defined in the same way as a $\Gamma$-tensor, 
with the addition of sign information $B$ specified at the leaves. By construction, any $\mathcal{V}(S, \phi, B)$ factors
as a monomial in simple paths $\mathcal{V}(S_{\gamma}, \phi_{\gamma}, B_{\gamma})$, the next proposition shows
that each of these path tensors can be considered to be a Pl\"ucker generator from Subsection \ref{gammatensors}.

\begin{proposition}\label{g0}
The set of $\tree-$tensors $\mathcal{S}(\tree) \subset \C[M_{\tree}(SL_2(\C))]$ coincides with the set of Pl\"ucker monomials in $\C[M_{\mathcal{L}(\tree)}(SL_2(\C))]$ under the isomorphisms $\Phi_{\tree}$ and $\Psi_{\tree}.$
\end{proposition}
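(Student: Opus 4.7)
The plan is to reduce to the connected case and then induct on the cellular length of the path.

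First I would observe that an abstract $\tree$-tensor $V(S,\phi)$ decomposes uniquely as a disjoint union of connected abstract $\tree$-tensors: each connected component is a single reduced directed path $\gamma$ from one leaf of $\tree$ to another, together with the restricted bijections at internal edges. The signed sum defining $\mathcal{V}(S,\phi,B)$ factors accordingly, because the marking data at different connected components are independent and the sign $(-1)^{\rho(A)}$ is a product of contributions $(-1)^{\rho(\phi_e,i,j)}$ local to each edge, each of which belongs to exactly one connected path. Thus it suffices to check the proposition for a single connected path tensor $\mathcal{V}(\gamma,\phi_\gamma,B_\gamma)$ with endpoints at leaves $i,j$ carrying signs $a,b$.

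Next I would identify the target: under the isomorphism $\Psi_{V,\tree}$ of Lemma \ref{distisotree}, the candidate image is the Pl\"ucker generator $p_{(i,a),(j,b)} = \det(C_{i,a},C_{j,b}) \in \C[M_{|\mathcal L(\tree)|}(SL_2(\C))]$. Base case: if $\gamma$ is the single path of cellular length one (i.e.\ $\tree$ has one vertex and two leaves), then by the very definition of the Pl\"ucker generators at a vertex the path tensor $\mathcal{V}(\gamma,\phi_\gamma,B_\gamma)$ equals $\pm p_{(i,a),(j,b)}$. For the inductive step, let $e$ be an internal edge of $\tree$ lying on $\gamma$ with $\delta(e)=(v,u)$, so $\gamma$ splits into a subpath $\gamma_v$ ending at $e$ on the $v$ side and a subpath $\gamma_u$ starting at $e$ on the $u$ side. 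The signed sum $\mathcal{V}(\gamma,\phi_\gamma,B_\gamma)$ then expands, for each internal sign assignment $(s,t)$ on the two endpoints of $e$ identified by $\phi_e$, as
\begin{equation}
\sum_{(s,t)} (-1)^{\rho(\phi_e,s,t)} \, \mathcal{V}(\gamma_v,\phi_{\gamma_v}, B_v\cup\{s\}) \cdot \mathcal{V}(\gamma_u,\phi_{\gamma_u}, B_u\cup\{t\}),
\end{equation}
and by the induction hypothesis applied to $\gamma_v, \gamma_u$ (which are paths in smaller trees) each factor is a Pl\"ucker generator with the $e$-endpoint carrying sign $s$ or $t$.

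The main obstacle is then the sign-bookkeeping crystallized in the following identity. The two terms in the sum above produce
\begin{equation}
p_{(i,a),(e,1)}\,p_{(e,2),(j,b)} - p_{(i,a),(e,2)}\,p_{(e,1),(j,b)},
\end{equation}
with the $(-1)$ precisely coming from the convention for $\rho(\phi_e,\cdot,\cdot)$ (positive when the outgoing end has sign $1$, negative otherwise). Here I would invoke the Pl\"ucker relation
\begin{equation}
p_{(i,a),(e,1)}\,p_{(e,2),(j,b)} - p_{(i,a),(e,2)}\,p_{(e,1),(j,b)} = p_{(i,a),(j,b)}\,p_{(e,1),(e,2)},
\end{equation}
and use the defining identity $p_{(e,1),(e,2)} = \det(C_{e,1},C_{e,2}) = 1$ of $M_k(SL_2(\C))\subset \mathrm{SL}_2(\C)\ql M_{2,2k}(\C)$ to conclude that the expression collapses to $p_{(i,a),(j,b)}$, as required. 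Iterating this identity edge-by-edge along $\gamma$ realizes the induction, and packaging products over the connected components gives the bijection between $\tree$-tensors and Pl\"ucker monomials in $\C[M_{|\mathcal L(\tree)|}(SL_2(\C))]$. The subtle point to verify carefully is that the orientation conventions in the definition of $\rho$ (incoming vs.\ outgoing) line up with the ordering convention in $p_{(i,a),(j,b)} = -p_{(j,b),(i,a)}$ at every internal edge, so that no stray signs appear.
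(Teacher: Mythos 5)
The paper's own ``proof'' is a one-line citation to an external reference, so there is nothing in the text to compare against; this is a blind check of your reasoning. That said, your strategy is the right one and is surely what the cited reference carries out: decompose $\mathcal{V}(S,\phi,B)$ into its connected path components, then contract a long path one internal edge at a time by pairing the signed sum over internal sign assignments at that edge against the three-term Pl\"ucker relation and the normalization $p_{(e,1),(e,2)}=1$.

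There is, however, a concrete error in the identity you invoke. With the ordering $(i,a) < (e,1) < (e,2) < (j,b)$, the three-term Pl\"ucker relation gives
\begin{equation}
p_{(i,a),(e,1)}\,p_{(e,2),(j,b)} - p_{(i,a),(e,2)}\,p_{(e,1),(j,b)} = -\,p_{(i,a),(j,b)}\,p_{(e,1),(e,2)},
\end{equation}
so the displayed sum collapses to $-p_{(i,a),(j,b)}$, not $+p_{(i,a),(j,b)}$. This does not sink the proposition, since $-p_{(i,a),(j,b)} = p_{(j,b),(i,a)}$ is still a Pl\"ucker generator (the paper explicitly allows the ``opposite'' generators), so the set-level statement survives; but it does mean the sign bookkeeping you flag as a ``subtle point to verify'' is not merely a formality: the $(-1)^{\rho(\phi_e,\cdot,\cdot)}$ convention, the direction convention $p_{(i,a),(j,b)} = -p_{(j,b),(i,a)}$, and the lexicographic index ordering in the paper's quadratic Pl\"ucker relation all interact, and you cannot simply ``invoke'' the identity in the form you wrote. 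A secondary, minor point: your stated base case --- a tree with one vertex and two leaves --- has a bivalent vertex, which the paper's conventions exclude; the natural base case is a path passing through a single vertex of valence at least three, where the path tensor is by definition a generator of $\C[M_{\epsilon(v)}(SL_2(\C))]$ and no contraction is needed.
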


\begin{proof}
This is a straightforward calculation, also covered in Proposition $8.4$ of \cite{M15}. 
\end{proof}

We use this proposition to prove that $\mathcal{S}(\Gamma, \phi)$ coincides with the set $\mathcal{S}(\Gamma_g) \subset \C[\mathcal{X}(F_g, SL_2(\C))]$
for $\psi_{V, \tree}: \Gamma_g \to \Gamma$ and $\phi_{\tree}: \Gamma \to \Gamma_g$ a distinguished pair of maps corresponding to a spanning tree $\tree \subset \Gamma$.  Recall the maps $\Phi_{\tree}, \Psi_{\tree}$ and $\pi_{\tree}$ from Proposition \ref{spanningtreespace}.

\begin{proposition}\label{roseeq}
The set of $\Gamma-$tensors $\mathcal{S}(\Gamma) \subset \C[M_{\Gamma}(SL_2(\C))]$ coincides with the set of $\Gamma_g$-tensors in $\C[M_{\Gamma_g}(SL_2(\C))]$ under the isomorphisms $\Phi_{\tree}$ and $\Psi_{\tree}.$
\end{proposition}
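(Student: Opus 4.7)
My plan is to use the commutative diagram of Proposition \ref{spanningtreespace} to factor the isomorphism $M_{\Gamma}(SL_2(\C)) \cong M_{\Gamma_g}(SL_2(\C))$ through the intermediate tree space $M_{T(\tree, \Gamma)}(SL_2(\C))$, and then to reduce the statement to the tree-tensor case supplied by Proposition \ref{g0}. The essential point is that both quotient maps $\pi_{\tree, \Gamma}$ and $\pi_0$ implement the same ``regluing of a pair of split leaves'' coming from the isomorphism $[SL_2(\C) \times SL_2(\C)] \q SL_2(\C) \cong SL_2(\C)$, so their pullbacks should translate $\Gamma$-tensors and $\Gamma_g$-tensors into the same kind of signed sum of $T(\tree, \Gamma)$-tensors with matched leaf-sign data.

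First I would pull back an arbitrary $\Gamma$-tensor $\mathcal{V}(S, \phi) \in \mathcal{S}(\Gamma)$ through $\pi_{\tree, \Gamma}^*$ into $\C[M_{T(\tree, \Gamma)}(SL_2(\C))]$. Cutting each non-tree edge $e \in E(\Gamma) \setminus E(\tree)$ converts the matching $\phi_e$ into a pair of leaves of $T(\tree, \Gamma)$, and the reduced paths through $e$ in $S_v$ and $S_u$ become reduced paths terminating at these paired leaves. The defining signed sum $\sum_A (-1)^{\rho(A)}\mathcal{V}(S, \phi, A)$ then splits: the sum over sign assignments at tree edges stays inside the definition of a $T(\tree, \Gamma)$-tensor, while the sum over sign assignments at non-tree edges becomes a sum over markings at the corresponding matched pairs of leaves. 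This identifies $\pi_{\tree, \Gamma}^*(\mathcal{V}(S, \phi))$ as a $T(\tree, \Gamma)$-tensor summed over compatible sign data on the cut leaves.

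Next I would apply $\Psi_{V, \tree}$ and invoke Proposition \ref{g0} to recognize each such $T(\tree, \Gamma)$-tensor as a Pl\"ucker monomial in $\C[M_{2g}(SL_2(\C))]$. By the commutative square of Proposition \ref{spanningtreespace}, this Pl\"ucker monomial equals $\pi_0^*$ of a regular function on $M_{\Gamma_g}(SL_2(\C))$, and running the whole construction in reverse with the trivial spanning tree in $\Gamma_g$ exhibits that regular function as a $\Gamma_g$-tensor. Since every step is reversible, one obtains both inclusions and hence the equality of sets.

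The main obstacle I anticipate is the bookkeeping of the orientation-dependent signs $(-1)^{\rho(\phi_e, i, j)}$ at each non-tree edge: the definition in Subsection \ref{gammatensors} prescribes a specific sign based on whether an endpoint is incoming or outgoing and whether its label is $1$ or $2$, and one must verify that after regluing via $\pi_{\tree, \Gamma}$ these signs combine with the leaf-sign sum produced on the tree side to yield exactly the corresponding combination on the $\Gamma_g$ side. Because the isomorphism $[SL_2(\C) \times SL_2(\C)] \q SL_2(\C) \cong SL_2(\C)$ is implemented by the single formula $(g, h) \mapsto h^{-1}g$ regardless of which edge is being reglued, this check reduces to a single-edge local computation, which is precisely the one built into the sign convention of Subsection \ref{gammatensors}.
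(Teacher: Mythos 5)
Your proposal follows essentially the same route as the paper's own proof: both decompose $\mathcal{V}_\Gamma(S, \phi)$ as a signed sum of $T(\tree, \Gamma)$-tensors via $\pi_{\tree, \Gamma}^*$, identify each summand as a Pl\"ucker monomial using Proposition~\ref{g0}, reassemble the result as a $\Gamma_g$-tensor through the commutative square of Proposition~\ref{spanningtreespace}, and observe that the construction reverses. You make the sign-bookkeeping at the reglued non-tree edges more explicit than the paper (which simply cites the decomposition formula and asserts the sum ``amounts to an element of $\mathcal{S}(\Gamma_g)$''), but the underlying argument is the same.
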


\begin{proof}
This follows from the observation (see \cite{M15}, Proposition $8.5$) that any $\Gamma$ (respectively $\Gamma_g-$tensor) can be decomposed into a sum of $T(\tree, \Gamma)$-tensors, using the map $\pi_{\tree}.$

\begin{equation}
\mathcal{V}_{\Gamma}(S, \phi) = \sum_{B} \mathcal{V}_{T(\tree, \Gamma)}(S_{T(\tree, \Gamma)}, \phi_{T(\tree, \Gamma)}, B)(-1)^{\rho(B)}\\
\end{equation}

\noindent
Each of the components $\mathcal{V}_{T(\tree, \Gamma)}(S_{T(\tree, \Gamma)}, \phi_{T(\tree, \Gamma)}, B)$ is a monomial in Pl\"ucker generators by Proposition \ref{g0}.  This sum therefore amounts to an element of $\mathcal{S}(\Gamma_g)$.  This argument can 
then be run in reverse to give the other inclusion. 
\end{proof}

Next we show that the set $\mathcal{S}(\Gamma_g) \subset \C[M_{\Gamma_g}(SL_2(\C))]$ coincides with another set of distinguished regular functions on $M_{\Gamma_g}(SL_2(\C)) = \mathcal{X}(F_g, SL_2(\C))$. The trace-word function $\tau_{\omega}$ associated to a word $\omega \in F_g$ is the regular function that takes $(A_1, \ldots, A_g) \in \mathcal{X}(F_g, SL_2(\C))$ to the complex number $tr(\omega(A_1, \ldots, A_g))$.   Let $\mathcal{W}_g \subset \C[\mathcal{X}(F_g, SL_2(\C))]$ be the set of monomials in these functions, we will prove the following proposition with a series of lemmas.

\begin{proposition}\label{tracetensor}
The set $\mathcal{S}(\Gamma_g)$ coincides with $\mathcal{W}_g$. 
\end{proposition}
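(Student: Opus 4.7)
The plan is to establish the equality in three stages: reduce to connected $\Gamma_g$-tensors via multiplicativity, set up a bijection between connected tensors and trace-word functions, and verify the identity by unfolding along a spanning tree.

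First, I would observe that an abstract $\Gamma_g$-tensor with disconnected path data $V(S,\phi) = \bigsqcup_i V(S_i,\phi_i)$ yields a product $\mathcal{V}(S,\phi) = \prod_i \mathcal{V}(S_i,\phi_i)$: the sign $(-1)^{\rho(A)}$ factors over edges, and each edge belongs to a unique connected component, so the signed sum over sign assignments $A$ factors into the product of the corresponding sums for the individual components. Since $\mathcal{W}_g$ is by definition closed under products, it suffices to show that every connected $\Gamma_g$-tensor is a single trace-word function and vice versa. Combinatorially, because $\Gamma_g$ has a single vertex, a connected abstract $\Gamma_g$-tensor is precisely a single reduced directed cycle visiting edges $e_{i_1}^{\epsilon_1}, \ldots, e_{i_k}^{\epsilon_k}$; reading these in order gives a reduced cyclic word $\omega(S,\phi) \in \langle F_g \rangle$, whose associated trace-word function $\tau_{\omega(S,\phi)}$ is well-defined by conjugation-invariance of trace in $SL_2(\C)$. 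Conversely, every reduced cyclic word arises this way.

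Next, to prove $\mathcal{V}(S,\phi) = \tau_{\omega(S,\phi)}$ in the connected case, I would take the trivial spanning tree $\tree$ consisting of the sole vertex of $\Gamma_g$; then $T(\tree, \Gamma_g)$ is a star with $2g$ leaves, and Proposition \ref{spanningtreespace} identifies $M_{T(\tree,\Gamma_g)}(SL_2(\C)) \cong M_{2g}(SL_2(\C))$ compatibly with the gluing map $\pi_0: M_{2g}(SL_2(\C)) \to M_{\Gamma_g}(SL_2(\C))$. The decomposition used in Proposition \ref{roseeq} writes
\begin{equation*}
\mathcal{V}(S,\phi) = \sum_B (-1)^{\rho(B)}\, \mathcal{V}_{T(\tree,\Gamma_g)}(S_T, \phi_T, B),
\end{equation*}
and by Proposition \ref{g0} each summand is a monomial in Plücker generators $p_{(i_r, a_r),(i_{r+1}, a_{r+1})}$ indexed by consecutive corners of the cycle. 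Pulling through $\pi_0$ and writing in matrix entries of $A_1,\ldots,A_g \in SL_2(\C)$ representing the loops, this becomes a signed sum of products of $2 \times 2$ minors.

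Finally, I would verify that this signed sum equals $\text{tr}(A_{i_1}^{\epsilon_1} \cdots A_{i_k}^{\epsilon_k})$. The key ingredient is the classical $SL_2(\C)$ identity $A^{-1} = \text{adj}(A)$, under which each entry of an inverse becomes a signed minor; combined with the trace expansion $\text{tr}(M_1 \cdots M_k) = \sum_{j_1,\ldots,j_k}\prod_r (M_r)_{j_r, j_{r+1}}$, the trace acquires precisely the shape of signed minor sum produced by the unfolded $\mathcal{V}(S,\phi)$. The main obstacle is sign bookkeeping: the orientation-marking signs $(-1)^{\rho(A)}$ defined in Subsection \ref{gammatensors} must line up with the signs $(-1)^{s+t}$ introduced by the adjugate. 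I would handle this by induction on the word length $k$; the base case $k=1$ reduces to $\tau_{a_i} = \text{tr}(A_i)$ as a Plücker expression across the two split halves of $e_i$ under $\pi_0$, and the inductive step appends one more edge traversal to the cycle, using the Plücker relation in $M_3(SL_2(\C))$ to reconcile the new sign contribution with the prescribed orientation data.
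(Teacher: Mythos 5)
Your opening two stages match the paper: the reduction to connected $\Gamma_g$-tensors via multiplicativity of the signed sum over markings, and the combinatorial bijection between connected tensors and cyclic reduced words, are exactly what the paper establishes. The divergence is in the final verification, and it is there that your proposal has a genuine gap.

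The paper's key additional idea, which you skip, is an \emph{untangling} reduction: rather than prove $\mathcal{V}(S_\omega,\phi_\omega) = \tau_\omega$ directly for an arbitrary reduced word $\omega$, the paper introduces the model tensor $\mathcal{V}_n$ (the single-cycle $\Gamma_n$-tensor through $n$ distinct edges, with no repeats and no inverses) and the pullback map $\phi_\omega^* \colon \C[\mathcal{X}(F_n,SL_2(\C))] \to \C[\mathcal{X}(F_g,SL_2(\C))]$. It first shows $\phi_\omega^*(\mathcal{V}_n) = \mathcal{V}(S_\omega,\phi_\omega)$, which reduces everything to verifying $\tau_n = \mathcal{V}_n$ for the single model word $x_1x_2\cdots x_n$. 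All of the inverses and repeated letters in $\omega$ are absorbed into $\phi_\omega^*$, so the adjugate and sign issues you flag as ``the main obstacle'' never arise in the identity that actually needs to be computed. The paper then proves $\tau_n = \mathcal{V}_n$ by induction on $n$ via a matrix-composition isomorphism $([SL_2(\C)\times SL_2(\C)]\q SL_2(\C))^g \to ([SL_2(\C)\times SL_2(\C)]\q SL_2(\C))^{g-1}$ that replaces two consecutive factors by their product, reducing $\mathcal{V}_g$ to $\mathcal{V}_{g-1}$ and $\tau_g$ to $\tau_{g-1}$ simultaneously.

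Your proposed ``induction on word length $k$, appending one more edge traversal'' does not have a workable recursive structure as stated. Appending a letter to a word changes $\mathrm{tr}(\omega(\vec A))$ to $\mathrm{tr}(\omega'(\vec A))$, and there is no simple relation between these two traces that your Plücker-relation step would exploit; the three-term Plücker relation in $\C[M_3(SL_2(\C))]$ governs linear dependencies among products of minors at a single trivalent vertex, not the passage from a length-$k$ cycle to a length-$(k+1)$ cycle. The correct recursion, as in the paper, goes the other way: it \emph{contracts} an edge by composing matrices, a move compatible with both the trace expansion and the Plücker picture. So while the adjugate formula and the trace-product expansion are the right building blocks, the sign bookkeeping is precisely the hard content here, and the mechanism you propose for it (append plus Plücker relation) would not close; you need either the untangling reduction to $\mathcal{V}_n$ or a contraction-type induction.
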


Notice that $\mathcal{W}_g$ is naturally closed under the action of the outer automorphism group $Out(F_g)$ on $\C[\mathcal{X}(F_g, SL_2(\C))]$, so Proposition \ref{tracetensor} implies that $\mathcal{S}(\Gamma, \phi) = \mathcal{S}(\Gamma_g) \subset \C[\mathcal{X}(F_g, SL_2(\C))]$ for $\phi$ any composition of an outer automorphism with a distinguished graph map.  Proposition \ref{hominv} then implies that this is the case for any marking $\phi: \Gamma_g \to \Gamma.$

First we show that these sets are bijective combinatorially.  Let $\mathcal{S}_{con}(\Gamma)$ be the set of connected abstract $\Gamma$-tensors. Fix a connected abstract $\Gamma_g$-tensor $V(S, \phi) \in \mathcal{S}_{con}(\Gamma_g)$.  We let $T(\Gamma_g, v)$ be the $2g$ tree obtained from the "spanning tree" $\{v\} \subset \Gamma_g$.  We label the leaves of $T(\Gamma_g, v)$ with the $2g$ indices $2i-1, 2i$ $1 \leq i \leq g$, where the $2i-1$ and $2i$ leaves are identified in the quotient map $T(\Gamma_g, v) \to \Gamma_g.$ We let $p_{(i_1,j_1)} \in S$ be a path in $V(S, \phi).$  The endpoint $j_1$ is connected with the starting point of the next path $p_{(i_2, j_2)},$ so $j_1 = 2a_1 -1, i_2 = 2a_1$ or $j_1 = 2a_1, i_2 = 2a_1-1$.   Continuing this way, we obtain a word $\omega(V(S, \phi), p_{(i_1, j_1)}) = x_{a_1}^{\epsilon_1}x_{a_2}^{\epsilon_2}\ldots x_{a_m}^{\epsilon_m}$, where the sign $\epsilon_i$ is determined by the rule $j_1 = 2a_1 -1, i_2 = 2a_1 = 2 \implies \epsilon_i = 1$, $j_1 = 2a_1, i_2 = 2a_1-1 \implies \epsilon_i = -1.$  By construction, this word is reduced, as $\Gamma_g$ tensors do not have back-tracks by definition.  Given a word $\omega$, we may reverse this recipe to obtain a $\Gamma_g-$tensor $V(S_{\omega}, \phi_{\omega}).$  All paths in $T(\Gamma_g, v)$ are determined by their endpoints, so we have a $1-1$ map from the set of connected $\Gamma_g$-tensors $V(S, \phi)$ with a choice of initial path  $p_{(i_1,j_1)} \in S$ to the set of reduced words in $F_g.$  Changing the initial path amounts to changing the word $\omega(V(S, \phi), p_{(i_1,j_1)})$ by a cyclic permutation, so we have proved the following lemma. 

\begin{lemma}
The set $\mathcal{S}_{con}(\Gamma_g)$ is in bijection with cyclic equivalence classes of reduced words in $F_g.$
\end{lemma}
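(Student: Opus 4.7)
The plan is to exhibit mutually inverse maps between connected abstract $\Gamma_g$-tensors equipped with a choice of initial path and the set of reduced words in $F_g$, and then observe that changing the initial path permutes the resulting word cyclically. First I would set up the forward map: given $V(S,\phi)\in \mathcal{S}_{con}(\Gamma_g)$ and a distinguished starting path $p_{(i_1,j_1)}\in S$, I would use the edge-gluing bijections $\phi_e$ to string the paths of $S$ together into a single closed walk that visits every path of $S$ exactly once; connectedness of $V(S,\phi)$ guarantees that one really does obtain a single cycle rather than several. Each time the walk crosses a loop of $\Gamma_g$, identified with one of the generators $x_a$ via the labeling of the leaves $2a-1,2a$ of $T(\Gamma_g,v)$, it contributes a letter $x_a^{\epsilon}$, with sign $\epsilon\in\{\pm 1\}$ determined by the parity convention stated just above the lemma.

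Next I would verify that the resulting word $\omega(V(S,\phi),p_{(i_1,j_1)})$ is reduced. A backtrack $x_a x_a^{-1}$ (or its inverse) would correspond, under the recipe above, to two consecutive paths in the link $\Gamma_{g,v}$ whose meeting endpoints lie on the same pair $\{2a-1,2a\}$ in the forbidden way, i.e.\ to a backtrack \emph{inside} the link. This is ruled out by the reducedness clause in the definition of an abstract $\Gamma$-tensor. Conversely, given a reduced word $\omega\in F_g$, I would read each pair of consecutive letters as a simple path in the link $\Gamma_{g,v}$ and glue these paths using the canonical identification of the leaves $2a-1$ and $2a$. Reducedness of $\omega$ is exactly what guarantees that the resulting link data has no backtracks, so one recovers a bona fide element of $\mathcal{S}_{con}(\Gamma_g)$, together with a canonical initial path. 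The two constructions are visibly mutually inverse, because a path in the tree $T(\Gamma_g,v)$ is determined by its pair of leaf endpoints.

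Finally, changing the distinguished initial path to another path on the same cycle merely shifts where one begins reading the closed walk, so it permutes the letters of $\omega(V(S,\phi),p_{(i_1,j_1)})$ by a cyclic rotation; conversely, any two reduced words differing by a cyclic rotation come from the same tensor with different initial paths. Passing to the quotient by the cyclic action on both sides gives the claimed bijection between $\mathcal{S}_{con}(\Gamma_g)$ and cyclic equivalence classes of reduced words in $F_g$. The only genuine bookkeeping point — and the mildest of obstacles — is keeping the signs $\epsilon_i$ compatible with the orientation convention on the loops of $\Gamma_g$ and the identification of the leaves $2i-1, 2i$, but this is dictated verbatim by the parity rule already described.
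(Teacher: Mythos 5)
Your proof is correct and takes essentially the same route as the paper's: both set up mutually inverse maps between connected abstract $\Gamma_g$-tensors with a chosen initial path and reduced words in $F_g$ (using the leaf labeling $2i-1, 2i$ of $T(\Gamma_g,v)$ and the parity rule for signs), argue that reducedness of the paths in the link forbids backtracks in the word, invoke that paths in $T(\Gamma_g,v)$ are determined by their endpoints to get injectivity, and then quotient by the cyclic shift that comes from changing the initial path.
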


By Proposition \ref{roseeq}, the same is true for the connected $\Gamma$ tensors $\mathcal{S}_{con}(\Gamma)$ of any graph $\Gamma.$ 
The recipe to compute $\omega(V(S, \phi), p_1) \in F_g$ is almost identical in the general case, except one replaces a simple path $p_{(i_s,j_s)}$ with the shortest path $p_s$ in the chosen spanning tree $\tree \subset \Gamma.$

Now we show that any connected $\Gamma_g-$tensor can be ``untangled," in the sense that it is realized as the pullback of a fixed $\Gamma_{g'}$ tensor by a map $\phi_*: M_{\Gamma_{g'}}(SL_2(\C)) \to M_{\Gamma_g}(SL_2(\C))$, where $\Gamma_{g'}$ has a different genus. 

\begin{definition}
We let $\mathcal{V}_g \in \C[M_{\Gamma_g}(SL_2(\C))] = \C[SL_2(\C) \ql SL_2(\C)^{2g} \q SL_2(\C)^g]$ be the $\Gamma_g$-tensor defined 
by joining the paths $p_{(2g, 1)} \to p_{(2, 3)} \to \ldots \to p_{(2g-2, 2g-1)}$, as in Figure \ref{rose}. 
\end{definition}

\begin{figure}[htbp]
\centering
\includegraphics[scale = 0.6]{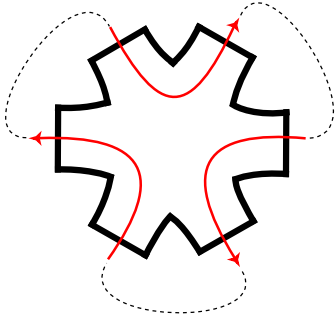}
\caption{The tensor $\mathcal{V}_{3}$}
\label{rose}
\end{figure}

\noindent
We fix a reduced word $\omega \in F_g$, and we let $n = |\omega|$.  There is a natural map of $\phi_{\omega}: \mathcal{X}(F_g, SL_2(\C)) \to \mathcal{X}(F_n, SL_2(\C))$ defined by sending $(A_1, \ldots, A_g)$ to the entries of $\omega$ in order, $(\omega_1(\vec{A}), \ldots, \omega_n(\vec{A)}).$  Recall that $\mathcal{X}(F_g, SL_2(\C))$ is the quotient $SL_2(\C) \ql SL_2(\C)^{2g} \q SL_2(\C)^g = M_{2g}(SL_2(\C))\q SL_2(\C)^g$, by way of the map $\pi_{0, g}(k_1, h_1, \ldots, k_g, h_g) = (k_1h_1^{-1}, \ldots, k_gh_g^{-1}).$  We define a map $\phi_{\omega}'$ by sending the tuple $(k_1, h_1, \ldots, k_g, h_g)$ to $(\omega_1(\vec{k}), \omega_1(\vec{h}), \ldots, \omega_n(\vec{k}), \omega_n(\vec{h}))$, this commutes with $\phi_{\omega}$ under the isomorphisms $\pi_{0, g}, \pi_{0, n}.$  We will see where $\mathcal{V}_g$ goes under $\phi_{\omega}^*.$ 

\begin{lemma}
\begin{equation}
\phi_{\omega}^*(\mathcal{V}_n) = \mathcal{V}(S_{\omega}, \phi_{\omega})\\
\end{equation}
\end{lemma}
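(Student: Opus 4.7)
The plan is to factor the computation through the star-tree covers. By Proposition \ref{spanningtreespace} applied to the trivial spanning tree $\{v\} \subset \Gamma_g$ (and similarly for $\Gamma_n$), both $\mathcal{V}_g$ and $\mathcal{V}_n$ lift to Pl\"ucker-monomial tensors on $T(\Gamma_g,v)$ and $T(\Gamma_n,v)$, and by hypothesis $\phi_\omega$ is induced from $\phi_\omega'$. So it suffices to establish the identity $(\phi_\omega')^*(\widetilde{\mathcal{V}}_n) = \widetilde{\mathcal{V}}(S_\omega,\phi_\omega)$ upstairs in $\C[M_{T(\Gamma_n,v)}(SL_2(\C))]$ and push it down by applying $\pi_{0,g}^*$.

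Next I would unpack $\widetilde{\mathcal{V}}_n$ using Proposition \ref{g0}. Its defining abstract $\Gamma_n$-tensor is the single cyclic path $p_{(2n,1)}p_{(2,3)}\cdots p_{(2n-2,2n-1)}$, so on $T(\Gamma_n,v)$ it is realized as the signed sum over markings $B \in \{1,2\}^{2n}$ of the Pl\"ucker monomials $\prod_{i=1}^n p_{(2i-1,B_{2i-1}),(2i,B_{2i})}$, with signs determined by the rules in Subsection \ref{gammatensors}. Analogously, $\widetilde{\mathcal{V}}(S_\omega,\phi_\omega)$ is a signed sum of Pl\"ucker monomials on $T(\Gamma_g,v)$, with one Pl\"ucker factor per maximal arc of $S_\omega$ in the star.

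The central step is then to check that $(\phi_\omega')^*$ carries each Pl\"ucker generator $p_{(2i-1,a),(2i,b)}$ on $T(\Gamma_n,v)$ to the Pl\"ucker expression on $T(\Gamma_g,v)$ encoded by the single letter $\omega_i = x_{a_i}^{\epsilon_i}$. When $\epsilon_i=+1$ this is immediate, since $\phi_\omega'$ copies the columns of the $(k_{a_i},h_{a_i})$ coordinate. When $\epsilon_i=-1$, one uses the $SL_2$ identity $M^{-1}=\bigl(\begin{smallmatrix}d&-b\\-c&a\end{smallmatrix}\bigr)$ to rewrite the pulled-back columns as signed transposes of columns of $(k_{a_i},h_{a_i})$; this yields the same Pl\"ucker generator with the marking indices $a,b$ swapped and the appropriate sign. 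Concatenating these identities over $i=1,\ldots,n$, multiplying, and then summing over markings $B$ reconstructs exactly the signed sum defining $\widetilde{\mathcal{V}}(S_\omega,\phi_\omega)$, because the path in $T(\Gamma_g,v)$ determined by $\omega$ is built precisely by stringing these generator-arcs together as the letters of $\omega$ are traversed in order.

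The main obstacle is sign bookkeeping. The signs $(-1)^{\rho(A)}$ defined by the orientation data $\phi_e$ in both $\mathcal{V}_n$ and $\mathcal{V}(S_\omega,\phi_\omega)$ must be reconciled with the signs introduced by inverting the letters $x_{a_i}^{-1}$ in $\omega$. The orientation-reversal rule $p_{(i,a),(j,b)}=-p_{(j,b),(i,a)}$ combined with the sign introduced by $M \mapsto M^{-1}$ in $SL_2(\C)$ is exactly the sign data encoded by the exponents $\epsilon_i$ in the construction of $\omega(V(S,\phi),p_{(i_1,j_1)})$, so the combinatorics do line up; the verification reduces to a careful case analysis on each letter of $\omega$. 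Finally, no cancellations collapse the sum prematurely, by the same non-vanishing argument used in the proof of Proposition \ref{gammainitial}.
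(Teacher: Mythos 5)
Your overall strategy — lift $\mathcal{V}_n$ and $\mathcal{V}(S_\omega,\phi_\omega)$ to Pl\"ucker monomials on the star trees and track what $(\phi_\omega')^*$ does to each generator — is the right one, and matches the paper's (very terse) proof.  But there is a concrete misidentification in the second paragraph that propagates through the rest of the argument.  You correctly record that the defining abstract $\Gamma_n$-tensor of $\mathcal{V}_n$ has paths $p_{(2n,1)}, p_{(2,3)},\ldots,p_{(2n-2,2n-1)}$, but you then immediately claim that on $T(\Gamma_n,v)$ this is realized by the Pl\"ucker monomials $\prod_{i=1}^n p_{(2i-1,B_{2i-1}),(2i,B_{2i})}$.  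Those are the \emph{within-slot} generators pairing leaf $2i-1$ with leaf $2i$, i.e.\ the two ends of the $i$-th edge of $\Gamma_n$ — they are not the paths you wrote one clause earlier.  The actual generators are the \emph{between-slot} $p_{(2i,a),(2i+1,b)}$ (indices mod $2n$), connecting the ``outgoing'' end of the $i$-th edge to the ``incoming'' end of the $(i+1)$-st.  This is not a harmless reindexing: the within-slot paths, when glued up, produce a disjoint union of loops rather than the single cyclic loop whose trace $\mathcal{V}_n$ is supposed to compute (cf.\ Lemma~\ref{rosetrace}, whose proof explicitly expands $\mathcal{V}_g$ as a sum of monomials $p_{(2g,a_{2g}),(1,a_1)}p_{(2,a_2),(3,a_3)}\cdots$).

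The error matters for your ``central step.''  Since slot $2i$ maps under $\phi_\omega'$ to $\omega_i(\vec{h})$ and slot $2i+1$ to $\omega_{i+1}(\vec{k})$, the pullback of the genuine generator $p_{(2i,a),(2i+1,b)}$ is the determinant of the $a$-th column of $\omega_i(\vec{h})$ with the $b$-th column of $\omega_{i+1}(\vec{k})$ — a quantity depending on \emph{two consecutive letters} $\omega_i,\omega_{i+1}$, not on a single letter.  Only the within-slot generator you actually analyzed would pull back to a single-letter object, and your $\epsilon_i=\pm1$ case analysis (with the $M^{-1}$ identity) is built around that single-letter picture.  To repair the argument you should replace the single-letter step by the two-letter observation above; once you note that the location of the image column (even or odd slot of the star $T(\Gamma_g,v)$) is determined by the sign $\epsilon_i$ of each of the two adjacent letters, the pullback of each $p_{(2i,a),(2i+1,b)}$ is visibly the $i$-th path in $V(S_\omega,\phi_\omega)$, and the lemma follows by definition.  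A side remark: in the paper's setup the sign bookkeeping you worry about is absorbed directly into the definition of $\phi_\omega'$ (for an inverse letter, $\omega_i(\vec{k})$ and $\omega_i(\vec{h})$ are taken to be $h_{a_i}$ and $k_{a_i}$, not $k_{a_i}^{-1}$ and $h_{a_i}^{-1}$), so the $M^{-1}=\bigl(\begin{smallmatrix}d&-b\\-c&a\end{smallmatrix}\bigr)$ expansion you invoke is not actually needed — relying on it suggests the definition of $\phi_\omega'$ has not been fully internalized.
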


\begin{proof}
When the Pl\"ucker element $p_{(2i, a), (2i+1, b)}$ is evaluated on $\phi_{\omega}'(k_1, h_1, \ldots, k_g, h_g)$ it gives the determinant of the $a$ column of $\omega_i(\vec{h})$ and the $b$ column of $\omega_{i+1}(\vec{k}).$  The pullback $\phi_{\omega}^*(\mathcal{V}_n)$ therefore coincides with $\mathcal{V}(S_{\omega}, \phi_{\omega})$ by definition.     
\end{proof}

We now introduce the model trace-word $\tau_g = tr(A_1A_2 \ldots A_g)$.  By the previous lemma, if we can show that $\tau_n = \mathcal{V}_n \in \C[\mathcal{X}(F_n, SL_2(\C))]$, we would have $\mathcal{V}(S_{\omega}, \phi_{\omega}) = \tau_{\omega} = tr(\omega(\vec{A})),$ and Proposition \ref{tracetensor} would follow. 

\begin{lemma}\label{rosetrace}

\begin{equation}
\tau_n = \mathcal{V}_n \in \C[\mathcal{X}(F_n, SL_2(\C))]\\
\end{equation}
\end{lemma}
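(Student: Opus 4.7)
The plan is to verify the identity by a direct computation in coordinates. Using the GIT identification $SL_2(\C) \cong [SL_2(\C) \times SL_2(\C)] \q SL_2(\C)$ from Section \ref{sl2}, I lift the computation to $M_{2n}(SL_2(\C))$ by writing $A_j = h_j^{-1} k_j$, where the columns of $k_j, h_j \in SL_2(\C)$ are the Pl\"ucker columns $C_{2j-1,\cdot}, C_{2j,\cdot}$ in the $2 \times 2n$ matrix presentation of Section \ref{gammatensors}. Both $\tau_n$ and $\mathcal{V}_n$ then pull back along $\pi_0$ to $SL_2(\C)^n$-invariant polynomials in the Pl\"ucker generators $p_{(i,a),(j,b)} = \det(C_{i,a}, C_{j,b})$, and the task is to show these two polynomials are equal.

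For the trace side, the $SL_2$ adjugate formula $(h^{-1})_{ac} = (-1)^{a+c} h_{3-c,\, 3-a}$ gives $(A_j)_{ab} = (-1)^{a+1} p_{(2j-1, b),(2j, 3-a)}$, exhibiting each matrix entry of $A_j$ as a signed Pl\"ucker generator. The cyclic trace expansion $\tau_n = \sum_{a_1, \ldots, a_n \in \{1,2\}} \prod_{j=1}^n (A_j)_{a_j a_{j+1}}$ (with $a_{n+1} = a_1$) therefore displays $\tau_n$ as a signed sum over $a \in \{1,2\}^n$ of products of \emph{within-loop} Pl\"ucker generators $p_{(2j-1,\cdot),(2j,\cdot)}$, with total sign $(-1)^{\sum a_j + n}$. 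For the $\mathcal{V}_n$ side, the constraint $s_{2i-1} \neq s_{2i}$ at each loop of $\Gamma_n$ (coming from the identification map $\phi_e$) parametrizes sign markings by $t \in \{1,2\}^n$ via $t_i = s_{2i-1}$ and $s_{2i} = 3 - t_i$; the Pl\"ucker factors from the paths $p_{(2n,1)}, p_{(2,3)}, \ldots, p_{(2n-2,2n-1)}$, together with the orientation signs $(-1)^{\rho}$, produce $\mathcal{V}_n$ as a signed sum over $t$ of products of \emph{between-loop} Pl\"ucker generators $p_{(2i,\cdot),(2i+1,\cdot)}$ and a wrap-around factor $p_{(2n,\cdot),(1,\cdot)}$.

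These two expressions involve Pl\"ucker generators of different types, so equality is not term-by-term; it must be verified using the three-term Pl\"ucker relations together with the normalization $p_{(i,1),(i,2)} = 1$ (the $\det k_i = \det h_i = 1$ condition) presenting $\C[M_{2n}(SL_2(\C))]$. The cleanest organization is induction on $n$: the base case $n = 1$ reduces both sides directly to $p_{(1,1),(2,2)} - p_{(1,2),(2,1)}$ and fixes the orientation convention on the single loop of $\Gamma_1$; the inductive step splits off loop $n$ from $\Gamma_n$, applies one Pl\"ucker relation to each within-loop factor $p_{(2n-1, a_1),(2n, 3-a_n)}$ in the trace expansion to convert it into two between-loop terms linking loop $n$ to loops $n-1$ and $1$, and uses the induction hypothesis on the remaining $n-1$ loops to recognize the result as the recursive unfolding of $\mathcal{V}_n$. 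The main obstacle will be the sign bookkeeping in this last step, since both the orientation convention on $\Gamma_n$ and the cyclic expansion of the trace contribute signs that must agree after the Pl\"ucker reduction; checking the $n=2$ case by hand (for instance, at $k_i = h_i = I$ and at $k_1 = \mathrm{diag}(\lambda,\lambda^{-1}), h_j = k_2 = I$) confirms that the signs indeed align with the prescribed convention, after which the inductive computation proceeds uniformly.
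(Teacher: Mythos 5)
Your coordinate setup is correct, and the two expansions you describe (within-loop for $\tau_n$, between-loop for $\mathcal{V}_n$) are the right objects to compare: with the identification $A_j = h_j^{-1}k_j$ (which agrees with the map $(g,h)\mapsto h^{-1}g$ stated at the start of Section \ref{charactervaluations}, though note the paper's $\pi_{0,g}$ is written as $(k_j,h_j)\mapsto k_jh_j^{-1}$), the identity $(A_j)_{ab}=(-1)^{a+1}p_{(2j-1,b),(2j,3-a)}$ holds, and the base case $n=1$ reduces both sides to $p_{(1,1),(2,2)}-p_{(1,2),(2,1)}$. The gap is in the inductive step. A three-term Pl\"ucker relation involves four column indices, and your within-loop factor $p_{(2n-1,a_1),(2n,3-a_n)}$ supplies only two. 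If you bring in columns from loops $1$ and $n-1$ to ``link'' them, the relation reads $p_{(1,\cdot),(2n-2,\cdot)}\,p_{(2n-1,a_1),(2n,3-a_n)}+p_{(1,\cdot),(2n,3-a_n)}\,p_{(2n-2,\cdot),(2n-1,a_1)}=p_{(1,\cdot),(2n-1,a_1)}\,p_{(2n-2,\cdot),(2n,3-a_n)}$, which cannot be solved for the within-loop factor without dividing by $p_{(1,\cdot),(2n-2,\cdot)}$; this is not a unit (the normalization $p_{(i,1),(i,2)}=1$ requires matching $i$), and the trace expansion contains no such factor to absorb the division. If instead you straighten against the within-loop factor of loop $1$ itself, the relation produces terms of the form $p_{(2,\cdot),(2n-1,\cdot)}$, which for $n>2$ are not of the between-loop shape $p_{(2i,\cdot),(2i+1,\cdot)}$ appearing in $\mathcal{V}_n$. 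Either way the manipulation does not land in the form your induction hypothesis needs, so the step does not close as described. Moreover, evaluation at $k_i=h_i=I$ or along a one-parameter family is a sanity check on signs, not a proof of a polynomial identity.

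The paper closes the induction by a different reduction. Rather than splitting off a loop and reorganizing Pl\"ucker factors, it merges the first two matrices via the composition map $(A_1,\ldots,A_g)\mapsto(A_1A_2,A_3,\ldots,A_g)$, lifted compatibly to the pair presentation $(k_1,h_1,\ldots,k_g,h_g)\mapsto\bigl((k_1h_1^{-1},h_2k_2^{-1}),(k_3,h_3),\ldots,(k_g,h_g)\bigr)$. Under this map $\tau_{g-1}$ pulls back to $\tau_g$ by cyclicity and multiplicativity of the trace, and the key claim is that $\mathcal{V}_{g-1}$ pulls back to $\mathcal{V}_g$: concretely, the extra factor $p_{(2,a_2),(3,a_3)}$ present in $\mathcal{V}_g$ collapses correctly on the merged input. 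This is a purely local identity involving only the columns near the merged edge, hence a finite direct calculation (the paper carries it out at $g=5$, which already exhibits the general local pattern), and the induction closes against your same base case $\tau_1=\mathcal{V}_1$. The merge construction is the idea your proposal is missing: it keeps $\tau$ and $\mathcal{V}$ structurally parallel at each stage and reduces all the Pl\"ucker bookkeeping to one local check, whereas your conversion from within-loop to between-loop tries to do too much at once and its intermediate terms drift out of the shape that $\mathcal{V}_n$ demands.
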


\begin{proof}
We employ the following commutative diagram:

\footnotesize 
$$
\begin{CD}
 ([SL_2(\C)\times SL_2(\C)]\q SL_2(\C) \times  \big{[}[SL_2(\C) \times SL_2(\C)]\q SL_2(\C)\big{]}^{g-1} @>>> SL_2(\C) \times SL_2(\C)^{g-1}\\
@VVV @VVV\\
([SL_2(\C) \times SL_2(\C)]\q SL_2(\C))^{g-1} @>>> SL_2(\C)^{g-1}\\
\end{CD}
$$

\normalsize

$$
\begin{CD}
 (k_1, h_1), (k_2, h_2), \ldots, (k_g, h_g) @>>> (k_1h_1^{-1}, \ldots, k_gh_g^{-1})\\
@VVV @VVV\\
(k_1h_1^{-1}, h_2k_2^{-1}), \ldots, (k_g, h_g) @>>> (k_1h_1^{-1}k_2h_2^{-1}, \ldots, k_gh_g^{-1}).\\
\end{CD}
$$

The horizontal arrows in this diagram give the isomorphism between the spaces $M_{2g}(SL_2(\C))\q SL_2(\C)^g$ and $\mathcal{X}(F_g, SL_2(\C))$. 
The vertical arrow on the right is the matrix composition operation $(A_1, \ldots, A_g) \to (A_1A_2, \ldots, A_g) \in SL_2(\C)^{g-1}.$  The vertical arrow on the left lifts this to $([SL_2(\C) \times SL_2(\C)]\q SL_2(\C))^{g}$, where we can talk about $\Gamma-$tensors.  

Notice that $\tau_{g-1}$ pulls back to $\tau_g$ under the right vertical arrow.  It follows that if we can show that $\tau_1 = \mathcal{V}_1 \in \C[SL_2(\C)]^{SL_2(\C)},$ and that $\mathcal{V}_{g-1}$ pulls back to $\mathcal{V}_g$ under the left vertical arrow, we have the lemma.  The first assertion holds by a straightforward calculation. The second assertion follows from the fact that $\mathcal{V}_{g-1} = \sum (p_{(2g, a_{2g}), (1, a_1)}p_{(4, a_4), (5, a_5)}\cdots)$ evaluated at $(k_1h_1^{-1}, h_2k_2^{-1}), \ldots, (k_g, h_g)$ gives the same answer as $\mathcal{V}_{g} = \sum (p_{(2g, a_{2g}), (1, a_1)}p_{(2, a_2),(3,a_3)} p_{(4, a_4), (5, a_5)}\cdots)$ evaluated at $(k_1, h_1), (k_2, h_2), \ldots, (k_g, h_g)$. This can be checked by direct calculation on the case $g = 5$. 
\end{proof}

\begin{figure}[htbp]
\centering
\includegraphics[scale = 0.4]{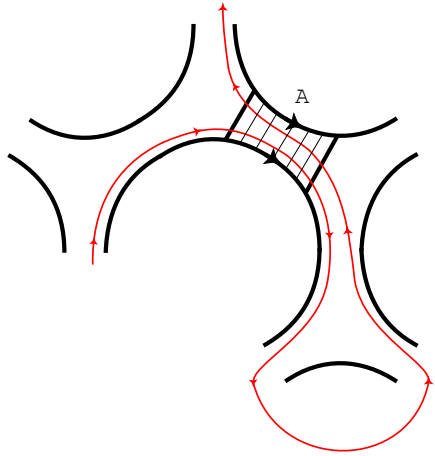}
\caption{A path representing $tr(\ldots A \ldots A^{-1} \ldots)$}
\label{pathword}
\end{figure}

For each metric graph $(\Gamma, \ell),$ with a marking $\phi: \Gamma_g \to \Gamma,$ with corresponding valuation $v_{ \Gamma, \ell, \phi}: \C[\mathcal{X}(F_g, SL_2(\C))] \to \Z \cup\{-\infty\}$, the following holds by Corollary \ref{gammaval}: 

\begin{equation}\label{lengthequation}
 v_{\Gamma, \ell, \phi}(\tau_{\omega}) = v_{\Gamma, \ell, \phi}(\mathcal{V}(S_{\omega}, \phi_{\omega})) = d_{\omega}(\Gamma, \ell, \phi).\\
\end{equation}

\subsection{Proof of Theorem \ref{mainvaluation}}\label{proofofmainvaluation}

Now we finish the proof of Theorem \ref{mainvaluation} by showing that $\Sigma: \hat{O}(g) \to \mathcal{X}(F_g, SL_2(\C))^{an}$, $\Sigma(\Gamma, \ell, \phi) = v_{\Gamma, \ell, \phi}$ gives an embedding.  Outer space $O(g)$ is the closed subset of $\hat{O}(g)$ of marked metric graphs $(\Gamma, \ell, \phi)$ with volume $1$, this also gives an embedding of outer space into $\mathcal{X}(F_g, SL_2(\C))^{an}$.

  The topology on $\mathcal{X}(F_g, SL_2(\C))^{an}$ is generated by the the sets $ev_f^{-1}(U)$ where the $U \subset \R$ are open intervals. This is similar to the topology on $\hat{O}(g)$, which is generated by the sets $d_w^{-1}(U)$ (\cite{CV}).  Each length function on $\hat{O}(g)$ extends to a continuous function on $\mathcal{X}(F_g, SL_2(\C))^{an}$ by Equation \ref{lengthequation}, so it follows that any open set in $\hat{O}(g)$ is induced from its inclusion into $\mathcal{X}(F_g, SL_2(\C))^{an}$. It remains to check that any evaluation function is continuous on $\hat{O}(g),$ as this would imply that no new open sets are induced from $f \in \C[\mathcal{X}(F_g, SL_2(\C))]$, $f \neq \tau_{\omega}.$   

Note that $f$ can be expanded in a unique way into $(\Gamma, \phi)$ spin diagrams $\Phi_a \in \mathcal{R}(\Gamma)$, and that the value $v_{\Gamma, \ell, \phi}(f)$ is obtained on one or more of these diagrams.  Now we may replace each spin diagram function $\Phi_a$ with a lower-triangular expansion into trace-word functions $\Phi_a = \sum \tau_{w_{a,i}}$, with $v_{\Gamma, \ell, \phi}(\Phi_a) = v_{\Gamma, \ell, \phi}(\tau_{w_{a, 1}})$, the value of the leading term.   The equivalence class of $\Phi_a$ equals that of $\tau_{w_{a, 1}}$ in the associated graded algebra $\C[P_{\Gamma}]$ of any interior point of $C_{\Gamma, \phi}$; it follows that $v_{\Gamma, \ell, \phi}(f) = max_{C_a \neq 0}\{ \ldots, v_{\Gamma, \ell, \phi}(\Phi_a)\ldots \}$ $ = max_{C_a \neq 0}\{ \ldots, v_{\Gamma, \ell, \phi}(\tau_{w_{a, 1}})\ldots \}$ for any $\ell \in C_{\Gamma, \phi}$.  This shows that $ev_f$ is the max of a finite number of continuous functions over the simplicial cone $C_{\Gamma, \phi}$.  As the function $ev_f$ is continuous when restricted to any $C_{\Gamma, \phi}$, it must be continuous on $\hat{O}(g)$ by the Pasting Lemma.  This proves Theorem \ref{mainvaluation}.

\subsection{The tropical variety $\trop(I_{2, g})$ and $\Upsilon_g$}

We refer the reader to the book of Maclagan and Sturmfels \cite{MSt} for background on tropical geometry.  We will need the following
result (see \cite{P}) relating the analytification $X^{an}$ of an affine variety to the tropical variety $\trop(I)$ of an ideal which cuts $X$ out of affine space. 

\begin{theorem}[Payne]\label{Payne}
Let $\pi: k[x_1, \ldots, x_n] \to k[X]$ be a presentation of the coordinate ring of $X$ with $I = Ker(\pi)$, then the image of the map $(ev_{\pi(x_1)}, \ldots, ev_{\pi(x_n)}): X^{an} \to \R^n$ is the tropical variety $\trop(I)$. 
\end{theorem}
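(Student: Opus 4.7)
The plan is to prove both containments using the description of the tropical variety as an intersection of tropical hypersurfaces, $\trop(I) = \bigcap_{f \in I} \trop(f)$, where $\trop(f)$ consists of those $w \in \R^n$ at which the tropicalization of $f = \sum_\alpha c_\alpha x^\alpha$ achieves its maximum at least twice.

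For the containment $(ev_{\pi(x_1)}, \ldots, ev_{\pi(x_n)})(X^{an}) \subseteq \trop(I)$, the argument is essentially axiomatic. Given $v \in X^{an}$, set $w_i = v(\pi(x_i))$, and fix any $f = \sum_\alpha c_\alpha x^\alpha \in I$. Since $\pi(f) = 0$ in $k[X]$, we have $v(\pi(f)) = -\infty$. Because $v$ lifts the trivial valuation on $k$, the valuation axioms yield
$$v(\pi(f)) \;\leq\; \max_{\alpha : c_\alpha \neq 0} \bigl(v(c_\alpha) + \langle \alpha, w\rangle\bigr) \;=\; \max_{\alpha : c_\alpha \neq 0} \langle \alpha, w \rangle.$$
If this maximum were attained at a \emph{unique} exponent $\alpha^\star$, then the standard fact that the ultrametric inequality is an equality when one term strictly dominates would force $v(\pi(f))$ to equal that finite maximum, contradicting $v(\pi(f)) = -\infty$. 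Hence the maximum is attained at least twice, i.e.\ $w \in \trop(f)$. Since this holds for every $f \in I$, we conclude $w \in \trop(I)$.

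For the reverse inclusion, the plan is to invoke the Fundamental Theorem of Tropical Algebraic Geometry (Kapranov for hypersurfaces; Einsiedler--Kapranov--Lind and Speyer--Sturmfels in the ideal case). Given $w \in \trop(I)$, this theorem produces an algebraically closed extension field $K/k$ equipped with a nontrivial rank-$1$ real-valued valuation $\mathrm{val}_K$ (extending the trivial valuation on $k$) together with a $K$-point $\xi = (\xi_1, \ldots, \xi_n) \in V(I)(K) \subset K^n$ such that $(\mathrm{val}_K(\xi_1), \ldots, \mathrm{val}_K(\xi_n)) = w$. Composing evaluation at $\xi$ with $\mathrm{val}_K$ yields the map
$$v : k[X] \longrightarrow \R \cup \{-\infty\}, \qquad v(g) = \mathrm{val}_K(g(\xi)),$$
which is a rank-$1$ valuation on $k[X]$ lifting the trivial valuation on $k$ and satisfies $v(\pi(x_i)) = \mathrm{val}_K(\xi_i) = w_i$. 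Thus $w$ is in the image of $X^{an}$, finishing the argument.

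The main obstacle is the second direction, which rests entirely on the Fundamental Theorem of Tropical Geometry. Its proof is the technical core of the theory and is typically carried out either via an initial-ideal/Gröbner argument (showing that $w \in \trop(I)$ is equivalent to $\mathrm{in}_w(I)$ containing no monomial, and then lifting solutions from the initial ideal to $V(I)(K)$ through a Newton polygon induction), or via the Bieri--Groves theorem on the polyhedral structure of tropicalizations together with a density/algebraization argument. The forward direction is elementary once the valuation axioms and the MAX-convention ultrametric equality are invoked; it is only the existence of the lifted $K$-point that requires substantial machinery.
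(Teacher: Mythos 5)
This statement is cited in the paper as Payne's theorem from \cite{P} and is used as a black box; the paper gives no proof of its own, so there is nothing internal to compare against. Your sketch is nevertheless a correct rendering of the standard argument (which is essentially the one in \cite{P}), correctly adapted to the paper's MAX convention. The forward inclusion is exactly the elementary ultrametric argument: for $v \in X^{an}$ and $f = \sum_\alpha c_\alpha x^\alpha \in I$, each monomial has $v$-value $\langle \alpha, w\rangle$ because $v$ is trivial on $\C^*$, and $v(\pi(f)) = -\infty$ forces the maximum of these values to be attained at least twice, since a unique strict maximizer would force equality in the ultrametric inequality and hence a finite value. The reverse inclusion correctly rests on the Fundamental Theorem of Tropical Geometry (Kapranov, Einsiedler--Kapranov--Lind, Speyer--Sturmfels): a point $w \in \trop(I)$ lifts to a $K$-point $\xi$ over a nontrivially valued algebraically closed extension $K$ of $\C$ whose valuation restricts trivially to $\C$, and $g \mapsto \mathrm{val}_K(g(\xi))$ is then a rank-$1$ multiplicative seminorm on $\C[X]$ lifting the trivial valuation — i.e., a point of $X^{an}$ per the paper's axioms (note that axiom (4) does not require $v(g) = -\infty \Rightarrow g = 0$, so the seminorm is allowed to vanish on nonzero functions when $\xi$ lies in a proper subvariety). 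The only small point worth making explicit is that the map $(ev_{\pi(x_1)}, \ldots, ev_{\pi(x_n)})$ lands in $\R^n$ only on the locus where each $\pi(x_i)$ is nonzero and $v(\pi(x_i))$ is finite; this is implicit both in the theorem's statement and in your argument, and causes no trouble in the paper's application where the coordinates are nonvanishing trace functions.
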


We fix a set of generators $x_1, \ldots, x_g \in F_g$ and consider the set $S_{2, g}$ of trace word functions where no $x_i$ appears more than twice, including inverses and multiplicity.  For a parameter set $Y_{2, g}$ in bijection with $S_{2, g}$ we let $I_{2, g} \subset \C[Y_{2, g}]$ be the ideal of forms which vanishes on the associated map $\C[Y_{2, g}] \to \C[\mathcal{X}(F_g, SL_2(\C))]$. The choice of generators $x_1, \ldots, x_g$ determines an embedding of the space of spanned metric graphs $\Upsilon_g$ into $ \hat{O}_g$ (Subsection \ref{spanneddef}).   We finish this section with a proof of the following theorem. 

\begin{theorem}\label{Khovanskii}
The map $(\ldots ev_{\tau_w}\ldots)_{w \in S_{2, g}}: X(F_g, SL_2(\C))^{an} \to \trop(I_{2, g})$ is $1-1$ on $\Upsilon_g \subset \hat{O}_g$.
\end{theorem}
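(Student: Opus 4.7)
The plan is, first, to use Theorem \ref{mainvaluation} together with the basepoint embedding $\Upsilon_g \hookrightarrow \hat{O}(g)$ from Subsection \ref{spanneddef} to identify the map under consideration, restricted to $\Sigma(\Upsilon_g)$, as the length-function evaluation map $(\Gamma, \psi_{\tree}, \ell) \mapsto (d_w(\Gamma, \ell, \phi))_{w \in S_{2,g}}$. By Theorem \ref{Payne}, this map lands inside $\trop(I_{2,g})$, so the task reduces to showing that this restricted length-function map is injective on $\Upsilon_g$.

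Second, I would use the identification $\Upsilon_g \cong \{(\tree', \ell') \in \mathfrak{T}_{2g} : \ell'(f_{2i}) = \ell'(f_{2i-1})\}$ to reformulate the problem as reconstructing the paired phylogenetic tree $(\tree', \ell')$ from the data $\{d_w\}_{w \in S_{2,g}}$. Explicit cellular calculations via Proposition \ref{backtrack} give the following dictionary: for each $i$ one has $d_{x_i} = d_{\tree'}(f_{2i-1}, f_{2i}) - \ell(e_i)$, and for $i \neq j$,
\begin{equation*}
d_{x_i x_j} = d_{\tree'}(f_{2i}, f_{2j-1}) + d_{\tree'}(f_{2j}, f_{2i-1}) - \ell(e_i) - \ell(e_j),
\end{equation*}
\begin{equation*}
d_{x_i x_j^{-1}} = d_{\tree'}(f_{2i}, f_{2j}) + d_{\tree'}(f_{2i-1}, f_{2j-1}) - \ell(e_i) - \ell(e_j).
\end{equation*}
Consequently, the three quantities $d_{x_i}+d_{x_j},\, d_{x_i x_j},\, d_{x_i x_j^{-1}}$ equal the three four-point distance sums on the quadruple of leaves $\{f_{2i-1}, f_{2i}, f_{2j-1}, f_{2j}\}$ of $\tree'$ shifted by the common constant $\ell(e_i) + \ell(e_j)$.

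Third, since the four-point condition for tree metrics (two sums equal and maximal, the third strictly smaller) is invariant under such a common shift, it can be read off directly from the length functions, yielding the quartet topology induced on $\{f_{2i-1}, f_{2i}, f_{2j-1}, f_{2j}\}$ in $\tree'$ together with the length of the separating internal edge. Analogous formulas for three-letter words in $S_{2,g}$ such as $d_{x_i x_j x_k^{\pm 1}}$ encode the quartet topologies on the remaining mixed-pair quadruples of leaves, so Buneman-style splits reconstruction then recovers $(\tree', \ell')$ up to the individual leaf-edge shifts $\ell(e_i)$. These remaining shifts are extracted from the overdetermined linear system formed by all $d_w$ within each maximal cell of $\Upsilon_g$, restoring the pairwise leaf distances and hence the point of $\Upsilon_g$.

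The main obstacle is this final cell-by-cell linear-algebra check: within a fixed combinatorial type $(\Gamma, \psi_{\tree})$, one must verify that the coefficient vectors of the linear functionals $d_w \colon \R^{E(\Gamma)} \to \R$ for $w \in S_{2,g}$ span $(\R^{E(\Gamma)})^*$. The subtle point is that each $w \in S_{2,g}$ traverses a non-tree edge $\bar{e}_i$ at most twice, but may traverse tree edges arbitrarily many times through the $\tree$-paths between consecutive $\bar{e}_i$'s; one must leverage this flexibility to exhibit words whose cellular loops systematically isolate each individual edge of $\Gamma$, using the explicit construction of minimal cellular representatives from Proposition \ref{backtrack}.
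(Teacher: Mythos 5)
Your strategy shares the paper's starting point—reduce to showing the length functionals $\{d_w\}_{w \in S_{2,g}}$ separate points of $\Upsilon_g$—but diverges entirely in how it establishes this, and ultimately leaves the crucial step unproven by your own admission. The paper's route is algebraic: Lemma \ref{Ggen} shows that injectivity follows if the initial forms of the functions in $G$ generate the associated graded $\C[P_{\Gamma}]$ for every cell, and this is then discharged by a single cited combinatorial fact from \cite{M14}, that $P_{\Gamma}$ is generated as an affine semigroup by weightings $a \colon E(\Gamma) \to \Z_{\geq 0}$ with $a(e) \le 2$. Since by Proposition \ref{gammainitial} the initial form of $\tau_w$ is the spin diagram $\Phi_{a(S_w,\phi_w)}$, and $v_\ell(\Phi_a) = \langle \ell, a\rangle$, the semigroup-generation statement \emph{is} exactly the assertion that the linear functionals $\{d_w\}$ contain a spanning set of $(\R^{E(\Gamma)})^*$. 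You are trying to re-derive this spanning statement from scratch via phylogenetic tree reconstruction.

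That route has a genuine gap, not a cosmetic one. The four-point test applied to $d_{x_i}+d_{x_j}$, $d_{x_ix_j}$, $d_{x_ix_j^{-1}}$ only controls the quartets of the form $\{f_{2i-1},f_{2i},f_{2j-1},f_{2j}\}$, i.e., quadruples drawn from two generator-pairs. Your proposal to cover the remaining quadruples using three-letter words such as $x_ix_jx_k^{\pm}$ does not directly yield four-point data: as one can check by expanding via Proposition \ref{backtrack}, a three-letter word contributes a \emph{six-point} cyclic sum $d_{\tree'}(f_{2i},f_{2j-1})+d_{\tree'}(f_{2j},f_{2k-1})+d_{\tree'}(f_{2k},f_{2i-1})$ shifted by $-(\ell(e_i)+\ell(e_j)+\ell(e_k))$, not a comparison of three pairwise sums on a fixed quadruple, so it is not obvious how Buneman-style splits reconstruction proceeds from here. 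You flag the final cell-by-cell spanning verification as "the main obstacle" and leave it open—but this verification is precisely the content of the theorem. In the paper's framework the work you are trying to do by hand is packaged once and for all into the cited statement that weight-$\le 2$ diagrams generate $P_{\Gamma}$; I'd recommend either invoking that result (via Lemma \ref{Ggen}), or, if you want a self-contained combinatorial proof, carrying out a complete reconstruction argument that genuinely covers all quartets and solves for the leaf-edge lengths, rather than gesturing at an overdetermined system.
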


The general principle at work in this statement is encapsulated in the following lemma.

\begin{lemma}\label{Ggen}
Let $G \subset \C[\mathcal{X}(F_g, SL_2(\C))]$ be any set so that the equivalence classes of the elements $f \in X$ generate $gr_{v_{\Gamma, \tree, \ell}}(\C[\mathcal{X}(F_g, SL_2(\C))]) \cong \C[P_{\Gamma}]$ for all $(\Gamma, \tree, \ell) \in \Upsilon_g$, then the map $(\ldots ev_{f} \ldots)_{f \in G}$ is $1-1$ on $\Upsilon_g$.
\end{lemma}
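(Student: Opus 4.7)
The plan is to turn the hypothesis into a Khovanskii-basis statement and conclude that the valuations associated to points of $\Upsilon_g$ are determined by their values on $G$. Combined with the fact that $\Sigma$ is an embedding (Theorem~\ref{mainvaluation}), this will give injectivity on $\Upsilon_g$.

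The key step is the formula
$$
v(h) \;=\; \min_{h=\sum_\alpha c_\alpha f^\alpha}\;\max\Bigl\{\sum_i \alpha_i\, v(f_i) \;:\; c_\alpha \neq 0\Bigr\},
$$
valid for $v = v_{\Gamma, \tree, \ell}$ and any $h \in A := \C[\mathcal{X}(F_g, SL_2(\C))]$, where the minimum is taken over all ways of writing $h$ as a polynomial in the elements $f_i \in G$. The inequality $\leq$ follows directly from the valuation axioms. For the reverse, the hypothesis says the class $\bar h \in gr_v(A) \cong \C[P_\Gamma]$ is a polynomial in the classes $\bar f_i$; lifting this expression to a polynomial $P \in A$ gives $h - P$ in the strictly smaller filtration piece, and iterating produces a polynomial expression of $h$ whose term-maximum equals $v(h)$. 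To keep the iteration finite I would first reduce to the case of a single spin-basis element: write $h = \sum c_a \Phi_a$ in the basis of Section~\ref{spanningsets} and resolve each $\Phi_a$ separately, using that $\bar\Phi_a$ lives in a single graded piece of $\C[P_\Gamma]$.

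With the formula in hand, suppose two points $(\Gamma_1, \tree_1, \ell_1), (\Gamma_2, \tree_2, \ell_2) \in \Upsilon_g$ produce valuations $v_1, v_2$ with $v_1(f) = v_2(f)$ for every $f \in G$. The right-hand side of the formula depends only on the algebra $A$ and on the shared data $\{v_i(f)\}_{f \in G}$, so $v_1(h) = v_2(h)$ for every $h \in A$, i.e.\ $v_1 = v_2$ as points of $\mathcal{X}(F_g, SL_2(\C))^{an}$. By Theorem~\ref{mainvaluation} the map $\Sigma$ is an embedding, so the two source points of $\Upsilon_g \subset \hat{O}(g)$ must coincide.

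The main obstacle I expect is justifying termination of the lift-and-subtract procedure: for $\ell$ with irrational entries the values of $v_{\Gamma, \tree, \ell}$ are not discrete, so a naive iteration could produce an infinite chain of successively smaller remainders. The reduction to the spin basis resolves this, since each $\Phi_a$ corresponds to a single monomial in the affine semigroup ring $\C[P_\Gamma]$ whose degree $\sum_e \ell(e) a(e)$ is realized by finitely many lattice points, making the expression of $\bar\Phi_a$ as a polynomial in the $\bar f_i$ a finite-dimensional problem that terminates after one step.
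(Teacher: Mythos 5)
Your route is different from the paper's, and the step you flag as the ``main obstacle'' is where the argument as written breaks down. The good news is that the reduction to the spin basis is exactly the right move, and once you make it the subduction machinery becomes unnecessary: for $\ell \in C_{\Gamma, \tree}$ and $h = \sum C_a \Phi_a$ in the spin-diagram basis determined by $(\Gamma,\tree)$, Proposition~\ref{spinadapt} already supplies the \emph{closed} formula $v_\ell(h) = \max\{ v_\ell(\Phi_a) : C_a \neq 0\}$. Since the hypothesis forces the exponents $a$ of the initial forms of the elements of $G$ to generate $P_\Gamma$, and $v_\ell(\Phi_a)$ is additive in $a$ (the valuation axiom applied to $\Phi_a\Phi_{a'}$, whose leading term is a nonzero multiple of $\Phi_{a+a'}$), the values $v_\ell(f)$ for $f\in G$ determine every $v_\ell(\Phi_a)$ and hence every $v_\ell(h)$. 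This is the paper's argument; it reconstructs $v_\ell$ from $v_\ell|_G$ in one shot, with no iteration to terminate.

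If you insist on the intrinsic min--max (Khovanskii-basis) characterization --- an attractive route because it makes no reference to a chosen cell of $\Upsilon_g$ and then hands the problem to the injectivity of $\Sigma$ in Theorem~\ref{mainvaluation} --- the termination fix you propose does not actually work. Knowing $\bar\Phi_a = c\,\bar f^\alpha$ in $\C[P_\Gamma]$ gives, after lifting, $\Phi_a = c f^\alpha + r_a$ with $v(r_a) < v(\Phi_a)$; but $r_a$ is nonzero in general and must itself be rewritten in $G$. The ``$\geq$'' half of your formula requires exhibiting a polynomial expression of $h$ whose term-maximum \emph{equals} $v(h)$, which means controlling the entire chain of remainders, not only the first subtraction. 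Finite-dimensionality of a single $\R$-graded piece of $\C[P_\Gamma]$ does not bound that chain, because the successive remainders land in lower graded pieces whose values can accumulate when $\ell$ is irrational. Proposition~\ref{spinadapt} is precisely the tool that makes the iteration unnecessary, and you should lean on it rather than on subduction.
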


\begin{proof}
A choice $(\Gamma, \tree)$ determines a basis $\mathcal{R}(\Gamma, \tree) \subset \C[\mathcal{X}(F_g, SL_2(\C))]$ of spin diagrams.  Each $f \in G$ has an expansion $f = \sum C_a \Phi_a$ in these diagrams, and the equivalence class of $f$ in $gr_{v_{\ell}}(\C[\mathcal{X}(F_g, SL_2(\C))])$ for $\ell$ an interior point of $C_{\Gamma, \tree} \subset \Upsilon_g$ is the initial form of $f$ with respect to $v_{\ell}$.   By assumption the $a$ of these initial forms generate $P_{\Gamma}$, and by Proposition \ref{spinadapt} the values on these generators determine all outputs of the valuation $v_{\ell}$.  It follows that if $v_, v_2 \in \Upsilon_g$ have the same values on every element of $G$ then they must be the same valuation. 
\end{proof}

Theorem \ref{Khovanskii} is a consequence of Lemma \ref{Ggen} and the following proposition, which is \cite[Theorem 7.2]{M14}.

\begin{proposition}
The affine semigroup $P_{\Gamma}$ is generated by those $a: E(\Gamma) \to \Z_{\geq 0}$ with $a(e) \leq 2$. 
\end{proposition}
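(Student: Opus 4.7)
The approach is geometric: realize any $a \in P_\Gamma$ as an immersed weighted multi-curve in $\Gamma$ built from a canonical local resolution at each trivalent vertex, and then decompose it into pieces that each use every edge at most twice.

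First, at each trivalent vertex $v$ of $\Gamma$ with incident edges $e_1, e_2, e_3$ and values $a_i = a(e_i)$, define the local strand multiplicities
$$x_{ij}(v) = \frac{1}{2}(a_i + a_j - a_k), \qquad \{i,j,k\}=\{1,2,3\}.$$
The triangle inequality and parity condition defining $P_\Gamma$ (Definition \ref{spincone}) make each $x_{ij}(v)$ a non-negative integer. Interpret $x_{ij}(v)$ as the number of disjoint strand-segments at $v$ joining $e_i$ to $e_j$; these account for exactly $a(e_i)$ strand-endpoints on each incident edge $e_i$ at $v$. For each edge $e$ of $\Gamma$, choose any bijection between the $a(e)$ strand-endpoints at its two endpoints. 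Concatenating strand-segments through these bijections produces a disjoint union of closed (possibly self-crossing) curves $\gamma_1,\ldots,\gamma_m$ in $\Gamma$, with $a = \sum_j a_{\gamma_j}$, where $a_{\gamma_j}(e)$ counts how many times $\gamma_j$ crosses $e$. Each $a_{\gamma_j}$ lies in $P_\Gamma$, because the local strand multiplicities of $\gamma_j$ at every vertex are a sub-collection of the $x_{ij}(v)$ and hence still satisfy both the triangle inequality and the parity condition.

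Second, I would adjust the edge-bijections by a sequence of swaps to force $a_{\gamma_j}(e) \leq 2$ for every $j$ and $e$. If some $\gamma_j$ crosses an edge $e$ at least three times, pick two such crossings of $\gamma_j$ along $e$ and swap the corresponding pairings at one endpoint of $e$. The two crossings meet cyclically along $\gamma_j$ in a way that forces the swap to split $\gamma_j$ into two closed curves $\gamma_j', \gamma_j''$ (rather than merely reroute it); both inherit strand multiplicities from the same local resolutions and so still lie in $P_\Gamma$. The monovariant
$$M(\{\gamma_j\}) = \sum_j \sum_{e\in E(\Gamma)} \max\bigl(0,\, a_{\gamma_j}(e)-2\bigr)$$
strictly decreases under such a swap: the $e$-contribution falls because $a_{\gamma_j'}(e) + a_{\gamma_j''}(e) = a_{\gamma_j}(e)$ with both summands at least $1$, while no other edge contribution can rise. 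Iterating, the procedure terminates and expresses $a$ as a sum of elements of $P_\Gamma$ with values in $\{0,1,2\}$ on every edge.

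The main obstacle will be verifying the decisive ``splitting swap'' in the second step: that one can always choose two crossings of $\gamma_j$ along $e$ whose pairing-swap at one endpoint $u$ of $e$ genuinely separates $\gamma_j$ into two components. This reduces to a cyclic-order analysis of the permutation $\sigma$ that encodes how the outer segments of $\gamma_j$ connect the strand endpoints of $e$ at $u$ to those at $v$: since $\gamma_j$ uses $e$ at least three times, $\sigma$ is a nontrivial cycle on these endpoints, and any transposition-swap of two endpoints acts on $\sigma$ to break it into two shorter orbits, which correspond to the two resulting closed curves.
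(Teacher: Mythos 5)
The paper itself defers this statement to \cite[Theorem 7.2]{M14} and gives no proof in the text, so there is no argument to compare against directly; your proposal has to be assessed on its own terms. Your strategy---resolve $a$ at each trivalent vertex into the integer strand multiplicities $x_{ij}(v)=\tfrac{1}{2}(a_i+a_j-a_k)$, glue along edges to produce a multi-curve with $a=\sum_j a_{\gamma_j}$ and each $a_{\gamma_j}\in P_{\Gamma}$, and then drive down $\max_e a_{\gamma_j}(e)$ by re-pairing strands along an over-used edge---is a sound skein-theoretic approach, and the verifications up to and including the monovariant estimate are correct.

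The gap is exactly where you flagged it. Your $\sigma$ is described as a bijection from the strand-endpoints of $e$ at $u$ to those at $v$, but the outer arcs of $\gamma_j$ need not connect a $u$-endpoint to a $v$-endpoint: if consecutive crossings of $e$ along $\gamma_j$ are traversed in opposite directions, the arc between them joins two endpoints on the same side of $e$, and $\sigma$ is not well defined. Relatedly, it is false that \emph{any} transposition of two crossings splits $\gamma_j$. Deleting the two chosen copies of $e$ from the cycle $\gamma_j$ leaves two paths, and the re-pairing reconnects them into two cycles precisely when the four endpoints occur in the alternating cyclic order $u_i,v_i,u_j,v_j$ around $\gamma_j$ (equivalently, when the two chosen crossings are traversed in the same direction across $e$); if instead they are nested as $u_i,v_i,v_j,u_j$ (opposite directions), the re-pairing returns a single cycle and nothing splits. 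The repair is cheap and should be stated: with $m\geq 3$ crossings and only two possible directions, the pigeonhole principle guarantees two crossings in the same direction, and swapping \emph{those} two splits $\gamma_j$. Once this choice is made explicit, the rest of your argument---that both halves remain in $P_{\Gamma}$ because they inherit sub-collections of the same local resolutions, and that the monovariant strictly decreases---goes through.
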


\noindent
The set $S_{2, g}$ is said to be a Khovanskii basis of $\C[\mathcal{X}(F_g, SL_2(\C))]$ with respect to each valuation $v_{\ell}$ with $\ell \in \Upsilon_g$ because the equivalence classes of these functions generate each associated graded algebra of each $v_{\ell}$.  Results from \cite{M14} also imply that $S_{2, g}$ is a Khovanskii basis for a collection of maximal rank ($= dim(\mathcal{X}(F_g, SL_2(\C)))$) valuations $v_{\Gamma, \prec}$ (see Section \ref{NOKsection}).  This terminology is used in honor of the contributions of Askold Khovanskii to the intersection of combinatorics and algebraic geometry.

\section{The integrable Hamiltonian system in $\mathcal{X}(F_g, SL_2(\C))$ associated to $\Gamma$ }\label{Hamiltonian}

In this section we construct the spaces $M_{\Gamma}(SL_2(\C))$ and  $M_{\Gamma}(SL_2(\C)^c)$ as symplectic reductions. We show that
each distinguished map $\phi_{\tree}: \Gamma \to \Gamma_g$ associated to a spanning tree $\tree$ gives a symplectomorphism
$\phi_{\tree}^*: \mathcal{X}(F_g, SL_2(\C)) \to M_{\Gamma}(SL_2(\C)).$ Then we construct a surjective, continuous contraction map
$\Xi_{\Gamma}: M_{\Gamma}(SL_2(\C)) \to M_{\Gamma}(SL_2(\C)^c)$, and we show that this map is a symplectomorphism on a dense, 
open subspace $M^o_{\Gamma}(SL_2(\C)) \subset M_{\Gamma}(SL_2(\C))$.  The compact part $\mathbb{T}_{\Gamma}$ of the defining torus $T_{\Gamma}$ of the toric variety $M_{\Gamma}(SL_2(\C)^c)$ is used to define an integrable system on $M^o_{\Gamma}(SL_2(\C))$ with momentum map $\mu_{\mathbb{T}_{\Gamma}} \circ \Xi_{\Gamma}.$  We show that the fibers of the contraction map $\Xi_{\Gamma}$ are all connected, and that the fiber over the origin is the compact character variety $\mathcal{X}(F_g, SU(2)).$

\subsection{$M_{\Gamma}(SL_2(\C))$ and $M_{\Gamma}(SL_2(\C)^c)$ as symplectic reductions.}

The product spaces $SL_2(\C)^{E(\Gamma)}$ and $[SL_2(\C)^c]^{E(\Gamma)}$ have natural K\"ahler structures as subspaces of  $M_{2 \times 2}(\C)^{E(\Gamma)}$, and the actions of $SU(2)^{V(\Gamma)}$ (as a subgroup of $SL_2(\C)^{V(\Gamma)}$) on these spaces are Hamiltonian.  The work of Kirwan \cite{Kir} and Kempf-Ness \cite{KN} shows that the reductions $SL_2(\C)^{E(\Gamma)}/_0 SU(2)^{V(\Gamma)}$, $[SL_2(\C)^c]^{E(\Gamma)}/_0 SU(2)^{V(\Gamma)}$ carry stratified K\"ahler structures, and can be identified with the spaces $M_{\Gamma}(SL_2(\C))$ and $M_{\Gamma}(SL_2(\C)^c)$ respectively.  We show that the symplectic structure on $M_{\Gamma}(SL_2(\C))$ obtained by this reduction procedure does not depend on $\Gamma.$

Let $\tree$ be a directed tree with one internal edge $e \in E(\tree)$ and endpoints $\delta(e) =(u, v)$, and let $\tree'$ be the tree obtained from $\tree$ by collapsing $e$ to a single vertex $w$. We let $S_1 \cup S_2 = \mathcal{L}(\tree)$ be the partition of the leaves of $\tree$ defined by $e.$

\begin{lemma}\label{treelemma}
There are symplectomorphisms $\phi: M_{\tree}(SL_2(\C)) \to M_{\tree'}(SL_2(\C)), \psi:M_{\tree'}(SL_2(\C)) \to M_{\tree}(SL_2(\C)).$
Furthermore, both of these maps are isomorphisms of $SU(2)^{\mathcal{L}(\tree)}$ Hamiltonian spaces. 
\end{lemma}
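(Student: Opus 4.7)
The plan is to exhibit both $M_\tree(SL_2(\C))$ and $M_{\tree'}(SL_2(\C))$ as symplectic reductions of a common product of copies of $T^*(SU(2))$ and to derive the desired symplectomorphism by reduction in stages, using a symplectic version of the algebraic isomorphism of Figure \ref{Gluing} to collapse the internal edge $e$.

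First, following Kirwan \cite{Kir} and Kempf-Ness \cite{KN}, both spaces inherit stratified K\"ahler forms as reductions at level $0$ of the products $SL_2(\C)^{E(\tree)\cup\mathcal{L}(\tree)}\hookrightarrow M_{2\times2}(\C)^{E(\tree)\cup\mathcal{L}(\tree)}$ by the compact groups $SU(2)^{V(\tree)}$ and $SU(2)^{\{w\}}$, respectively. In both descriptions the residual $SU(2)^{\mathcal{L}(\tree)}$-action on the leaf-factors is Hamiltonian, and it is this action which appears in the lemma.

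Second, I would establish the symplectic analogue of Figure \ref{Gluing}: a Hamiltonian isomorphism
\[
[T^*SU(2)\times T^*SU(2)]\,/\!/_{0}\,SU(2)\;\cong\;T^*SU(2),
\]
where the reducing $SU(2)$ combines the right action on the first factor with the left action on the second, and the residual $SU(2)\times SU(2)$-action on the quotient corresponds to left and right multiplication on $T^*SU(2)$. This is the symplectic realization of the algebraic map $(g,h)\mapsto h^{-1}g$ of Figure \ref{Gluing}; I would verify it by a direct moment-map computation using the formulas for $\mu_L, \mu_R$ recalled in Subsection \ref{sl2symplecticstructures}.

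Third, I would apply this local identity at the edge $e$. In the ambient product for $M_\tree$, the factor $SL_2(\C)_e$ carries commuting left and right actions by $SU(2)_u$ and $SU(2)_v$, determined by $\delta(e)=(u,v)$. Combining $SL_2(\C)_e$ with a neighboring leaf-factor (say one at $v$), the identity of the previous paragraph lets us first reduce by $SU(2)_v$: the edge-element is absorbed into the leaf-factor, the vertex $v$ is eliminated, and the graph becomes $\tree'$ with $w=u$ now incident to all of $S_1\cup S_2$. The remaining reduction by $SU(2)_u$ is precisely the reduction defining $M_{\tree'}$, producing the symplectomorphism $\phi: M_\tree\to M_{\tree'}$; its inverse $\psi$ corresponds to the section ``set $g_e=\mathrm{Id}$'', which after passing to the quotient realizes the algebraic isomorphism from Lemma \ref{distisotree}. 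Because the leaf-factor actions are preserved throughout, modulo an internal $SU(2)$-equivariant symplectomorphism of one leaf-factor used to absorb the edge, both $\phi$ and $\psi$ intertwine the $SU(2)^{\mathcal{L}(\tree)}$ Hamiltonian structures and their moment maps.

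The main obstacle is the orientation bookkeeping in Step 3: the actions of $SU(2)_u$ and $SU(2)_v$ on $SL_2(\C)_e$ enter as left versus right multiplication according to $\delta(e)$, so the absorption map may require pre-composing with the inversion $g\mapsto g^{-1}$ on selected factors in order for the cotangent-gluing identity to apply. Compatibility with the singular strata where the moment maps fail to be submersions should follow from Kirwan's stratified K\"ahler reduction theory \cite{Kir} applied uniformly to $\tree$ and $\tree'$.
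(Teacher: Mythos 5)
Your overall strategy — realize both spaces as Kempf--Ness/Kirwan reductions, then do reduction in stages, absorbing the edge factor $SL_2(\C)_e$ via a cotangent-bundle gluing lemma — is exactly the paper's. However, the local identity you state in Step~2 is too weak for Step~3 to go through as written.

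In Step~2 you prove only the two-factor version
\[
[T^*SU(2) \times T^*SU(2)] \,/\!/_{0}\, SU(2) \;\cong\; T^*SU(2),
\]
but in Step~3 you need to ``reduce by $SU(2)_v$'' where $SU(2)_v$ acts on $SL_2(\C)_e$ \emph{and on every leaf factor in $S_2$ simultaneously}; since the internal vertex $v$ has valence at least three, $S_2$ contains at least two leaves. The moment map for $SU(2)_v$ therefore has a contribution from each of $SL_2(\C)_e$ and all of $SL_2(\C)^{S_2}$, and its zero level set couples all of these factors at once. You cannot carry out this reduction by ``absorbing $SL_2(\C)_e$ into a single leaf factor,'' because there is no $SU(2)$ in the picture that acts only on $SL_2(\C)_e$ and one leaf; the only group that acts on the right of $SL_2(\C)_e$ also acts diagonally on the rest of $S_2$. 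So the two-factor gluing identity, as stated, simply does not apply.

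What is needed (and what the paper uses, citing \cite[proof of Lemma~4.8]{GJS}) is the general statement: for \emph{any} Hamiltonian $SU(2)$-space $X$ there is a Hamiltonian isomorphism $SU(2) \backslash_0 [T^*SU(2) \times X] \cong X$, given explicitly by $\bar{\phi}(x) = ([\mathrm{Id},-\mu(x)],x)$ and $\bar{\psi}([k,v],x)=k^{-1}x$, and compatible with any further commuting compact-group action on $X$. Applying this with $X = SL_2(\C)^{S_2}$, carrying the diagonal left $SU(2)$-action and the residual leaf actions by $SU(2)^{S_2}$, yields $SU(2)_v \backslash_0 [SL_2(\C)_e \times SL_2(\C)^{S_2}] \cong SL_2(\C)^{S_2}$ as Hamiltonian $SU(2)_u \times SU(2)^{S_2}$-spaces; then taking the product with $SL_2(\C)^{S_1}$ and reducing by $SU(2)_u$ gives the claimed symplectomorphism $M_{\tree}(SL_2(\C)) \cong M_{\tree'}(SL_2(\C))$, together with the $SU(2)^{\mathcal{L}(\tree)}$-equivariance. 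Your Step~2 is the special case $X=T^*SU(2)$ of this lemma, not the lemma itself. The orientation bookkeeping you flag in your final paragraph is a legitimate but secondary concern, handled in the paper by the built-in inversion isomorphism $g \mapsto g^{-1}$ in the graph functor.
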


\begin{proof}
Recall that $SL_2(\C)$ can be identified with $T^*(SU(2))$ as a Hamiltonian $SU(2) \times SU(2)$ space.  For any Hamiltonian $SU(2)$ space $X$ there are isomorphisms  $\bar{\phi}: X \to SU(2) \backslash_0 [SL_2(\C) \times X]$, $\bar{\psi}: SU(2) \backslash_0[SL_2(\C) \times X] \to X$ computed as follows (\cite[proof of Lemma 4.8]{GJS}):

\begin{equation}
\bar{\phi}(x) = ([Id, -\mu(x)], x) \ \ \ \ \bar{\psi}([k, v], x) = k^{-1}x\\  
\end{equation}

Furthermore, these maps intertwine the left $SU(2)$ action on $SL_2(\C) \times X$ with the $SU(2)$ action on $X$, and if $X$ carries a Hamiltonian $K$ action for some compact Lie group $K$, this action is carried through $\bar{\phi}$. 

The space $M_{\tree'}(SL_2(\C))$ is an $SU(2) \times SU(2)$ reduction of $SL_2(\C)^{S_1}\times SL_2(\C) \times SL_2(\C)^{S_2}$.  By setting $X = SL_2(\C)^{S_2}$, $\bar{\phi}$ and $\bar{\psi}$ identify $SU(2) \backslash_0 [SL_2(\C) \times SL_2(\C)^{S_2}]$ with $SL_2(\C)^{S_2}$.  Under this isomorphism the right $SU(2)$ action on $SL_2(\C)$ is identified with the left diagonal action on $SL_2(\C)^{S_2}$.  After taking the product with $SL_2(\C)^{S_1}$ and reducing by $SU(2)$, $\bar{\phi}$ and $\bar{\psi}$ descend to give isomorphisms $\phi$ and $\psi$ between $M_{\tree'}(SL_2(\C))$ and the diagonal left $SU(2)$ reduction of $SL_2(\C)^{\mathcal{L}(\tree)} = SL_2(\C)^{S_1 \cup S_2}$.  This latter space is $M_{\tree}(SL_2(\C))$. 
\end{proof}

\begin{proposition}
Let $\tree \subset \Gamma$ be a spanning tree, with corresponding markings $\phi_{\tree, V}: \Gamma_g \to \Gamma,$ $\psi_{\tree}: \Gamma \to \Gamma_g.$ This induces maps $\Phi_{\tree}: M_{\Gamma}(SL_2(\C)) \to M_{\Gamma_g}(SL_2(\C))$, $\Psi_{\tree}: M_{\Gamma_g}(SL_2(\C)) \to M_{\Gamma}(SL_2(\C))$ which are symplectomorphisms.
\end{proposition}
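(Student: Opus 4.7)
The plan is to reduce to the tree case handled by Lemma \ref{treelemma}, then propagate through symplectic reduction using the commutative diagram of Proposition \ref{spanningtreespace}.

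First I would establish the symplectic analogue of Lemma \ref{distisotree}: for any directed tree $\tree$, the algebraic isomorphism $\Phi_{\tree}: M_{|\mathcal{L}(\tree)|}(SL_2(\C)) \to M_{\tree}(SL_2(\C))$ is in fact an $SU(2)^{\mathcal{L}(\tree)}$-equivariant symplectomorphism. This follows by induction on the number of internal edges of $\tree$: each inductive step collapses one internal edge via Lemma \ref{treelemma}, whose conclusion explicitly preserves the residual $SU(2)^{\mathcal{L}(\tree)}$-Hamiltonian structure at the leaves.

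Next I would apply this to the tree $T(\tree, \Gamma)$ obtained by splitting each of the $g$ edges in $E(\Gamma)\setminus E(\tree)$. Since $T(\tree, \Gamma)$ has exactly $2g$ leaves organized into $g$ pairs, $\Phi_{T(\tree, \Gamma)}$ gives an $SU(2)^{2g}$-equivariant symplectomorphism from $M_{2g}(SL_2(\C))$ onto $M_{T(\tree, \Gamma)}(SL_2(\C))$. To descend to the graph level, I would realize the quotient $\pi_{\tree, \Gamma}: M_{T(\tree, \Gamma)}(SL_2(\C)) \to M_{\Gamma}(SL_2(\C))$ as a symplectic reduction at level $0$ by the diagonal action of $SU(2)^g$, where each factor acts diagonally on the pair of leaves coming from a split edge; similarly, $\pi_0: M_{2g}(SL_2(\C)) \to M_{\Gamma_g}(SL_2(\C))$ is the symplectic reduction by the same $SU(2)^g$, now acting on the $g$ consecutive pairs of the $2g$ $SL_2(\C)$ factors. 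Since the tree-level symplectomorphism intertwines these two $SU(2)^g$ actions, reducing both sides yields a symplectomorphism between $M_{\Gamma}(SL_2(\C))$ and $M_{\Gamma_g}(SL_2(\C))$. By the commutativity of Proposition \ref{spanningtreespace}, this reduced map coincides with $\Phi_{\tree}$, and by the same argument run in the opposite direction, the inverse coincides with $\Psi_{\tree}$.

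The main obstacle will be carefully tracking the $SU(2)^g$ action induced by edge-splitting, and verifying that the tree-level symplectomorphism built by iterating Lemma \ref{treelemma} carries the diagonal $SU(2)$ on each split-edge pair of leaves to the ``gluing'' $SU(2)$ acting on the corresponding pair of $SL_2(\C)$ factors in $SL_2(\C)^{2g}$. This amounts to a direct check using the explicit formulas $\bar{\phi}(x) = ([Id, -\mu(x)], x)$ and $\bar{\psi}([k, v], x) = k^{-1}x$ from the proof of Lemma \ref{treelemma}, together with the identification $SL_2(\C) \cong T^{*}(SU(2))$ via the Cartan polar decomposition recalled in Subsection \ref{sl2symplecticstructures}. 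Once this intertwining is verified on each pair, reduction-in-stages for Hamiltonian group actions delivers the required symplectomorphisms.
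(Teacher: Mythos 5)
Your argument is correct, but it takes a longer route than the paper's. The paper's proof is a one-liner: ``this follows by repeatedly applying Lemma~\ref{treelemma} to the edges of $\tree$.'' In other words, the paper collapses the edges of the spanning tree $\tree\subset\Gamma$ one at a time \emph{inside the graph}, using reduction in stages together with the GJS cotangent-absorption isomorphism $X \cong SU(2)\backslash_0[SL_2(\C)\times X]$ from the proof of Lemma~\ref{treelemma}. Each collapse removes one tree edge and one vertex, stays within the category of metric graphs, and lands on $\Gamma_g$ after $|E(\tree)|$ steps, with the Hamiltonian structure transported at every step; no detour through $T(\tree,\Gamma)$ or $M_{2g}(SL_2(\C))$ is needed.

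Your route instead first splits the $g$ non-tree edges to form the tree $T(\tree,\Gamma)$, applies the tree-level version of the same absorption lemma to identify $M_{T(\tree,\Gamma)}(SL_2(\C))$ with $M_{2g}(SL_2(\C))$ $SU(2)^{2g}$-equivariantly, and then reduces both sides by the ``gluing'' $SU(2)^g$ to descend to $M_{\Gamma}(SL_2(\C))$ and $M_{\Gamma_g}(SL_2(\C))$ respectively, invoking commutativity with Proposition~\ref{spanningtreespace}. This is valid, but it requires the extra equivariance verification you yourself flag: that the tree-level symplectomorphism built from $\bar\phi, \bar\psi$ intertwines the split-edge pair $SU(2)$ with the corresponding gluing $SU(2)$ on $SL_2(\C)^{2g}$. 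That check is not hard (it follows from the explicit formulas and the fact that $\bar\phi,\bar\psi$ carry along any residual Hamiltonian $K$-action), but the paper's version avoids it entirely by never leaving the graph. What your version buys is a cleaner dictionary with the purely algebraic commuting square of Proposition~\ref{spanningtreespace}, making the symplectic and GIT pictures run in visible parallel; what the paper's version buys is brevity.
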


\begin{proof}
This follows by repeatedly applying Lemma \ref{treelemma} to the edges of $\tree.$
\end{proof}

\subsection{The Hamiltonian $\mathbb{T}_{\Gamma}$-space $M_{\Gamma}(SL_2(\C)^c)$}

By Proposition \ref{contractiontoricvariety}, the space $M_{\Gamma}(SL_2(\C)^c)$ is the affine toric variety associated to the convex polyhedral cone $\mathcal{P}_{\Gamma}$. We describe a Hamiltonian action by a torus $\mathbb{T}_{\Gamma}$ on this space with momentum image equal to $\mathcal{P}_{\Gamma}$.  The space $SL_2(\C)^c$ is an $S^1$ reduction of $SL_2(\C)\q U_- \times U_+\ql SL_2(\C)$, considered as a product of imploded cotangent bundles of $SU(2)$, and the momentum map $\mu: SL_2(\C)^c \to \R$ of the residual $S^1$ action takes a class $([k, w][h, v]) $ to $|w| = |v|$.   The $SU(2)\times SU(2)$ and $S^1$ actions commute on $SL_2(\C)^c$, so it follows that we may realize $M_{\Gamma}(SL_2(\C)^c)$ as an $SU(2)^{V(\Gamma)}\times (S^1)^{E(\Gamma)}$ reduction of $[SL_2(\C)\q U_- \times U_+\ql SL_2(\C)]^{E(\Gamma)}.$

By first reducing with respect to $SU(2)^{V(\Gamma)}$, we are led to consider the spaces $SU(2) {}_0\backslash [SL_2(\C)\q U]^3.$
The momentum level $0$ set for the action of $SU(2)$ on $[SL_2(\C)\q U]^3$ is the space $\mu_{SU(2)}^{-1}(0)$ of imploded triples $[k_1, r_1\varrho][k_2, r_2\varrho][k_3, r_3\varrho]$ such that

\begin{equation}
r_1 k_1\circ\varrho + r_2k_2\circ\varrho + r_3k_3\varrho = 0\\
\end{equation}

The quotient space $SU(2) \backslash \mu_{SU(2)}^{-1}(0)$ is known as the space of spin-framed triangles in $\R^3 \cong su(2)^*$, see \cite{HMM}.  It is known that $SU(2) \backslash \mu_{SU(2)}^{-1}(0)$ is symplectomorphic to the affine toric manifold $\bigwedge^2(\C^3)$.
Notice that the action of $(S^1)^3$ on this space has momentum map 

\begin{equation}
\mu_{(S^1)^3}([k_1, r_1\varrho][k_2, r_2\varrho][k_3, r_3\varrho]) = (r_1, r_2, r_3).\\
\end{equation}

\noindent
The momentum image is therefore the polyhedral cone $\Delta_3$ of non-negative real triples $(r_1, r_2, r_3)$ which can be the sides of a triangle.

Now we may realize $M_{\Gamma}(SL_2(\C)^c)$ as an $(S^1)^{E(\Gamma)}$ reduction of the affine toric symplectic manifold $\prod_{v \in V(\Gamma)} SU(2) \backslash \mu_{SU(2)}^{-1}(0)$.  The torus $(S^1)^{2E(\Gamma)}$ acts on the product $\prod_{v \in V(\Gamma)}SU(2) {}_0\backslash [SL_2(\C)\q U]^3$; where there is a copy of $(S^1)^2$  for each edge $e \in E(\Gamma)$.  The torus $(S^1)^{E(\Gamma)}$ is a product of the anti-diagonally embedded copies of the circle: $S^1 \subset (S^1)^2$, $t \to (t, t^{-1})$.  Following the definition of $SL_2(\C)^c,$ the momentum map of the copy of $S^1$ associated to an edge $e \in E(\Gamma)$ is the difference in lengths of the two imploded coordinates associated to $e$ in this product.  We let $\mathbb{T}_{\Gamma} \subset (S^1)^{2E(\Gamma)}$ be the product of circles $S^1 \subset (S^1)^2$ embedded by $t \to (t, 1)$; as remarked above, the component associated a single edge $e \in E(\Gamma)$ has momentum map $\mu: SL_2(\C)^c \to \R$.

\begin{proposition}
The space $M_{\Gamma}(SL_2(\C)^c)$ is a Hamiltonian toric space, and the momentum image of the torus $\mathbb{T}_{\Gamma}$
is the polyhedral cone $\mathcal{P}_{\Gamma}.$
\end{proposition}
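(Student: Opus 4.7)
The plan is to compute the torus action and its momentum map stagewise, leveraging the description of $M_{\Gamma}(SL_2(\C)^c)$ as an iterated symplectic reduction. First I would record that for each vertex $v$ of $\Gamma$, the space $SU(2) \backslash_{0} \mu_{SU(2)}^{-1}(0)$ is the toric K\"ahler manifold $\bigwedge^2(\C^3)$, with the residual $(S^1)^3$-action (one circle per edge incident at $v$) having momentum image the triangle cone $\Delta_3 \subset \R^3_{\geq 0}$; this is the content of the discussion preceding the proposition and of \cite{HMM}. Taking a product over $v \in V(\Gamma)$ yields a Hamiltonian toric symplectic orbifold whose torus is $(S^1)^{2E(\Gamma)}$, with one circle for each (edge, vertex) incidence, and whose momentum image is the product cone $\prod_{v} \Delta_{3,v} \subset \R^{2E(\Gamma)}_{\geq 0}$.

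Next I would carry out the final $(S^1)^{E(\Gamma)}$ reduction. The anti-diagonal circle $S^1 \hookrightarrow (S^1)^2$, $t \mapsto (t,t^{-1})$, associated to an edge $e$ with $\delta(e) = (u,w)$ has momentum map equal to $r_{e,u} - r_{e,w}$, the difference of the two length coordinates attached to $e$. Reducing at level $0$ forces $r_{e,u} = r_{e,w}$ for every edge, so the reduced momentum image (for the still-acting product torus) lies in the diagonal subspace $\R^{E(\Gamma)} \subset \R^{2E(\Gamma)}$. The complementary torus $\mathbb{T}_{\Gamma} = \prod_{e} S^1_{(t,1)}$ descends to a Hamiltonian action on the reduced space, and the residual momentum map simply records one of the two (now equal) length parameters $r_e$ for each edge $e$.

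From here the identification of the momentum image with $\mathcal{P}_{\Gamma}$ is almost immediate: a tuple $(r_e)_{e \in E(\Gamma)} \in \R^{E(\Gamma)}_{\geq 0}$ lies in the image if and only if, at each vertex $v$ of valence three, the three numbers $r_e$ for $e$ incident to $v$ are the sides of a triangle, i.e.\ satisfy the inequalities cutting out $\Delta_{3,v}$. This is precisely the defining inequality system for $\mathcal{P}_{\Gamma}$ in Definition \ref{spincone}. A dimension count confirms the action is toric: $\dim_{\R} \mathbb{T}_{\Gamma} = |E(\Gamma)|$, which matches $\dim_{\C} M_{\Gamma}(SL_2(\C)^c) = \dim \mathcal{P}_{\Gamma} = |E(\Gamma)|$, the latter by Proposition \ref{contractiontoricvariety}. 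Finally, to tie the symplectic torus to the algebraic torus of the toric variety $\mathrm{Spec}(\C[P_{\Gamma}])$, I would note that the $\mathbb{T}_{\Gamma}$-weight of the spin generator $\Phi_a$ (as a regular function on the reduction) is $a \in P_{\Gamma}$, forcing agreement of the two torus structures.

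The main technical obstacle is keeping the bookkeeping of the $2E(\Gamma)$ circles clean through the reduction: one must verify that the anti-diagonal $(S^1)^{E(\Gamma)}$ acts locally freely on $\mu^{-1}(0)$ generically (so the reduction is a symplectic stratified space whose top stratum is a genuine toric manifold) and that the quotient by the anti-diagonal together with the surviving copy $\mathbb{T}_{\Gamma}$ recovers all of $(S^1)^{2E(\Gamma)}$ modulo the anti-diagonal. Both are straightforward linear algebra once the edge-vertex incidence matrix is written out, but they are the only nontrivial ingredients beyond assembling the vertex-local statements above.
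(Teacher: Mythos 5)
Your proof follows exactly the paper's route: realize $M_{\Gamma}(SL_2(\C)^c)$ as the $(S^1)^{E(\Gamma)}$ reduction of the product of vertex-local toric pieces $\prod_v SU(2)\backslash_0\mu_{SU(2)}^{-1}(0)$ with momentum image $\prod_v \Delta_3$, reduce by the anti-diagonal circles, and observe that the residual $\mathbb{T}_{\Gamma}$-momentum image is the subcone where the two length coordinates on each edge agree, which is $\mathcal{P}_{\Gamma}$ by definition. The additional bookkeeping you flag (dimension count, matching the symplectic torus with the algebraic one via the weights of the $\Phi_a$, and generic local freeness) is a useful elaboration but the argument is the same as the paper's.
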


\begin{proof}
The space $\prod_{v \in V(\Gamma)} SU(2) \backslash \mu_{SU(2)}^{-1}(0)$ has a Hamiltonian action of $(S^1)^{2E(\Gamma)}$, with momentum image $\prod_{v \in V(\Gamma)} \Delta_3.$  In reducing by $(S^1)^{E(\Gamma)}$, we pass to the subquotient
with residual momentum image the subcone of $\prod_{v \in V(\Gamma)} \Delta_3$ where two components corresponding to the same edge $e \in E(\Gamma)$ have the same value, this is $\mathcal{P}_{\Gamma}$ by definition.  Furthermore, this is the momentum map of the residual action by the torus $\mathbb{T}_{\Gamma}$.
\end{proof}

\subsection{The $\Gamma$-collapsing map}

Recall that the collapsing map $\Xi: SL_2(\C) \to SL_2(\C)^c$ is a surjective, 
continuous map of Hamiltonian $SU(2) \times SU(2)$ spaces, and that it intertwines the left and right $SU(2)$ momentum mappings on these spaces. We let $\Xi^{E(\Gamma)}: SL_2(\C)^{E(\Gamma)} \to [SL_2(\C)^c]^{E(\Gamma)}$ be the product map, which
is likewise a map of Hamiltonian $SU(2)^{V(\Gamma)}$ spaces, see Figures \ref{TriangleDiagram} and \ref{TriangleDiagram2}.   It follows that $\Xi^{E(\Gamma)}$ restricts to a surjective, continuous map on the $0$-level sets of the
map(s) $\mu_{SU(2)^{V(\Gamma)}}$.  We let $\Xi_{\Gamma}$ be the map on the quotient by $SU(2)^{V(\Gamma)}.$

\begin{figure}[htbp]

$$
\begin{xy}
(0, -3)*{su(2)^*\times su(2)^*} = "A";
(3.5, 0)*{} = "A1";	
(-3.5, 0)*{} = "A2";
(25, 30)*{SL_2(\C)^c} = "B";
(20,25)*{} = "B1";
(15, 30)*{} = "B2";
(-25, 30)*{SL_2(\C)} = "C";
(-20,25)*{} = "C1";
(-15, 30)*{} = "C2"; 
(0, 32)*{\Xi};
(-20, 14)*{\mu_L \times \mu_R};
(20, 14)*{\mu_L\times \mu_R};
{\ar@{>}"B1"; "A1"};
{\ar@{>}"C1"; "A2"};
{\ar@{>}"C2"; "B2"};
\end{xy}
$$\\
\caption{$\Xi$ intertwines the momentum maps.}
\label{TriangleDiagram}
\end{figure}
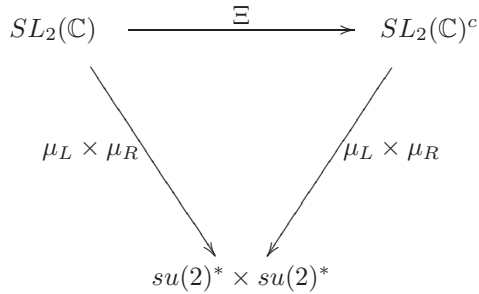

\begin{figure}[htbp]

$$
\begin{xy}
(0, -3)*{[su(2)^*]^{V(\Gamma)}} = "A";
(3.5, 0)*{} = "A1";
(-3.5, 0)*{} = "A2";
(26, 30)*{[SL_2(\C)^c]^{E(\Gamma)}} = "B";
(20,25)*{} = "B1";
(15, 30)*{} = "B2";
(-25, 30)*{SL_2(\C)^{E(\Gamma)}} = "C";
(-20,25)*{} = "C1";
(-15, 30)*{} = "C2"; 
(0, 32)*{\Xi^{E(\Gamma)}};
(-22, 14)*{\mu_{SU(2)^{V(\Gamma)}}};
(22, 14)*{\mu_{SU(2)^{V(\Gamma)}}};
{\ar@{>}"B1"; "A1"};
{\ar@{>}"C1"; "A2"};
{\ar@{>}"C2"; "B2"};
\end{xy}
$$\\
\caption{$\Xi^{E(\Gamma)}$ also intertwines the momentum maps.}
\label{TriangleDiagram2}
\end{figure}

\begin{equation}
\Xi_{\Gamma}: \mathcal{X}(F_g, SL_2(\C)) \cong M_{\Gamma}(SL_2(\C)) \to M_{\Gamma}(SL_2(\C)^c)\\
\end{equation}

Following Subsection \ref{themapxi}, we consider the family $det^{\Gamma}: M_{2\times 2}(\C)^{E(\Gamma)} \to \C^{E(\Gamma)}$.  This family is the Vinberg enveloping monoid of the group $SL_2(\C)^{E(\Gamma)}$.  By \cite[Section 6]{HMM2}, for any assignment of dominant $SL_2(\C)$ coweights $a: E(\Gamma) \to \Z_{> 0}$ (namely a choice of a point in $C_{\Gamma}$) there is an $SU(2)^{2E(\Gamma)}$-equivariant gradient Hamiltonian flow $V_{\pi, a}$ on the flat $\C-$family $E_a$ obtained by base change under the map $\phi_{a}: \C \to \C^{E(\Gamma)}$, $z \to (\ldots, z^{a(e)}, \ldots)$:

$$
\begin{CD}
E_a = M_{2\times2}(\C)\times_{\phi_a, det^{\Gamma}}\C @>>> M_{2\times 2}(\C)^{E(\Gamma)}\\
@VVV @V det^{\Gamma}VV\\
\C @>\phi_a>> \C^{E(\Gamma)}\\
\end{CD}
$$

\noindent
By \cite[Section 6]{HMM2}, this flow is completed by the continuous map $\Xi^{E(\Gamma)}: SL_2(\C)^{E(\Gamma)} \to [SL_2(\C)^c]^{E(\Gamma)}$. As this map is $SU(2)^{V(\Gamma)}$-equivarient, it descends to the symplectic reductions, where it must agree with $\Xi_{\Gamma}$.

\subsection{The integrable system in $\mathcal{X}(F_g, SL_2(\C))$}

By Proposition \ref{sl2xi}, $\Xi^{E(\Gamma)}: SL_2(\C)^{E(\Gamma)} \to [SL_2(\C)^c]^{E(\Gamma)}$ is a symplectomorphism on $SL_2(\C)_o^{E(\Gamma)}$; the latter being the set of points $(\ldots, [k_e, v_e], \ldots)$ with $v_e \neq 0$ for all $e \in E(\Gamma)$.   The product  $SL_2(\C)_o^{E(\Gamma)}$ is preserved by the $SU(2)^{V(\Gamma)}$ action, in particular it inherits an $SU(2)^{V(\Gamma)}$ Hamiltonian structure from $SL_2(\C)^{E(\Gamma)}$, and $\Xi^{E(\Gamma)}$ restricts to give an isomorphism of Hamiltonian $SU(2)^{V(\Gamma)}$ spaces onto the image in $[SL_2(\C)^c]^{E(\Gamma)}$.  

\begin{definition}
The subspace  $M_{\Gamma}^o(SL_2(\C)) \subset M_{\Gamma}(, SL_2(\C))$ is the space of those points $(\ldots [k_e, v_e], \ldots)$ with $v_e \neq 0$.
\end{definition}

\begin{proposition}
The map $\Xi_{\Gamma}: M_{\Gamma}(SL_2(\C)) \to M_{\Gamma}(SL_2(\C)^c)$ restricts to a symplectomorphism from $M_{\Gamma}^o(SL_2(\C))$ onto its image in $M_{\Gamma}(SL_2(\C)^c)$. 
\end{proposition}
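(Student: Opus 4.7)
The plan is to deduce this from the single-factor result (Proposition \ref{sl2xi}) by a standard reduction-in-stages argument. First I would observe that since $\Xi: SL_2(\C) \to SL_2(\C)^c$ is a symplectomorphism on $SL_2(\C)_o$ which intertwines the left and right $SU(2)$ actions together with their momentum maps, the product map $\Xi^{E(\Gamma)}$ restricts to an $SU(2)^{V(\Gamma)}$-equivariant symplectomorphism
\begin{equation*}
\Xi^{E(\Gamma)}: SL_2(\C)_o^{E(\Gamma)} \longrightarrow \Xi^{E(\Gamma)}(SL_2(\C)_o^{E(\Gamma)}) \subset [SL_2(\C)^c]^{E(\Gamma)},
\end{equation*}
that intertwines the two $SU(2)^{V(\Gamma)}$ momentum maps. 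The point is that the $SU(2)^{V(\Gamma)}$ momentum map on each product is assembled vertex-by-vertex from the left/right $SU(2)$ momentum contributions on the incident edges, and $\Xi$ preserves these factorwise.

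Next I would invoke the general reduction principle: if $\Phi: X \to Y$ is an equivariant symplectomorphism of Hamiltonian $G$-manifolds which intertwines momentum maps, it descends to a symplectomorphism of the reductions $X/\!/_0 G \to Y/\!/_0 G$, at least on the locus where the $G$-action is locally free. Applying this with $G = SU(2)^{V(\Gamma)}$ to the restriction of $\Xi^{E(\Gamma)}$ above, and using that $M_{\Gamma}^o(SL_2(\C))$ is by construction the reduction of the $SU(2)^{V(\Gamma)}$-invariant open subset $SL_2(\C)_o^{E(\Gamma)}$, we obtain that $\Xi_{\Gamma}$ restricted to $M_{\Gamma}^o(SL_2(\C))$ is a symplectomorphism onto its image. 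The invariance of the condition $v_e \neq 0$ is immediate since this condition is cut out by the $SU(2) \times SU(2)$-invariant function $|\mu_R(\cdot)|$ on each edge.

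The main obstacle is the technical issue that both $M_{\Gamma}(SL_2(\C))$ and $M_{\Gamma}(SL_2(\C)^c)$ are stratified singular spaces: the reduction theorem in its clean form applies on the locus of points with locally free $SU(2)^{V(\Gamma)}$-stabilizer. To handle this cleanly, I would pass to the open stratum (removing reducible representations, which correspond to points with jumping stabilizers), verify the symplectomorphism there by the reduction principle, and then extend to all of $M_{\Gamma}^o(SL_2(\C))$ using the stratified K\"ahler structures of Kirwan and Kempf-Ness cited in the discussion of symplectic structures. The gradient Hamiltonian flow perspective from the preceding subsection provides an alternative route: since $\Xi^{E(\Gamma)}$ arises as the completion of the $SU(2)^{V(\Gamma)}$-equivariant gradient Hamiltonian flow on the family $E_a$ and this flow is a symplectomorphism on $SL_2(\C)_o^{E(\Gamma)}$ (as it is generated by a smooth vector field there), the descended map $\Xi_{\Gamma}$ inherits the same property on $M_{\Gamma}^o(SL_2(\C))$ via $SU(2)^{V(\Gamma)}$-reduction.
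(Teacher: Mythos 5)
Your proposal is correct and follows essentially the same route as the paper: the paper's proof is exactly the reduction-in-stages argument you outline — it observes that $\Xi^{E(\Gamma)}$ restricts to an isomorphism of Hamiltonian $SU(2)^{V(\Gamma)}$ spaces on $SL_2(\C)_o^{E(\Gamma)}$ and then concludes by noting $M_{\Gamma}^o(SL_2(\C))$ is the level-$0$ reduction of $SL_2(\C)_o^{E(\Gamma)}$. The extra care you devote to the stratified/singular nature of the reduced spaces and the alternative gradient-Hamiltonian-flow route are reasonable expansions of details the paper leaves implicit, but the core argument is the same.
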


\begin{proof}
This follows from the discussion above and the fact that $M_{\Gamma}^o(SL_2(\C))$ is the reduction at level $0$ of $SL_2(\C)_o^{E(\Gamma)}$ by $SU(2)^{V(\Gamma)}$.
\end{proof}

This completes the proof of Theorem \ref{mainsymplectic}.  The integrable $\mathbb{T}_{\Gamma} = [S^1]^{E(\Gamma)}$ action on $M_{\Gamma}(SL_2(\C)^c)$ also defines a Hamiltonian action on the image of $M_{\Gamma}^o(SL_2(\C))$. The momentum map for this action is computed as $\mu_{\mathbb{T}_{\Gamma}}\circ \Xi_{\Gamma}: \mathcal{X}(F_g, SL_2(\C)) \to \mathcal{P}_{\Gamma} \subset \R^{E(\Gamma)}$.  In particular, the Hamiltonians on $M_{\Gamma}^o(SL_2(\C))$ extend continuously to $\mathcal{X}(F_g, SL_2(\C)).$

We let $\Xi^{-1}_{\Gamma}: M_{\Gamma}^o(SL_2(\C)^c) \to M_{\Gamma}^o(SL_2(\C))$ be the inverse map to the restriction of $\Xi_{\Gamma} $ to $M_{\Gamma}^o(SL_2(\C)).$  The Hamiltonian action of $\mathbb{T}_{\Gamma}$ is computed through $\Xi^{-1}_{\Gamma}$ as follows. Let $(t_e) \in \mathbb{T}_{\Gamma}$, and $(\ldots, [k_e, v_e], \ldots) \in M_{\Gamma}(SL_2(\C)) = \mathcal{X}(F_g, SL_2(\C))$.

\begin{equation}
(t_e) \star [k_e, v_e] =  \Xi^{-1}_{\Gamma}\circ (t_e) \star \Xi_{\Gamma}[k_e, v_e] = \Xi^{-1}_{\Gamma} \circ([k_eh_e^{-1}, w_e][t_eh_e, v_e]) = [k_eh_e^{-1}t_eh_e, v_e]\\ 
\end{equation}

\noindent
Recall that $h_e$ is an element chosen so that $h_e \circ v_e = w_e = r_e\varrho_0$, for some $r_e \in \R_{\geq 0}$.  The action by $(t_e)$ is well-defined on any point $(k_e, v_e)$ with $|v_e| = |w_e| = r_e > 0$ by Proposition \ref{sl2xi}.

\subsection{Induced $\Gamma-$decomposition}

By Proposition \ref{sl2xi}, the map $\Xi_{\Gamma}$ is not $1-1$ on the subspace $M_{\Gamma}(SL_2(\C)) \setminus M_{\Gamma}^o(SL_2(\C))$, these are the points $(\ldots, [k_e, v_e], \ldots)$ with $|v_e| = 0$ for some subset of edges $S \subset E(\Gamma)$.  For any $S \subset E(\Gamma),$  define $\Gamma_S \subset \Gamma$ to be the graph induced by the $complement$ of $S$, and $\Gamma^S$ to be the graph induced by $S$ itself (see Figure \ref{gammas}). The next lemma addresses the structure of the possible $\Gamma_S$.

\begin{figure}[htbp]
\centering
\includegraphics[scale = 0.4]{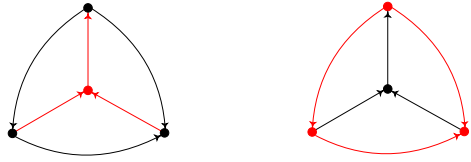}
\caption{The graphs $\Gamma^S$ (left) and $\Gamma_S$ (right)}
\label{gammas}
\end{figure}

\begin{lemma}\label{hyperplanesubgraph}
Let $S \subset E(\Gamma)$ be a set of edges for which some $(\ldots, [k_e, v_e], \ldots)$ has $v_e = 0$ if and only if $e \in S$, then
$\Gamma_S \subset \Gamma$ has no leaves.  For any subgraph $\Gamma' \subset \Gamma$ with no leaves, there exists
a $(\ldots, [k_e, v_e], \ldots) \in \mu_{SU(2)^{V(\Gamma)}}^{-1}(0)$ with $v_e = 0$ for precisely $e \in E(\Gamma) \setminus E(\Gamma')$.
\end{lemma}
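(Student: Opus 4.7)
The argument has two parts, both of which reduce to examining the zero moment-map constraint at a single vertex of $\Gamma$. Recall that a class in $SL_2(\C)^c$ can be written as $[k,w][h,v]$ with $w = h \circ v$, so $|w_e|=|v_e|$. The moment map for the $SU(2)_w$ action at a vertex $w \in V(\Gamma)$ sums one contribution from each edge of $\epsilon(w)$, and each contribution is a vector in $su(2)^*$ of norm $|v_e|$, originating from the left or right $SU(2)$ factor according to whether $w$ is the source or the target of $e$.

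For the first claim, suppose a point $(\ldots,[k_e,v_e],\ldots) \in \mu_{SU(2)^{V(\Gamma)}}^{-1}(0)$ has $v_e = 0$ precisely when $e \in S$, and suppose for contradiction that $\Gamma_S$ had a leaf vertex $w$. By definition of a leaf, exactly one edge $e_0 \in \epsilon(w)$ lies outside $S$; the remaining edges of $\epsilon(w)$ all lie in $S$ (at least two of them, since $w$ has valence at least three in $\Gamma$). Edges in $S$ contribute the zero vector to $\mu_w$, so the moment equation at $w$ collapses to the single contribution from $e_0$, whose norm is $|v_{e_0}| > 0$. This contradicts $\mu_w = 0$, so $\Gamma_S$ has no leaves.

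For the second claim, I would build the required configuration on $\Gamma'$ first and then extend by zeros across $E(\Gamma) \setminus E(\Gamma')$. Since every vertex $w$ of $\Gamma'$ has valence $n_w \geq 2$ in $\Gamma'$, the constant weight $a \equiv 1$ on $E(\Gamma')$ satisfies the polygon inequality $1 \leq n_w - 1$ at every vertex, placing $a \in \mathcal{P}_{\Gamma'}$. Because $\mathcal{P}_{\Gamma'}$ is the moment image of $\mathbb{T}_{\Gamma'}$ acting on $M_{\Gamma'}(SL_2(\C)^c)$, with edge-$e$ component equal to $|v_e|$, the fiber over $a$ supplies a point of $M_{\Gamma'}(SL_2(\C)^c)$ whose $v_e$ are all strictly positive for $e \in E(\Gamma')$. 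Now place this configuration on $E(\Gamma')$ and the origin $[\mathrm{Id},0][\mathrm{Id},0]$ on every $e \in E(\Gamma) \setminus E(\Gamma')$. The moment equation at each $w \in V(\Gamma)$ is automatic: contributions from edges outside $\Gamma'$ vanish, while the contributions from edges in $\Gamma'$ incident to $w$ already sum to zero in the $\Gamma'$-configuration. The only real bookkeeping concerns the left-versus-right $SU(2)$ identifications at a shared vertex, but this is immediate since the $\Gamma'$-moment at $w$ is literally the restriction of the $\Gamma$-moment at $w$ to those terms with $e \in E(\Gamma')$, so no orientation subtleties arise.
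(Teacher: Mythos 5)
Your first part is correct and essentially reproduces the paper's argument: if $w$ were a leaf of $\Gamma_S$, all but one contribution to the $SU(2)_w$ moment equation would vanish, leaving a single nonzero term.

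Your second part takes a genuinely different route from the paper, which directly manufactures a moment-zero configuration with all $v_e \neq 0$ by induction on the edge count of the leafless subgraph (base case $\Gamma_g$; inductive step: uncollapse an edge and do explicit $su(2)^*$ vector algebra, with the separating/nonseparating dichotomy). The paper never invokes the moment polytope. Your moment-polytope argument is slicker, but as written it contains a type mismatch that is a real gap. The lemma asks for a point of $\mu_{SU(2)^{V(\Gamma)}}^{-1}(0) \subset SL_2(\C)^{E(\Gamma)} \cong T^*(SU(2))^{E(\Gamma)}$ --- the notation $[k_e,v_e]$ is cotangent-bundle notation, and the lemma's role is to identify which strata of $M_{\Gamma}(SL_2(\C))$ are nonempty. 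But the object you construct lives in the \emph{contracted} space: the torus $\mathbb{T}_{\Gamma'}$ acts (with image $\mathcal{P}_{\Gamma'}$) only on $M_{\Gamma'}(SL_2(\C)^c)$, and your ``origin'' $[\mathrm{Id},0][\mathrm{Id},0]$ is an $SL_2(\C)^c$-element, not a point of $T^*(SU(2))$. So you have produced a moment-zero configuration in $(SL_2(\C)^c)^{E(\Gamma)}$, not in $SL_2(\C)^{E(\Gamma)}$. The repair is short but must be said: $\Xi^{E(\Gamma')}: SL_2(\C)^{E(\Gamma')} \to (SL_2(\C)^c)^{E(\Gamma')}$ is surjective, intertwines the $SU(2)^{V(\Gamma')}$ moment maps (this is exactly the content of Figure \ref{TriangleDiagram2}), and preserves the lengths $|v_e|$ (Proposition \ref{sl2xi}); hence a moment-zero configuration in $(SL_2(\C)^c)^{E(\Gamma')}$ with all $|v_e| = 1$ pulls back to a moment-zero configuration in $SL_2(\C)^{E(\Gamma')}$ with all $|v_e| = 1$, which you can then extend by $(\mathrm{Id},0) \in T^*(SU(2))$ on every edge of $E(\Gamma) \setminus E(\Gamma')$. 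A smaller point worth flagging: Definition \ref{spincone} and the moment-image proposition are stated in the paper for trivalent $\Gamma$, whereas a leafless subgraph $\Gamma'$ of a trivalent graph will typically have vertices of valence $2$; your polygon inequality $1 \leq n_w - 1$ is the right generalization, but you are using a version of these results that the paper does not actually record.
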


\begin{proof}
Let $w \in V(\Gamma),$ have $\epsilon(w) = \{e_1, \ldots, e_m\} \cup \{f_1, \ldots, f_k\}$ with the $e_i$ incoming and $f_j$ outgoing.
For $(k_e, v_e) \in \mu_{SU(2)^{V(\Gamma)}}^{-1}(0)$ the momentum $0$ condition for the copy of $SU(2)$ associated to $w$ is the following. 

\begin{equation}
k_{e_1}\circ v_{e_1} + \ldots + k_{e_m}\circ v_{e_m} -( v_{f_1} + \ldots + v_{f_k}) = 0\\
\end{equation}

\noindent
If $\Gamma_S \subset \Gamma$ has a leaf, then for some $w \in V(\Gamma)$ all of the $v_{e_i}, v_{f_j}$ above 
are $0$ save one.  This contradicts the equation.   

For the second part, observe that if we can assign non-zero vectors to the edges of any graph which satisfy the momentum
$0$ condition above, we can take an such assignment to $\Gamma_S$ and extend it to $\Gamma$ by placing $0$ on all
edges of $S$.  For this reason, it suffices to show that any graph supports a non-zero vector assignment. 

We fix the genus $g$ and consider the graph $\Gamma_g$ with one vertex.  Any assignment of vectors $v_1, \ldots, v_g$ to the edges of $\Gamma_g$ satisfies the momentum $0$ condition, because $\sum Id\circ v_1 + \ldots + Id \circ v_g - (v_1 + \ldots + v_g) = 0.$ We take this as the base case of an induction on the number of edges in a graph $\Gamma.$  We let $\Gamma'$ be obtained from $\Gamma$ by collapsing an edge $f \in E(\Gamma)$, $\delta(f) = (x, y)$ to a vertex $w \in V(\Gamma')$, and we suppose there is some assignment of $(\ldots, [k_e, v_e], \ldots)$ to the edges of $\Gamma'$ which satisfies the momentum $0$ condition, with all $v_e \neq 0.$   The edges incident on $w$
are partitioned into two sets $\epsilon(w) = X \cup Y$ by $f$, with those in $X$ incident on $x$ and those $Y$ incident on $y$, with associated sums

\begin{equation}
\sigma = - [\sum_{i \in X} k_i \circ v_i - \sum_{j \in X} v_j] = \ \ \ \ [\sum_{i \in Y} h_i \circ z_i - \sum_{j \in Y} z_j]. \\
\end{equation}

There are two cases, $\sigma \neq 0$ and $\sigma = 0.$  If $\sigma \neq 0$, we place $(Id, \sigma)$ on the edge $f$.  As $f$ is incoming
on $y$, we have $-\sigma + [\sum_{i \in Y} h_i \circ z_i - \sum_{j \in Y} z_j] = -\sigma + \sigma = 0.$  We also have $ \sigma + [\sum_{i \in X} k_i \circ v_i - \sum_{j \in X} v_j] = \sigma - \sigma = 0,$ so this assignment satisfies the requisite conditions for $\Gamma.$   

If $\sigma = 0$, we have two more cases. If $f$ is a separating edge, we rotate all $[k_i, v_i]$, $i \in Y$ by some $g \in SU(2)$ to make the difference $\neq 0$, and reduce to the previous case.   If $f$ is not separating, we take it to be part of a simple loop $\gamma = (f, e_2, \ldots, e_k)$ in $\Gamma.$  Each edge of this loop is directed in some way, if two consecutive edges share a direction, we add a $q$ or $-q$ to both (correcting as necessary with $k_i^{-1}.$  If two consecutive edges switch directions, we switch signs.  This can always be done, because there must always be an even number of switch vertices.  To see this last point, note that if we have an odd number of switch vertices, then the edge orientation must switch an odd number of times as we go around $\gamma$, but this implies that $f$ has both possible orientations, a contradiction. 
\end{proof}

\begin{corollary}
The polyhedral cone $\mathcal{P}_{\Gamma} \subset \R^{E(\Gamma)}$ intersects the coordinate subspace $\R^K \subset \R^{E(\Gamma)}$
if and only if $K = E(\Gamma) \setminus E(\Gamma')$, for $\Gamma' \subset \Gamma$ a subgraph with no leaves. 
\end{corollary}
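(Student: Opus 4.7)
The plan is to derive this as a direct consequence of Lemma \ref{hyperplanesubgraph} through the momentum-map picture of $\mathcal{P}_\Gamma$ developed in the preceding subsection. Recall that $\mathcal{P}_\Gamma$ is exactly the image of $M_\Gamma(SL_2(\C)^c)$ under the momentum map of the torus $\mathbb{T}_\Gamma \cong (S^1)^{E(\Gamma)}$, and that the $e$-th component of this map reads off the length $|v_e|$ of the imploded vector on the edge $e$. Consequently, the set of coordinates on which a point $a \in \mathcal{P}_\Gamma$ vanishes is the same as the set of edges $e$ on which any (equivalently, some) representative $(\ldots, [k_e, v_e], \ldots) \in M_\Gamma(SL_2(\C)^c)$ has $v_e = 0$.

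With this dictionary in hand, I would read the condition that $\mathcal{P}_\Gamma$ intersects the coordinate subspace $\R^K$ as asserting the existence of a point of $\mathcal{P}_\Gamma$ lying in the relative interior of the face of the positive orthant cut out by the equations $\{a(e) = 0 : e \in K\}$, that is, a point whose vanishing set is exactly $K$. In the forward direction, such a point lifts through $\mu_{\mathbb{T}_\Gamma}$ and the $SU(2)^{V(\Gamma)}$ reduction to an element of $\mu_{SU(2)^{V(\Gamma)}}^{-1}(0)$ with $v_e = 0$ precisely when $e \in K$; the first half of Lemma \ref{hyperplanesubgraph} then forces $\Gamma_K$, the subgraph of $\Gamma$ induced on $E(\Gamma) \setminus K$, to have no leaves, and we may take $\Gamma' = \Gamma_K$ to get $K = E(\Gamma) \setminus E(\Gamma')$.

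Conversely, given a leafless subgraph $\Gamma' \subset \Gamma$ and setting $K = E(\Gamma) \setminus E(\Gamma')$, the second half of Lemma \ref{hyperplanesubgraph} produces a momentum-zero point of $[SL_2(\C) \q U_- \times U_+ \ql SL_2(\C)]^{E(\Gamma)}$ with $v_e = 0$ exactly on $K$; its image under the reduction and then under $\mu_{\mathbb{T}_\Gamma}$ lies in $\mathcal{P}_\Gamma$ and has zero set precisely $K$, witnessing the required intersection.

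I do not expect a genuine obstacle here, since both halves of Lemma \ref{hyperplanesubgraph} are already established and the translation to the torus momentum map was carried out in the construction of $M_\Gamma(SL_2(\C)^c)$ as a Hamiltonian toric space. If preferred, a purely combinatorial variant is available directly from the cone definition of $\mathcal{P}_\Gamma$: at a trivalent vertex with exactly one incident edge in the support of $a$ the triangle inequalities force $a$ to vanish on that edge as well, which rules out leaves in the support subgraph, and the uniform assignment $a \equiv 1$ on $E(\Gamma')$ satisfies the triangle inequality at every vertex of any leafless $\Gamma'$, supplying the converse witness.
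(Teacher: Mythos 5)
Your proof is correct and follows exactly the (implicit) route the paper intends: the corollary is stated immediately after Lemma \ref{hyperplanesubgraph} with no separate proof, and the translation you spell out — that the $e$-th component of $\mu_{\mathbb{T}_\Gamma}$ reads $|v_e|$, so the vanishing set of a point of $\mathcal{P}_\Gamma$ matches the set of edges with $v_e = 0$ on a representative — is precisely what turns the two halves of that lemma into the two directions of the corollary. The only nit is a small imprecision about where the momentum-zero witness lives: the lemma produces it in $\mu_{SU(2)^{V(\Gamma)}}^{-1}(0) \subset SL_2(\C)^{E(\Gamma)}$, whereas the torus picture lives on the contraction side; one passes between them via $\Xi^{E(\Gamma)}$, which is surjective and preserves $|v_e|$, so nothing breaks. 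Your closing combinatorial variant (a leaf in the support violates the polygon inequality; $a \equiv 1$ on $E(\Gamma')$ satisfies it at every vertex of a leafless subgraph) is a nice, more elementary route that avoids the symplectic machinery altogether and would have been equally valid.
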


We let $M_{\Gamma, S}(SL_2(\C)) \subset M_{\Gamma}(SL_2(\C))$ and $M_{\Gamma, S}(SL_2(\C)^c) \subset M_{\Gamma}(SL_2(\C)^c)$ 
be the subspaces defined by the condition $|v_e| = 0$ if and only if $e \in S$. Clearly $\Xi_{\Gamma}$ restricts
to a surjective map $\Xi_{\Gamma}: M_{\Gamma, S}(SL_2(\C)) \to M_{\Gamma, S}(SL_2(\C)^c).$   The condition $|v_e| = 0$ if and only if $e \in S$  defines an open face of $\mathcal{P}_{\Gamma}$, which is the image of $M_{\Gamma, S}(SL_2(\C))$ and $M_{\Gamma, S}(SL_2(\C)^c)$ under the maps
$\mu_{\mathbb{T}}\circ \Xi_{\Gamma}$ and $\mu_{\mathbb{T}}$, respectively. Furthermore, $M_{\Gamma, S}(SL_2(\C)^c)$ is the $\mathbb{T}^{\C}$
orbit corresponding to this face.  For the next proposition, we allow a graph $\Gamma_S$ which may have bivalent vertices, the definition of $M_{\Gamma_S}(-)$ is identical. 

\begin{proposition}
The closure of $M_{\Gamma, S}(SL_2(\C)^c)$ is isomorphic to $M_{\Gamma_S}(SL_2(\C)^c)$. 
\end{proposition}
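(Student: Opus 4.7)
My plan is to reduce the claim to a combinatorial identification of affine semigroups using the toric description of $M_\Gamma(SL_2(\C)^c)$. By Proposition \ref{contractiontoricvariety}, $M_\Gamma(SL_2(\C)^c) = Spec(\C[P_\Gamma])$ is the affine toric variety of the cone $\mathcal{P}_\Gamma$, with the dense algebraic torus $T_\Gamma$ acting by the standard torus action on $\C[P_\Gamma]$. From the paragraph preceding the proposition, $M_{\Gamma,S}(SL_2(\C)^c)$ coincides with the $T_\Gamma$-orbit corresponding to the relative interior of the face $\mathcal{F}_S = \mathcal{P}_\Gamma \cap \R^{E(\Gamma)\setminus S}$, since its image under $\mu_{\mathbb{T}_\Gamma}$ is exactly this open face and the preimage of an open face in an affine toric variety is a single torus orbit. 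By the standard orbit-closure dictionary for affine toric varieties, the closure of this orbit is $Spec(\C[P_\Gamma \cap \R^{E(\Gamma)\setminus S}])$, i.e., the spectrum of the quotient of $\C[P_\Gamma]$ by the monomial ideal killing the coordinates in $S$.

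It then remains to identify $P_\Gamma \cap \R^{E(\Gamma)\setminus S}$ with $P_{\Gamma_S}$, using Definition \ref{spincone} extended to allow bivalent vertices in $\Gamma_S$ as indicated in the excerpt. I would verify this vertex-by-vertex. For a vertex of $\Gamma$ none of whose incident edges lie in $S$, which remains trivalent in $\Gamma_S$, the triangle inequalities and parity condition are literally identical in both descriptions. For a vertex of $\Gamma$ with exactly one incident edge $g \in S$, setting $a(g) = 0$ collapses the triangle inequality at that vertex to the equality $a(e) = a(f)$ on the remaining two edges, which is exactly the Clebsch-Gordon condition for the resulting bivalent vertex of $\Gamma_S$; the parity condition $a(e) + a(f) \in 2\Z$ then follows automatically from $a(e)=a(f)$. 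Lemma \ref{hyperplanesubgraph} rules out the leaf case (two incident edges in $S$), and a vertex with all three incident edges in $S$ becomes an isolated vertex of $\Gamma_S$ contributing only the trivial constraint on either side.

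I expect the main obstacle to be the bivalent-vertex case, together with confirming that the definitions of $\mathcal{P}_{\Gamma_S}$ and $L_{\Gamma_S}$ extended to allow bivalent (and isolated) vertices agree with the restricted cone and lattice. Once this vertex-by-vertex comparison is established, combining with the toric description above yields $\overline{M_{\Gamma,S}(SL_2(\C)^c)} \cong Spec(\C[P_{\Gamma_S}]) = M_{\Gamma_S}(SL_2(\C)^c)$, completing the proof.
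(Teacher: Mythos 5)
Your argument is correct, and it takes a genuinely different route from the paper's own proof. You reduce everything to the combinatorial geometry of affine toric varieties: identify $M_{\Gamma,S}(SL_2(\C)^c)$ as the torus orbit attached to the open face $\mathcal{F}_S = \mathcal{P}_\Gamma \cap \R^{E(\Gamma)\setminus S}$, invoke the orbit-closure dictionary to get $\overline{M_{\Gamma,S}(SL_2(\C)^c)} = \mathrm{Spec}(\C[P_\Gamma \cap \R^{E(\Gamma)\setminus S}])$, and then verify vertex by vertex that $P_\Gamma \cap \R^{E(\Gamma)\setminus S} = P_{\Gamma_S}$, with the key check being that at a bivalent vertex of $\Gamma_S$ the degenerate triangle inequality collapses to $a(e) = a(f)$ and the parity condition becomes automatic. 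That vertex-by-vertex matching and your observation that the restricted lattice $L_\Gamma \cap \R^{E(\Gamma)\setminus S}$ agrees with $L_{\Gamma_S}$ are both fine, and Lemma \ref{hyperplanesubgraph} correctly rules out the leaf case. The paper instead argues in the symplectic-reduction model: points of $M_{\Gamma,S}(SL_2(\C)^c)$ are classes $(\ldots,[k_e,v_e],\ldots)$ in $\mu^{-1}_{SU(2)^{V(\Gamma)}}(0)$ with $v_e=0$ (and hence $k_e$ collapsed to the cone point) exactly on $S$; the closure adds further $v_e=0$, lands inside $(SL_2(\C)^c)^{E(\Gamma_S)} \cap \mu^{-1}(0)$, and the $SU(2)^{V(\Gamma)}$ action visibly descends to an $SU(2)^{V(\Gamma_S)}$ action, giving $M_{\Gamma_S}(SL_2(\C)^c)$ directly as a reduction. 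The paper's argument is shorter and stays in the Hamiltonian picture already set up for the $\Xi_\Gamma$ machinery; yours is more algebraic and has the side benefit of making the isomorphism explicit at the level of semigroups, which dovetails with Proposition \ref{contractiontoricvariety} and would be the natural route if one wanted to verify the remark that follows, namely $M_{\Gamma_S}(SL_2(\C)^c) \cong M_{\bar\Gamma_S}(SL_2(\C)^c)$ after suppressing bivalent vertices.
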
 

\begin{proof}
By definition $M_{\Gamma, S}(SL_2(\C)^c)$ is covered by the space of $(\ldots, [k_e, v_e], \ldots)$ satisfying
the momentum $0$ condition $SU(2)^{V(\Gamma)}$ with $v_e = 0, k_e = Id$ for precisely those $e \in E(\Gamma_S).$ The closure of this space
admits points where $v_e = 0$, and therefore $k_e$ is identified to  $Id.$  This is the intersection of the subspace $(SL_2(\C)^c)^{E(\Gamma_S)} \subset (SL_2(\C)^c)^{E(\Gamma)}$ with the momentum level $0$ space.  As all $e \in S$ are assigned $0$ lengths, this subspace is 
precisely the momentum level $0$ set for the action of $SU(2)^{V(\Gamma_S)}$, and the condition $k_e =  Id$ implied by the definition of the contraction $SL_2(\C)^c$ implies the action of $SU(2)^{V(\Gamma)}$ reduces to that of $SU(2)^{V(\Gamma_S)}.$ 
\end{proof}

Let $\bar{\Gamma}_S$ be the graph obtained from $\Gamma_S$ by replacing any pair of edges connected to a bivalent vertex with a single edge.  We leave
it to the reader to check that $M_{\Gamma_S}(SL_2(\C)^c) \cong M_{\bar{\Gamma}_S}(SL_2(\C)^c)$.

The space $M_{\Gamma, S}(SL_2(\C))$ is an $SU(2)^{V(\Gamma)}$ quotient of the product space $SU(2)^S \times \mu_{SU(2)^{V(\Gamma_S)}}^{-1}(0)$.
Here $\mu_{SU(2)^{V(\Gamma_S)}}^{-1}(0) \subset SL_2(\C)^{E(\Gamma) \setminus S}$ is the level $0$ subset defined by the $SU(2)^{V(\Gamma_S)}$
action in the definition of $M_{\Gamma_S}(SL_2(\C)).$  In particular, if we take $S = E(\Gamma)$, the fiber over the unique point in $M_{\Gamma}(SL_2(\C)^c)$ defined by the condition $|v_e| =0$ for all $e \in E(\Gamma)$ is the character variety $M_{\Gamma}(SU(2)) = \mathcal{X}(F_g, SU(2)) \subset \mathcal{X}(F_g, SL_2(\C))$ associated to the group $SU(2).$ 

\begin{proposition}\label{connectedfibers}
The map $\Xi_{\Gamma}: M_{\Gamma, S}(SL_2(\C)) \to M_{\Gamma, S}(SL_2(\C)^c)$  has connected fibers. 
\end{proposition}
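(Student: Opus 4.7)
The plan is to reduce the claim to the pre-quotient level, where the map $\Xi^{E(\Gamma)} : SL_2(\C)^{E(\Gamma)} \to [SL_2(\C)^c]^{E(\Gamma)}$ factors as a direct product of copies of $\Xi$ and the fibers become transparent. Given $[y] \in M_{\Gamma, S}(SL_2(\C)^c)$, I would pick a lift $\tilde{y}$ in the zero-level set $\mu^{-1}_{SU(2)^{V(\Gamma)}}(0) \subset [SL_2(\C)^c]^{E(\Gamma)}$, and then use the $SU(2)^{V(\Gamma)}$-equivariance of $\Xi^{E(\Gamma)}$ to identify $\Xi_{\Gamma}^{-1}([y])$ with the image of the single fiber $(\Xi^{E(\Gamma)})^{-1}(\tilde{y})$ under the quotient map $\mu^{-1}(0) \to M_\Gamma(SL_2(\C))$.

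Next I would compute this upstairs fiber explicitly. Proposition \ref{sl2xi} says that $\Xi : SL_2(\C) \to SL_2(\C)^c$ is bijective on $SL_2(\C)_o$ (the locus $v \neq 0$) and collapses the whole subgroup $SU(2) = \{v = 0\}$ to a single point. Since the stratum $M_{\Gamma, S}$ is cut out precisely by $v_e = 0$ for $e \in S$ and $v_e \neq 0$ for $e \notin S$, the fiber $(\Xi^{E(\Gamma)})^{-1}(\tilde{y})$ is a product: the coordinate is uniquely determined on each edge $e \notin S$, while on each $e \in S$ the coordinate $k_e$ ranges freely over the collapsed $SU(2)$. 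So the upstairs fiber is homeomorphic to $SU(2)^S$.

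The next task is to verify that this entire fiber actually lies inside $\mu^{-1}_{SU(2)^{V(\Gamma)}}(0)$, so that it does parametrize points in $M_\Gamma(SL_2(\C))$. This is automatic from the $SU(2) \times SU(2)$-equivariance of $\Xi$ recorded in Figure \ref{TriangleDiagram2}, which shows that $\Xi^{E(\Gamma)}$ intertwines the vertex momentum maps; equivalently, for each $e \in S$ the vanishing $v_e = 0$ means the term $k_e \circ v_e$ in the vertex equation of Lemma \ref{hyperplanesubgraph} is zero regardless of $k_e$, so varying $k_e$ cannot spoil momentum zero. Since $SU(2)^S$ is connected, its continuous image in $M_\Gamma(SL_2(\C))$ under the quotient is connected as well, and this image is exactly $\Xi_{\Gamma}^{-1}([y])$.

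The step I expect to require the most care is the identification of the upstairs fiber: one must confirm that a lift $\tilde{y}$ genuinely has $v_e = 0$ for precisely $e \in S$ in both the $[h_e, v_e]$ and $[k_e, w_e]$ coordinates built from the imploded cotangent bundle picture of $SL_2(\C)^c$, so that the claim ``exactly the $S$-factors contribute a copy of $SU(2)$, the rest a point'' is unambiguous. Once that is in hand the argument is immediate: connectedness of the upstairs fiber passes through the quotient, and the momentum-zero constraint adds no further cutting because the free $SU(2)$ directions are momentum-neutral.
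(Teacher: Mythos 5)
Your argument is correct and follows essentially the same route as the paper's: both proofs reduce to the level set $\mu^{-1}_{SU(2)^{V(\Gamma)}}(0)$, use the factorization of $\Xi^{E(\Gamma)}$ into copies of $\Xi$ to identify the upstairs fiber with $SU(2)^S$ (a free $SU(2)$ for each collapsed edge, with the coordinates on $E(\Gamma)\setminus S$ pinned by the bijectivity of $\Xi$ on $SL_2(\C)_o$), and then pass connectedness through the quotient. The paper phrases the descent step slightly more compactly — it writes the upstairs fiber as $SU(2)^S \times (\text{orbit of a lift})$ rather than fixing one lift and invoking equivariance — but the substance is identical, and your explicit check that the free $SU(2)^S$ directions preserve the momentum-zero condition (because $v_e = 0$ kills the $k_e\circ v_e$ contribution) is the same point the paper relies on implicitly via the description of $M_{\Gamma,S}(SL_2(\C))$ as a quotient of $SU(2)^S \times \mu^{-1}_{SU(2)^{V(\Gamma_S)}}(0)$.
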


\begin{proof}
  The fiber over an equivalence class  $(\ldots, [k_e, v_e], \ldots) \in M_{\Gamma, S}(SL_2(\C)^c)$ is $SU(2)^S \times (\ldots, [k_e, v_e], \ldots)$, which is connected because the subspace of points in the equivalence class $(\ldots, [k_e, v_e], \ldots)$ is connected.  
\end{proof}

Now Proposition \ref{connectedfibers} and Lemma \ref{hyperplanesubgraph} complete a proof of Theorem \ref{maintoric}.

\section{Compactifications of $\mathcal{X}(F_g, SL_2(\C))$}\label{compactification}

In this section we show that the cone $C_{\Gamma, \phi}$ of valuations
on $\C[\mathcal{X}(F_g, SL_2(\C))]$ defined by a marking $\phi: \Gamma_g \to \Gamma$
is induced from a compactification $M_{\Gamma}(SL_2(\C)) \subset M_{\Gamma}(X).$
Here $X \subset \mathbb{P}^5$ is the projective $SL_2(\C) \times SL_2(\C)$ scheme from Section \ref{sl2} obtained by taking $Proj$ of the algebra $\bar{R} = \C[a, b, c, d, t]/<ad - bc - t^2>$
  We construct $M_{\Gamma}(X)$ as a $GIT$ quotient as in Section \ref{charactervaluations}, using the $SL_2(\C) \times SL_2(\C)$-linearized line bundle $\mathcal{O}(1)$ on $\mathbb{P}^4.$

\subsection{The $GIT$ quotient $M_{\Gamma}(X)$}

  As $X$ is an $SL_2(\C) \times SL_2(\C)$ variety,  $X^{E(\Gamma)}$ carries an action of the group $SL_2(\C)^{V(\Gamma)}$.  For each edge $e \in E(\Gamma)$ there is a corresponding divisor $\bar{D}_e = D \times \prod_{e \neq f \in E(\Gamma)} X$. Furthermore, there is a subspace $D_S = \cap_{e \in S} D_e = \prod_{e \in S} D \times \prod_{f \in E(\Gamma) \setminus S} X$ for every $S \subset E(\Gamma).$ Each subspace $D_S$ is $SL_2(\C)^{E(\Gamma)}$ stable, and smooth, making $\hat{D}_{\Gamma} = \cup D_e$ a normal crossings divisor on $X^{E(\Gamma)}$. 

We let $\mathcal{L} = i^*(\mathcal{O}(1))$ under $i: X \subset \mathbb{P}^4$, this bundle is $SL_2(\C) \times SL_2(\C)$-linearized and the section ring of $\mathcal{L}$ is then $\bar{R}$. Similarly we let $\mathcal{M} = j^*(O(1))$ for $j: D \subset \mathbb{P}^4$ the inclusion of the divisor $D \subset X$; the section ring of $\mathcal{M}$ is $\bar{R}/<t>$. We place $\mathcal{L}^{\boxtimes E(\Gamma)}$ on $X^{E(\Gamma)}$, which then comes with an $SL_2(\C)^{V(\Gamma)}$ action. We define $M_{\Gamma}(X)$ to be the following $GIT$ quotient.  

\begin{equation}
M_{\Gamma}(X) = SL_2(\C)^{V(\Gamma)} \ql_{\mathcal{L}^{\boxtimes E(\Gamma)}} X^{E(\Gamma)}\\
\end{equation}

The projective coordinate ring $\bar{R}_{\Gamma}$ associated to the line bundle $\mathcal{L}^{\boxtimes E(\Gamma)}$ is a direct sum of the following spaces. 

\begin{equation}
\bar{R}_{\Gamma}(m) = \bigoplus_{a: E(\Gamma) \to [0, m]} \bigotimes_{e \in E(\Gamma)} V(a(e)) \otimes V(a(e))t^m\\
\end{equation}

Once again, $t^m$ serves as a placeholder in this algebra, and multiplication by $t \in V(0, 0)^{\otimes E(\Gamma)}t \subset \bar{R}_{\Gamma}(1)$ shifts a summand in $\bar{R}_{\Gamma}(m)$ to its isomorphic image in $\bar{R}_{\Gamma}(m+1).$  The projective coordinate ring $R_{\Gamma} = $ $[\bar{R}_{\Gamma}]^{SL_2(\C)^{V(\Gamma)}}$ of $M_{\Gamma}(X)$ has a similar description to the affine coordinate ring of $\C[M_{\Gamma}(SL_2(\C))]$, it is a direct sum of the following spaces. 
 
\begin{equation}
R_{\Gamma}(m) = \bigoplus_{a: E(\Gamma) \to [0, m]} [\bigotimes_{e \in E(\Gamma)} V(a(e)) \otimes  V(a(e))]^{SL_2(\C)^{V(\Gamma)}}t^m\\
\end{equation}

\noindent
Each of the spaces in this sum is spanned by a single spin diagram $\Phi_a$, with $a(e) \leq m$ for all $e \in E(\Gamma).$ Multiplication on $R_{\Gamma}$ is induced from the inclusions $R_{\Gamma}(m) \subset \C[M_{\Gamma}(SL_2(\C))]$.  

\begin{proposition}
The space $M_{\Gamma}(SL_2(\C))$ is a dense, open subset of $M_{\Gamma}(X)$.
\end{proposition}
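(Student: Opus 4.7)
The plan is to realize $M_{\Gamma}(SL_2(\C))$ as the distinguished affine open subscheme of the projective scheme $M_{\Gamma}(X)$ cut out by a single $SL_2(\C)^{V(\Gamma)}$-invariant global section, and then to deduce density from the irreducibility of $M_{\Gamma}(X)$. The relevant section is $\tau := t^{\otimes E(\Gamma)}$, which sits in the summand of $\bar{R}_{\Gamma}(1)$ indexed by the zero weighting $a\equiv 0$. This summand is the one-dimensional trivial representation $\C\cdot\tau$ of $SL_2(\C)^{V(\Gamma)}$, so $\tau$ descends to an invariant element of $R_{\Gamma}(1)$. Its vanishing locus on $X^{E(\Gamma)}$ is exactly the boundary divisor $\hat{D}_{\Gamma}$, while its non-vanishing locus is precisely $SL_2(\C)^{E(\Gamma)}$.

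Next I would compute the principal open affine $U_{\tau}\subset M_{\Gamma}(X)$, whose coordinate ring is the degree-zero part of $R_{\Gamma}[\tau^{-1}]$. Since $\tau$ is invariant, localization commutes with taking $SL_2(\C)^{V(\Gamma)}$-invariants, and this ring agrees with the degree-zero part of $(\bar{R}_{\Gamma}[\tau^{-1}])^{SL_2(\C)^{V(\Gamma)}}$. Factor by factor, Proposition \ref{sl2val}(2) gives the identification $\bar{R}[t^{-1}]_{0}\cong \C[SL_2(\C)]$ via the substitutions $A=a/t$, $B=b/t$, $C=c/t$, $D=d/t$, so tensoring yields $\bar{R}_{\Gamma}[\tau^{-1}]_{0}\cong \C[SL_2(\C)^{E(\Gamma)}]$. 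Passing to $SL_2(\C)^{V(\Gamma)}$-invariants then recovers $\C[M_{\Gamma}(SL_2(\C))]$ by the very definition given in Section \ref{charactervaluations}, producing an isomorphism $U_{\tau}\cong M_{\Gamma}(SL_2(\C))$, i.e. an open immersion $M_{\Gamma}(SL_2(\C))\hookrightarrow M_{\Gamma}(X)$.

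For density, I would argue that $M_{\Gamma}(X)$ is irreducible. The quadric $ad-bc-t^{2}$ is an irreducible form (a non-degenerate rank-five quadric) in $\C[a,b,c,d,t]$, so $\bar{R}$ is an integral domain. Tensor products over the algebraically closed field $\C$ preserve domains, so $\bar{R}_{\Gamma}$ is a domain, its $SL_2(\C)^{V(\Gamma)}$-invariant subring $R_{\Gamma}$ is a graded domain, and $M_{\Gamma}(X)=\Proj(R_{\Gamma})$ is an irreducible projective scheme. Since any non-empty open subset of an irreducible scheme is dense, $U_{\tau}$ is dense in $M_{\Gamma}(X)$.

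The only subtle step is the interchange of localization with invariants, but because $\tau$ is itself $SL_2(\C)^{V(\Gamma)}$-invariant this is immediate: an element $f/\tau^{n}\in\bar{R}_{\Gamma}[\tau^{-1}]$ is invariant if and only if $f$ is, so $R_{\Gamma}[\tau^{-1}]=(\bar{R}_{\Gamma}[\tau^{-1}])^{SL_2(\C)^{V(\Gamma)}}$ without invoking a separate exactness argument. I therefore do not anticipate any serious obstacle beyond bookkeeping; the content of the proposition is that the equivariant open embedding $SL_2(\C)^{E(\Gamma)}\hookrightarrow X^{E(\Gamma)}$ passes cleanly to the GIT quotient.
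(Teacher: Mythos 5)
Your proof is correct and follows essentially the same route as the paper: both identify $M_{\Gamma}(SL_2(\C))$ with the non-vanishing locus of $t$ (your $\tau$) in $M_{\Gamma}(X)$. The paper gets there faster by noting that the subspaces $R_{\Gamma}(m)$ filter $\C[M_{\Gamma}(SL_2(\C))]$, so $R_{\Gamma}$ is a Rees algebra and $\frac{1}{t}R_{\Gamma}\cong \C[M_{\Gamma}(SL_2(\C))]\otimes\C[t,\frac{1}{t}]$ by general Rees-algebra facts; you instead compute the localization factor-by-factor via $\bar R[t^{-1}]_0\cong\C[SL_2(\C)]$ and commute with invariants, and you also make the density claim explicit via irreducibility of $R_{\Gamma}$, a point the paper leaves implicit.
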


\begin{proof}
  The subspaces $R_{\Gamma}(m)$ filter the algebra $\C[M_{\Gamma}(SL_2(\C))]$, so the algebra $\frac{1}{t}R_{\Gamma}$ is isomorphic to $\C[M_{\Gamma}(SL_2(\C))]\otimes \C[t, \frac{1}{t}]$ by standard properties of Rees algebras. This implies that the complement of the $t = 0$ hypersurface is $M_{\Gamma}(SL_2(\C)).$
\end{proof}

\subsection{The boundary divisor $D_{\Gamma}$}

The divisor $D_{\Gamma} \subset M_{\Gamma}(X)$ is defined by the equation $t = 0$.  
The $m-$th graded piece of the ideal $<t >$ is the following subspace of $R_{\Gamma}$: 

\begin{equation}
<t> \cap R_{\Gamma}(m) = \bigoplus_{a: E(\Gamma) \to [0, m-1]}\C \Phi_a t^m.\\
\end{equation}

\noindent
The ideal $<t>$ is the intersection of the following subspaces $I_e$, $e \in E(\Gamma)$:

\begin{equation}
I_e(m) = \bigoplus_{a: E(\Gamma) \to [0, m], a(e) < m} \C \Phi_a t^m.
\end{equation}

\begin{proposition}
The subspaces $I_e$ are prime ideals, and therefore define the irreducible components of $D_{\Gamma}.$  
The locus $D_e$ of $I_e$ is the image in $M_{\Gamma}(X)$ of $\bar{D}_e \subset X^{E(\Gamma)}$, 
and can be identified with $SL_2(\C)^{V(\Gamma)} \ql_{\mathcal{M} \boxtimes \prod_{e' \in E(\Gamma) \setminus \{e\}} \mathcal{L}} [D \times \prod_{e' \in E(\Gamma) \setminus \{e\}} X].$
\end{proposition}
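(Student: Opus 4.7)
The plan is to identify the graded quotient ring $R_\Gamma/I_e$ explicitly with the section ring of the restricted line bundle on $D \times \prod_{e' \neq e} X$ after taking $SL_2(\C)^{V(\Gamma)}$-invariants, then use irreducibility to deduce primality.

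First I would verify the set-theoretic identity $\langle t\rangle = \bigcap_{e \in E(\Gamma)} I_e$ directly from the definitions (a spin diagram $\Phi_a t^m$ lies in $\langle t\rangle$ iff some $a(e)<m$ iff it lies in some $I_e$), and I would check that the $I_e$ are pairwise incomparable: for any $e \neq e'$ and any $m \geq 1$ one can find a weight $a$ with $a(e)=m$ and $a(e')<m$ lying in $P_\Gamma$ (by adjusting a multiple of the characteristic function of $e$), giving $\Phi_a \in I_{e'} \setminus I_e$. Thus once primality is established the $I_e$ are exactly the distinct minimal primes over $\langle t \rangle$, and so $V(I_e)=D_e$ are the irreducible components of $D_\Gamma$.

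The heart of the argument is the computation of the projective coordinate ring attached to the line bundle $\mathcal{M} \boxtimes \bigboxplus_{e' \neq e} \mathcal{L}$ (using $\boxtimes$) on $D \times \prod_{e'\neq e} X$. Because the section ring of $\mathcal{M}$ on $D$ at level $m$ is the graded component $F_{\leq m}/F_{\leq m-1} \cong V(m) \otimes V(m)$, while the section ring of $\mathcal{L}$ on $X$ at level $m$ is $F_{\leq m} = \bigoplus_{i \leq m} V(i) \otimes V(i)$, the $m$-th graded piece of the ambient section ring is
\[
V(m) \otimes V(m) \;\otimes\; \bigotimes_{e' \neq e} \bigoplus_{a(e') \leq m} V(a(e')) \otimes V(a(e')).
\]
Passing to $SL_2(\C)^{V(\Gamma)}$-invariants and using the multiplicity-one property of $\bigotimes_{e \in E(\Gamma)}[V(a(e))\otimes V(a(e))]^{SL_2(\C)^{V(\Gamma)}}$ from Subsection \ref{subspindiagrams} collapses this to $\bigoplus_{a:\, a(e)=m,\, a(e')\leq m} \C\Phi_a$, which is precisely $R_\Gamma(m)/I_e(m)$ by construction. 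This exhibits an isomorphism of graded rings $R_\Gamma/I_e \cong [\bar{R}_D \otimes \bigotimes_{e' \neq e} \bar{R}]^{SL_2(\C)^{V(\Gamma)}}$, where $\bar{R}_D = \bar{R}/\langle t\rangle$, proving the claimed identification of $D_e$ with the GIT quotient $SL_2(\C)^{V(\Gamma)} \ql_{\mathcal{M} \boxtimes \prod \mathcal{L}} [D \times \prod_{e'\neq e} X]$. The identification with the image of $\bar{D}_e$ under the quotient morphism $X^{E(\Gamma)} \to M_\Gamma(X)$ follows because $\bar{D}_e$ is a closed $SL_2(\C)^{V(\Gamma)}$-stable subscheme and GIT quotients commute with restriction to such closed invariant subschemes (the image being computed by the same invariant section ring).

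To conclude primality, I would note that $D \cong \mathbb{P}^1 \times \mathbb{P}^1$ is irreducible, and $X = \Proj(\bar{R})$ is irreducible because $\bar{R} = \C[a,b,c,d,t]/\langle ad-bc-t^2\rangle$ is a domain (the defining quadric has maximal rank, hence is irreducible in the polynomial ring). The product $D \times \prod_{e'\neq e} X$ is therefore an irreducible projective variety, and its homogeneous coordinate ring under the product linearization is a domain. Since $SL_2(\C)^{V(\Gamma)}$ is reductive and connected, the invariant subring is likewise a domain, so $R_\Gamma/I_e$ is a domain and $I_e$ is prime. The main obstacle is the careful bookkeeping in the graded decomposition above; everything rests on keeping straight the two different interpretations of $\bar{R}$ (as a section ring on $X$) and $\bar{R}/\langle t\rangle$ (as a section ring on $D$) and matching the grading of the GIT invariants with the filtration by $\max_e a(e)$.
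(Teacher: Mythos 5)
Your proof is correct and takes essentially the same approach as the paper: both realize $I_e$ (equivalently, $R_{\Gamma}/I_e$) as the $SL_2(\C)^{V(\Gamma)}$-invariant part of the ideal $\bar{I}_e \subset \bar{R}_{\Gamma}$ cutting out the irreducible $SL_2(\C)^{V(\Gamma)}$-stable subvariety $\bar{D}_e \subset X^{E(\Gamma)}$, and deduce primality from irreducibility together with exactness of invariants for the reductive group. You are substantially more explicit than the paper's three-line proof --- in particular you carry out the graded-piece computation and supply the pairwise-incomparability check needed to conclude the $I_e$ are precisely the minimal primes over $\langle t \rangle$, a step the paper leaves implicit.
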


\begin{proof}
We let $\bar{I}_e \subset \bar{R}_{\Gamma}$ be the sum of the spaces:

\begin{equation}
\bar{I}_e(m) =  \bigoplus_{a: E(\Gamma) \to [0, m], a(e) < m}  \bigotimes_{e' \in E(\Gamma)} V(a(e')) \otimes V(a(e'))t^m \subset \bar{R}_{\Gamma}(m),\\
\end{equation}

\noindent
 so that  $I_e = \bar{I}_e \cap R_{\Gamma}$ and $I_e = \bar{I}_e^{SL_2(\C)^{V(\Gamma)}}$. By definition the locus of $\bar{I}_e$ is $\bar{D}_e = D \times \prod_{e' \in E(\Gamma) \setminus \{e\}} X \subset X^{E(\Gamma)}.$   
\end{proof}

The space $SL_2(\C)^{E(\Gamma)}$ is compactified by  $X^{E(\Gamma)}$, and the cone $C_{\Gamma}$  of valuations on $\C[SL_2(\C)^{E(\Gamma)}]$ are the divisorial valuations associated to the components $\bar{D}_e \subset \bar{D}_{\Gamma}.$  These valuations are induced on $\C[M_{\Gamma}(SL_2(\C))]$ by the inclusion $\C[M_{\Gamma}(SL_2(\C))] \subset \C[SL_2(\C)^{E(\Gamma)}]$, this allows us to  characterize the cone $C_{\Gamma}.$

\begin{proposition}
The divisorial valuations on $\C[M_{\Gamma}(SL_2(\C))]$ attached to the components $D_e$ are points on the extremal rays of $C_{\Gamma}.$ 
\end{proposition}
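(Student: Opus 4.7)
The strategy is to identify the divisorial valuation attached to each component $D_e$ with the valuation $v_e$ on $\C[M_{\Gamma}(SL_2(\C))]$, up to the sign convention separating the paper's MAX valuations from the MIN convention of divisorial orders of vanishing, and then to observe that the $v_e$ are linearly independent as $\R$-valued functions on the lattice $L_{\Gamma}$, so that they span distinct extremal rays of the simplicial cone $C_{\Gamma}$.

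For the identification, I would exploit the fact that $\bar{D}_e \subset X^{E(\Gamma)}$ is a $G = SL_2(\C)^{V(\Gamma)}$-invariant prime divisor whose GIT image is $D_e$. By the discussion in Subsection \ref{wonderful}, the divisorial valuation of $D \subset X$ is precisely $v$, so on $\C[SL_2(\C)^{E(\Gamma)}]$ the divisorial valuation of $\bar{D}_e$ coincides with $v_e$. Since the GIT quotient map on the semistable locus is a good quotient and $\bar{D}_e$ maps generically to $D_e$ as the orbit space of a single $G$-orbit, a local comparison of discrete valuation rings at the generic points of $\bar{D}_e$ and $D_e$ identifies the divisorial valuation of $D_e$ on $\C[M_{\Gamma}(SL_2(\C))] = \C[SL_2(\C)^{E(\Gamma)}]^G$ with the restriction of $v_e$ to the invariant subring. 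Alternatively, I would verify this directly from the spin diagram description of $I_e$: an induction on $k$ should give $I_e^k \cap R_{\Gamma}(m) = \bigoplus_{a(e) \leq m-k} \C \Phi_a t^m$, whereupon passing to the affine ring by inverting $t$ yields $\text{ord}_{D_e}(\Phi_a) = -a(e) = -v_e(\Phi_a)$.

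For the extremality claim, formula \ref{innerproducteval} gives $v_e(\Phi_a) = a(e)$, so each $v_e$ restricts to a standard coordinate projection on the character lattice $L_{\Gamma} \otimes \R \cong \R^{E(\Gamma)}$. These projections are linearly independent in the dual space, so $C_{\Gamma} = \sum_{e \in E(\Gamma)} \R_{\geq 0} v_e$ is a full-dimensional simplicial cone, and its extremal rays are precisely the rays $\R_{\geq 0} v_e$, one for each edge. The main obstacle I expect will be justifying the local comparison of DVRs in the GIT quotient: one must verify that no ramification factor arises, which should follow from the fact that $\bar{D}_e$ meets the stable locus and that $G$ acts with generically trivial stabilizer at its generic point, so that the ramification index of the quotient map along $\bar{D}_e$ is one.
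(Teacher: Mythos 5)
Your overall plan is the right one and your alternative route (computing $\text{ord}_{D_e}(\Phi_a)$ directly from the ideal $I_e$) would give a valid proof, but the justification you lean on for the DVR comparison is the wrong one. ``$\bar{D}_e$ meets the stable locus and $G$ acts with trivial stabilizer, so the ramification index is one'' is a heuristic borrowed from finite quotients, and $G = SL_2(\C)^{V(\Gamma)}$ is positive-dimensional: the map $X^{E(\Gamma)} \dashrightarrow M_{\Gamma}(X)$ is not finite, the generic fiber over $D_e$ is a whole $G$-orbit, and a finite ``ramification index'' along $\bar{D}_e$ is not the relevant invariant. Turning your heuristic into an actual argument would require something like a Luna slice at the generic point of $\bar{D}_e$, which is considerably more machinery than the situation calls for. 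What the paper does instead is both simpler and more robust: $\mathcal{O}_{\eta_e}$ is the $G$-invariant subring $\mathcal{O}_{\bar{\eta}_e}^{G}$ of the DVR $\mathcal{O}_{\bar{\eta}_e}$, and the uniformizer $t_e$ of $\mathcal{O}_{\bar{\eta}_e}$ is itself $G$-invariant (it spans the trivial isotypical component $End(V(0))$ of the $e$-th copy of $\bar{R}$, and the semisimple group has no nontrivial characters). Once $t_e$ lives in the invariant subring, it visibly generates the maximal ideal there too, so the restriction of the order-of-vanishing valuation to invariants is surjective onto $\Z$ and no renormalization factor appears. That $G$-invariance of the uniformizer is the observation you are missing, and it replaces the whole ``ramification/stabilizer'' discussion. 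The extremality half of your argument — the $v_e$ are the coordinate projections on $L_{\Gamma}\otimes\R\cong\R^{E(\Gamma)}$, hence span the extremal rays of the simplicial cone $C_{\Gamma}$ — is fine and matches what the paper implicitly takes for granted from the definition of $C_{\Gamma}$.
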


\begin{proof}
For an edge $e \in E(\Gamma)$ let $v_e$ be the valuation on $\C[M_{\Gamma}(SL_2(\C))]$ corresponding to the metric which assigns $e$ a $1$ and every other edge a $0$. By definition $v_e$ is the valuation induced on $\C[M_{\Gamma}(SL_2(\C))] \subset \C[SL_2(\C)^{E(\Gamma)}]$ by taking degree along $\bar{D}_e$. Let $\bar{\eta}_e$ be the generic point of $\bar{D}_e \subset X^{E(\Gamma)},$
and let $\eta_e$ be the generic point of $D_e \subset M_{\Gamma}(X).$  We have the inclusion of local rings $\mathcal{O}_{\eta_e} = \mathcal{O}_{\bar{\eta}_e}^{SL_2(\C)^{E(\Gamma)}} \subset \mathcal{O}_{\bar{\eta}_e}.$ Furthermore, the generator $t_e$ of the maximal ideal of $\mathcal{O}_{\bar{\eta}_e}$ is an invariant, it follows that this same element generates the maximal ideal of $\mathcal{O}_{\eta_e}$.
\end{proof}

For a set $S \subset E(\Gamma),$ the ideal $I_S = \sum_{e \in S} I_e$  cuts out $D_S = \cap_{e \in S} D_e:$

\begin{equation}
I_S(m) = \bigoplus_{a: E(\Gamma) \to [0, m], \exists e \in E(\Gamma), a(e) < m} \C \Phi_a t^m.\
\end{equation}

\begin{proposition}\label{codim}
Each $I_S \subset R_{\Gamma}$ is prime, and $codim(D_S) = |S|$. 
\end{proposition}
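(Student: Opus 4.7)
The plan is to identify $D_S$ with an iterated version of the $GIT$-quotient description the previous proposition gave for $D_e$, use irreducibility of that model to deduce primality of $I_S$, and then compute the codimension via a dimension count through the quotient.

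First I would establish
\[
D_S \;\cong\; SL_2(\C)^{V(\Gamma)} \ql_{\mathcal{M}^{\boxtimes S} \boxtimes \mathcal{L}^{\boxtimes (E(\Gamma)\setminus S)}} \Big[\prod_{e \in S} D \times \prod_{f \notin S} X\Big]
\]
by comparing graded pieces of the two projective coordinate rings. On one side, $(R_\Gamma / I_S)(m)$ is spanned by those $\Phi_a t^m$ with $a(e) = m$ for every $e \in S$ and $a(f) \leq m$ for $f \notin S$. On the other side, the section ring of $\mathcal{M}^{\boxtimes S} \boxtimes \mathcal{L}^{\boxtimes (E(\Gamma)\setminus S)}$ in degree $m$ is $\bigotimes_{e \in S} \mathrm{End}(V(m)) \otimes \bigotimes_{f \notin S} F_{\leq m}$, whose $SL_2(\C)^{V(\Gamma)}$-invariant isotypical decomposition picks out exactly the same set of spin diagrams. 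Alternatively one can iterate the previous proposition one edge at a time, each step identifying $D_S \cap D_{e'}$ with the intersection of the corresponding model with a $t_{e'} = 0$ hyperplane.

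Next I would deduce primality of $I_S$ from irreducibility of this model. Every $D \cong \mathbb{P}^1 \times \mathbb{P}^1$ is irreducible and every $X$ is irreducible (being a smooth quadric in $\mathbb{P}^4$), so the product $\prod_S D \times \prod_{E(\Gamma)\setminus S} X$ is irreducible. A $GIT$ quotient of an irreducible variety by a connected reductive group is again irreducible; hence $D_S$ is irreducible, the graded ring $R_\Gamma / I_S$ is a domain, and $I_S$ is prime.

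For the codimension I would count dimensions through the $GIT$ quotients. The acting group $SL_2(\C)^{V(\Gamma)}$ is unchanged between $X^{E(\Gamma)}$ and the replacement product, and a standard analysis shows the generic stabilizer on a stable orbit reduces to a finite central subgroup of the diagonal $SL_2(\C) \subset SL_2(\C)^{V(\Gamma)}$ in both cases. Since $\dim X - \dim D = 3 - 2 = 1$, replacing $X$ by $D$ at each of the $|S|$ edges of $S$ decreases the ambient dimension by $|S|$, so $\dim D_S = \dim M_\Gamma(X) - |S|$ and $\mathrm{codim}(D_S) = |S|$. The main obstacle I anticipate is verifying that the generic stabilizer dimension does not jump when $X$ is replaced by $D$ at some edges; I would approach this inductively from the $|S|=1$ case already handled by the previous proposition, or else by exhibiting explicit stable points on $\prod_S D \times \prod_{E(\Gamma)\setminus S} X$ with finite stabilizer in $PSL_2(\C)^{V(\Gamma)}$.
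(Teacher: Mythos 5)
Your primality argument takes a genuinely different route from the paper's, and both are correct. You realize $D_S$ as the $GIT$ quotient
\[
SL_2(\C)^{V(\Gamma)} \ql_{\mathcal{M}^{\boxtimes S}\boxtimes\mathcal{L}^{\boxtimes (E(\Gamma)\setminus S)}}\Big[\prod_{e\in S}D\times\prod_{f\notin S}X\Big]
\]
by matching graded pieces (using that invariants of a reductive group are exact, so $R_\Gamma/I_S \cong (\bar R_\Gamma/\bar I_S)^{SL_2(\C)^{V(\Gamma)}}$), and then observe that $R_\Gamma/I_S$ is the invariant subring of the section ring of an integral projective variety, hence a domain. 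The paper instead works with the quasivaluation $v_S = \sum_{e\in S} v_e$: a homogeneous $f\in R_\Gamma(m)$ lies outside $I_S$ exactly when $v_S(f)$ attains its maximum value $m|S|$, and since $v_S$ is multiplicative (Lemma \ref{val2}), so is the product of two elements outside $I_S$. Your route is more geometric; the paper's stays inside the valuation machinery already set up and requires no auxiliary projective normality or K\"unneth facts.

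The codimension argument is where you have a real gap, one you flag yourself. The dimension count $\dim D_S = \dim M_\Gamma(X) - |S|$ through the $GIT$ quotient needs the generic stabilizer in $SL_2(\C)^{V(\Gamma)}$ to stay finite (indeed zero-dimensional) after replacing $X$ by $D \cong \P^1\times\P^1$ at the edges of $S$. This is not automatic: the $SL_2(\C)\times SL_2(\C)$-stabilizer of a point of $D$ is a product of Borels, of dimension $4$, strictly larger than the dimension-$3$ stabilizer of a point of $SL_2(\C)$. The generic stabilizer for the product action does in fact stay finite (e.g.\ for $S=E(\Gamma)$ trivalent it is $\{\pm I\}^{V(\Gamma)}$ rather than the diagonal center), but you do not establish this, and your fallback of arguing ``inductively from $|S|=1$'' amounts, via Krull's principal ideal theorem, to showing each inclusion $I_S \subsetneq I_{S\cup\{e'\}}$ is strict. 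That strictness is precisely what the paper proves, by exhibiting explicit spin diagrams with $a(e)=4$ on $S$ and $a(e')<4$ built from the trinodes of Figure \ref{strattri}, assembling a chain of prime ideals of length $|E(\Gamma)| = \dim M_\Gamma(X)$ that pins down all the heights simultaneously without any stabilizer analysis. That construction is the missing ingredient in your outline.
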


\begin{proof}
If $f = \sum C_i \Phi_{a_i} \in R_{\Gamma}(m)$, $g = \sum K_i \Phi_{b_i} \in R_{\Gamma}(n)$, $f, g \notin I_S$, then $a_i(e) = m$ and $b_i(e) = n$ for all $e \in S.$  Let $v_S = \sum_{e \in S} v_e: \C[M_{\Gamma}(SL_2(\C))] \to \Z \cup \{\infty\}$ be the sum of the extremal ray generators of $C_{\Gamma}$ corresponding to the edges in $S$.  We have $v_S(fg) = v_S(f) + v_S(g) = |S|(n+m),$ so it follows that some component $\Phi_c$ of $fg \in R_{\Gamma}(n+m)$ must have $c(e) = n + m$ for all $e \in S.$  

For the second part we show that for any $S \subset S'$ with $|S' \setminus S| = 1$ there is an element $\Phi_a \in I_{S'}$, $\notin I_S.$  This implies that the height of $I_S$ is $|S|$, as it can placed in a strict chain of prime ideals of length $|E(\Gamma)| = dim(M_{\Gamma}(SL_2(\C)))$. The element $\Phi_a$ must have an $a: E(\Gamma) \to [0, m]$ with $a(e) = m$ for all $e \in S$ and $a(f) < m$ for $f \in S' \setminus S$, for some $m \in \Z.$  We let $m = 4$, and observe that each of the trinodes in Figure \ref{strattri}, and all of their permutations, can be part of a spin diagram.  These can be combined without limitations to produce a weighting with the desired properties for any $\Gamma.$

\begin{figure}[htbp]
\centering
\includegraphics[scale = 0.4]{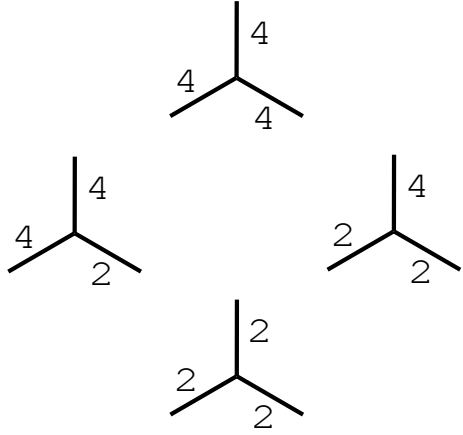}
\caption{building blocks of the element $a \in I_{S'}, \notin I_S$}
\label{strattri}
\end{figure}

\end{proof}

\subsection{The induced toric degeneration of $M_{\Gamma}(X)$}

The projective coordinate ring $R_{\Gamma}$ is a subring of $\C[M_{\Gamma}(SL_2(\C)]] \otimes \C[t],$
and inherits the basis of spin diagrams $\Phi_at^m$.  Lemma \ref{val1} implies that the valuations $v_{\Gamma, \ell} \in C_{\Gamma}$
all extend to $\C[M_{\Gamma}(SL_2(\C)]] \otimes \C[t]$ and $R_{\Gamma}$.  We use an interior element of $C_{\Gamma}$ to induce
a toric degeneration on $M_{\Gamma}(X).$  The following scheme is the special fiber of this degeneration, in analogy with $M_{\Gamma}(SL_2(\C)^c).$  Recall the flat degeneration $i: X_0 \subset \mathbb{P}^4$ of $X$ from Section \ref{sl2} and let $\mathcal{L}_0 = i^*(O(1))$. 

\begin{definition}
Let $M_{\Gamma}(X_0)$ be the $GIT$ quotient of $X_0^{E(\Gamma)}$ with respect to the
$SL_2(\C)^{V(\Gamma)}-$linearized line bundle $\mathcal{L}_0^{\boxtimes E(\Gamma)}.$ 
\end{definition}

\begin{proposition}
An interior point $v_{\Gamma, \ell} \in C_{\Gamma}$ defines a flat degeneration $M_{\Gamma}(X) \Rightarrow M_{\Gamma}(X_0)$. 
\end{proposition}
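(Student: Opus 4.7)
The plan is to construct the degeneration as $\Proj$ of a Rees algebra of $R_{\Gamma}$ with respect to $v_{\Gamma, \ell}$, and then identify the special fiber with the projective coordinate ring of $M_{\Gamma}(X_0)$. First, I would extend the valuation $v_{\Gamma, \ell}$, originally defined on $\C[M_{\Gamma}(SL_2(\C))]$, to $R_{\Gamma} \subset \C[M_{\Gamma}(SL_2(\C))] \otimes \C[t]$ via Lemma \ref{val2}, using the trivial valuation on the $\C[t]$-factor; equivalently, extend each edge valuation $v_e$ to a valuation $\bar{v}_e$ on $\bar{R}$ mimicking the construction of $\bar{v}$ from $v$ in Section \ref{wonderful}, form the positive combination $\sum_e \ell(e) \bar{v}_e$ on $\bar{R}_{\Gamma}$, and restrict to the invariants $R_{\Gamma}$. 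By Proposition \ref{spinadapt}, the induced filtration on each graded piece $R_{\Gamma}(m)$ is spanned by the spin-diagram basis elements $\Phi_a t^m$ with $\sum_e \ell(e) a(e) \leq k$, so the $\Phi_a t^m$ form a common basis adapted to the filtration.

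Next, I would form the Rees algebra $\mathcal{R} = \bigoplus_k F_{\leq k}(R_{\Gamma})\, s^k \subset R_{\Gamma}[s]$, which is bigraded by the Rees parameter $s$ and by the original projective degree $m$. Since the filtration is spanned by a distinguished subset of the $\C$-basis $\{\Phi_a t^m\}$, the Rees algebra is free, in particular flat, as a $\C[s]$-module. The fiber at $s = 1$ recovers $R_{\Gamma}$, yielding $M_{\Gamma}(X)$; the fiber at $s = 0$ is $\gr_{v_{\Gamma, \ell}}(R_{\Gamma})$. Consequently $\Proj_{\C[s]}(\mathcal{R}) \to \mathrm{Spec}(\C[s])$ is a flat family whose general fiber is $M_{\Gamma}(X)$ and whose special fiber is $\Proj(\gr_{v_{\Gamma, \ell}}(R_{\Gamma}))$.

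The main step is then to identify $\gr_{v_{\Gamma, \ell}}(R_{\Gamma})$ with the projective coordinate ring of $M_{\Gamma}(X_0)$. By Lemma \ref{val1}, invariants by the reductive group $SL_2(\C)^{V(\Gamma)}$ commute with passage to associated graded, so it suffices to identify $\gr_{v_{\Gamma, \ell}}(\bar{R}_{\Gamma})$ with the section ring of $\mathcal{L}_0^{\boxtimes E(\Gamma)}$ on $X_0^{E(\Gamma)}$. Iterating Lemma \ref{val2} across the edges, together with the computation $\gr_{\bar{v}}(\bar{R}) = T = \C[a,b,c,d,t]/\langle ad - bc \rangle$ from Section \ref{wonderful}, yields exactly this ring; taking $SL_2(\C)^{V(\Gamma)}$-invariants then produces the projective coordinate ring of $M_{\Gamma}(X_0)$ described above.

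The hard part will be the bookkeeping around the two gradings: the Rees parameter $s$ and the original projective degree $m$ must be kept compatible so that the Rees construction commutes with $\Proj$, and one must check that the associated graded preserves the $\mathcal{L}^{\boxtimes E(\Gamma)}$-polarization upon degeneration to $\mathcal{L}_0^{\boxtimes E(\Gamma)}$. Both issues reduce to the observation that the valuation is bihomogeneous with respect to $s$ and $m$ on the spin-diagram basis, which follows from Proposition \ref{spinadapt} and the explicit spanning description of the filtration on each $R_{\Gamma}(m)$.
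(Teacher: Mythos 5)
Your proposal is correct and follows essentially the same route as the paper: the paper's proof is the one-line remark that the argument is identical to that of Proposition \ref{agraded}, which in turn invokes Lemmas \ref{val1} and \ref{val2} together with the $SL_2(\C)$-level degeneration, exactly as you spell out. You have simply made explicit the Rees algebra construction, the flatness coming from the spin-diagram basis being adapted to the filtration, and the passage to invariants via Lemma \ref{val1}, all of which the paper leaves implicit.
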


\begin{proof}
This argument is identical to the proof of Proposition \ref{agraded}.
\end{proof} 

We let $\bar{T}_{\Gamma}$ be the graded coordinate ring of $X_0^{E(\Gamma)}$ with respect to the line bundle $\mathcal{L}_0^{\boxtimes E(\Gamma)}$.  The invariant subring $T_{\Gamma} \subset \bar{T}_{\Gamma}$ has an identical description to $R_{\Gamma}$ as a direct
sum of spaces associated to spin diagrams $a: E(\Gamma) \to Z$. 

\begin{equation}
T_{\Gamma}(m) = \bigoplus_{a: E(\Gamma) \to [0, m]} [\bigotimes_{e \in E(\Gamma)} V(a(e)) \otimes V(a(e))]^{SL_2(\C)^{V(\Gamma)}} t^m\\
\end{equation}

The multiplication operation on this algebra is induced from $\bar{T}_{\Gamma}$, so it follows that
the product of elements from components labeled by $a, b: E(\Gamma) \to \Z$ is an element
in the component labeled by $a + b: E(\Gamma) \to \Z.$  As each of these components is multiplicity
free, it follows that $T_{\Gamma}$ is a graded (by the exponent of $t$)  affine semigroup algebra.  

\begin{definition}
We define the polytope $\mathcal{Q}_{\Gamma}$ to be the subset of $a \in \mathcal{P}_{\Gamma}$ satisfying $a(e) \leq 1.$
\end{definition}

\begin{proposition}
The algebra $T_{\Gamma}$ is isomorphic to the graded affine semigroup algebra
defined by $\mathcal{Q}_{\Gamma}$ with respect to the lattice $L_{\Gamma}.$
\end{proposition}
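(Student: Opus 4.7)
The plan is to identify the semigroup $\hat{S}$ indexing a monomial basis of $T_{\Gamma}$ with the semigroup $\hat{P}_{\Gamma} := \bigsqcup_{m \geq 0} (m\mathcal{Q}_{\Gamma} \cap L_{\Gamma}) \times \{m\}$ that underlies the graded affine semigroup algebra of $\mathcal{Q}_{\Gamma}$ with respect to $L_{\Gamma}$ (with operation $(a,m) + (b,n) = (a+b, m+n)$ and grading by $m$). The paragraph preceding the proposition already establishes that $T_{\Gamma}$ is a graded affine semigroup algebra with a basis $\{\Phi_a t^m\}$ and multiplication additive on the indexing set, so the task reduces to identifying $\hat{S}$ with $\hat{P}_{\Gamma}$.

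First I would extract $\hat{S}$ from the formula for $T_{\Gamma}(m)$. The invariant space $[\bigotimes_{e \in E(\Gamma)} V(a(e)) \otimes V(a(e))]^{SL_2(\C)^{V(\Gamma)}}$ is one-dimensional (spanned by $\Phi_a$) precisely when $a \in P_{\Gamma}$ and vanishes otherwise, so $\hat{S} = \{(a, m) : a \in P_{\Gamma}, \, a(e) \leq m \text{ for all } e \in E(\Gamma)\}$. To identify this with $\hat{P}_{\Gamma}$, observe that $\mathcal{P}_{\Gamma}$ is a cone and $\mathcal{Q}_{\Gamma} = \{a \in \mathcal{P}_{\Gamma} : a(e) \leq 1\}$, so the dilate $m\mathcal{Q}_{\Gamma}$ equals $\{a \in \mathcal{P}_{\Gamma} : a(e) \leq m\}$; intersecting with $L_{\Gamma}$ yields $\{a \in P_{\Gamma} : a(e) \leq m\}$. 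Hence $\hat{S} = \hat{P}_{\Gamma}$ as subsets of $L_{\Gamma} \times \Z_{\geq 0}$.

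Finally, to upgrade this bijection of indexing sets to an algebra isomorphism, I would verify that the semigroup operation on $\hat{S}$ induced by multiplication in $T_{\Gamma}$ coincides with the additive semigroup operation on $\hat{P}_{\Gamma}$. This is immediate from the formula for $T_{\Gamma}(m)$ together with the $SL_2(\C)^{V(\Gamma)}$-equivariance of multiplication inherited from $\bar{T}_{\Gamma}$: the product $\Phi_a t^m \cdot \Phi_b t^n$ must lie in the one-dimensional invariant component $\C \cdot \Phi_{a+b} t^{m+n}$, so it equals $c_{(a,m),(b,n)} \Phi_{a+b} t^{m+n}$ for some scalar. After rescaling each basis element $\Phi_a t^m$ by a suitable $\lambda_{a,m} \in \C^{\times}$, the nonzero structure constants $c_{(a,m),(b,n)}$ can be normalized to $1$ by the standard cocycle-coboundary argument on the finitely generated torsion-free abelian semigroup $\hat{P}_{\Gamma}$, producing the desired isomorphism $T_{\Gamma} \cong \C[\hat{P}_{\Gamma}]$. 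The only subtlety is the non-vanishing of the structure constants $c_{(a,m),(b,n)}$, which is the main obstacle; however, this is exactly the content of the assertion already made in the preceding paragraph that $T_{\Gamma}$ is itself an affine semigroup algebra, which in turn rests on the toric presentation of $X_0$ and the fact that products of nonzero toric monomials in $\bar{T}_{\Gamma}$ remain nonzero under projection onto the invariant subalgebra.
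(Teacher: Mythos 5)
Your proof is correct and follows the same approach as the paper: identify the indexing semigroup of the monomial basis of $T_{\Gamma}$ with the lattice points of $\bigsqcup_m m\mathcal{Q}_{\Gamma}$, and observe that multiplication is additive on these labels. The paper's proof is just these two observations; you add an explicit cocycle-coboundary normalization of the structure constants (and a slightly muddled remark about ``projection onto the invariant subalgebra''—the relevant point is simply that $T_{\Gamma}$ is a domain, being a subring of the domain $\bar{T}_{\Gamma}$), which makes the isomorphism more rigorous but does not change the argument.
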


\begin{proof}
The description of $T_{\Gamma}(m)$ above implies that
the basis of this space defined by the direct sum decomposition is in bijection with the lattice points of $m\mathcal{Q}_{\Gamma}.$ 
Multiplication in this algebra is computed by addition on the direct sum labels $a: E(\Gamma) \to \Z, m \in \Z.$
\end{proof}

\begin{example}[Graphs of genus $2$]
The character variety $\mathcal{X}(F_2, SL_2(\C))$ is isomorphic to $\C^3$.  We depict
the compactification polytopes $\mathcal{Q}_{\Gamma_1}, \mathcal{Q}_{\Gamma_2}$ for the two graphs of genus $2$
in Figure \ref{2polytopes}.  We have colored the intersection of $\mathcal{Q}_{\Gamma_1}, \mathcal{Q}_{\Gamma_2}$ with
the coordinate hyperplanes in red.  The fibers of $\Xi_{\Gamma_i}$ for points in these intersections
are products of $3-$spheres $\mathbb{S}^3 \cong SU(2)$, with the exception of the fiber over
the origin, which is $\mathcal{X}(F_2, SU(2)).$

\begin{figure}[htbp]
\centering
\includegraphics[scale = 0.4]{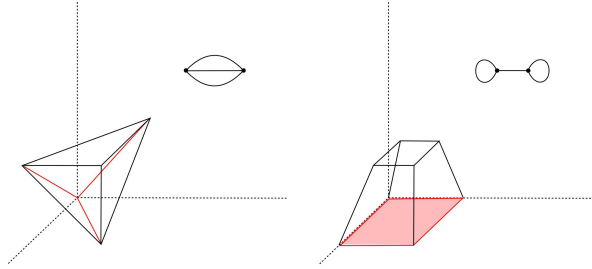}
\caption{The polytopes $\mathcal{Q}_{\Gamma_1}, \mathcal{Q}_{\Gamma_2}$ for graphs of genus $2$}
\label{2polytopes}
\end{figure}

\end{example}

The conditions which define $I_S \subset R_{\Gamma}$ likewise define ideals $J_S \subset T_{\Gamma}$. We let $K_S = \cap K_e$, for $K_e$ the divisor defined by $J_e.$ 

\begin{proposition}

\begin{enumerate}
\item The ideal $J_S$ cuts out the toric subspace $K_S \subset M_{\Gamma}(X_0)$ defined
by the face $F_S = \{w \in \mathcal{Q}_{\Gamma}, e \in S | w(e) = 1\}$.\\
\item $codim(F_S) = codim(K_S) = |S|$.\\
\end{enumerate}
\end{proposition}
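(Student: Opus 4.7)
The plan is to mimic the proof of Proposition \ref{codim}, with the argument considerably simplified because $T_\Gamma$ is a graded affine semigroup algebra rather than an invariant ring with nontrivial filtration.

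For part (1), I would observe that the graded vector space $T_\Gamma / J_S$ has basis consisting of the classes $[\Phi_a t^m]$ with $a(e) = m$ for every $e \in S$ (equivalently $w(e) = 1$ in the polytope coordinate $w = a/m$). These labels are in bijection with the lattice points of the dilates $m F_S$ of the face $F_S \subset \mathcal{Q}_\Gamma$, and the addition of labels that defines multiplication in $T_\Gamma$ preserves the face $F_S$. Hence $T_\Gamma/J_S$ is exactly the graded affine semigroup algebra of the rational polytope $F_S$, which is a domain. This simultaneously shows that $J_S$ is prime and identifies $K_S = \Proj(T_\Gamma/J_S)$ with the torus-invariant subscheme of the projective toric variety $M_\Gamma(X_0) = \Proj(T_\Gamma)$ corresponding to the face $F_S$.

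For part (2), since $K_S$ is the projective toric variety of the polytope $F_S$ we have $\dim K_S = \dim F_S$, so the claim becomes $\dim F_S = \dim \mathcal{Q}_\Gamma - |S|$. The inequality $\mathrm{codim}(K_S) \leq |S|$ is immediate: $F_S$ is cut from $\mathcal{Q}_\Gamma$ by the $|S|$ affine conditions $w(e) = 1$, $e \in S$. For the reverse inequality, I would construct a strictly ascending chain of prime ideals $J_\emptyset \subsetneq J_{\{e_1\}} \subsetneq \cdots \subsetneq J_S$ in $T_\Gamma$, which forces $\mathrm{height}(J_S) \geq |S|$. This reduces to producing, for each pair $S \subsetneq S'$ with $|S' \setminus S| = 1$, a spin diagram $\Phi_a$ at some level $m$ with $a(e) = m$ for all $e \in S$ but $a(f) < m$ for the unique $f \in S' \setminus S$; such a $\Phi_a$ lies in $J_{S'} \setminus J_S$ after passing to the quotient.

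The main obstacle is the explicit construction of such a witness $\Phi_a$ for an arbitrary trivalent $\Gamma$. Exactly as in Proposition \ref{codim}, I would take $m = 4$ and assemble $\Phi_a$ out of the local trinode pieces displayed in Figure \ref{strattri}, together with their permutations. At each trivalent vertex these pieces supply enough flexibility within the parity and triangle constraints to independently prescribe whether each of the three incident edge weights equals $m$ or is strictly smaller, and they can be glued consistently because the prescribed weights agree along every shared edge. Choosing the trinodes that assign weight $4$ to all edges of $S$ meeting a given vertex, and a smaller weight to the distinguished edge of $S' \setminus S$, yields the required $\Phi_a$ and completes the codimension computation.
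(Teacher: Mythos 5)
Your proposal matches the paper's argument: part (1) is the same observation that $T_\Gamma/J_S$ has the graded semigroup-algebra structure of the face $F_S$ (the paper states this tersely, you spell out why it forces $J_S$ to be prime), and part (2) is exactly the paper's reduction to Proposition on $I_S$, constructing the strict chain of primes via the $m=4$ trinode witnesses of Figure \ref{strattri}.
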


\begin{proof}
The first part follows from the observation that $S_{\Gamma}^{SL_2(\C)^{V(\Gamma)}}(m)/J_S(m)$
has a basis in bijection with the lattice points of $mF_S.$  The second part is identical to proof of 
Proposition \ref{codim}. 
\end{proof}

\noindent
Finally we relate $D_S$ and $K_S.$

\begin{proposition}
The space $K_S$ is a toric degeneration of $D_S.$
\end{proposition}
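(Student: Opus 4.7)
The plan is to extend the valuation $v_{\Gamma, \ell}$ for an interior point $\ell \in C_\Gamma$ from $\C[M_\Gamma(SL_2(\C))]$ to the projective coordinate ring $R_\Gamma$, and then exhibit $J_S$ as the initial ideal of $I_S$ with respect to this extended valuation. By Lemma \ref{val2}, combining $v_{\Gamma, \ell}$ with the trivial valuation on $\C[t]$ gives a valuation on $\C[M_\Gamma(SL_2(\C))] \otimes \C[t]$ whose restriction to the Rees subring $R_\Gamma$ satisfies $v_{\Gamma, \ell}(\Phi_a t^m) = \sum_{e \in E(\Gamma)} \ell(e) a(e)$. By the argument of Proposition \ref{spinadapt}, the spin-diagram basis $\{\Phi_a t^m\}$ is adapted to this valuation, and the associated graded algebra is naturally identified with $T_\Gamma$.

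First I would observe that the ideal $I_S(m) \subset R_\Gamma(m)$ is, by its very definition, spanned by the subset $\{\Phi_a t^m : \text{there exists } e \in S \text{ with } a(e) < m\}$ of the adapted basis, and that $J_S(m) \subset T_\Gamma(m)$ is spanned by the identically-indexed subset of the matching basis of $T_\Gamma$. Because $I_S$ therefore admits a basis of adapted elements, its initial ideal with respect to $v_{\Gamma, \ell}$ is exactly $J_S$.

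Next, I would invoke the Rees algebra of the filtration defined by $v_{\Gamma, \ell}$ on $R_\Gamma$ to produce a flat family over $\mathbb{A}^1$ whose generic fiber is $M_\Gamma(X)$ and whose special fiber is $\Proj T_\Gamma = M_\Gamma(X_0)$; this is the graded analogue of the degeneration established in the preceding proposition. The Rees ideal generated by $I_S$ inside this total Rees algebra then cuts out a closed subscheme whose generic fiber is $D_S \subset M_\Gamma(X)$ and whose special fiber is $\Proj(T_\Gamma/J_S) = K_S \subset M_\Gamma(X_0)$.

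The main technical point to verify is flatness of this subfamily over $\mathbb{A}^1$, which reduces to showing that the Rees module of $I_S$ is $t$-torsion free over $\C[t]$. This in turn follows from the existence of an adapted basis for $I_S$: each filtered piece $I_{S, \leq r}(m)$ is a direct sum of basis vectors $\Phi_a t^m$ whose individual valuations all realize the filtration, so each subquotient $I_{S, \leq r}(m)/I_{S, < r}(m)$ is a direct summand of $R_{\Gamma, \leq r}(m)/R_{\Gamma, <r}(m)$. The Hilbert functions of the generic and special fibers then match in every bidegree, which combined with the identification $\mathrm{gr}_{v_{\Gamma,\ell}}(R_\Gamma/I_S) = T_\Gamma/J_S$ yields the desired flat degeneration $D_S \Rightarrow K_S$.
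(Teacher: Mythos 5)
Your proof is correct and follows essentially the same approach as the paper: both rest on the fact that the spin-diagram basis is adapted to the valuation $v_{\Gamma,\ell}$ and that $I_S$ is a coordinate subspace of that basis, so its initial ideal is $J_S$. The paper's version is terser — it notes directly that $R_\Gamma(m)/I_S(m)$ has the spin-diagram basis indexed by $mF_S$, that multiplication in the quotient remains lower-triangular with leading term $\Phi_{a+b}$, and concludes the degeneration exists — whereas you make the Rees-family and flatness bookkeeping explicit. That extra care is harmless and in fact clarifies why the degeneration is flat, but it does not change the substance of the argument.
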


\begin{proof}
The graded component $R_{\Gamma}(m)/I_S(m)$ of the coordinate ring of has a basis $[\Phi_a]$ for $[a] \in mF_S \subset m\mathcal{Q}_{\Gamma}.$
For $[\Phi_a] \in R_{\Gamma}(m)/I_S(m),$ $[\Phi_b] \in R_{\Gamma}(n)/I_S(n),$  multiplication $[\Phi_a][\Phi_b]$ gives a sum $C_i[\Phi_{c_i}]$ for $[c_i] \in (n +m)F_S$, and $c_1 = a + b$.  This expansion is obtained from the one in $R_{\Gamma}$ by eliminating those $\Phi_{c_i}$ with $c_i \in I_S.$  This is implies that we may use any filtration induced by a $v_{\ell}$ in the interior of $C_{\Gamma}$ can be used to obtain the toric degeneration to $K_S.$ 
\end{proof}

This concludes the proof of Theorem \ref{maincompact}.

\subsection{Newton-Okounkov bodies of $\mathcal{X}(F_g, SL_2(\C))$}\label{NOKsection}

We fix a graph $\Gamma$, an isomorphism $\gamma: \pi_1(\Gamma) \cong F_g$,
and we let $v_{\Gamma, \gamma, \vec{1}}$ be the valuation on $\C[M_{\Gamma}(SL_2(\C)]]$
associated to the metric which assigns every edges $e \in E(\Gamma)$ length $1.$
The toric degeneration $\C[P_{\Gamma}]$ matches the associated
graded ring of a Newton-Okounkov body construction explored
in \cite{M14}.  We show that that the maximal rank 
valuation used in this construction can be built from a flag of subvarieties
in the boundary divisor $D_{\Gamma} \subset M_{\Gamma}(X).$

Let $X$ be a projective variety of dimension $d$, and let $\vec{F}$ be a full flag of irreducible subvarieties, with
$F_1 = \{pt\}$ a smooth point of $X.$  Following \cite{KK}, one obtains a maximal rank valuation $\mathfrak{v}_{\vec{F}}$ on the
rational functions $K(X)$ as follows.  For $f \in K(X)$, one takes the degree $v_{F_d}(f)$ along the divisor $F_d$.  For $y_d \in \mathcal{O}_{pt}$ a local equation which defining $F_d$, one then repeats this process on $y_d^{- v_{F_d}(f)}f$, thought of as a
regular function on $F_d$, with respect to the flag $F_1 \subset \ldots F_{d-1}.$  The resulting function from $K(X)$ to $\Z^d$ (considered
with the lexicographic ordering) defines the associated valuation on $K(X).$ 

A total ordering $\prec$ on $E(\Gamma)$ defines an associated  lexicographic ordering on the spin diagrams $\Phi_a$ (\cite[Theorem 1.1]{M14}). We say $a < b$ if this is the case in the lexicographic ordering defined on the entries $a(e), b(e)$, $e \in E(\Gamma)$ with respect to the lexicographic ordering defined by $\prec$.   This construction defines a filtration $\mathfrak{v}_{\Gamma, \prec}$ on $\C[M_{\Gamma}(SL_2(\C))]$ which yields the Newton-Okounkov body construction in \cite{M14}.  The image of $\mathcal{R}(\Gamma, \phi)$ under $\mathfrak{v}_{\Gamma, \prec}$ is shown to give a basis, and it follows by Proposition \ref{gammainitial} that the same is true for $\mathcal{S}(\Gamma, \gamma).$ There is an associated ordering on the irreducible components $D_{e} \subset D_{\Gamma}$, we let $D_i$ be the intersection of the first $i$ of these components under $\prec,$ this defines a complete flag $\vec{D}_{\prec}$ of subspaces of $M_{\Gamma}(X)$.

\begin{theorem}\label{NOK}
Let $\prec$ be an ordering on the edges $E(\Gamma)$, and
let $\mathfrak{v}_{\Gamma, \prec}$ be the associated maximal rank valuation. 
Then $\mathfrak{v}_{\Gamma, \prec}$ is the maximal rank valuation defined
by the flag $\vec{D}_{\prec}.$
\end{theorem}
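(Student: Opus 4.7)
The plan is to prove the equality of valuations by showing that both assign the same value to each spin diagram $\Phi_a$, and since the spin diagrams span the relevant spaces, this will suffice. I will proceed by induction on $|E(\Gamma)|$, descending along the flag.

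First I would fix notation: write $E(\Gamma) = \{e_1, \ldots, e_n\}$ with $e_1 \succ e_2 \succ \cdots \succ e_n$, and recall that by definition $\mathfrak{v}_{\Gamma, \prec}(\Phi_a) = (a(e_1), a(e_2), \ldots, a(e_n)) \in \Z^n$ in the lexicographic order (so the first coordinate is the value at the largest edge).  Similarly the flag $\vec{D}_\prec$ has $D_i = D_{e_1} \cap \cdots \cap D_{e_i}$.  I would first verify agreement of the initial coordinates.  The first coordinate of $\mathfrak{v}_{\vec{D}_\prec}(\Phi_a)$ is the order of vanishing along $D_{e_1}$, which by the Proposition identifying the extremal generators of $C_\Gamma$ with the divisorial valuations of the boundary divisors equals $v_{e_1}(\Phi_a) = a(e_1)$.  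This matches the first coordinate of $\mathfrak{v}_{\Gamma, \prec}(\Phi_a)$.

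Next I would set up the inductive step by restricting to $D_{e_1}$.  Because $D = \Proj(\C[a,b,c,d]/\langle ad-bc\rangle) \cong \mathbb{P}^1 \times \mathbb{P}^1$ has graded coordinate ring $\bigoplus_m V(m) \otimes V(m)$, the identification of $D_{e_1}$ as the GIT quotient of $D \times \prod_{e \neq e_1} X$ by $SL_2(\C)^{V(\Gamma)}$ gives a direct sum description of its projective coordinate ring $R_\Gamma / I_{e_1}$ whose $m$-th graded piece is
\begin{equation}
\bigoplus_{a : E(\Gamma) \to [0,m],\ a(e_1) = m} \Bigl[\bigotimes_{e \in E(\Gamma)} V(a(e)) \otimes V(a(e))\Bigr]^{SL_2(\C)^{V(\Gamma)}} t^m.
\end{equation}
The images $\bar{\Phi}_a$ of the spin diagrams with $a(e_1) = m$ furnish a basis in each degree, and after dividing by a local equation for $D_{e_1}$ the function $\Phi_a$ restricts to a nonzero multiple of $\bar\Phi_a$.

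I would then check that the restrictions to $D_{e_1}$ of the divisors $D_{e_j}$, $j \geq 2$, form a complete flag on $D_{e_1}$ whose associated divisorial valuations coincide with the restrictions of $v_{e_j}$.  This uses the codimension computation in Proposition \ref{codim} together with the description of $I_S$ in terms of weightings saturated on $S$, which guarantees that $D_{e_1} \cap D_{e_j}$ is an irreducible divisor in $D_{e_1}$ and that its local equation is the same $t_{e_j}$ that defines $D_{e_j}$ inside $M_\Gamma(X)$.  By the inductive hypothesis applied within $D_{e_1}$, the flag valuation coming from $D_{e_1} \cap D_{e_2} \supset \cdots \supset D_{e_1} \cap \cdots \cap D_{e_n}$ evaluates $\bar\Phi_a$ at $(a(e_2), \ldots, a(e_n))$.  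Assembling the two steps yields $\mathfrak{v}_{\vec{D}_\prec}(\Phi_a) = (a(e_1), a(e_2), \ldots, a(e_n))$, and since for a general $f = \sum C_a \Phi_a$ a maximal-rank valuation is determined by its values on such a basis, Proposition \ref{spinadapt} and the lex-compatibility of $\mathfrak{v}_{\Gamma, \prec}$ described in \cite{M14} together force $\mathfrak{v}_{\Gamma, \prec} = \mathfrak{v}_{\vec{D}_\prec}$ on all of $K(M_\Gamma(X))$.

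The main obstacle I anticipate is the inductive step: showing that the spin-diagram basis and extremal-ray valuations behave well under restriction to the boundary stratum $D_{e_1}$.  The variety $D_{e_1}$ is \emph{not} of the form $M_{\Gamma'}(X)$ for any graph $\Gamma'$, so the induction cannot be on the number of edges of an auxiliary graph; instead one must work with the intrinsic description of $D_{e_1}$ as a GIT quotient involving the divisor $D \subset X$.  The key technical content is therefore to verify that the generator $t_{e_j}$ of the local ring at the generic point of $D_{e_j}$ remains a local equation for $D_{e_1} \cap D_{e_j}$ inside $D_{e_1}$, and that the lex ordering on the remaining coordinates is encoded correctly by the induced flag—both of which reduce to transparent computations with the ideals $I_e$ described in the previous subsection.
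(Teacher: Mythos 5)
Your proposal takes a genuinely different route from the paper's proof, and the difference is worth noting. The paper does not argue via spin diagrams and a flag-by-flag descent at all; instead it passes to the trace-word functions $\tau_w \in \mathcal{S}(\Gamma, \gamma)$, writes $\tau_w$ out explicitly as a polynomial in $a_e/t_e, b_e/t_e, c_e/t_e, d_e/t_e$ on $X^{E(\Gamma)}$, and shows by a degree-counting argument (comparing $v_{\Gamma,\gamma,\vec{1}}(\tau_w)$ with $\sum_e a(P,\phi)(e)$) that a single monomial simultaneously achieves the minimal $t_e$-power $-a(P,\phi)(e)$ for every edge $e$. That one monomial then survives every restriction in the Newton--Okounkov iteration, which is what makes the paper's "this pattern must continue" step go through. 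Your approach replaces the trace word by the spin diagram $\Phi_a$ and tries to replace the monomial argument with an induction along the flag; the trade-off is that spin diagrams lack the explicit polynomial expression that powers the monomial-domination argument.

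There are two concrete gaps in the proposal, both in the inductive step. First, as you yourself flag, $D_{e_1}$ is not of the form $M_{\Gamma'}(X)$, so the phrase "by the inductive hypothesis applied within $D_{e_1}$" has no content: there is no statement of the theorem available at $D_{e_1}$, and the appeal to "transparent computations with the ideals $I_e$" is exactly the place where the actual argument would have to live. Second, the bridging claim that "after dividing by a local equation for $D_{e_1}$ the function $\Phi_a$ restricts to a nonzero multiple of $\bar\Phi_a$" does not hold as stated in the graded picture: the class $\bar\Phi_a \in R_\Gamma(m)/I_{e_1}(m)$ is nonzero only when $a(e_1) = m$, i.e.\ only when $e_1$ happens to be the edge on which $a$ attains its maximum. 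For a general $a$ the graded image of $\Phi_a t^m$ in $R_\Gamma(m)/I_{e_1}(m)$ vanishes for every admissible $m$, even though the pole-cleared rational function $t_{e_1}^{a(e_1)}\Phi_a$ does restrict to something nonzero on $D_{e_1}$. Reconciling the rational-function restriction with the graded basis of $R_\Gamma/I_{e_1}$---and then iterating---is precisely the hard part, and it is not supplied. (There is also a minor sign issue: with the paper's MAX convention, $\Phi_a$ has a \emph{pole} of order $a(e_1)$ along $D_{e_1}$, so one multiplies by $t_{e_1}^{a(e_1)}$ rather than divides, and "order of vanishing" should read "pole order.") The first-coordinate verification and the closing reduction via Proposition \ref{spinadapt} and one-dimensional leaves are both fine; the missing work is in the middle, where the paper's explicit $\Gamma$-tensor computation replaces the induction you are proposing.
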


\begin{proof}

Everything is $SL_2(\C)^{V(\Gamma)}$ invariant, so we carry out our analysis in the scheme $X^{E(\Gamma)}$. Note that the
scheme $D_i$ is the image of the subspace $D^i \times X^{|E(\Gamma)| -i} \subset X^{E(\Gamma)}$.

Let $\tau_w \in \mathcal{S}(\Gamma, \gamma)$ be the trace-word function
for a reduced word $w \in F_g$ with associated $\Gamma$ tensor $\mathcal{V}(P, \phi).$ By the description of $X$ in Subsection \ref{wonderful}, 
$\tau_w$ can be written as a polynomial in $\C[SL_2(\C)]$ generators $A_e, B_e, C_e, D_e$ for $e \in E(\Gamma)$.
We let these be represented by $a_e/t_e, b_e/t_e, c_e/t_e,$ and $d_e/t_e$ for $t_e$ the form which cuts out $D \subset X$, note that the $t_e$ are algebraically independent.  We claim that some monomial in this polynomial has $t_e$ power equal to $-a(P, \phi)(e)$
for all $e \in E(\Gamma)$.  It follows from the definition of $\tau_w$ as a regular function on $M_{\Gamma}(SL_2(\C))$ that each monomial
has $t_e$ power $\geq -a(P, \phi)(e)$. Furthermore, if no monomial has all minimal possible $t_e$ powers then $v_{\Gamma, \gamma, \vec{1}}(\tau_w) < \sum_{e \in E(\Gamma)} a(P, \phi)(e)$, where $v_{\Gamma, \gamma, \vec{1}}$ is considered as a valuation on the rational functions of $X^{E(\Gamma)}.$ 

Following the Newton-Okounkov body recipe, we record the value $a(P, \phi)(e_1)$ and consider the product $t_{e_1}^{a(P, \phi)(e_1)}\tau_w$ as a regular function on $D \times X^{|E(\Gamma)| -1}$.  But by the above argument, the leading term of this product
has $t_{e_2}$ degree $\geq - a(P, \phi)(e_2)$, and this value is achieved on one of the same monomials with $t_{e_1}$ power $t_{e_1}^{-a(P, \phi)(e_1)}$.  Furthermore this pattern must continue as we proceed deeper into the flag. We conclude that $\mathfrak{v}_{\Gamma, \prec}(\tau_w) = (a(P, \phi)(e_1), \ldots, a(P, \phi)(e_{|E(\Gamma)|})),$ where the ordering is determined by $\prec.$
\end{proof}

\bibliographystyle{alpha}
\bibliography{Biblio}

\begin{thebibliography}{GHKK14}

\bibitem[Ale]{A}
D.~Alessandrini.
\newblock Amoebas, tropical varieties and compactification of teichm\"uller
  spaces.
\newblock arXiv:math/0505269 [math.AG].

\bibitem[Bae00]{Baez}
John~C. Baez.
\newblock An introduction to spin foam models of {$BF$} theory and quantum
  gravity.
\newblock In {\em Geometry and quantum physics ({S}chladming, 1999)}, volume
  543 of {\em Lecture Notes in Phys.}, pages 25--93. Springer, Berlin, 2000.

\bibitem[Ber90]{Ber}
Vladimir~G. Berkovich.
\newblock {\em Spectral theory and analytic geometry over non-{A}rchimedean
  fields}, volume~33 of {\em Mathematical Surveys and Monographs}.
\newblock American Mathematical Society, Providence, RI, 1990.

\bibitem[BHV01]{BHV}
Louis~J. Billera, Susan~P. Holmes, and Karen Vogtmann.
\newblock Geometry of the space of phylogenetic trees.
\newblock {\em Adv. in Appl. Math.}, 27(4):733--767, 2001.

\bibitem[CM87]{CM}
Marc Culler and John~W. Morgan.
\newblock Group actions on {${\bf R}$}-trees.
\newblock {\em Proc. London Math. Soc. (3)}, 55(3):571--604, 1987.

\bibitem[CV86]{CV}
Marc Culler and Karen Vogtmann.
\newblock Moduli of graphs and automorphisms of free groups.
\newblock {\em Invent. Math.}, 84(1):91--119, 1986.

\bibitem[FL13]{FL}
Carlos Florentino and Sean Lawton.
\newblock Character varieties and moduli of quiver representations.
\newblock In {\em In the tradition of {A}hlfors-{B}ers. {VI}}, volume 590 of
  {\em Contemp. Math.}, pages 9--38. Amer. Math. Soc., Providence, RI, 2013.

\bibitem[GHKK14]{GHKK}
M.~Gross, P.~Hacking, S.~Keel, and M.~Kontsevich.
\newblock Canonical bases for cluster algebras.
\newblock arXiv:1411.1394 [math.AG], 2014.

\bibitem[GJS02]{GJS}
Victor Guillemin, Lisa Jeffrey, and Reyer Sjamaar.
\newblock Symplectic implosion.
\newblock {\em Transform. Groups}, 7(2):155--184, 2002.

\bibitem[Gro97]{Gr}
Frank~D. Grosshans.
\newblock {\em Algebraic homogeneous spaces and invariant theory}, volume 1673
  of {\em Lecture Notes in Mathematics}.
\newblock Springer-Verlag, Berlin, 1997.

\bibitem[HK]{HK}
Megumi Harada and Kiumars Kaveh.
\newblock Toric degenerations, integrable systems and okounkov bodies.
\newblock arXiv:1205.5249, to appear in Inventiones Mathematicae.

\bibitem[HMM]{HMM2}
Joachim Hilgert, Christopher Manon, and Johan Martens.
\newblock Contraction of hamiltonian k-spaces.
\newblock arXiv:1509.06406 [math.SG].

\bibitem[HMM11]{HMM}
Benjamin Howard, Christopher Manon, and John Millson.
\newblock The toric geometry of triangulated polygons in {E}uclidean space.
\newblock {\em Canad. J. Math.}, 63(4):878--937, 2011.

\bibitem[Kir11]{Kir}
Frances Kirwan.
\newblock Symplectic implosion and nonreductive quotients.
\newblock In {\em Geometric aspects of analysis and mechanics}, volume 292 of
  {\em Progr. Math.}, pages 213--256. Birkh\"auser/Springer, New York, 2011.

\bibitem[KK12]{KK}
Kiumars Kaveh and A.~G. Khovanskii.
\newblock Newton-{O}kounkov bodies, semigroups of integral points, graded
  algebras and intersection theory.
\newblock {\em Ann. of Math. (2)}, 176(2):925--978, 2012.

\bibitem[KN79]{KN}
George Kempf and Linda Ness.
\newblock The length of vectors in representation spaces.
\newblock In {\em Algebraic geometry ({P}roc. {S}ummer {M}eeting, {U}niv.
  {C}openhagen, {C}openhagen, 1978)}, volume 732 of {\em Lecture Notes in
  Math.}, pages 233--243. Springer, Berlin, 1979.

\bibitem[LP09]{LP}
Sean Lawton and Elisha Peterson.
\newblock Spin networks and {${\rm SL}(2,\Bbb C)$}-character varieties.
\newblock In {\em Handbook of {T}eichm\"uller theory. {V}ol. {II}}, volume~13
  of {\em IRMA Lect. Math. Theor. Phys.}, pages 685--730. Eur. Math. Soc.,
  Z\"urich, 2009.

\bibitem[Man09]{M4}
C.~Manon.
\newblock The algebra of conformal blocks.
\newblock arXiv:0910.0577v6, 2009.

\bibitem[Man11]{M5}
Christopher Manon.
\newblock Dissimilarity maps on trees and the representation theory of {${\rm
  SL}_m(\Bbb C)$}.
\newblock {\em J. Algebraic Combin.}, 33(2):199--213, 2011.

\bibitem[Man14a]{M15}
C.~Manon.
\newblock Compactifications of character varieties and skein relations on
  conformal blocks.
\newblock arXiv:1401.8249 [math.AG], to appear in Geometriae Dedicata, 2014.

\bibitem[Man14b]{M14}
C.~Manon.
\newblock Newton-okounkov polyhedra for character varieties and configuration
  spaces.
\newblock arXiv:1403.3990 [math.AG], to appear in Transactions of the AMS,
  2014.

\bibitem[Man14c]{M3}
C.~Manon.
\newblock Symplectic geometry of the vinberg monoid and branching problems,.
\newblock Mathematisches Forschungsinstitut Oberwolfach Report No. 27/2014,
  31-35, DOI: 10.4171/OWR/2014/27, 2014.

\bibitem[MS84]{MS}
John~W. Morgan and Peter~B. Shalen.
\newblock Valuations, trees, and degenerations of hyperbolic structures. {I}.
\newblock {\em Ann. of Math. (2)}, 120(3):401--476, 1984.

\bibitem[MS15]{MSt}
Diane Maclagan and Bernd Sturmfels.
\newblock {\em Introduction to {T}ropical {G}eometry}, volume 161 of {\em
  Graduate Studies in Mathematics}.
\newblock American Mathematical Society, Providence, RI, 2015.

\bibitem[MT12]{MaTh}
Johan Martens and Michael Thaddeus.
\newblock On non-{A}belian symplectic cutting.
\newblock {\em Transform. Groups}, 17(4):1059--1084, 2012.

\bibitem[NNU10]{NNU}
Takeo Nishinou, Yuichi Nohara, and Kazushi Ueda.
\newblock Toric degenerations of {G}elfand-{C}etlin systems and potential
  functions.
\newblock {\em Adv. Math.}, 224(2):648--706, 2010.

\bibitem[Pay09]{P}
Sam Payne.
\newblock Analytification is the limit of all tropicalizations.
\newblock {\em Math. Res. Lett.}, 16(3):543--556, 2009.

\bibitem[Rua01]{R}
Wei-Dong Ruan.
\newblock Lagrangian torus fibration of quintic hypersurfaces. {I}. {F}ermat
  quintic case.
\newblock In {\em Winter {S}chool on {M}irror {S}ymmetry, {V}ector {B}undles
  and {L}agrangian {S}ubmanifolds ({C}ambridge, {MA}, 1999)}, volume~23 of {\em
  AMS/IP Stud. Adv. Math.}, pages 297--332. Amer. Math. Soc., Providence, RI,
  2001.

\bibitem[Sja98]{Sja}
Reyer Sjamaar.
\newblock Convexity properties of the moment mapping re-examined.
\newblock {\em Adv. Math.}, 138(1):46--91, 1998.

\bibitem[SS04]{SpSt}
David Speyer and Bernd Sturmfels.
\newblock The tropical {G}rassmannian.
\newblock {\em Adv. Geom.}, 4(3):389--411, 2004.

\bibitem[Thu88]{T}
William~P. Thurston.
\newblock On the geometry and dynamics of diffeomorphisms of surfaces.
\newblock {\em Bull. Amer. Math. Soc. (N.S.)}, 19(2):417--431, 1988.

\bibitem[Vin95a]{Vi1}
E.~B. Vinberg.
\newblock On reductive algebraic semigroups.
\newblock In {\em Lie groups and {L}ie algebras: {E}. {B}. {D}ynkin's
  {S}eminar}, volume 169 of {\em Amer. Math. Soc. Transl. Ser. 2}, pages
  145--182. Amer. Math. Soc., Providence, RI, 1995.

\bibitem[Vin95b]{Vi2}
Ernest~B. Vinberg.
\newblock The asymptotic semigroup of a semisimple {L}ie group.
\newblock In {\em Semigroups in algebra, geometry and analysis ({O}berwolfach,
  1993)}, volume~20 of {\em de Gruyter Exp. Math.}, pages 293--310. de Gruyter,
  Berlin, 1995.

\bibitem[Vog02]{V}
Karen Vogtmann.
\newblock Automorphisms of free groups and outer space.
\newblock In {\em Proceedings of the {C}onference on {G}eometric and
  {C}ombinatorial {G}roup {T}heory, {P}art {I} ({H}aifa, 2000)}, volume~94,
  pages 1--31, 2002.

\end{thebibliography}

\bigskip
\noindent
Christopher Manon:\\
cmanon@gmu.edu\\
Department of Mathematics,\\ 
George Mason University\\ 
Fairfax, VA 22030 USA 

\end{document}